\documentclass[12pt,twoside,a4paper]{amsart}
\usepackage{amssymb,amscd,amsxtra,calc,mathrsfs}
\usepackage{amsmath}
\usepackage{bm}
\usepackage[all]{xy}
\usepackage{todonotes}

\makeatletter
\c@MaxMatrixCols=12
\makeatother

\allowdisplaybreaks[1]


\title[Coupled stability thresholds]{On the coupled Ding stability and the 
Yau--Tian--Donaldson correspondence for Fano manifolds}
\author{Kento Fujita}
\author{Yoshinori Hashimoto} 
\date{\today}
\subjclass[2020]{Primary 14J45; Secondary 32Q20, 14L24, 53C55}
\keywords{K-stability, Fano varieties}
\address{Department of Mathematics, Graduate School of Science, Osaka University, 
Toyonaka, Osaka 560-0043, Japan}
\email{fujita@math.sci.osaka-u.ac.jp}

\address{Department of Mathematics, 
Osaka Metropolitan University,
3-3-138, Sugimoto, Sumiyoshi-ku, Osaka, 558-8585, Japan.}
\email{yhashimoto@omu.ac.jp}


\setlength{\topmargin}{-1.0cm}
\setlength{\footskip}{0.5cm}
\setlength{\oddsidemargin}{-0.25cm}
\setlength{\evensidemargin}{-0.25cm}
\setlength{\textheight}{25.5cm}
\setlength{\textwidth}{16.5cm}

\newcommand{\pr}{\mathbb{P}}
\newcommand{\PP}{\mathbf{P}}

\newcommand{\Z}{\mathbb{Z}}
\newcommand{\Q}{\mathbb{Q}}
\newcommand{\R}{\mathbb{R}}
\newcommand{\C}{\mathbb{C}}

\newcommand{\T}{\mathbb{T}}

\newcommand{\JJ}{\mathbf{J}}
\newcommand{\DD}{\mathbf{D}}
\newcommand{\bL}{\mathbf{L}}
\newcommand{\A}{\mathbb{A}}
\newcommand{\G}{\mathbb{G}}

\newcommand{\K}{\mathbb{K}}

\newcommand{\Pic}{\operatorname{Pic}}

\newcommand{\Hom}{\operatorname{Hom}}

\newcommand{\Aut}{\operatorname{Aut}}
\newcommand{\Proj}{\operatorname{Proj}}

\newcommand{\id}{\operatorname{id}}

\newcommand{\lct}{\operatorname{lct}}

\newcommand{\Ding}{\operatorname{Ding}}
\newcommand{\DDHH}{\operatorname{DH}}
\newcommand{\ord}{\operatorname{ord}}

\newcommand{\vol}{\operatorname{vol}}
\newcommand{\Image}{\operatorname{Image}}

\newcommand{\Val}{\operatorname{Val}}
\newcommand{\QM}{\operatorname{QM}}

\newcommand{\triv}{\operatorname{triv}}
\newcommand{\mult}{\operatorname{mult}}

\newcommand{\interior}{\operatorname{int}}

\newcommand{\Red}{\operatorname{red}}

\newcommand{\Conv}{\operatorname{Conv}}
\newcommand{\Gr}{\operatorname{Gr}}
\newcommand{\Ric}{\operatorname{Ric}}
\newcommand{\bc}{\operatorname{bc}}
\newcommand{\cp}{\operatorname{cp}}
\newcommand{\dist}{\operatorname{dist}}
\newcommand{\PROD}{\operatorname{prod}}
\newcommand{\sI}{\mathcal{I}}
\newcommand{\sJ}{\mathcal{J}}

\newcommand{\sO}{\mathcal{O}}
\newcommand{\sN}{\mathcal{N}}

\newcommand{\sX}{\mathcal{X}}
\newcommand{\sY}{\mathcal{Y}}
\newcommand{\sL}{\mathcal{L}}

\newcommand{\sM}{\mathcal{M}}
\newcommand{\sF}{\mathcal{F}}
\newcommand{\sG}{\mathcal{G}}

\newcommand{\sW}{\mathcal{W}}
\newcommand{\sZ}{\mathcal{Z}}

\newcommand{\sD}{\mathcal{D}}
\newcommand{\sR}{\mathcal{R}}



\newtheorem{thm}{Theorem}[section]
\newtheorem{lemma}[thm]{Lemma}
\newtheorem{proposition}[thm]{Proposition}
\newtheorem{corollary}[thm]{Corollary}
\newtheorem{claim}[thm]{Claim}

\theoremstyle{definition}
\newtheorem{definition}[thm]{Definition}
\newtheorem{remark}[thm]{Remark}

\newtheorem{example}[thm]{Example}
\newtheorem*{ack}{Acknowledgments}

\begin{document}

\maketitle 

\begin{abstract}
We interpret the coupled Ding semistability and the reduced coupled uniform Ding 
stability of log Fano pairs in the notion of coupled stability thresholds
and reduced coupled stability thresholds. As a corollary, we solve a modified 
version of the conjecture by Hultgren and Witt Nystr\"om for coupled K\"ahler--Einstein metrics on Fano manifolds. 
\end{abstract}

\setcounter{tocdepth}{1}
\tableofcontents

\section{Introduction}\label{section:intro}

Let us consider an $n$-dimensional Fano manifold $X$ over the complex numbers 
$\C$. Take ample $\Q$-divisors $L_1,\dots,L_k$ with $-K_X\sim_\Q L_1+\cdots+L_k$, and K\"ahler metrics $\omega_i\in c_1(L_i)$ for all $1\leq i\leq k$. 
Following Hultgren--Witt Nystr\"om \cite{HWN}, we say that 
$(\omega_i)_{i=1}^k$ are \emph{coupled K\"ahler--Einstein metrics} 
(cKE metrics) if 
\[
\Ric \omega_1=\cdots=\Ric \omega_k=\sum_{i=1}^k \omega_i
\]
holds. Obviously, when $k=1$, this is nothing but the K\"ahler--Einstein 
metric on a Fano manifold $X$. 
Hultgren and Witt Nystr\"om conjectured \cite[Conjecture 1.16]{HWN} that 
the existence of cKE metrics is equivalent to an algebraic stability condition. 
In fact, they conjectured that the condition should be the K-polystability 
or the Ding polystability of $\left(X; \left\{L_i\right\}_{i=1}^k\right)$. 
However, in their definition of K-stability/Ding stability, they assumed that 
the total spaces of test configurations of $(X, L_i)$ are isomorphic 
to each other \cite[Definition 1.3]{HWN}. Later, the second author \cite{hashimoto} 
observed that we should consider test configurations $\left(\sX_i, \sL_i\right)/\A^1$ 
of $(X, L_i)$ such that the total spaces $\sX_i$ are different to each other 
in general, and defined the test configuration $(\sY,\sL_\sY)/\A^1$ 
\emph{generated by the $\C^*$-actions of} 
$\left\{(\sX_i,\sL_i)\right\}_{i=1}^k$ \cite[Definition 18]{hashimoto}, 
and then introduced the coupled Ding invariant 
of $\left\{(\sX_i,\sL_i)\right\}_{i=1}^k$ from $(\sY,\sL_\sY)/\A^1$ 
\cite[Definition 19]{hashimoto}. He conjectured 
\cite[Remark 11]{hashimoto} that, for a Fano manifold $X$ (without any assumption 
of the automorphism group of $X$), the existence of cKE metrics should be 
equivalent to an equivariant version of the \emph{uniform} coupled Ding stability, which 
he did not define precisely. 

On the other hand, assuming that the automorphism group of $X$ is finite, Kewei Zhang 
showed \cite[Remark 5.3]{kewei} that the existence of cKE metrics follows from 
the condition 
\[
\delta\left(X;\left\{L_i\right\}_{i=1}^k\right)>1, 
\]
where $\delta\left(X;\left\{L_i\right\}_{i=1}^k\right)$ is the 
\emph{coupled stability threshold} (see \cite{coupled-delta} for the basic theory, 
see also \cite{RTZ}). 
Therefore, it is important to see 
the relationship between the coupled stability threshold 
and the coupled Ding stability to consider the modified version (in the sense of \cite{hashimoto}) of the conjecture of Hultgren and Witt Nystr\"om. 

The purpose of the article is to see the relationship between 
(a reduced version of) the uniform coupled Ding stability and the coupled stability 
thresholds for log Fano pairs, and also the cKE metrics for Fano manifolds. Let $(X,\Delta)$ be a \emph{log Fano pair}, i.e., 
$(X,\Delta)$ is a projective klt pair with $\Delta$ an effective $\Q$-divisor such that 
$-(K_X+\Delta)$ is ample. Fix an algebraic torus $\T\subset\Aut(X,\Delta)$ and 
take $\T$-linearized ample $\Q$-line bundles $L_1,\dots,L_k$ on $X$ 
such that $L:=\sum_{i=1}^k L_i$ coincides with $-(K_X+\Delta)$ with the standard 
$\T$-linearization. As in the standard textbook \cite[\S 2.2]{Xu}, we firstly introduce the coupled weighted barycenter $\alpha_{\bc}^{\cp}:=\sum_{i=1}^k
\alpha_{\bc}^{L_i}$, 
and define the notion of \emph{$\left(X,\Delta;\left\{L_i\right\}_{i=1}^k\right)$ 
with vanishing coupled $\T$-Futaki characters} as $\alpha_{\bc}^{\cp}=0$. 
We also define \emph{the sum configuration} $(\sX,\sL)/\A^1$
of the test configurations
$(\sX_i,\sL_i)/\A^1$ of $(X, L_i)$ (see Definition \ref{definition:sum-tc}). 
It is worth mentioning that, even when all constituents 
$\sX_i$ are normal, the sum $\sX$ may not be normal in general 
(see \S \ref{section:appendix}). 
Moreover, the sum configuration 
coincides with the one generated by the $\C^*$-actions of the given test 
configurations (see Proposition \ref{proposition:hashimoto}). 
We also define \emph{the sum filtration} of given filtrations in \S \ref{section:sum-filt}, which is 
a natural generalization of sum configurations. Discussions from the point of view of the non-Archimedean metrics are given in \S \ref{section:sum-namc}.
In \S \ref{section:c-ding}, we introduce the coupled Ding invariant, which turns out to be the same as the one in 
\cite[Definition 19]{hashimoto}, and the coupled J-norm for test configurations 
$\{(\sX_i,\sL_i)\}_{i=1}^k$ of $(X, L_i)$. Moreover, 
we introduce the notion of \emph{$\T$-equivariant coupled Ding semistability} and 
\emph{$\T$-reduced uniform coupled Ding stability} (see Definition 
\ref{definition:c-ding}). In \S \ref{section:c-delta}, as a generalization of the reduced stability thresholds \cite{Li, XZ} and the coupled stability 
thresholds \cite{RTZ, coupled-delta}, we introduce \emph{the $\T$-reduced coupled stability threshold} 
\[
\delta_{\T}^{\Red}\left(X,\Delta;\left\{L_i\right\}_{i=1}^k\right) 
\]
of $\left(X,\Delta;\{L_i\}_{i=1}^k\right)$ (see Definition \ref{definition:c-delta}). The relationship to the cKE metrics is discussed in \S \ref{section:analytic}.

Here is the main result of this article. 

\begin{thm}\label{corollary:apmnthm}
Let $X$ be a Fano manifold over the complex number field $\C$, with $\Delta = 0$, and let $\T\subset\Aut_0(X)$ be a maximal torus with the maximal compact subgroup $\T_r \subset \T$. Then the following are equivalent.
\begin{enumerate}
\renewcommand{\theenumii}{\roman{enumii}}
\renewcommand{\labelenumii}{(\theenumii)}
\item\label{corollary:apmnthm1}
$\left(X ;\{L_i\}_{i=1}^k\right)$ is $\T$-reduced uniformly coupled Ding stable.
\item\label{corollary:apmnthm2}
$\left(X ;\{L_i\}_{i=1}^k\right)$ has vanishing coupled $\T$-Futaki characters and $\delta_{\T}^{\Red}\left(X;\{L_i\}_{i=1}^k\right)>1$.
\item\label{corollary:apmnthm3}
$\left(X ;\{L_i\}_{i=1}^k\right)$ admits a $\T_r$-invariant coupled K\"ahler--Einstein metric.
\end{enumerate}
\end{thm}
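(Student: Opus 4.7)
The plan is to close the three-way equivalence via the cycle $(iii)\Rightarrow (ii)\Rightarrow (i)\Rightarrow (iii)$. The algebraic equivalence $(i)\Leftrightarrow (ii)$ reduces to a Fujita--Odaka/Blum--Jonsson-type valuative identity in the $\T$-reduced coupled setting, while the analytic implication $(i)\Rightarrow (iii)$ follows from the variational Yau--Tian--Donaldson programme adapted to the coupled Ding functional.

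For $(ii)\Rightarrow (i)$, I would apply the filtration-theoretic machinery of \S\ref{section:sum-filt} and \S\ref{section:sum-namc}. Given a $\T$-equivariant tuple of test configurations $\{(\sX_i,\sL_i)\}_{i=1}^k$ of $(X,L_i)$, the coupled Ding invariant of the sum is controlled by a log canonical threshold of the associated sum filtration. Using the definition of $\delta_{\T}^{\Red}$ from \S\ref{section:c-delta} one then obtains, after subtracting the $\T$-twist permitted by the vanishing coupled $\T$-Futaki character hypothesis, an inequality of the form
\[
\Ding^{\NA}\bigl(\{(\sX_i,\sL_i)\}_{i=1}^k\bigr) \;\geq\; \Bigl(1-\frac{1}{\delta_{\T}^{\Red}}\Bigr)\cdot \JJ^{\T,\NA}_{\cp}\bigl(\{(\sX_i,\sL_i)\}_{i=1}^k\bigr).
\]
The condition $\delta_{\T}^{\Red}>1$ then delivers a uniform positive constant, whence $\T$-reduced uniform coupled Ding stability. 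The converse $(i)\Rightarrow (ii)$ follows dually: any valuation (or minimizing sequence of valuations) computing $\delta_{\T}^{\Red}$ generates via the standard Fujita--Odaka approximation a $\T$-equivariant tuple of test configurations whose reduced coupled $\JJ$-norm is positive, forcing $\delta_{\T}^{\Red}>1$ upon substitution into the uniform Ding inequality; the vanishing Futaki character is obtained by noting that a nonzero one would produce a destabilizing direction along a $1$-parameter subgroup of $\T$ with vanishing reduced coupled $\JJ$-norm, contradicting reduced uniform stability.

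For $(iii)\Rightarrow (ii)$, the cKE equation integrates directly to give the vanishing of the coupled $\T$-Futaki character, and the bound $\delta_{\T}^{\Red}>1$ combines Kewei Zhang's $\delta>1$-type implication \cite{kewei} with the reduction-by-torus technique of \cite{Li,XZ}. The main implication $(i)\Rightarrow (iii)$ requires transporting the non-Archimedean Ding inequality to $\T_r$-reduced coercivity of the Archimedean coupled Ding functional on a product of finite-energy spaces. The plan is to adapt the Berman--Boucksom--Jonsson variational strategy via the analytic framework of \S\ref{section:analytic}: regularize an arbitrary finite-energy tuple of potentials by test-configuration-induced geodesic rays, push the uniform non-Archimedean Ding inequality through this regularization to obtain an Archimedean coercivity estimate modulo the $\T_r$-action, and invoke the variational existence theorem for minimizers of the coupled Ding functional. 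The hardest step will be the joint handling of the $\T_r$-reduction and the non-normality of sum configurations flagged in \S\ref{section:appendix}, which obstructs direct potential-theoretic identifications; the remedy should be to execute all non-Archimedean computations at the level of sum filtrations, where normality is not required, and only then transfer the resulting inequalities to the Archimedean side.
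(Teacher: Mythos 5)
Your algebraic leg (i)$\iff$(ii) follows the same general route as the paper (control the coupled Ding invariant by the log canonical threshold of the sum filtration, twist using the vanishing coupled $\T$-Futaki characters, and run a valuative argument for the converse), but two points are glossed over. First, the inequality you write with constant $1-1/\delta_{\T}^{\Red}$ is not what the argument produces: after bounding $\lambda_{\max}$ by $T_{L_i}(v_\xi)\le (n+1)S_{L_i}(v_\xi)$ one only gets a dimension-dependent constant $\varepsilon$ with $1+n\varepsilon\le\delta$ (as in Theorem \ref{theorem:A}); this is a quantitative slip, not fatal. More seriously, for (i)$\Rightarrow$(ii) you need the uniform Ding--$\JJ_{\T}^{\cp}$ inequality not merely for $k$-tuples of test configurations but for arbitrary $\T$-equivariant filtrations (so that you may substitute $\sF_{v}$ and its twists for a valuation $v$ computing the threshold). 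That extension is the technical heart of the paper (Theorem \ref{theorem:DingD}: approximating sum filtrations by normalized blowup test configurations compatibly with $\xi$-twists and base change), and your sketch of ``the standard Fujita--Odaka approximation'' does not address it; without it the substitution step in your dual argument does not go through.

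The analytic part contains two genuine gaps. (a) Your cycle requires a direct proof of (iii)$\Rightarrow$(ii), and you propose to ``combine Kewei Zhang's $\delta>1$-type implication'' -- but Zhang's result goes in the opposite direction ($\delta>1\Rightarrow$ existence of cKE, for finite automorphism group), so it cannot produce $\delta_{\T}^{\Red}>1$ from a cKE metric. The workable route, and the one the paper takes, is (iii)$\Rightarrow$(i): coercivity of $\mathrm{D}^{\mathrm{cp}}$ modulo $\T$ via Darvas--Rubinstein properness (Proposition \ref{ppkickedcptc}) together with the slope formulae (\ref{apeqcpdsf}) and (\ref{apeqcpjsf}) along rays attached to $\T$-equivariant test configurations, and only then the algebraic (i)$\Rightarrow$(ii). (b) For (i)$\Rightarrow$(iii), ``regularizing an arbitrary finite-energy tuple by test-configuration rays and pushing the non-Archimedean inequality through'' does not yield coercivity: slopes along rays only capture asymptotics, and no direct transfer of a non-Archimedean inequality into an Archimedean coercivity bound at a fixed potential is available. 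The argument must run by contraposition: non-coercivity produces, via the entropy-free compactness of Darvas--Zhang and Berndtsson's convexity of $\mathrm{L}$, a $k$-tuple of $\T_r$-invariant \emph{maximal} geodesic rays with nonpositive normalized $\mathrm{D}^{\mathrm{cp}}$ and strictly positive reduced $\mathrm{J}_{\mathrm{cp},\T}$ slope (Theorem \ref{thgdsttc}); one must take the maximal ray of each component $\phi_i^{\mathrm{NA}}$ separately (a point the paper explicitly flags as nonobvious), use the vanishing coupled Futaki characters to control twists (Lemmas \ref{lmcpfich1} and \ref{lmcpfich2}), and then approximate each $\phi_i^{\mathrm{NA}}$ by $\T$-equivariant test configurations in the style of Berman--Boucksom--Jonsson to contradict (i). Finally, the non-normality of sum configurations is not the obstruction you make it out to be: normalization changes neither $\Ding$ nor $\JJ_{\T}^{\cp}$ (Lemma \ref{lemma:ding-normal}), and the non-Archimedean metric of the sum depends only on the classes of the constituents (Lemma \ref{lemma:aplmstcna}), so the analytic transfer is insensitive to it.
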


In the above and throughout this article, $\Aut_0 (X)$ stands for the identity component of $\Aut (X)$. Note that the proof of Theorem \ref{corollary:apmnthm} for the case when $\mathrm{Aut}(X)$ is finite can be simplified significantly, since essentially it reduces to proving \eqref{corollary:apmnthm1}$\implies$\eqref{corollary:apmnthm2}; \eqref{corollary:apmnthm2}$\implies$\eqref{corollary:apmnthm3} was already proved in this case by Kewei Zhang \cite[Remark 5.3]{kewei}, and the proof of \eqref{corollary:apmnthm3}$\implies$\eqref{corollary:apmnthm1}, given in Theorem \ref{theorem:apmnthm}, is much easier than the converse.

Theorem \ref{corollary:apmnthm} is an immediate consequence of Theorems \ref{theorem:main} and \ref{theorem:mainmc} stated below. The first result establishes \eqref{corollary:apmnthm1}$\iff$\eqref{corollary:apmnthm2} and holds for any log Fano pairs over any algebraically closed field of characteristic zero.

\begin{thm}[{see Corollary \ref{corollary:conclusion} in detail}]\label{theorem:main}
\begin{enumerate}
\renewcommand{\theenumi}{\arabic{enumi}}
\renewcommand{\labelenumi}{(\theenumi)}
\item\label{theorem:main1}
The following are equivalent: 
\begin{enumerate}
\renewcommand{\theenumii}{\roman{enumii}}
\renewcommand{\labelenumii}{(\theenumii)}
\item\label{theorem:main11}
$\left(X,\Delta;\{L_i\}_{i=1}^k\right)$ is coupled Ding semistable.
\item\label{theorem:main12}
$\left(X,\Delta;\{L_i\}_{i=1}^k\right)$ is $\T$-equivariantly coupled Ding semistable.
\item\label{theorem:main13}
$\delta\left(X,\Delta;\{L_i\}_{i=1}^k\right)\geq 1$ holds. 
\end{enumerate}
\item\label{theorem:main2}
If $\left(X,\Delta;\{L_i\}_{i=1}^k\right)$ is $\T$-reduced uniformly coupled Ding 
stable, then $\T\subset\Aut(X,\Delta)$ must be a maximal torus. 
\item\label{theorem:main3}
The following are equivalent for a maximal torus $\T\subset\Aut(X,\Delta)$: 
\begin{enumerate}
\renewcommand{\theenumii}{\roman{enumii}}
\renewcommand{\labelenumii}{(\theenumii)}
\item\label{theorem:main31}
$\left(X,\Delta;\{L_i\}_{i=1}^k\right)$ is $\T$-reduced uniformly coupled Ding stable.
\item\label{theorem:main32}
$\left(X,\Delta;\{L_i\}_{i=1}^k\right)$ has vanishing coupled $\T$-Futaki characters and
\[
\delta^{\Red}_{\T}\left(X,\Delta;\left\{L_i\right\}_{i=1}^k\right)> 1
\] 
holds. 
\end{enumerate}
\end{enumerate}
\end{thm}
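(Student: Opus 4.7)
The plan is to treat the three parts separately, using the valuative/filtration machinery of \S\ref{section:sum-namc}--\S\ref{section:c-delta} as the common engine.

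For Part \eqref{theorem:main1}, which is the coupled analogue of the established equivalence between $\delta\geq 1$ and Ding semistability in the single-polarization case, the implication \eqref{theorem:main11}$\Longrightarrow$\eqref{theorem:main12} is immediate. For \eqref{theorem:main13}$\Longrightarrow$\eqref{theorem:main11}, I would start from an arbitrary collection of test configurations $\{(\sX_i,\sL_i)\}_{i=1}^k$ of $(X,L_i)$, pass to the sum configuration $(\sX,\sL)/\A^1$, and rewrite the coupled Ding invariant using filtrations in the spirit of the Fujita--Li limit formula. The resulting expression bounds the coupled Ding invariant from below by a linear combination involving $A_{X,\Delta}(v)-\sum_{i=1}^k S_{L_i}(v)$ over divisorial valuations $v$, whose non-negativity is exactly $\delta\geq 1$. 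For \eqref{theorem:main12}$\Longrightarrow$\eqref{theorem:main13}, one restricts to $\T$-invariant divisorial valuations, which suffice to compute the coupled stability threshold.

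For Part \eqref{theorem:main2}, if $\T$ is not a maximal torus in $\Aut(X,\Delta)$, choose a one-parameter subgroup $\rho$ of $\Aut_0(X,\Delta)$ not contained in $\T$. It produces product test configurations $(X\times\A^1,L_i\times\A^1)$ whose $\T$-reduced coupled $\JJ$-norm is strictly positive, since $\rho$ is independent from $\T$. Applying uniform stability to both $\rho$ and its inverse forces the coupled Ding invariant of this family to vanish, contradicting the strict uniformity estimate required.

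Part \eqref{theorem:main3} is the core of the theorem. The direction \eqref{theorem:main31}$\Longrightarrow$\eqref{theorem:main32} is obtained by first applying uniform stability to product configurations coming from one-parameter subgroups of $\T$, whose $\T$-reduced coupled $\JJ$-norm vanishes and whose coupled Ding invariant equals the pairing with the coupled $\T$-Futaki character; the strict uniformity forces the Futaki character to vanish. The bound $\delta^{\Red}_{\T}>1$ then follows from the uniformity estimate applied to filtrations built from $\T$-invariant divisorial valuations, after taking the infimum of the $\JJ$-norm over all $\T$-twists. For the converse \eqref{theorem:main32}$\Longrightarrow$\eqref{theorem:main31}, the plan is: (a) use the vanishing of the coupled $\T$-Futaki character to show that both the coupled Ding invariant and the $\T$-reduced coupled $\JJ$-norm are invariant under twisting by one-parameter subgroups of $\T$, so that one may work with the twist minimizing the $\JJ$-norm; (b) translate $\delta^{\Red}_{\T}>1$ into a uniform coupled estimate of the form $\Ding_{\cp}\geq (1-1/\delta^{\Red}_{\T})\,\JJ^{\Red}_{\T,\cp}$ on all $\T$-equivariant configurations via the sum-filtration formalism; (c) reduce the non-equivariant case to the equivariant one by a component-wise equivariantization followed by re-assembly via the sum configuration.

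The main obstacle is expected to be the quantitative estimate in step (b): the $\JJ$-norm minimizing twist in the coupled setting must be selected simultaneously for all $k$ components, and must be compatible with the sum-filtration expression of the coupled $S$-invariants underlying $\delta^{\Red}_{\T}$. Establishing this joint compatibility, which is a genuinely new feature absent in the single-polarization setting of \cite{Li,XZ}, is where the bulk of the technical work will concentrate.
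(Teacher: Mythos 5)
Your overall architecture matches the paper's (sum filtrations, the twist formalism, $\JJ$-infima over $N_\R(\T)$, and a single lct-computing valuation controlling all $k$ components), and your part (2) variant using $\rho$ and $\rho^{-1}$ works once you insist that $\rho$ be chosen inside a torus $\tilde\T\supsetneq\T$ containing $\T$ (an arbitrary one-parameter subgroup not in $\T$ need not commute with $\T$, so its product configurations are not $\T$-equivariant and $\JJ^{\cp}_\T$ of them is not controlled by Lemma \ref{lemma:sum-basics} \eqref{lemma:sum-basics4}). But there is a genuine gap in the directions that go \emph{from} test-configuration stability \emph{to} valuative statements. In (1)(ii)$\implies$(iii) and in (3)(i)$\implies$(ii) you propose to "apply the uniformity estimate to filtrations built from $\T$-invariant divisorial valuations": the hypotheses in (ii) of (1) and (i) of (3) only quantify over $k$-tuples of test configurations, and $\sF_v$ for a general ($\T$-invariant, non-dreamy) valuation is not of that form. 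Bridging this requires the coupled analogue of the approximation argument — approximating sequences of filtrations, their compatibility with sums, $\xi$-twists and base change, and the comparison with normalized blowup test configurations (Theorem \ref{theorem:DingD}, built on Propositions \ref{proposition:approx-sum-filt}, \ref{proposition:approx-filt-e}, \ref{proposition:tc-twist} and \ref{proposition:technical}) — which is the single most technical ingredient of the paper and is absent from your plan. Alternatively, for (1)(ii)$\implies$(iii) one can argue as the paper does by contraposition without filtrations, but then the missing mechanism is different and equally essential: from $\delta<1$ one must produce an actual destabilizing $k$-tuple of $\T$-equivariant test configurations, which forces you to show the destabilizing $\T$-invariant divisor is an lc place of a $\Q$-complement (coupled Zhuang-type statement via $m$-basis type divisors and the equivariant minimizer result) so that it is dreamy and extractable by \cite{BCHM}, and then to verify $\mu(\sF)\le A_{X,\Delta}(E)$ for the sum filtration. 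Your one-line claim that "$\T$-invariant divisorial valuations suffice to compute the coupled stability threshold" is precisely what needs the basis-divisor argument of Proposition \ref{proposition:zhuang}, and even granted it, it does not by itself contradict test-configuration semistability.

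Two smaller points. First, the "main obstacle" you identify for (3)(ii)$\implies$(i) is real and is resolved in the paper by the iterated estimate (Claims \ref{claim:t} and \ref{claim:ineq}) in which the single lct-computing valuation $v_\xi$ bounds all $S_{L_i}(\sF_{i,\xi})$ and $\lambda_{\max}(\sF_{i,\xi})$ simultaneously, combined with $T_{L_i}\le (n+1)S_{L_i}$ and the twist-invariance $A-\sum_i S_{L_i}$ from the vanishing Futaki character; your plan is consistent with this but you should be aware that this is a finite induction over the $k$ factors, not a compactness or simultaneous-minimization issue. Second, step (c) of your part (3) ("reduce the non-equivariant case to the equivariant one") addresses a reduction that is not needed there, since $\T$-reduced uniform stability only quantifies over $\T$-equivariant data; the only equivariant-versus-arbitrary comparison in the theorem is (1)(i)$\iff$(ii), and in the paper it is obtained for free by routing both through the torus-free threshold condition (iii).
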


For the proof of Theorem \ref{theorem:main}, 
we \emph{heavily} depend on the recent progress in 
K-stability of log Fano pairs. In fact, the strategy of the proof of 
Theorem \ref{theorem:main} is very close to the construction of the arguments 
in \cite{Xu}. 
We refer the reader to the new book \cite{Xu} for the terminologies used in this article. 

Furthermore, relying on the recent progress in the variational methods for K\"ahler--Einstein and constant scalar curvature K\"ahler metrics such as \cite{BBJ,BJ22,darlectures,DZ,gzbook,LiCsck,Li,kewei2}, we solve a 
modified version of the conjecture \cite[Conjecture 1.16]{HWN} by Hultgren and Witt Nystr\"om in the affirmative, establishing \eqref{corollary:apmnthm1}$\iff$\eqref{corollary:apmnthm3} in Theorem \ref{corollary:apmnthm}.

\begin{thm}\label{theorem:mainmc}
Let $X$ be a Fano manifold over the complex number field $\C$, with $\Delta = 0$, and let $\T\subset\Aut_0(X)$ be a maximal torus with the maximal compact subgroup $\T_r \subset \T$. Then $\left(X ;\{L_i\}_{i=1}^k\right)$ admits a $\T_r$-invariant coupled K\"ahler--Einstein metric if and only if $\left(X ;\{L_i\}_{i=1}^k\right)$ is $\T$-reduced uniformly coupled Ding stable.
\end{thm}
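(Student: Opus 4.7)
The plan is to prove Theorem \ref{theorem:mainmc} by adapting the variational method of Berman--Boucksom--Jonsson \cite{BBJ} and Li \cite{Li}, together with the Darvas pluripotential theory \cite{darlectures, DZ}, to the coupled and $\T$-reduced setting, building on the non-Archimedean framework developed in \S \ref{section:sum-namc}. The algebraic equivalence \eqref{corollary:apmnthm1}$\iff$\eqref{corollary:apmnthm2} supplied by Theorem \ref{theorem:main} is used freely, so the task reduces to the analytic equivalence between coupled K\"ahler--Einstein existence and $\T$-reduced uniform coupled Ding stability.

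For the easy direction, I would use convexity of the coupled Ding functional $\DD$ along the product of weak geodesic rays $(\varphi_{1,t},\dots,\varphi_{k,t})$ in $\sH(X,L_1)\times\cdots\times\sH(X,L_k)$ associated to a tuple of $\T$-equivariant test configurations $(\sX_i,\sL_i)$ of $(X,L_i)$. A slope formula, the coupled analogue of the Boucksom--Hisamoto--Jonsson computation, identifies the asymptotic slope of $\DD$ with the coupled Ding invariant of the sum configuration constructed in \S \ref{section:sum-filt} and the asymptotic slope of the reduced $J^{\T}$-norm with the reduced coupled J-norm. A $\T_r$-invariant cKE minimiser of $\DD$ then yields nonnegativity of the slope. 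To obtain the strict reduced inequality, after twisting each component ray by a one-parameter subgroup of $\T$ chosen to minimise $J$ componentwise, the reduced $J^{\T}$-norm becomes comparable to the product $d_1$-distance, and coercivity of $\DD$ modulo the $\T$-action (a consequence of the existence of a minimiser in the cKE case) gives the required quantitative lower bound.

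For the hard direction, the goal is to derive $\T_r$-reduced coercivity of $\DD$ on the space $\sH(X,L_1)^{\T_r}\times\cdots\times\sH(X,L_k)^{\T_r}$ of $\T_r$-invariant coupled potentials, and then to invoke the coupled analogue of the Darvas--Rubinstein existence-under-coercivity result, along the lines of \cite{HWN}, to produce a finite-energy cKE minimiser and upgrade it to a smooth metric via standard regularity (\cite{BBJ, gzbook, LiCsck}). I would prove coercivity by contradiction: if $\DD - \epsilon J^{\T}$ is not bounded below for any $\epsilon>0$, take a minimising sequence of $\T_r$-invariant coupled potentials, project out the $\T$-action, and rescale in $d_1$ to obtain non-trivial weak geodesic rays $(\varphi_{i,t})_{i=1}^k$. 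The sum filtration attached to these rays, through the framework of \S \ref{section:sum-filt} and \S \ref{section:sum-namc}, has positive reduced coupled J-norm (by the normalisation) but non-positive non-Archimedean coupled Ding invariant in the limit, contradicting $\T$-reduced uniform coupled Ding stability via its reformulation on filtrations afforded by Theorem \ref{theorem:main} and the coupled stability threshold from \S \ref{section:c-delta}.

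The main obstacle is this last step: the coupled slope formula together with the lower semicontinuity of the non-Archimedean coupled Ding functional along $d_1$-geodesic rays. In the non-coupled case this is the technically delicate BBJ-type inequality whose proof has been refined in \cite{BJ22, Li, kewei2}. Each ingredient --- the asymptotic slope of the coupled Monge--Amp\`ere energy, the coupled entropy, and the Darvas $d_1$-convergence --- must be analysed componentwise and then assembled. A further subtlety specific to the coupled setting is that the sum of $k$ normal test configurations need not be normal (\S \ref{section:appendix}); the formalism of sum filtrations and non-Archimedean metrics of \S \ref{section:sum-namc} is precisely designed to bypass this, so a substantial portion of the argument consists of transcribing the classical BBJ machinery into that formalism and verifying its compatibility with the $\T$-reduction.
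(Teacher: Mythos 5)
Your outline follows essentially the same variational route as the paper: Proposition \ref{ppkickedcptc} (Darvas--Rubinstein) reduces everything to $\T$-coercivity of $\mathrm{D}^{\mathrm{cp}}$, the easy direction is the slope-formula argument of Theorem \ref{theorem:apmnthm}, and the hard direction is a contradiction argument with geodesic rays as in Theorems \ref{thgdsttc} and \ref{thsttcsv}. However, the step you defer as ``the main obstacle'' is exactly where the content lies, and two of the ingredients you propose for it would not go through as stated. First, you list ``the coupled entropy'' among the quantities to be controlled: the BBJ-type compactness through entropy requires comparing the Ding functional with a (coupled) Mabuchi functional, which is not available in this setting --- the paper deliberately avoids it by following Darvas--Zhang \cite{DZ}, extracting the limit ray from psh/$L^1$ compactness of the normalized potentials and treating $\mathrm{L}^{\mathrm{cp}}$ only through the identity $\mathrm{L}^{\mathrm{cp}}(\phi_1,\dots,\phi_k)=\mathrm{L}\left(\sum_{i=1}^k\phi_i\right)$ together with Berndtsson convexity \cite{Ber}. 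Second, positivity of the reduced $\mathrm{J}$-slope is not ``by the normalisation'': after projecting out the $\T$-action one only knows $\sum_i d_{1,i}(\phi_i^{(j)},0)\to\infty$ (indeed some components may stay $d_1$-bounded and become constant rays), and to bound the infimum over twists away from zero along the limit ray one needs the vanishing coupled $\T$-Futaki characters and the twist-invariance Lemmas \ref{lmcpfich1} and \ref{lmcpfich2}, plus the comparison of the twisted maximal ray with the twisted subgeodesic as in \cite{kewei2}.

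Your plan to attach a sum filtration to the limiting rays and contradict the filtration form of stability (Corollary \ref{corollary:DingD}) also differs from the paper and hides delicate points: a finite-energy ray obtained this way does not obviously carry a filtration with the required slope identities. What the paper does instead is replace the limit subgeodesics by the maximal geodesic rays with the same non-Archimedean limits (\cite[Theorem 6.6]{BBJ}) --- noting explicitly that one must maximize each component separately rather than the sum, a point the authors flag as nontrivial --- and then approximate each $\phi_i^{\mathrm{NA}}$ by $\T_r$-invariant multiplier-ideal test configurations (\cite[Lemma 5.7]{BBJ}), using continuity of $\mathrm{L}^{\mathrm{NA}}$ and of $\mathrm{E}_i^{\mathrm{NA}}$ along the decreasing approximation and the coupled extension of \cite[Lemma 6.4]{Li} for the reduced $\mathrm{J}^{\mathrm{NA}}$, so that the contradiction is reached directly with $k$-tuples of test configurations, with the sum handled by Lemma \ref{lmsmnatc} and Proposition \ref{proposition:hashimoto}. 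If you insist on the filtration route you must prove the analogous slope and semicontinuity statements for filtrations attached to rays, which is precisely the BBJ-type machinery your proposal leaves unproved; as written, the hard direction is therefore not yet a proof.
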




In this paper, we only consider a torus $\T\subset\Aut(X,\Delta)$ which is often assumed to be maximal. We believe that the results in this paper can be generalized to the situation where we consider any connected reductive subgroup $\G$ of $\Aut(X , \Delta )$, as Chi Li \cite{Li} proves results for the $\G$-reduced uniform Ding stability and the $\G$-reduced stability threshold for the K\"ahler--Einstein case ($k=1$). We do not pursue this direction any further in this paper as the problem seems nontrivial, particularly since \cite{Li} uses the $\G$-equivariant minimal model program which does not seem readily extendable to our situation.

Throughout the rest of this article (except for \S 
\ref{section:analytic}), 
we work over any algebraically closed field $\Bbbk$ 
of characteristic zero.

\begin{ack}
The authors thank Chenyang Xu who wrote the nice book \cite{Xu}. 
The authors thank Chi Li for answering our questions on Remark \ref{remark:theta}. 
K.F.\ was supported by JSPS KAKENHI Grant Number 22K03269, 
Royal Society International Collaboration Award 
ICA\textbackslash 1\textbackslash 23109 and Asian Young Scientist Fellowship. 
Y.H.\ was supported by JSPS KAKENHI Grant Number 23K03120 and 24K00524, and part of this work was carried out when he was visiting Centre de Recherche Math\'ematique as a CRM-Simons Scholar supported by the CRM and the Simons Foundation.
\end{ack}

\section{Torus actions on polarized varieties}\label{section:torus}

We recall the results in \cite[\S 2--3]{Li} and \cite[\S 6]{Xu}. 
In this section, we fix an $n$-dimensional projective 
klt pair $(X,\Delta)$ (with $\Delta$ an effective $\Q$-divisor), 
an algebraic torus $\T\simeq\G_m^p$ with $p\in\Z_{\geq 0}$, 
an injection $\T\to\Aut(X,\Delta)$, and a $\T$-linearized ample $\Q$-line bundle 
$L$ on $X$, i.e., there exists $r\in\Z_{>0}$ such that $r L$ is a $\T$-linearized 
ample line bundle on $X$. 
As in \cite[\S 2.2.1]{Xu}, we set 
\begin{eqnarray*}
M(\T):=\Hom(\T,\G_m), \quad &&M_\K(\T):=M(\T)\otimes_\Z\K, \\
N(\T):=\Hom(\G_m,\T), \quad &&N_\K(\T):=N(\T)\otimes_\Z\K
\end{eqnarray*}
for $\K\in\{\Q,\R\}$. Set 
\[
R:=\bigoplus_{m\in r \Z_{\geq 0}}R_m:=\bigoplus_{m\in r \Z_{\geq 0}}H^0(X, m L),
\]
and let 
\[
R_m=\bigoplus_{\alpha\in M(\T)}R_{m,\alpha}
\]
be the weight decomposition of $R_m$, i.e., 
\[
R_{m,\alpha}:=\left\{s\in R_m\,\,|\,\,\xi(t)\cdot s=t^{\langle\alpha,\xi\rangle}
\cdot s\,\,\left(\forall\xi\in N(\T),\,\,\forall t\in\G_m\right)\right\},
\]
as in \cite[(2.22)]{Xu}. 

\begin{definition}[{\cite[\S 2.4]{Li}, \cite[\S A]{XZ}, 
\cite[\S 6.1]{Xu}}]\label{definition:valuation-twist}
Let $\Val_X^\T$ be the set of all $\T$-invariant valuations on $X$, and let us set 
\begin{eqnarray*}
\Val_X^{<\infty,\T}&:=&\left\{v\in\Val_X^\T\,\,|\,\,A_{X,\Delta}(v)<\infty\right\}, \\
\QM_X^\T&:=&\{v\in\Val_X^{<\infty,\T}\,\,|\,\,v:\text{ quasi-monomial}\},
\end{eqnarray*}
where $A_{X,\Delta}(v)\in\R_{\geq 0}\cup\{\infty\}$ is the log discrepancy 
of $(X,\Delta)$ along $v$. For any $\xi\in N_\R(\T)$, as in 
\cite[Definition-Lemma 6.15]{Xu}, we can define 
$\operatorname{wt}_\xi\in\QM_X^\T$. Moreover, for any $v\in\Val_X^\T$, we can 
define the \emph{$\xi$-twist} $v_\xi\in\Val_X^\T$ of $v$. 
We know that, if $v\in\Val_X^{<\infty,\T}$ (resp., if $v\in\QM_X^\T$), then we have 
$v_\xi\in\Val_X^{<\infty,\T}$ (resp., $v_\xi\in\QM_X^\T$). We set 
\begin{eqnarray*}
\Val_X^{*,\T}&:=&\Val_X^{<\infty,\T}\setminus\{\operatorname{wt}_\xi\,\,
|\,\,\xi\in N_\R(\T)\}, \\
\QM_X^{*,\T}&:=&\QM_X^{\T}\setminus\{\operatorname{wt}_\xi\,\,
|\,\,\xi\in N_\R(\T)\}.
\end{eqnarray*}
\end{definition}

\begin{definition}[{\cite[\S 2.2.1]{Xu}}]\label{definition:polytope}
\begin{enumerate}
\renewcommand{\theenumi}{\arabic{enumi}}
\renewcommand{\labelenumi}{(\theenumi)}
\item\label{definition:polytope1}
For any $m\in r\Z_{\geq 0}$, set 
\begin{eqnarray*}
\Lambda_m^L&:=&\left\{\alpha\in M(\T)\,\,|\,\,R_{m,\alpha}\neq 0\right\}, \\
\PP_m^L&:=&\Conv\left(\Lambda_m^L\right)\subset M_\R(\T), \\
\PP^L&:=&\Conv\left(\bigcup_{m\in r\Z_{>0}}\frac{1}{m}\PP_m^L\right).
\end{eqnarray*}
As in \cite[Lemma 2.33]{Xu}, the set $\PP^L$ is a rational polytope of maximal 
dimension. Moreover, for any sufficiently divisible $m\in r\Z_{>0}$, we have 
$\PP^L=\frac{1}{m}\PP_m^L$. 
\item\label{definition:polytope2}
The \emph{$\T$-equivariant Duistermaat--Heckman measure} $d\nu_{\DDHH,\T}$ 
on $\PP^L\subset M_\R(\T)$ is defined to be the weak limit of 
\[
d\rho_{m,\T}:=\frac{1}{m^n}\sum_{\alpha\in\Lambda_m^L}\dim R_{m,\alpha}
\cdot\delta_{\frac{\alpha}{m}}.
\]
The \emph{weighted barycenter $\alpha_{\bc}^L\in M_\R(\T)$ of $\PP^L$} is 
defined to be 
\[
\alpha_{\bc}^L:=\lim_{m\to\infty}\frac{1}{m\dim R_m}\sum_{\alpha\in\Lambda_m^L}
\dim R_{m,\alpha}\cdot\alpha=\frac{n!}{\vol(L)}\int_{\PP^L}\alpha d\nu_{\DDHH,\T}.
\]
As in \cite[Lemmas 2.33 and 2.35]{Xu}, we know that $\alpha_{\bc}^L\in M_\Q(\T)$ 
with $\alpha_{\bc}^L\in\interior\left(\PP^L\right)$.
\end{enumerate}
\end{definition}

\begin{definition}[{\cite[\S 2.2.1]{Xu}}]\label{definition:theta}
\begin{enumerate}
\renewcommand{\theenumi}{\arabic{enumi}}
\renewcommand{\labelenumi}{(\theenumi)}
\item\label{definition:theta1}
For any $m\in r\Z_{\geq 0}$ and $\alpha\in\Lambda_m^L$, we set 
\[
I_{m,\alpha}:=\Image\left(R_{m,\alpha}\otimes_\Bbbk\sO_X(-m L)\to \sO_X\right). 
\]
Note that the graded $\sO_X$-algebra
\[
\bigoplus_{m\in r\Z_{\geq 0}}\bigoplus_{\alpha\in\Lambda_m^L}I_{m,\alpha}
\]
is finitely generated. 
For any $v\in\Val_X$, we set 
\[
v\left(I_{\bullet(m,\alpha)}\right):=\inf_{k\in\Z_{>0}}\frac{v(I_{k m,k \alpha})}{k}
=\lim_{k\to\infty}\frac{v(I_{k m,k \alpha})}{k}.
\]
From the above finite generation, we have 
\[
v\left(I_{\bullet(m,\alpha)}\right)=\frac{v(I_{k m,k \alpha})}{k}
\]
holds for any sufficiently divisible $k\in\Z_{>0}$. 
\item\label{definition:theta2}
For any $v\in\Val_X$, let us define 
\begin{eqnarray*}
\gamma_v\colon\PP^L\cap M_\Q(\T)&\to&\R_{\geq 0}\\
\alpha&\mapsto&\frac{v\left(I_{\bullet(k,k\alpha)}\right)}{k}
\end{eqnarray*}
for a sufficiently divisible $k\in\Z_{>0}$ for each $\alpha\in\PP^L\cap M_\Q(\T)$. 
By \cite[Proposition 4.7]{ELMNP}, the above function uniquely extends to a 
convex, continuous and rationally piecewise affine function 
\[
\gamma_v\colon\PP^L\to\R_{\geq 0}.
\]
More precisely, $\PP^L$ can be covered by finitely many rational polytopes 
$C_\lambda$ such that $\gamma_v|_{C_\lambda}$ is affine for each $C_\lambda$. 
\item\label{definition:theta3} (cf.\ \cite[\S 2.5.3]{Li})
Take any $v\in\Val_X$ and $\xi\in N_\R(\T)$. 
\begin{itemize}
\item
For any $m\in r\Z_{>0}$, we set 
\[
\theta_{\xi,m}^L(v):=\frac{1}{m}\max_{\alpha\in\Lambda_m^L}
\left\{-\langle\alpha,\xi\rangle-v(I_{m,\alpha})\right\}.
\]
\item
Set 
\[
\theta_\xi^L(v):=\max_{\alpha\in\PP^L}
\left\{-\langle\alpha,\xi\rangle-\gamma_v(\alpha)\right\}.
\]
The maximum of the right hand side can be attained by an element in 
$\PP^L\cap M_\Q(\T)$ since $\gamma_v$ is rationally piecewise affine. 
By the following Lemma \ref{lemma:theta}, we have 
\[
\theta_\xi^L(v)=\lim_{m\to\infty}\theta_{\xi,m}^L(v).
\]
\end{itemize}
\end{enumerate}
\end{definition}

\begin{lemma}\label{lemma:theta}
We have
\[
\lim_{m\to\infty}\theta_{\xi,m}^L(v)=\max_{\alpha\in\PP^L}
\left\{-\langle\alpha,\xi\rangle-\gamma_v(\alpha)\right\}.
\]
\end{lemma}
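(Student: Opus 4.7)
The plan is to prove the inequalities $\limsup_{m\to\infty}\theta_{\xi,m}^L(v)\leq \theta_\xi^L(v)$ and $\liminf_{m\to\infty}\theta_{\xi,m}^L(v)\geq \theta_\xi^L(v)$ separately, with two ingredients doing most of the work. First, the containment $I_{m,\alpha}\cdot I_{m',\alpha'}\subset I_{m+m',\alpha+\alpha'}$ yields the subadditivity $v(I_{m+m',\alpha+\alpha'})\leq v(I_{m,\alpha})+v(I_{m',\alpha'})$. Second, the finite generation of $\bigoplus_{m,\alpha}I_{m,\alpha}$, via Fekete's lemma, gives the homogeneity identity $\gamma_v(\alpha/m)=v(I_{\bullet(m,\alpha)})/m$ for every $m\in r\Z_{>0}$ and $\alpha\in\Lambda_m^L$.

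For the upper bound, the infimum definition of $v(I_{\bullet(m,\alpha)})$ (taking $k=1$) gives $v(I_{m,\alpha})\geq v(I_{\bullet(m,\alpha)})$, so
\[
\frac{1}{m}\bigl(-\langle\alpha,\xi\rangle - v(I_{m,\alpha})\bigr)\;\leq\;-\langle\alpha/m,\xi\rangle - \gamma_v(\alpha/m),
\]
and taking maxima with $\alpha/m\in\PP^L$ yields $\theta_{\xi,m}^L(v)\leq \theta_\xi^L(v)$ for every $m$. For the lower bound, the subadditivity noted above implies that $m\theta_{\xi,m}^L(v)$ is super-additive in $m\in r\Z_{>0}$, so by Fekete's lemma the limit $\lim_{m\to\infty}\theta_{\xi,m}^L(v)$ already exists and equals $\sup_m\theta_{\xi,m}^L(v)$. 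Since $\gamma_v$ is rationally piecewise affine, the maximum defining $\theta_\xi^L(v)$ is attained at some rational $\alpha^*\in\PP^L\cap M_\Q(\T)$; a standard convex-geometric argument allows us to write a point arbitrarily close to $\alpha^*$ as $\alpha_0/m_0$ with $\alpha_0\in\Lambda_{m_0}^L$. Restricting to $m=\ell m_0$, $\alpha=\ell\alpha_0\in\Lambda_{\ell m_0}^L$ then gives
\[
\theta_{\xi,\ell m_0}^L(v)\;\geq\;-\langle\alpha_0/m_0,\xi\rangle-\frac{v(I_{\ell m_0,\ell\alpha_0})}{\ell m_0},
\]
whose limit as $\ell\to\infty$ is $-\langle\alpha_0/m_0,\xi\rangle-\gamma_v(\alpha_0/m_0)$ by Fekete applied along the ratio $\alpha_0/m_0$; letting the approximation $\alpha_0/m_0$ tend to $\alpha^*$ then recovers $\theta_\xi^L(v)$.

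The main technical subtlety is the convex-geometric approximation step: a priori $\alpha^*$ is only a convex combination of points in $\Lambda_{m_0}^L/m_0$, so some care is required to realize it (or nearby points) as a single element of some $\Lambda_m^L/m$. This uses $\PP^L=\PP_{m_0}^L/m_0$ for sufficiently divisible $m_0$ together with the multiplicativity $R_{m_0,\alpha_1}\cdots R_{m_0,\alpha_p}\subset R_{pm_0,\alpha_1+\cdots+\alpha_p}$, which turns rational convex combinations of lattice points into single lattice points after scaling the denominator. Everything else is a routine application of Fekete's lemma—once for the monotonicity of the limit in $m$, once to identify $\gamma_v$ with the asymptotic ideal-valuation at rational points.
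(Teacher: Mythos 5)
Your proposal is correct and takes essentially the same route as the paper: the upper bound $\theta_{\xi,m}^L(v)\leq\max_{\alpha\in\PP^L}\{-\langle\alpha,\xi\rangle-\gamma_v(\alpha)\}$ from $v(I_{m,\alpha})\geq v(I_{\bullet(m,\alpha)})$, and the lower bound by realizing a rational maximizer of the right-hand side as $\alpha_0/m_0$ with $\alpha_0\in\Lambda_{m_0}^L$ and invoking the asymptotic definition of $\gamma_v$. The only notable differences are that you make the passage from $\sup_m$ to $\lim_{m\to\infty}$ explicit via superadditivity of $m\theta_{\xi,m}^L(v)$ and Fekete's lemma (a point the paper leaves implicit), and that your approximation of the maximizer $\alpha^*$ is not needed, since your own convex-combination argument shows $\alpha^*$ itself lies in $\Lambda_m^L/m$ for sufficiently divisible $m$, which is exactly what the paper uses together with the finite-generation fact $\gamma_v(\alpha^*)=v(I_{m,m\alpha^*})/m$.
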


\begin{proof}
Observe that 
\begin{eqnarray*}
\sup_{m\in r\Z_{>0}}\theta_{\xi,m}^L(v)&=&\sup_m\max_{\alpha\in\Lambda_m^L}
\left\{-\left\langle\frac{\alpha}{m},\xi\right\rangle-\frac{1}{m}v(I_{m,\alpha})\right\}\\
&=&\sup_{\alpha\in\PP^L\cap M_\Q(\T)}
\left\{-\langle\alpha,\xi\rangle-\gamma_v(\alpha)\right\}
=\max_{\alpha\in\PP^L}\left\{-\langle\alpha,\xi\rangle-\gamma_v(\alpha)\right\}.
\end{eqnarray*}
Take $\alpha_0\in\PP^L\cap M_\Q(\T)$ and a sufficiently divisible $m\in r\Z_{>0}$ 
with 
\begin{eqnarray*}
\max_{\alpha\in\PP^L}\left\{-\langle\alpha,\xi\rangle-\gamma_v(\alpha)\right\}
&=&-\langle\alpha_0,\xi\rangle-\gamma_v(\alpha_0), \\
 m\alpha_0&\in&\Lambda_m^L, \\
 \gamma_v(\alpha_0)&=&\frac{1}{m}v(I_{m,m\alpha_0}).
\end{eqnarray*}
Then we have 
\[
\theta_{\xi,m}^L(v)\geq \frac{1}{m}\left(-\langle m\alpha_0,\xi\rangle
-v(I_{m,m\alpha_0})\right)=-\langle\alpha_0,\xi\rangle-\gamma_v(\alpha_0).
\]
Thus the assertion follows.
\end{proof}

\begin{definition}[{\cite[\S 6.1.1]{Xu}}]\label{definition:filtration}
A \emph{$\T$-equivariant filtration $\sF$ on $R$} in this article is defined to be 
a $\T$-equivariant linearly bounded and graded multiplicative filtration on $R$ 
in the sense of \cite[Definition 3.14 and \S 6.1.1]{Xu}. 
For any $m\in r\Z_{\geq 0}$ and $\alpha\in M(\T)$, we denote the restriction of 
$\sF$ to $R_{m,\alpha}$ by $\sF$ again. Then, for any $x\in\R$, we have 
\[
\sF^x R_m=\bigoplus_{\alpha\in M(\T)}\sF^x R_{m,\alpha}.
\]
For any $m\in r\Z_{\geq 0}$, $\alpha\in M(\T)$ and $x\in\R$, we set 
\[
I_{(m,\alpha;x)}(\sF):=\Image\left(\sF^x R_{m,\alpha}\otimes_{\Bbbk}
\sO_X(-m L)\to\sO_X\right)
\]
and 
\[
I_{(m;x)}(\sF):=\sum_{\alpha\in M(\T)}I_{(m,\alpha;x)}(\sF)
=\Image\left(\sF^x R_m\otimes_{\Bbbk}
\sO_X(-m L)\to\sO_X\right).
\]
Moreover, let $\sI_m(\sF)$ be the ($\G_m$-invariant) fractional ideal sheaf 
on $X_{\A^1}:=X\times\A^1_t$ defined by 
\[
\sI_m(\sF):=\bigoplus_{\lambda\in\Z}t^{-\lambda}I_{(m;\lambda)}(\sF). 
\]
We also define the following: 
\begin{enumerate}
\renewcommand{\theenumi}{\arabic{enumi}}
\renewcommand{\labelenumi}{(\theenumi)}
\item\label{definition:filtration1}
For any $C\in\R$, let $\sF_{[C]}$ be the \emph{$C$-shift} of $\sF$, i.e., 
$\sF_{[C]}^x R_m:=\sF^{x-C m}R_m$. 
\item\label{definition:filtration2}
For any $\xi\in N_\R(\T)$, let $\sF_\xi$ be the \emph{$\xi$-twist} of $\sF$, 
i.e., 
\[
\sF_\xi^x R_m:=\bigoplus_{\alpha\in M(\T)}
\sF^{x-\langle\alpha,\xi\rangle}R_{m,\alpha}.
\]
\end{enumerate}
It is obvious from the definition that, both $\sF_{[C]}$ and $\sF_\xi$ are 
$\T$-equivariant filtrations of $R$. Moreover, we have 
$\sF_{[C],\xi}=\sF_{\xi,[C]}$. 
\end{definition}

\begin{example}\label{example:twisted-tc}
Let $(\sX,\sL)/\A^1$ be any $\T$-equivariant test configuration of $(X,L)$ 
in the sense of \cite[Definition 2.3 and Remark 2.20]{Xu}. Then, as in 
\cite[Example 3.34]{Xu}, after replacing $r\in\Z_{>0}$ if necessary, 
we get the $\T$-equivariant filtration $\sF_{\sX,\sL}$ on $R$ associated to 
$(\sX,\sL)/\A^1$. For any $\xi\in N(\T)$, the \emph{$\xi$-twisted test configuration}
$(\sX_\xi,\sL_\xi)/\A^1$ of $(\sX,\sL)/\A^1$ as in \cite[Example 6.9]{Xu} 
satisfies that $\sF_{\sX_\xi,\sL_\xi}=\left(\sF_{\sX,\sL}\right)_\xi$
by \cite[Lemma 6.10]{Xu}. 
\end{example}

\begin{lemma}\label{lemma:uniform-convergence}
Let $\sF$ be a $\T$-equivariant filtration on $R$ and take any 
$v\in\Val_X$ and $\lambda\in\left(-\infty, \lambda_{\max}(\sF)\right)$, 
where $\lambda_{\max}$ is as in \cite[Definition 3.20]{Xu}. 
Then, the sequence 
\[
\left\{x\mapsto\frac{v(I_{(m; m x)}(\sF))}{m}\right\}_{m\in r\Z_{>0}}
\]
of functions over $x\in\left(-\infty, \lambda\right]$ uniformly converges to 
the function $x\mapsto v\left(I_{\bullet}^{(x)}(\sF)\right)$, where 
$I_{\bullet}^{(x)}(\sF)$ is the sequence of graded ideal sheaves on $X$ 
defined by $I_{m}^{(x)}(\sF):=I_{(m; m x)}(\sF)$ (see \cite[Definition 3.42]{Xu}).  
\end{lemma}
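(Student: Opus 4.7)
My plan is to combine monotonicity of the approximants, a Fekete-type argument for pointwise convergence, and a convexity-based continuity argument for the limit, and then to invoke a classical real-variables fact. Set $f_m(x) := v(I_{(m;mx)}(\sF))/m$ and $f(x) := v(I_\bullet^{(x)}(\sF))$. First I would observe that each $f_m$ is a non-decreasing step function on $\R$: as $y$ grows, $\sF^y R_m$ shrinks, so $I_{(m;y)}$ shrinks, and $v(I_{(m;y)})=\min_{s\in I_{(m;y)}}v(s)$ grows. The multiplicativity of $\sF$ yields $I_{(m_1;m_1x)}\cdot I_{(m_2;m_2x)}\subset I_{(m_1+m_2;(m_1+m_2)x)}$, so $\{I_{(m;mx)}\}_m$ forms a graded system of ideals, and Fekete's lemma applied to the subadditive sequence $m\mapsto v(I_{(m;mx)})$ gives $f_m(x)\to f(x)$ pointwise from above at every fixed $x<\lambda_{\max}(\sF)$.

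Next I would show that $f$ is continuous on $(-\infty,\lambda_{\max}(\sF))$. Multiplicativity across two filtration levels yields
\[
I_{(m;mx_1)}\cdot I_{(m;mx_2)}\subset I_{(2m;\,2m\cdot(x_1+x_2)/2)},
\]
which after dividing $v$ by $2m$ becomes $\tfrac{1}{2}(f_m(x_1)+f_m(x_2))\geq f_{2m}((x_1+x_2)/2)$. Letting $m\to\infty$ shows that $f$ is midpoint-convex. Since each $f_m$ is piecewise constant with finitely many jumps and $f$ is a pointwise limit of such step functions, $f$ is Borel measurable; a measurable midpoint-convex function is convex by Sierpi\'nski's theorem, and hence continuous on the interior of its domain.

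With continuity of $f$ in hand I would apply the classical fact that a sequence of non-decreasing functions on a compact interval converging pointwise to a continuous function converges uniformly (brief proof: use uniform continuity of $f$ to produce a finite partition, then apply pointwise convergence at the finitely many partition points, which sandwiches each $f_m(x)$ between values close to $f(x)$ via monotonicity). To upgrade from a compact interval to $(-\infty,\lambda]$, linear boundedness of $\sF$ supplies some $c\leq\lambda$ with $\sF^{mx}R_m=R_m$ whenever $x\leq c$, so that $I_{(m;mx)}=\sO_X$ for $m\in r\Z_{>0}$ large enough that $mL$ is base-point-free, and therefore $f_m\equiv 0\equiv f$ on $(-\infty,c]$; combining the two regimes yields uniform convergence on all of $(-\infty,\lambda]$. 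The principal difficulty in the argument is the continuity of $f$: midpoint-convexity is immediate from multiplicativity, but promoting it to genuine convexity requires the measurability step and Sierpi\'nski's theorem, while equi-Lipschitz bounds on the $f_m$ themselves do not seem to be available at this level of generality.
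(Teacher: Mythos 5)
Your proposal is correct and follows essentially the same route as the paper: both reduce to the classical fact that monotone functions converging pointwise to a continuous limit converge uniformly on a compact interval, after noting that everything vanishes far to the left and that $f$ is convex (hence continuous) on $\left(-\infty,\lambda_{\max}(\sF)\right)$. The only difference is that you supply proofs of the pointwise convergence (Fekete) and of convexity (midpoint convexity plus measurability via Sierpi\'nski, where a direct weighted-multiplicativity argument with $m_1\neq m_2$ would also do), facts the paper simply invokes from the standard theory of filtrations.
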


\begin{proof}
Set $f_m(x):=v(I_{(m; m x)}(\sF))/m$ and $f(x):=v\left(I_{\bullet}^{(x)}(\sF)\right)$. 
We know that, over $x\in\left(-\infty, \lambda_{\max}(\sF)\right)$, 
the functions $f_m(x)$ and $f(x)$ are \emph{non-decreasing} functions. 
Moreover, the function $f(x)$ is \emph{continuous} since it is convex. 
The sequence of functions $\{f_m(x)\}_{m\in r\Z_{>0}}$ pointwise converges to 
$f(x)$. Moreover, there exists $a\in\R$ such that $f_m(x)=f(x)=0$ for any $x<a$. 
Thus it is enough to show the uniform convergence over the area $[a,\lambda]$, 
but then the assertion is well-known: if a sequence of monotone functions over 
$[a,\lambda]$ pointwise converges to a continuous function, then the 
convergence is uniform. 
\end{proof}

\begin{proposition}[{cf.\ \cite[Proposition 3.3]{Li}}]\label{proposition:theta}
Take any $v\in\Val_X^{<\infty,\T}$. Then the value $\theta_\xi^L(v)$ in Definition 
\ref{definition:theta} coincides with the one in \cite[(111), (121)]{Li} 
(see Remark \ref{remark:theta}). In particular, if $L=-(K_X+\Delta)$ with the 
standard $\T$-linearization, then we have the equality 
\[
\theta_\xi^{-(K_X+\Delta)}(v)=A_{X,\Delta}(v_\xi)-A_{X,\Delta}(v).
\]
\end{proposition}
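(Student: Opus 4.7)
The plan is to reduce both assertions to \cite[Proposition 3.3]{Li} by carefully matching the two definitions of $\theta_\xi^L(v)$.

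First I would verify the definitional equivalence. Li's formula in \cite[(111), (121)]{Li} may be rewritten as $\theta_\xi^L(v)=\lim_{m\to\infty}\frac{1}{m}\max_{\alpha\in\Lambda_m^L}\{-\langle\alpha,\xi\rangle-v(I_{m,\alpha})\}$, after reconciling Li's sign and normalization conventions with ours (this reconciliation is the subject of Remark \ref{remark:theta}). Up to that, Li's quantity is precisely $\lim_m\theta_{\xi,m}^L(v)$, which by Lemma \ref{lemma:theta} coincides with $\max_{\alpha\in\PP^L}\{-\langle\alpha,\xi\rangle-\gamma_v(\alpha)\}$ of Definition \ref{definition:theta}. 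So once the notational translation is made the first assertion is essentially tautological.

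With this identification in hand, the anticanonical identity $\theta_\xi^{-(K_X+\Delta)}(v)=A_{X,\Delta}(v_\xi)-A_{X,\Delta}(v)$ becomes \cite[Proposition 3.3]{Li}. I would verify it first for a $\T$-invariant divisorial valuation $v=c\,\ord_E$, with $E$ appearing on a $\T$-equivariant log resolution of $(X,\Delta)$. In this case the twist $v_\xi$ has the same underlying birational model, and the standard $\T$-linearization of $-(K_X+\Delta)$ records the discrepancy shift: for each $m$, the weight-$\alpha$ section of $-m(K_X+\Delta)$ realizing the minimum order along $E$ contributes $-\langle\alpha,\xi\rangle$ to $m\,\theta_{\xi,m}^{-(K_X+\Delta)}(v)$, and the combinatorics of the $\T$-action on sections near $E$ identifies the limit with $A_{X,\Delta}(v_\xi)-A_{X,\Delta}(v)$.

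The main obstacle will be extending this identity from divisorial $v$ to arbitrary $v\in\Val_X^{<\infty,\T}$. Here I would use that both sides are continuous in $v$ in a suitable sense: the left-hand side by the rational piecewise-affine nature of $\gamma_v$ together with the finite generation recorded in Definition \ref{definition:theta}, and the right-hand side by the standard continuity of $A_{X,\Delta}$ on $\Val_X^{<\infty,\T}$ together with the continuity of the twist operation $v\mapsto v_\xi$ from Definition \ref{definition:valuation-twist}. Combining these with the $\T$-equivariant approximation of $v$ by $\T$-invariant divisorial valuations from \cite[\S 6]{Xu} then propagates the divisorial identity to all of $\Val_X^{<\infty,\T}$, completing the reduction to \cite[Proposition 3.3]{Li}.
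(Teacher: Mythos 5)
Your first paragraph is essentially the paper's argument: Li's quantity is the non-Archimedean potential $\phi^{\sF_{\triv,-\xi}}(v)=\lim_m\phi_m^{\sF_{\triv,-\xi}}(v)$, and a short computation with the weight decomposition $\sF_{\triv,-\xi}^x R_m=\bigoplus_{\alpha\,:\,x\leq-\langle\alpha,\xi\rangle}R_{m,\alpha}$ shows $\phi_m^{\sF_{\triv,-\xi}}(v)=\theta_{\xi,m}^L(v)$, after which Lemma \ref{lemma:theta} finishes the identification; this computation (and the sign flip $\xi\mapsto-\xi$ of Remark \ref{remark:theta}) is the actual content, not a tautology, but your outline is compatible with it. Moreover, once that identification is made, the anticanonical identity is \emph{literally} Li's statement, so your second and third paragraphs are not needed --- and, as written, they do not work.

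Two concrete problems there. First, for $v=c\,\ord_E$ the twist $v_\xi$ does \emph{not} ``have the same underlying birational model'': the twist is defined through the Gauss extension $G(v)$ and the birational map $\sigma_\xi$ of $X_{\A^1}$ (cf.\ Corollary \ref{corollary:theta}), and it genuinely changes the valuation --- e.g.\ the trivial valuation twists to $\operatorname{wt}_\xi$. The whole point of the identity is that $A_{X,\Delta}$ changes by exactly $\theta_\xi^{-(K_X+\Delta)}(v)$ under this operation, and your phrase about ``the combinatorics of the $\T$-action on sections near $E$'' skips precisely this step. Second, the extension from divisorial to general $v\in\Val_X^{<\infty,\T}$ by ``continuity'' is not available: $A_{X,\Delta}$ is only lower semicontinuous on $\Val_X$, the continuity of $v\mapsto\gamma_v$ (hence of $\theta_\xi^L(v)$) and of $v\mapsto v_\xi$ in a topology where $\T$-invariant divisorial valuations are dense is not established (piecewise affineness of $\gamma_v$ in $\alpha$ says nothing about dependence on $v$), so the limiting argument has no foundation. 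This is exactly why Li, and the paper via Corollaries \ref{corollary:theta} and \ref{corollary:twist-valuation}, prove the formula for all $v\in\Val_X^{<\infty,\T}$ at once through the Gauss extension rather than by divisorial approximation. The fix is simply to delete the re-derivation and, after the identification of the two definitions, quote \cite[(121)]{Li} (with the corrected sign) for the ``in particular'' statement.
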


\begin{proof}
We follow the notation in \cite{Li}. Let $\sF_{\triv}$ be the trivial filtration on $R$ 
(in the sense of \cite[Example 3.21]{Xu}). Let $\phi^{\sF_{\triv,-\xi}}$ be the 
non-Archimedean potential associated with $\sF_{\triv,-\xi}$ in the sense of 
\cite{BJ22}. As in \cite[Proposition 3.9]{Li}, we have 
\[
\phi^{\sF_{\triv,-\xi}}(v)=\lim_{m\to\infty}\phi_m^{\sF_{\triv,-\xi}}(v), 
\]
where we have 
\[
\phi_m^{\sF_{\triv,-\xi}}(v)=\frac{1}{m}\max_{\substack{
x\in\R \\ s\in\sF_{\triv,-\xi}^x R_m}}\left\{x-v(s)\right\} 
\]
by \cite[(98), (100)]{Li}. Since 
\[
\sF_{\triv,-\xi}^x R_m=\bigoplus_{\alpha\in M(\T); x\leq-\langle\alpha,\xi\rangle}
R_{m,\alpha},
\]
we have 
\begin{eqnarray*}
\phi_m^{\sF_{\triv,-\xi}}(v)&=&\frac{1}{m}\max_{x\in\R}\left\{x-v\left(
\sum_{\alpha\in M(\T), x\leq-\langle\alpha,\xi\rangle}I_{m,\alpha}\right)\right\}\\
&=&\frac{1}{m}\max_{x\in\R}\max_{\alpha\in M(\T); x\leq-\langle\alpha,\xi\rangle}
\left\{x-v(I_{m,\alpha})\right\}\\
&=&\frac{1}{m}\max_{\alpha\in\Lambda_m^L}\left\{-\langle\alpha,\xi\rangle
-v(I_{m,\alpha})\right\}=\theta_{\xi,m}^L(v).
\end{eqnarray*}
Thus the assertion follows by Lemma \ref{lemma:theta}. 
\end{proof}

\begin{remark}\label{remark:theta}
In our terminologies of group actions, 
on the left hand side of 
the equations (111), (121), (130), (131) in \cite{Li}, we must replace $\xi$ with 
$-\xi$. For example, the equation (111) should be replaced by 
\[
\phi_{(\sZ_{-\xi},\sL_{-\xi})}(w)=\phi_{(\sZ,\sL)}(w_\xi)+\theta_\xi^L(w).
\]
Let us consider a simple example. Let us assume that $L=-(K_X+\Delta)$
with the standard $\T$-linearization for simplicity. 
For any $\xi\in N_\R(\T)$, we know that 
\[
\sF_{\operatorname{wt}_\xi}=\sF_{\triv,\xi,
\left[\theta_\xi^{-(K_X+\Delta)}(v_{\triv})\right]}
\]
by \cite[Lemma 6.22]{Xu}, where $v_{\triv}$ is the trivial valuation and 
$\sF_{\triv}$ is the trivial filtration on $R$. Thus, by 
\cite[Example 6.13]{Xu}, we must have 
\[
\theta_\xi^{-(K_X+\Delta)}(v_{\triv})=-\lambda_{\PP}(\xi), 
\]
where 
\[
\lambda_{\PP}(\xi):=\min_{\alpha\in\PP^{-(K_X+\Delta)}}\left\{
\langle\alpha,\xi\rangle\right\}.
\]
On the other hand, by (the corrected version of) \cite[(121)]{Li} shows that 
\[
\phi^{\sF_{\triv,-\xi}}(v_{\triv})=\theta_\xi^{-(K_X+\Delta)}(v_{\triv}).
\]
As in the proof of Proposition \ref{proposition:theta}, we have 
\[
\phi^{\sF_{\triv, -\xi}}(v_{\triv})=\lim_{m\to\infty}\frac{1}{m}\max_{\alpha\in
\Lambda_m^{-(K_X+\Delta)}}\left\{-\langle\alpha,\xi\rangle
-v_{\triv}(I_{m,\alpha})\right\}=-\lambda_{\PP}(\xi).
\]
We note that 
\[
\phi^{\sF_{\triv, \xi}}(v_{\triv})=\max_{\alpha\in\PP^{-(K_X+\Delta)}}
\left\{\langle\alpha,\xi\rangle\right\}, 
\]
which is different from $-\lambda_{\PP}(\xi)$ in general. 
\end{remark}

\begin{corollary}[{cf.\ \cite[Proposition 3.3]{Li}, 
\cite[Lemma 6.21]{Xu}}]\label{corollary:theta}
For any $\xi\in N(\T)$, let $\phi_{\xi}\colon\G_m\to\Aut(X,\Delta)$ be the 
one-parameter subgroup of $\Aut(X,\Delta)$ defined by $-\xi$. Set 
\begin{eqnarray*}
\sigma_{\xi}\colon X\times\G_m&\to&X\times\G_m\\
(x,t)&\mapsto&\left(\phi_{\xi}(t)\cdot x,t\right).
\end{eqnarray*}
Let us consider a birational model resolving $\sigma_{\xi}$: 
\[\xymatrix{
&\sW \ar[ld]_{\mu_1} \ar[rd]^{\mu_2} &\\
X_{\A^1} \ar@{-->}[rr]_{\sigma_{\xi}} & & X_{\A^1}.
}\]
For any $v\in\Val_X^{<\infty,\T}$, let $G(v)$ be the Gauss extension of $v$ 
in the sense of \cite[(93)]{Li}, which is a valuation on the \emph{left hand side} 
of $X_{\A^1}$. Then we have
\[
\theta_\xi^L(v)=G(v)\left(\mu_2^*L_{\A^1}-\mu_1^*L_{\A^1}\right).
\]
\end{corollary}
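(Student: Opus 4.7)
The plan is to deduce this identity by combining Proposition \ref{proposition:theta}, which identifies $\theta_\xi^L(v)$ with a non-Archimedean potential of a twisted filtration, with the standard geometric formula expressed in \cite[Lemma 6.21]{Xu} that computes non-Archimedean potentials via the Gauss extension.

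First, I would invoke Proposition \ref{proposition:theta} (and the correction of signs noted in Remark \ref{remark:theta}) to rewrite
\[
\theta_\xi^L(v)=\phi^{\sF_{\triv,-\xi}}(v),
\]
reducing the problem to computing the non-Archimedean potential of the $(-\xi)$-twisted trivial filtration.

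Next, I would interpret $\sF_{\triv,-\xi}$ geometrically using Example \ref{example:twisted-tc}. The $(-\xi)$-twisted trivial test configuration has underlying scheme $X\times\A^1$ and polarization $L_{\A^1}$, but carries a nonstandard $\G_m$-action which is the standard one modified by the one-parameter subgroup $\phi_\xi$ of $\T$ defined by $-\xi$. Equivalently, this twisted configuration is related to the standard trivial configuration $(X_{\A^1},L_{\A^1})$ by the birational automorphism $\sigma_\xi$ of $X\times\G_m$; this is precisely why the statement is formulated with $\phi_\xi=-\xi$ and the resolution $\mu_1,\mu_2:\sW\to X_{\A^1}$ of $\sigma_\xi$. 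On this common resolution, $\mu_1^\ast L_{\A^1}$ represents the trivial configuration while $\mu_2^\ast L_{\A^1}$ represents the twisted one, and their difference $\mu_2^\ast L_{\A^1}-\mu_1^\ast L_{\A^1}$ is supported on the exceptional locus.

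Finally, I would apply \cite[Lemma 6.21]{Xu} to the twisted trivial test configuration, together with the definition of the Gauss extension $G(v)$ from \cite[(93)]{Li} (which sends $t$ to $1$ and extends $v$ on $K(X)$). This lemma expresses $\phi^{\sF}(v)$ as $G(v)$ evaluated on the relative divisor between the line bundle of the configuration pulled back to the resolution and the trivial pullback of $L_{\A^1}$. In our situation this gives exactly
\[
\phi^{\sF_{\triv,-\xi}}(v)=G(v)\bigl(\mu_2^\ast L_{\A^1}-\mu_1^\ast L_{\A^1}\bigr),
\]
and chaining with the first step completes the proof. One sanity check I would perform: take $v=v_{\triv}$, where the formula must reduce to $\theta_\xi^L(v_{\triv})=-\lambda_{\PP}(\xi)$ as computed in Remark \ref{remark:theta}; this should match the Duistermaat--Heckman description of $G(v_{\triv})$ on the resolution.

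The main obstacle will be carefully tracking the sign and orientation conventions, a point already flagged by Remark \ref{remark:theta}: the twist by $-\xi$ on the filtration side, the one-parameter subgroup $\phi_\xi$ defined by $-\xi$, and the direction of the birational map $\sigma_\xi$ must all be consistent, so that the resolution $\mu_1,\mu_2$ genuinely corresponds to the $(-\xi)$-twisted trivial configuration rather than the $\xi$-twisted one. A secondary technical point is that the twisted trivial configuration is degenerate in that its underlying total space coincides with the product, so one must verify that \cite[Lemma 6.21]{Xu} applies to it directly (with the interpretation above via the two distinct maps $\mu_1$ and $\mu_2$) without the usual genericity hypotheses on the test configuration.
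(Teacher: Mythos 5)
Your proposal is correct and follows essentially the same route as the paper: the paper's proof is simply the citation of \cite[(94), (121)]{Li}, where (the corrected) (121) is exactly the identity $\theta_\xi^L(v)=\phi^{\sF_{\triv,-\xi}}(v)$ you extract from Proposition \ref{proposition:theta}, and (94) is the Gauss-extension formula for non-Archimedean potentials on a resolution of $\sigma_\xi$ that you invoke (via the analogous statement in \cite{Xu}). Your attention to the sign conventions for the $-\xi$ twist is precisely the point handled by Remark \ref{remark:theta}, so no gap remains.
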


\begin{proof}
Follows immediately from \cite[(94), (121)]{Li}. 
\end{proof}


\begin{corollary}[{cf.\ \cite[Lemma 6.22]{Xu}}]\label{corollary:twist-valuation}
For any $\alpha\in M(\T)$, $m\in r\Z_{>0}$, $s\in R_{m,\alpha}\setminus\{0\}$, 
$v\in\Val_X^{<\infty,\T}$ and $\xi\in N_\R(\T)$, we have 
\[
v_\xi(s)=v(s)+\langle\alpha,\xi\rangle+m\theta_\xi^L(v).
\]
In particular, we have $\sF_{v_\xi}=\left(\sF_v\right)_{\xi,[\theta_\xi^L(v)]}$.
\end{corollary}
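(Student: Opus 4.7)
The plan is to reduce the identity to Corollary \ref{corollary:theta} via the characterization of the $\xi$-twist in terms of the Gauss extension, in the spirit of \cite[Lemmas 6.21, 6.22]{Xu}. Recall from \cite[Definition-Lemma 6.15]{Xu} that the twisted valuation $v_\xi$ is defined so that its Gauss extension satisfies $G(v_\xi) = (\sigma_\xi)_* G(v)$, where $\sigma_\xi$ is the birational automorphism of $X_{\A^1}$ introduced in Corollary \ref{corollary:theta}. Thus evaluating $v_\xi$ on a section $s$ amounts to evaluating $G(v)$ on the pullback of $\tilde s := \pi^* s$ through $\sigma_\xi$, where $\pi\colon X_{\A^1} \to X$ is the projection, and the three terms on the right-hand side of the claimed identity will correspond to the three pieces arising from this pullback: the base value $v(s)$, the character contribution $\langle \alpha, \xi \rangle$, and the line-bundle correction $m \theta_\xi^L(v)$.

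Concretely, for $s \in R_{m,\alpha} \setminus \{0\}$ I would work on the resolution $\sW$ and compare $\mu_1^* \tilde s$ with $\mu_2^* \tilde s$ as rational sections. Because $s$ has weight $\alpha$ under the $\T$-linearization and $\phi_\xi$ is the one-parameter subgroup defined by $-\xi$, on the locus where $\sigma_\xi$ is an isomorphism we obtain $\sigma_\xi^* \tilde s = t^{\langle \alpha, \xi \rangle} \tilde s$, which on $\sW$ translates to an identity between $\mu_1^* \tilde s$ and $\mu_2^* \tilde s$ modulo the rational identification $\mu_1^*(mL_{\A^1}) \dashrightarrow \mu_2^*(mL_{\A^1})$, whose divisor of discrepancy is exactly $m(\mu_2^* L_{\A^1} - \mu_1^* L_{\A^1})$. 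Applying $G(v)$ and using the elementary identities $G(v)(\mu_1^* \tilde s) = v(s)$, $G(v)(t) = 1$, together with $G(v)(\mu_2^* L_{\A^1} - \mu_1^* L_{\A^1}) = \theta_\xi^L(v)$ from Corollary \ref{corollary:theta}, yields
\[
v_\xi(s) = v(s) + \langle \alpha, \xi \rangle + m \theta_\xi^L(v),
\]
as required.

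The \emph{in particular} statement is then an immediate unwinding of definitions: $s \in R_{m,\alpha}$ belongs to $\sF_{v_\xi}^x R_m$ if and only if $v_\xi(s) \geq x$, and by the main identity this is equivalent to $v(s) \geq x - \langle \alpha, \xi \rangle - m \theta_\xi^L(v)$, which by Definition \ref{definition:filtration} is precisely the condition defining $(\sF_v)_{\xi, [\theta_\xi^L(v)]}^x R_{m,\alpha}$; summing over $\alpha \in M(\T)$ gives $\sF_{v_\xi} = (\sF_v)_{\xi, [\theta_\xi^L(v)]}$. The main technical obstacle is passing from the set-theoretic identity $\sigma_\xi^* \tilde s = t^{\langle \alpha, \xi \rangle} \tilde s$ on the locus of isomorphism to a precise identity of rational sections of the two different line bundles $\mu_i^*(mL_{\A^1})$ on $\sW$, and verifying that the rational discrepancy contributes exactly $m \theta_\xi^L(v)$ under $G(v)$; this essentially reproduces the bookkeeping already carried out in \cite[Lemma 6.22]{Xu}, to which the argument can ultimately be referred.
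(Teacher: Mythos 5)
Your core mechanism for the displayed identity is the same as the paper's: the paper proves it by combining Corollary \ref{corollary:theta} with the bookkeeping of \cite[Lemma 6.22]{Xu}, which is precisely your comparison of $\mu_1^*\tilde{s}$ and $\mu_2^*\tilde{s}$ on a resolution, the weight relation $\sigma_\xi^*\tilde{s}=t^{\langle\alpha,\xi\rangle}\tilde{s}$, and the evaluation of the discrepancy by $G(v)(\mu_2^*L_{\A^1}-\mu_1^*L_{\A^1})=\theta_\xi^L(v)$; your deduction of $\sF_{v_\xi}=(\sF_v)_{\xi,[\theta_\xi^L(v)]}$ is also fine. The genuine gap is the range of $\xi$: the corollary is asserted for all $\xi\in N_\R(\T)$, but your argument presupposes the one-parameter subgroup $\phi_\xi$ and the birational map $\sigma_\xi$, which exist only for $\xi\in N(\T)$, and Corollary \ref{corollary:theta} itself is stated only for integral $\xi$. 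So, as written, you have proved the identity only on the lattice $N(\T)$. The paper closes this in two further steps that you omit: for $\xi\in N_\Q(\T)$ it uses the homogeneities $\theta_{e\xi}^L(e v)=e\,\theta_\xi^L(v)$ and $e(v_\xi)=(e v)_{e\xi}$ to reduce to the integral case, and for $\xi\in N_\R(\T)$ it concludes by continuity of both sides in $\xi$. You should add these reductions (or an equivalent density argument) to cover the full statement.

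A secondary, minor inaccuracy: the relation $G(v_\xi)=(\sigma_\xi)_*G(v)$ is not the definition of the twist in \cite[Definition-Lemma 6.15]{Xu} (there the twist is defined via the $\T$-weight decomposition on quasi-monomial valuations and extended to $\Val_X^\T$); it is a property that holds for $\xi\in N(\T)$ and needs to be cited or verified before you use it, since the correction term $m\theta_\xi^L(v)$ encodes exactly the discrepancy between twisting functions and twisting sections of $mL$.
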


\begin{proof}
The proof is same as the proof of \cite[Lemma 6.22]{Xu}. 
When $\xi\in N(\T)$, we have the assertion by using Corollary \ref{corollary:theta}
and the argument in \cite[Lemma 6.22]{Xu}. For any $e\in\R_{>0}$, we know that 
$\theta_{e\theta}^L(e v)=e\theta_\xi^L(v)$ and $e(v_\xi)=(e v)_{e\xi}$, we get the 
assertion when $\xi\in N_\Q(\T)$. Thus we get the assertion when $\xi\in N_\R(\T)$
by the continuities. 
\end{proof}

\begin{corollary}[{cf.\ \cite[Lemma 6.23]{Xu}}]\label{corollary:S-twist}
For any $v\in\Val_X^{<\infty,\T}$ and $\xi\in N_\R(\T)$, we have 
\[
S_L(v_\xi)=S_L(v)+\langle\alpha_{\bc}^L,\xi\rangle+\theta_\xi^L(v),
\]
where $S_L$ is the $S$-invariant \cite[(4.42)]{Xu}. 
\end{corollary}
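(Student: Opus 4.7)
The plan is to follow the strategy of \cite[Lemma 6.23]{Xu} and reduce the identity to two independent statements about filtrations: the behaviour of $S_L$ under shift and under $\xi$-twist. By Corollary \ref{corollary:twist-valuation} we have $\sF_{v_\xi} = (\sF_v)_{\xi,[\theta_\xi^L(v)]}$, and since $S_L(v)=S_L(\sF_v)$ by \cite[(4.42)]{Xu}, it suffices to show that for any $\T$-equivariant filtration $\sF$ on $R$, any $C\in\R$, and any $\xi\in N_\R(\T)$,
\[
S_L(\sF_{[C]}) = S_L(\sF) + C, \qquad S_L(\sF_\xi) = S_L(\sF) + \langle\alpha_{\bc}^L,\xi\rangle.
\]

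For the shift identity, I would expand $S_L(\sF)$ as the normalised limit of sums of jumping numbers of $\sF$ on $R_m$. Since $\sF_{[C]}^x R_m = \sF^{x-Cm}R_m$ by definition, each jumping number of $\sF_{[C]}$ on $R_m$ equals the corresponding one of $\sF$ increased by $Cm$; dividing by $m\dim R_m$ and letting $m\to\infty$ gives the formula.

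For the twist identity, note that $(\sF_\xi)^x R_{m,\alpha} = \sF^{x-\langle\alpha,\xi\rangle}R_{m,\alpha}$, so the jumping numbers of $\sF_\xi$ on the weight space $R_{m,\alpha}$ are exactly those of $\sF$ restricted to $R_{m,\alpha}$ shifted by $\langle\alpha,\xi\rangle$. Decomposing the total sum of jumping numbers of $\sF_\xi$ on $R_m$ according to the weight and dividing by $m\dim R_m$ yields
\[
S_L(\sF_\xi) - S_L(\sF) = \lim_{m\to\infty}\frac{1}{m\dim R_m}\sum_{\alpha\in\Lambda_m^L}\dim R_{m,\alpha}\cdot\langle\alpha,\xi\rangle = \langle\alpha_{\bc}^L,\xi\rangle,
\]
where the last equality is precisely Definition \ref{definition:polytope}\eqref{definition:polytope2}. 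Combining the two identities gives the desired formula.

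The main subtlety lies in extending the twist identity from $\xi\in N_\Q(\T)$, where $\sF_\xi$ is directly defined with integral weights after rescaling, to $\xi\in N_\R(\T)$. This will follow from continuity in $\xi$: the right-hand side is manifestly linear in $\xi$, while the left-hand side is continuous by the general theory of $\R$-filtrations (e.g.\ via the Duistermaat--Heckman description of $S_L$). One should also verify that $\sF_{[C]}$ and $\sF_\xi$ remain linearly bounded $\T$-equivariant filtrations so that $S_L$ is well defined on them, but this is immediate from the corresponding property of $\sF$ together with boundedness of $\PP^L$.
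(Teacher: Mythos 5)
Your proposal is correct and follows essentially the same route as the paper: both reduce the statement via Corollary \ref{corollary:twist-valuation} to the shift identity $S_L(\sF_{[C]})=S_L(\sF)+C$ and the twist identity $S_L(\sF_\xi)=S_L(\sF)+\langle\alpha_{\bc}^L,\xi\rangle$, the only difference being that the paper simply cites \cite[Lemma 6.4]{Xu} for the latter while you reprove it directly from the weight decomposition of the jumping numbers. Note only that your worry about passing from $\xi\in N_\Q(\T)$ to $\xi\in N_\R(\T)$ is unnecessary: the twist $\sF_\xi$ is defined for arbitrary real $\xi$ and your jumping-number computation is exact for every such $\xi$, so no continuity argument is needed.
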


\begin{proof}
The proof is same as the proof of \cite[Lemma 6.23]{Xu}. We have 
\begin{eqnarray*}
S_L(v_\xi)&=&S_L(\sF_{v_\xi})=S_L\left(\left(\sF_v\right)_{\xi,[\theta_\xi^L(v)]}\right)\\
&=&\theta^L_\xi(v)+S_L\left(\left(\sF_v\right)_\xi\right)
=\theta^L_\xi(v)+S_L(\sF_v)+\langle\alpha_{\bc}^L,\xi\rangle, 
\end{eqnarray*}
where the second equality follows from Corollary \ref{corollary:twist-valuation}, 
the third equality follows from the obvious equality $S_L(\sF_{[C]})=S_L(\sF)+C$, 
and the last equality follows from \cite[Lemma 6.4]{Xu}. 
\end{proof}

Recall the following notations in \cite[\S 3.4]{Xu}: 

\begin{definition}[{\cite[\S 3.4]{Xu}}]\label{definition:approx}
Let $\sF$ be a $\T$-equivariant filtration on $R$. 
\begin{enumerate}
\renewcommand{\theenumi}{\arabic{enumi}}
\renewcommand{\labelenumi}{(\theenumi)}
\item\label{definition:approx1} \cite[Definitions 3.2 and 3.16]{Xu}
We say that $\sF$ is a \emph{$\Z$-valued filtration} if 
$\sF^x R_m=\sF^{\lceil x\rceil}R_m$ holds for any $x\in\R$ and $m\in r\Z_{\geq 0}$. 
For any $\T$-equivariant filtration $\sF$ on $R$, let us define the 
$\Z$-valued filtration $\sF_\Z$ as 
$\sF_\Z^x R_m:=\sF^{\lceil x\rceil}R_m$. 
\item\label{definition:approx2} \cite[Definition 3.55]{Xu}
A sequence of $\T$-equivariant filtrations $\left\{\sF_{(m)}\right\}_{m\in r\Z_{>0}}$ 
on $R$ is said to be an \emph{approximating sequence of $\sF$} if, for any 
$m\in r\Z_{>0}$, we have: 
\begin{enumerate}
\renewcommand{\theenumii}{\roman{enumii}}
\renewcommand{\labelenumii}{(\theenumii)}
\item\label{definition:approx21}
$\sF_{(m)}\subset\sF$, 
\item\label{definition:approx22}
$\sF^x_{(m)}R_m=\sF^x R_m$ for any $x\in\R$, and 
\item\label{definition:approx23}
for any $s\in\Z_{>0}$ and $x\in\R$, we have 
\[
\sF_{(m)}^x R_{ms}=\sum_{x_1+\cdots+x_s\geq x}\sF^{x_1}R_m\cdots\sF^{x_s}R_m.
\]
\end{enumerate}
By \cite[Definition-Lemma 3.56]{Xu}, for any $\T$-equivariant filtration $\sF$ 
on $R$, there exists a ($\T$-equivariant) approximating sequence of $\sF$. 
\end{enumerate}
\end{definition}

\begin{lemma}\label{lemma:Z-twist}
Let $\sF$ be a $\T$-equivariant filtration on $R$. 
Set $\sG:=\sF_\Z$. Take any $\xi\in N_\R(\T)$. Then we have 
\[
S_L(\sF_\xi)=S_L(\sG_\xi)\quad\text{and}\quad
\lambda_{\max}(\sF_\xi)=\lambda_{\max}(\sG_\xi). 
\]
\end{lemma}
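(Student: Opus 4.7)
The plan is to exploit the fact that $\sG = \sF_\Z$ and $\sF$ differ pointwise by at most a ceiling, and to show that this difference is preserved in a controlled way under the $\xi$-twist operation.

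First I would establish the following two-sided inclusion for all $x \in \R$ and $m \in r\Z_{>0}$:
\[
\sF_\xi^{x+1} R_m \;\subseteq\; \sG_\xi^x R_m \;\subseteq\; \sF_\xi^x R_m.
\]
Both are immediate from Definition \ref{definition:filtration}\eqref{definition:filtration2} applied weight by weight: after decomposing into $\bigoplus_{\alpha \in M(\T)}$-components, the right inclusion reduces to $\sF^{\lceil y \rceil} R_{m,\alpha} \subseteq \sF^y R_{m,\alpha}$ with $y = x - \langle \alpha, \xi \rangle$, while the left inclusion reduces to $\sF^{y+1} R_{m,\alpha} \subseteq \sF^{\lceil y \rceil} R_{m,\alpha}$. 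These follow since $y \leq \lceil y \rceil \leq y+1$ and $\sF$ is decreasing.

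With the inclusion in hand, the equality of $\lambda_{\max}$ is straightforward. Decomposing weight by weight and using linear boundedness, one sees that $\lambda_{\max,m}(\sG_\xi)$ and $\lambda_{\max,m}(\sF_\xi)$ differ by at most $1$; dividing by $m$ and passing to the supremum (or limit) gives $\lambda_{\max}(\sF_\xi) = \lambda_{\max}(\sG_\xi)$. For the equality of $S$-invariants, I would recall that on each graded piece $R_m$ one can express $S_m$ as the normalized sum of jumping numbers $\lambda_1^{(m)} \geq \cdots \geq \lambda_{\dim R_m}^{(m)}$ of the filtration, where $\lambda_i^{(m)}(\sH) := \max\{x \in \R : \dim \sH^x R_m \geq i\}$. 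The sandwich above translates into
\[
\lambda_i^{(m)}(\sF_\xi) - 1 \;\leq\; \lambda_i^{(m)}(\sG_\xi) \;\leq\; \lambda_i^{(m)}(\sF_\xi)
\]
for every $i$, whence
\[
\bigl| S_m(\sF_\xi) - S_m(\sG_\xi) \bigr| \;\leq\; \frac{1}{m}.
\]
Letting $m \to \infty$ (via the existence of the limit defining $S_L$ as in \cite[(4.42)]{Xu}) yields $S_L(\sF_\xi) = S_L(\sG_\xi)$.

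I do not anticipate a genuine obstacle here; the only point requiring a small amount of care is that the definition of $\sF_\xi$ involves a real twist $\langle \alpha, \xi \rangle$, so neither $\sF_\xi$ nor $\sG_\xi$ is $\Z$-valued in general even when $\sF$ is. What saves the argument is that the ceiling appearing in $\sG_\xi^x R_m = \bigoplus_\alpha \sF^{\lceil x - \langle \alpha, \xi \rangle \rceil} R_{m,\alpha}$ is applied \emph{after} the real shift, so the perturbation remains bounded by $1$ independently of $\alpha$. This uniform bound is what makes the sandwich above survive the weight decomposition and gives the desired equalities in the limit.
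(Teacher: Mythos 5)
Your proposal is correct and follows essentially the same route as the paper: both hinge on the sandwich $\sF_\xi^{x+1}R_m\subseteq\sG_\xi^{x}R_m\subseteq\sF_\xi^{x}R_m$ (the paper phrases it in the rescaled form $\sF_\xi^{mx}R_m\subseteq\sG_\xi^{m(x-\varepsilon)}R_m$ for $m\varepsilon>1$), and then concludes from the asymptotic nature of $S_L$ and $\lambda_{\max}$, since the discrepancy of at most $1$ becomes $O(1/m)$ after rescaling. Your explicit jumping-number estimate $\lvert S_m(\sF_\xi)-S_m(\sG_\xi)\rvert\le 1/m$ just spells out the step the paper leaves implicit.
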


\begin{proof}
Take any $x\in\R$ and $m\in r\Z_{>0}$. From the definition, we have 
$\sG_\xi^{m x}R_m\subset\sF_\xi^{m x}R_m$. On the other hand, for any 
$\varepsilon\in\R_{>0}$, if we take $m\in r\Z_{>0}$ with $m\varepsilon>1$, then 
\begin{eqnarray*}
\sG_\xi^{m(x-\varepsilon)}R_m&=&
\bigoplus_{\alpha\in\Lambda_m^L}\sF^{\lceil m(x-\varepsilon)
-\langle\alpha,\xi\rangle\rceil}R_{m,\alpha}\\
&\supset&\bigoplus_{\alpha\in\Lambda_m^L}\sF^{m x
-\langle\alpha,\xi\rangle}R_{m,\alpha}=\sF_\xi^{m x}R_m.
\end{eqnarray*}
Thus we get the assertion. 
\end{proof}

\begin{lemma}\label{lemma:approx}
Let $\sF$ be a $\T$-equivariant filtration on $R$, and 
let $\left\{\sF_{(m)}\right\}_{m\in r\Z_{>0}}$ be an approximating sequence of $\sF$. 
\begin{enumerate}
\renewcommand{\theenumi}{\arabic{enumi}}
\renewcommand{\labelenumi}{(\theenumi)}
\item\label{lemma:approx1}
For any $C\in\R$, let $\sF_{(m),[C]}$ be the $C$-shift of $\sF_{(m)}$. Then 
$\left\{\sF_{(m),[C]}\right\}_{m\in r\Z_{>0}}$ is an approximating sequence of 
$\sF_{[C]}$. 
\item\label{lemma:approx2}
For any $\xi\in N_\R(\T)$, let $\sF_{(m),\xi}$ be the $\xi$-twist of $\sF_{(m)}$. 
Then $\left\{\sF_{(m),\xi}\right\}_{m\in r\Z_{>0}}$ is an approximating sequence of 
$\sF_{\xi}$. 
\end{enumerate}
\end{lemma}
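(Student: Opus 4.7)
The plan is to verify, for each of (1) and (2), the three defining properties \eqref{definition:approx21}--\eqref{definition:approx23} of Definition \ref{definition:approx}\eqref{definition:approx2}. Properties \eqref{definition:approx21} and \eqref{definition:approx22} are almost immediate from the fact that both the $C$-shift and the $\xi$-twist act on each graded piece $R_\ell$ by re-indexing the degree of the filtration only, so the inclusions $\sF_{(m),[C]}\subset\sF_{[C]}$ and $\sF_{(m),\xi}\subset\sF_\xi$ follow termwise from $\sF_{(m)}\subset\sF$, and the equalities on the $m$-th piece follow from $\sF_{(m)}^xR_m=\sF^xR_m$ together with the fact that re-indexing commutes with itself. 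The substantive content is \eqref{definition:approx23}, which requires verifying that the change of indexing is compatible with the $s$-fold multiplication $R_m^{\otimes s}\to R_{ms}$.

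For (1), the key computation is that by the $C$-shift definition and the approximating property for $\sF_{(m)}$,
\[
\sF_{(m),[C]}^x R_{ms}=\sF_{(m)}^{x-Cms}R_{ms}=\sum_{x_1+\cdots+x_s\geq x-Cms}\sF^{x_1}R_m\cdots\sF^{x_s}R_m,
\]
and then the substitution $y_j:=x_j+Cm$ turns the condition into $y_1+\cdots+y_s\geq x$, while each factor $\sF^{x_j}R_m=\sF^{y_j-Cm}R_m=\sF_{[C]}^{y_j}R_m$. I expect this step to be entirely mechanical.

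For (2), the proof is more delicate because the twist depends on the weight of the element, so I would first split everything into $\T$-weight spaces. Using the $\T$-equivariance of the filtration, I would decompose $\sF_{(m)}^y R_{ms}=\bigoplus_\beta \sF_{(m)}^y R_{ms,\beta}$, and refine the approximating identity to
\[
\sF_{(m)}^yR_{ms,\beta}=\sum_{\substack{y_1+\cdots+y_s\geq y\\ \alpha_1+\cdots+\alpha_s=\beta}}\sF^{y_1}R_{m,\alpha_1}\cdots\sF^{y_s}R_{m,\alpha_s},
\]
which holds because the multiplication $R_{m,\alpha_1}\otimes\cdots\otimes R_{m,\alpha_s}\to R_{ms,\alpha_1+\cdots+\alpha_s}$ respects the weight grading. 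Taking $y=x-\langle\beta,\xi\rangle$ to read off $\sF_{(m),\xi}^xR_{ms,\beta}$ and then substituting $x_j:=y_j+\langle\alpha_j,\xi\rangle$, I use $\sum_j\langle\alpha_j,\xi\rangle=\langle\beta,\xi\rangle$ to convert the constraint on the $y_j$'s to $\sum_j x_j\geq x$, while each factor $\sF^{y_j}R_{m,\alpha_j}$ becomes $\sF_{\xi}^{x_j}R_{m,\alpha_j}$. Finally summing over $\beta$ (equivalently over all tuples $(\alpha_1,\ldots,\alpha_s)$) yields the required identity
\[
\sF_{(m),\xi}^xR_{ms}=\sum_{x_1+\cdots+x_s\geq x}\sF_{\xi}^{x_1}R_m\cdots\sF_{\xi}^{x_s}R_m.
\]

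The only real pitfall I foresee is the bookkeeping in the $\xi$-twist case: one must keep the multi-index $(\alpha_1,\ldots,\alpha_s)$ coupled to the multi-index $(x_1,\ldots,x_s)$ via the substitution $x_j=y_j+\langle\alpha_j,\xi\rangle$, and only then collapse the $\beta$-sum. Once this weight decomposition is set up carefully, the rest is a direct verification.
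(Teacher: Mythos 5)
Your proposal is correct and follows essentially the same route as the paper: part (1) is the mechanical shift-substitution, and part (2) is handled exactly as in the paper's proof by decomposing into $\T$-weight spaces, refining the approximating identity to weight components (using $\T$-equivariance and that multiplication adds weights), and substituting $x_j=y_j+\langle\alpha_j,\xi\rangle$ with $\sum_j\langle\alpha_j,\xi\rangle=\langle\alpha,\xi\rangle$.
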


\begin{proof}
\eqref{lemma:approx1} is trivial. We only see \eqref{lemma:approx2}. Observe the 
following: 
\begin{enumerate}
\renewcommand{\theenumi}{\roman{enumi}}
\renewcommand{\labelenumi}{(\theenumi)}
\item\label{proof:approx1}
$\sF_{(m),\xi}^x R_{m',\alpha}=\sF_{(m)}^{x-\langle\alpha,\xi\rangle}R_{m',\alpha}
\subset\sF^{x-\langle\alpha,\xi\rangle}R_{m',\alpha}=\sF_\xi^x R_{m',\alpha}$. 
\item\label{proof:approx2}
$\sF_{(m),\xi}^x R_{m,\alpha}=\sF_{(m)}^{x-\langle\alpha,\xi\rangle}R_{m,\alpha}
=\sF^{x-\langle\alpha,\xi\rangle}R_{m,\alpha}=\sF_\xi^x R_{m,\alpha}$. 
\item\label{proof:approx3}
For any $s\in\Z_{>0}$, we have 
\begin{eqnarray*}
&&\sF_{(m),\xi}^x R_{m s,\alpha}
=\sF_{(m)}^{x-\langle \alpha,\xi\rangle}R_{m s,\alpha}\\
&=&\sum_{\substack{\alpha_1,\dots,\alpha_s\in M(\T);\\ 
\alpha_1+\cdots+\alpha_s=\alpha}}
\sum_{y_1+\cdots+y_s\geq x-\langle\alpha,\xi\rangle}
\sF^{y_1}R_{m,\alpha_1}\cdots\sF^{y_s}R_{m,\alpha_s}\\
&=&\sum_{\substack{\alpha_1,\dots,\alpha_s\in M(\T);\\ 
\alpha_1+\cdots+\alpha_s=\alpha}}
\sum_{x_1+\cdots+x_s\geq x}
\sF^{x_1-\langle\alpha_1,\xi\rangle}R_{m,\alpha_1}
\cdots\sF^{x_s-\langle\alpha_s,\xi\rangle}R_{m,\alpha_s}\\
&=&\sum_{\substack{\alpha_1,\dots,\alpha_s\in M(\T);\\ 
\alpha_1+\cdots+\alpha_s=\alpha}}
\sum_{x_1+\cdots+x_s\geq x}
\sF_\xi^{x_1}R_{m,\alpha_1}\cdots\sF_\xi^{x_s}R_{m,\alpha_s}.
\end{eqnarray*}
\end{enumerate}
Thus the assertion \eqref{lemma:approx2} follows. 
\end{proof}

We recall the following result: 

\begin{lemma}[{\cite[Theorems 3.58 and 3.60]{Xu}}]\label{lemma:S-approx}
Let $\sF$ be a $\T$-equivariant filtration on $R$, and 
let $\left\{\sF_{(m)}\right\}_{m\in r\Z_{>0}}$ be an approximating sequence of $\sF$.
Then we have
\[
\lim_{m\to\infty}S_L(\sF_{(m)})=S_L(\sF)\quad\text{and}\quad
\lim_{m\to\infty}\lambda_{\max}(\sF_{(m)})=\lambda_{\max}(\sF).
\]
\end{lemma}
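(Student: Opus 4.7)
Since the lemma is explicitly stated as a restatement of \cite[Theorems 3.58 and 3.60]{Xu}, the proof proposal essentially amounts to indicating that we invoke those theorems directly. Nevertheless, let me sketch the underlying strategy one would follow if proving these convergences from scratch, as the $\T$-equivariance plays no role once one has the weight-space decomposition, so the content is the same as in the non-equivariant case treated in \cite{Xu}.

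The plan is to relate both $S_L$ and $\lambda_{\max}$ to their finite-level approximants, which behave transparently under approximating sequences. For each $m \in r\Z_{>0}$, define the finite-level quantities
\[
S_L^{(m)}(\sG):=\frac{1}{m\dim R_m}\sum_j \lambda_j(\sG, m),\qquad
\lambda_{\max}^{(m)}(\sG):=\frac{1}{m}\max\{x\in\R\,|\,\sG^x R_m\neq 0\},
\]
where $\lambda_j(\sG,m)$ runs over the jumping numbers of $\sG$ on $R_m$. The defining condition \eqref{definition:approx22} of an approximating sequence, $\sF_{(m)}^x R_m=\sF^x R_m$, immediately gives $S_L^{(m)}(\sF_{(m)})=S_L^{(m)}(\sF)$ and $\lambda_{\max}^{(m)}(\sF_{(m)})=\lambda_{\max}^{(m)}(\sF)$. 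Combining this with the inclusion $\sF_{(m)}\subset\sF$ from \eqref{definition:approx21} gives the chain
\[
S_L^{(m)}(\sF)=S_L^{(m)}(\sF_{(m)})\leq S_L(\sF_{(m)})\leq S_L(\sF),
\]
and similarly for $\lambda_{\max}$. The conclusion then follows from the fact that $S_L^{(m)}(\sF)\to S_L(\sF)$ and $\lambda_{\max}^{(m)}(\sF)\to \lambda_{\max}(\sF)$ as $m\to\infty$, which is a standard consequence of linear boundedness together with Fekete's subadditivity-type arguments applied to the graded pieces; these two limit statements correspond precisely to \cite[Theorems 3.58 and 3.60]{Xu}.

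The only subtle point, and the one that is genuinely the main work in \cite{Xu}, is the upper semicontinuous convergence of the finite-level $S$-invariants to $S_L(\sF)$: one must pass from the $m$-th graded piece to the full asymptotic average of the measure $d\rho_m$ associated with the filtration, using Okounkov-body / linearly bounded estimates to control the tails. The corresponding statement for $\lambda_{\max}^{(m)}$ is easier, being a supremum of subadditive quantities by condition \eqref{definition:approx23} (which forces the multiplicative structure). Since both of these facts are exactly the content of the cited theorems in \cite{Xu}, I would simply write the proof as \textquotedblleft This follows from \cite[Theorems 3.58 and 3.60]{Xu} applied to the underlying (non-equivariant) filtration, the $\T$-equivariance being automatic since the approximating sequence $\{\sF_{(m)}\}$ is $\T$-equivariant by construction,\textquotedblright\ and leave it at that.
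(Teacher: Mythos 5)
Your proposal matches the paper exactly: the paper gives no proof of this lemma and simply recalls it as \cite[Theorems 3.58 and 3.60]{Xu}, which is precisely what you do, and your observation that the $\T$-equivariance is immaterial (since $S_L$ and $\lambda_{\max}$ are defined from the underlying filtration alone) is the correct justification for applying the cited theorems verbatim. The additional sketch of the finite-level reduction is not needed for the paper's purposes, so citing \cite{Xu} as you conclude is the right way to write it.
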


\begin{definition}\label{definition:base-change}
Let $(\sX,\sL)/\A^1$ be a $\T$-equivariant test configuration of $(X, L)$. 
For any $e\in\Z_{>0}$, let us set 
\[
\left(\sX^{(e)},\sL^{(e)}\right):=\left(\sX\times_{\A^1,\pi_e}\A^1,\pi_e^*\sL\right), 
\]
where 
\begin{eqnarray*}
\pi_e\colon\A^1&\to&\A^1\\
t&\mapsto&t^e.
\end{eqnarray*}
The $\left(\sX^{(e)},\sL^{(e)}\right)/\A^1$ is also a $\T$-equivariant 
test configuration of $(X, L)$. 
Moreover, let $\nu\colon \sX^{\overline{(e)}}\to\sX^{(e)}$ be the normalization and 
set $\sL^{\overline{(e)}}:=\nu^*\sL^{(e)}$. 
The $\left(\sX^{\overline{(e)}},\sL^{\overline{(e)}}\right)/\A^1$ is obviously a 
$\T$-equivariant normal test configuration of $(X, L)$. 
\end{definition}

\begin{definition}[{\cite[Definitions 2.8, 3.40 and 6.28]{Xu}}]\label{definition:J}
Let $\sF$ be a $\T$-equivariant filtration on $R$. 
Let 
\[
\JJ(\sF):=\lambda_{\max}\left(\sF\right)-S_L\left(\sF\right)
\]
be the \emph{$\JJ$-norm} of $(\sX,\sL)/\A^1$. 
If $(\sX,\sL)/\A^1$ is a $\T$-equivariant test configuration of $(X, L)$, then 
we set $\JJ(\sX,\sL):=\JJ\left(\sF_{\sX,\sL}\right)$. 
As in 
\cite[Definitions 2.8 and 3.40 and Proposition 3.41]{Xu}, 
$\JJ(\sX,\sL)$ can be expressed in terms of intersection numbers \cite[(2.8)]{Xu}. 
If $\xi\in N_\Q(\T)$, then, by \cite[Definition 6.28]{Xu}, 
we have the equality 
\[
\JJ\left(\left(\sF_{\sX,\sL}\right)_\xi\right)
=\frac{1}{e}\JJ\left(\left(\sX^{(e)}\right)_{e\xi},\left(\sL^{(e)}\right)_{e \xi}\right), 
\]
where $e\in\Z_{>0}$ is any positive integer with $e\xi\in N(\T)$. 
We set $\JJ(\sX_\xi,\sL_\xi):=\JJ\left(\left(\sF_{\sX,\sL}\right)_\xi\right)$. 
If $(\sX,\sL)/\A^1$ is the trivial test configuration of $(X, L)$ (in the sense of 
\cite[Example 2.5]{Xu}), then we write 
$\JJ(X_\xi,L_\xi):=\JJ(\sX_\xi,\sL_\xi)$. 
\end{definition}

\begin{lemma}\label{lemma:J-xi}
For any $\xi\in N_\Q(\T)$, we have
\[
\JJ(X_\xi,L_\xi)=\max_{\alpha\in\PP^L}\left\{\langle\alpha,\xi\rangle\right\}
-\langle\alpha_{\bc}^L,\xi\rangle. 
\]
\end{lemma}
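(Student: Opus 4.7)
The plan is to unwind the definition $\JJ(X_\xi,L_\xi) = \JJ((\sF_{\triv})_\xi) = \lambda_{\max}((\sF_{\triv})_\xi) - S_L((\sF_{\triv})_\xi)$ and compute each piece directly, exploiting the transparent structure of the trivial filtration.

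For the first piece, I would unwind Definition \ref{definition:filtration}\eqref{definition:filtration2}. Since $\sF_{\triv}^y R_{m,\alpha}$ equals $R_{m,\alpha}$ for $y \leq 0$ and vanishes otherwise, one has
\[
(\sF_{\triv})_\xi^x R_m = \bigoplus_{\substack{\alpha \in \Lambda_m^L \\ \langle \alpha, \xi \rangle \geq x}} R_{m, \alpha},
\]
so $(\sF_{\triv})_\xi^x R_m \neq 0$ iff $x \leq \max_{\alpha \in \Lambda_m^L} \langle \alpha, \xi \rangle$. Dividing by $m$ and passing to the limit along sufficiently divisible $m$ so that $\tfrac{1}{m} \PP_m^L = \PP^L$ (Definition \ref{definition:polytope}\eqref{definition:polytope1}), I conclude
\[
\lambda_{\max}((\sF_{\triv})_\xi) = \max_{\alpha \in \PP^L} \langle \alpha, \xi \rangle,
\]
since the maximum of a linear functional over $\tfrac{1}{m}\Lambda_m^L$ coincides with that over its convex hull $\tfrac{1}{m}\PP_m^L$.

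For the second piece, I would invoke the twist formula for the $S$-invariant of a filtration, $S_L(\sF_\xi) = S_L(\sF) + \langle \alpha_{\bc}^L, \xi \rangle$; this is \cite[Lemma 6.4]{Xu}, which already underlies the derivation of Corollary \ref{corollary:S-twist}. Applying it to $\sF = \sF_{\triv}$, which trivially satisfies $S_L(\sF_{\triv}) = 0$, yields $S_L((\sF_{\triv})_\xi) = \langle \alpha_{\bc}^L, \xi \rangle$, and subtracting from the $\lambda_{\max}$ identity gives the claimed formula. There is essentially no obstacle here: the argument is a direct manipulation of definitions, the only small subtlety being the limit used to compute $\lambda_{\max}$, which is handled by the polytope stabilization noted above.
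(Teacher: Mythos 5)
Your proposal is correct and follows essentially the same route as the paper: both compute $\lambda_{\max}\bigl((\sF_{\triv})_\xi\bigr)=\max_{\alpha\in\PP^L}\langle\alpha,\xi\rangle$ from the weight decomposition of the twisted trivial filtration (the paper via $T_m$ and \cite[Lemma 3.22]{Xu}, you via the stabilization $\PP^L=\tfrac{1}{m}\PP^L_m$, which is the same computation), and both obtain $S_L\bigl((\sF_{\triv})_\xi\bigr)=\langle\alpha_{\bc}^L,\xi\rangle$ from \cite[Lemma 6.4]{Xu}. The only cosmetic difference is that the paper first reduces to $\xi\in N(\T)$, which your direct filtration-level argument does not need.
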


\begin{proof}
We may assume that $\xi\in N(\T)$. 
Note that $S_L(\sF_\xi)=S_L(\sF_{\triv})+\langle\alpha_{\bc}^L,\xi\rangle
=\langle\alpha_{\bc}^L,\xi\rangle$ by \cite[Lemma 6.4]{Xu}. 
Moreover, we know that $\lambda_{\max}=T$, the $T$-invariant 
(see \cite[Lemma 3.22]{Xu}). 
Since 
\[
\sF_\xi^x R_m=\bigoplus_{\alpha\in M(\T)}\sF_{\triv}^{x-\langle\alpha,\xi\rangle}
R_{m,\alpha}, 
\]
we get 
\[
T_m\left(\sF_\xi\right)=\frac{1}{m}\max_{\alpha\in\Lambda_m^L}
\left\{\langle\alpha,\xi\rangle\right\}. 
\]
Thus we get $\lambda_{\max}(\sF_\xi)=
\max_{\alpha\in\PP^L}\left\{\langle\alpha,\xi\rangle\right\}$.
\end{proof}

\begin{definition}[{\cite[Definition 3.62]{Xu}}]\label{definition:blowup-tc}
Let $\sF$ be a $\T$-equivariant filtration on $R$. Take any $m\in r\Z_{>0}$. 
Let 
\[
q_{(m)}\colon\sY_{(m)}\to X_{\A^1}
\]
be the normalized blowup along the fractional ideal sheaf $\sI_m(\sF)$ 
(see Definition \ref{definition:filtration}), and let $m E$ be the Cartier divisor 
on $\sY_{(m)}$ defined by the equation 
$q_{(m)}^{-1}\left(\sI_m(\sF)\right)=\sO_{\sY_{(m)}}(-m E)$. 
(Since $\sI_m(\sF)$ is a fractional ideal, the Cartier divisor $m E$ may not be 
effective.)
Set
\[
\sM_{(m)}:=q_{(m)}^* L_{\A^1}-E, 
\]
where $L_{\A^1}$ is the pullback of $L$ under the projection $X_{\A^1}\to X$. By 
\cite[Lemma 3.61]{Xu}, the $\sM_{(m)}$ is semiample over $\A^1$. 
The ample model $(\sX_{(m)},\sL_{(m)})/\A^1$ of $(\sY_{(m)},\sM_{(m)})$ 
over $\A^1$ is said to be the \emph{normalized blowup test configuration along 
$\sI_m(\sF)$}. 
\end{definition}

\begin{proposition}\label{proposition:tc-twist}
Let $\sF$ be a $\Z$-valued and $\T$-equivariant filtration on $R$. 
For $m\in r\Z_{>0}$, let $\left(\sX_{(m)},\sL_{(m)}\right)/\A^1$ be the 
normalized blowup test configuration along $\sI_m(\sF)$. 
\begin{enumerate}
\renewcommand{\theenumi}{\arabic{enumi}}
\renewcommand{\labelenumi}{(\theenumi)}
\item\label{proposition:tc-twist1}
For any $\xi\in N(\T)$, set 
\[
\sI_m(\sF)_\xi:=\sI_m(\sF_\xi)=\bigoplus_{\lambda\in\Z}
t^{-\lambda}\sum_{\alpha\in M(\T)}I_{(m,\alpha;\lambda-\langle\alpha,\xi\rangle)}
(\sF). 
\]
Then the $\xi$-twisted test configuration 
$\left(\sX_{(m),\xi},\sL_{(m),\xi}\right)/\A^1$ of 
$\left(\sX_{(m)},\sL_{(m)}\right)/\A^1$ is equal to the normalized blowup test 
configuration along $\sI_m(\sF)_\xi$. 
\item\label{proposition:tc-twist2}
For any $e\in\Z_{>0}$ and for any $\T$-equivariant filtration $\sG$ on $R$, 
let us define the $\T$-equivariant filtration 
$\sG^{(e)}$ on $R$ as 
\[
\sG^{(e),x}R_m:=\sG^{\lceil x/e\rceil}R_m
\]
for any $x\in\R$ and $m\in r\Z_{\geq 0}$. Then the test configuration 
$\left(\left(\sX_{(m)}\right)^{\overline{(e)}}, 
\left(\sL_{(m)}\right)^{\overline{(e)}}\right)/\A^1$ (see Definition 
\ref{definition:base-change} for the notation) 
is equal to the normalized blowup test configuration 
along $\sI_m\left(\sF^{(e)}\right)$. 
\end{enumerate}
\end{proposition}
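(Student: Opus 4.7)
The plan is to reduce both identifications to equalities of $\T$-equivariant filtrations on $R$, invoking the correspondence between normal test configurations and $\Z$-valued linearly bounded graded filtrations, under which the ample models (and hence the test configurations $(\sX_{(m)},\sL_{(m)})$ at hand) are determined by their filtrations.

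For \eqref{proposition:tc-twist1}, the displayed formula for $\sI_m(\sF)_\xi$ follows immediately from $\sF_\xi^x R_{m,\alpha}=\sF^{x-\langle\alpha,\xi\rangle}R_{m,\alpha}$ together with the weight decomposition of $R_m$; in particular $\sI_m(\sF)_\xi=\sI_m(\sF_\xi)$. The core of the argument is then the filtration identity
\[
\left(\sF_{(m)}\right)_\xi = \left(\sF_\xi\right)_{(m)},
\]
where $(-)_{(m)}$ denotes the approximating filtration at level $m$ (Definition \ref{definition:approx}\eqref{definition:approx2}). The left-hand side is the filtration of $(\sX_{(m),\xi},\sL_{(m),\xi})$ by Example \ref{example:twisted-tc} applied to the filtration associated to the blowup construction in Definition \ref{definition:blowup-tc}, while the right-hand side is the filtration of the normalized blowup test configuration along $\sI_m(\sF_\xi)$. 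I would verify this identity by a direct term-by-term comparison on each $R_{ms}$: after expanding both sides via the weight decomposition and the multiplicative filtration, the substitution $y_i = x_i + \langle\alpha_i,\xi\rangle$ with $\beta_i=\alpha_i$ gives an index bijection between the two defining sums.

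For \eqref{proposition:tc-twist2}, the equality $I_{(m;\lambda)}(\sF^{(e)}) = I_{(m;\lceil\lambda/e\rceil)}(\sF)$ is immediate from $\sF^{(e),\lambda}R_m = \sF^{\lceil\lambda/e\rceil}R_m$. I would next verify that the base change $(\sX,\sL) \mapsto (\sX^{\overline{(e)}},\sL^{\overline{(e)}})$ on normal test configurations corresponds, under the filtration correspondence, to the operation $\sG \mapsto \sG^{(e)}$ on $\Z$-valued filtrations; this follows by tracking the effect of $\pi_e\colon t\mapsto t^e$ on the pole-order description of the filtration, together with the fact that $\sG^{(e)}$ remains $\Z$-valued. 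Granting this, the claim reduces to
\[
\left(\sF_{(m)}\right)^{(e)} = \left(\sF^{(e)}\right)_{(m)},
\]
which on $R_{ms}$ reads
\[
\sum_{y_1+\cdots+y_s\geq\lceil x/e\rceil}\prod_{i=1}^{s}\sF^{y_i}R_m
\;=\;\sum_{x_1+\cdots+x_s\geq x}\prod_{i=1}^{s}\sF^{\lceil x_i/e\rceil}R_m.
\]
The inclusion $\supset$ follows by setting $y_i := \lceil x_i/e\rceil$ and noting that $\sum y_i\in\Z$ with $\sum y_i\geq\lceil \sum x_i/e\rceil\geq\lceil x/e\rceil$, while $\subset$ follows by setting $x_i := ey_i$, which gives $\lceil x_i/e\rceil=y_i$ and $\sum x_i = e\sum y_i\geq e\lceil x/e\rceil\geq x$.

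The main obstacle is the intermediate step identifying the base change on normal test configurations with the operation $\sG\mapsto \sG^{(e)}$ on the filtration side; although essentially standard in the theory developed in \cite{Xu}, it requires careful bookkeeping of sign and normalization conventions, especially since the normalization map $\sX^{\overline{(e)}}\to\sX^{(e)}$ may modify the naive pullback filtration nontrivially. Modulo this, both parts of the proposition follow from the elementary filtration identities outlined above, and passing to ample models is compatible with these filtration-level identifications, yielding the stated equalities of test configurations.
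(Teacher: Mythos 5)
Your two combinatorial identities, $\left(\sF_{(m)}\right)_\xi=\left(\sF_\xi\right)_{(m)}$ and $\left(\sF_{(m)}\right)^{(e)}=\left(\sF^{(e)}\right)_{(m)}$, are correct (they are essentially Lemma \ref{lemma:approx} \eqref{lemma:approx2} and Proposition \ref{proposition:approx-base-change}), but the bridge from these filtration identities to the asserted equalities of test configurations is exactly where your argument has a genuine gap. You identify the filtration of the normalized blowup test configuration along $\sI_m(\sF)$ with the approximant $\sF_{(m)}$, and the effect of the base change $(\sX,\sL)\mapsto(\sX^{\overline{(e)}},\sL^{\overline{(e)}})$ with $\sG\mapsto\sG^{(e)}$; neither holds on the nose, because normalization (equivalently, integral closure of the flag ideal and its powers) can strictly enlarge the associated filtration. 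This is precisely why, later in the paper, only the inequalities $S_{L_i}(\sG_{i,(m)})\leq S_{L_i}(\sF_{\sY_{i,(m)},\sM_{i,(m)}})$ and $\lambda_{\max}(\sG_{i,(m)})\leq\lambda_{\max}(\sF_{\sY_{i,(m)},\sM_{i,(m)}})$ are available via \cite[equation (3.42)]{Xu}, rather than equalities. You yourself flag the normalization issue for part \eqref{proposition:tc-twist2} as ``the main obstacle'' and then proceed ``modulo this,'' so the proposal is incomplete at its crucial step; for part \eqref{proposition:tc-twist1} the analogous issue is not even acknowledged.

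The repair is to argue at the level of the flag ideals rather than the full filtrations, which is what the paper does. The normalized blowup test configuration is \emph{constructed} from $\sI_m(\bullet)$, so it suffices to compare ideals and constructions: for \eqref{proposition:tc-twist2}, a two-line computation gives $\pi_e^{-1}\sI_m(\sF)=\sI_m\left(\sF^{(e)}\right)$ (your identity restricted to degree $m$ gives the same thing), and then the universal property of blowups under the flat base change $\pi_e$, together with normalization and the compatibility of the polarization $q_{(m)}^*L_{\A^1}-E$ and of ample models with finite base change, yields the claim directly--no filtration correspondence needed. For \eqref{proposition:tc-twist1}, the $\xi$-twist only changes the $\G_m$-linearization, i.e.\ re-grades the weight decomposition by $\langle\alpha,\xi\rangle$, which converts $\sI_m(\sF)$ into $\sI_m(\sF_\xi)$ by definition; alternatively, if you insist on the filtration route, you must additionally prove that the saturation implicit in passing to the normalized blowup commutes with the $\xi$-twist (true, since all ideals involved are $(\G_m\times\T)$-invariant and hence weight-graded), a step absent from your write-up.
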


\begin{proof}
\eqref{proposition:tc-twist1} is trivial from the definition of $\xi$-twisted 
test configurations. Let us show \eqref{proposition:tc-twist2}. For the morphism 
\begin{eqnarray*}
\pi_e\colon X_{\A^1}&\to& X_{\A^1}\\
(x,t)&\mapsto&(x,t^e), 
\end{eqnarray*}
we can directly check that $\pi_e^{-1}\sI_m(\sF)=\sI_m\left(\sF^{(e)}\right)$. 
Therefore, the assertion immediately follows by the universality of blowups. 
\end{proof}

\begin{proposition}\label{proposition:approx-base-change}
Let $\sF$ be a $\Z$-valued and $\T$-equivariant filtration on $R$, and let 
$\{\sF_{(m)}\}_{m\in r\Z_{>0}}$ be a \emph{$\Z$-valued} approximating sequence of 
$\sF$. For any $e\in\Z_{>0}$, let $\sF^{(e)}$ and 
$\sF_{(m)}^{(e)}:=\left(\sF_{(m)}\right)^{(e)}$ be as in 
Proposition \ref{proposition:tc-twist} \eqref{proposition:tc-twist2}. 
Then $\left\{\sF_{(m)}^{(e)}\right\}_{m\in r\Z_{>0}}$ is an approximating sequence 
of $\sF^{(e)}$. 
\end{proposition}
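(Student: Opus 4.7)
The plan is to verify directly the three conditions \eqref{definition:approx21}, \eqref{definition:approx22}, \eqref{definition:approx23} from Definition \ref{definition:approx} \eqref{definition:approx2} for the sequence $\{\sF_{(m)}^{(e)}\}_{m\in r\Z_{>0}}$ relative to $\sF^{(e)}$, reducing each to the corresponding property of $\{\sF_{(m)}\}$ relative to $\sF$. The only arithmetic input is the elementary inequality $\lceil a\rceil+\lceil b\rceil\geq\lceil a+b\rceil$ together with $e\lceil x/e\rceil\geq x$ and the $\Z$-valuedness hypothesis, which ensures that $\sF^y R_m=\sF^{\lceil y\rceil}R_m$ and similarly for $\sF_{(m)}$.

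First, for \eqref{definition:approx21} and \eqref{definition:approx22}, the inclusion $\sF_{(m)}^{(e),x}R_{m'}=\sF_{(m)}^{\lceil x/e\rceil}R_{m'}\subset \sF^{\lceil x/e\rceil}R_{m'}=\sF^{(e),x}R_{m'}$ is immediate from the corresponding property of $\sF_{(m)}\subset\sF$, and for $m'=m$ this is an equality.

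The main, and only nontrivial, step is \eqref{definition:approx23}: one needs
\[
\sF_{(m)}^{(e),x}R_{ms}=\sum_{x_1+\cdots+x_s\geq x}\sF^{(e),x_1}R_m\cdots\sF^{(e),x_s}R_m
\]
for every $s\in\Z_{>0}$ and $x\in\R$. Rewriting both sides using the $\Z$-valuedness of $\sF$, the left-hand side equals $\sum_{y_1+\cdots+y_s\geq\lceil x/e\rceil}\sF^{y_1}R_m\cdots\sF^{y_s}R_m$ (by applying \eqref{definition:approx23} for $\sF_{(m)}\subset\sF$), and the right-hand side equals $\sum_{x_1+\cdots+x_s\geq x}\sF^{\lceil x_1/e\rceil}R_m\cdots\sF^{\lceil x_s/e\rceil}R_m$. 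To show the left-hand side is contained in the right-hand side, given integers $y_i$ with $\sum y_i\geq\lceil x/e\rceil$ I pick $x_i:=e y_i$, so that $\sum x_i=e\sum y_i\geq e\lceil x/e\rceil\geq x$ and $\lceil x_i/e\rceil=y_i$. For the reverse inclusion, given $x_i\in\R$ with $\sum x_i\geq x$, set $y_i:=\lceil x_i/e\rceil$; then $\sum y_i\geq\sum x_i/e\geq x/e$, and since $\sum y_i$ is an integer, $\sum y_i\geq\lceil x/e\rceil$, so the corresponding term on the right appears on the left.

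I expect no serious obstacle; the only point to be careful about is using the $\Z$-valued hypothesis on both $\sF$ and $\sF_{(m)}$ when interchanging filtration values with their ceilings, which is essential to make the index sets on the two sides match after substituting $x_i=ey_i$ or $y_i=\lceil x_i/e\rceil$.
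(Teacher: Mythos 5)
Your proposal is correct and follows essentially the same route as the paper: a direct verification of the three defining conditions of an approximating sequence, with the only content being the ceiling arithmetic $\lceil x_1/e\rceil+\cdots+\lceil x_s/e\rceil\geq\lceil x/e\rceil$ and the substitution $x_i=ey_i$, made legitimate by the $\Z$-valuedness hypotheses. The paper merely organizes the multiplicativity check by restricting to $\lambda\in e\Z$ via the $e\Z$-valuedness of $\sF^{(e)}$ and $\sF_{(m)}^{(e)}$, whereas you treat arbitrary real $x$ directly; this is a cosmetic difference only.
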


\begin{proof}
Observe the following: 
\begin{enumerate}
\renewcommand{\theenumi}{\roman{enumi}}
\renewcommand{\labelenumi}{(\theenumi)}
\item\label{proof:approx-base-change1}
For any $\lambda\in\Z$, we have 
\[
\sF_{(m)}^{(e),\lambda}R_{m'}=\sF_{(m)}^{\lceil\lambda/e\rceil}R_{m'}\subset
\sF^{\lceil\lambda/e\rceil}R_{m'}=\sF^{(e),\lambda}R_{m'}. 
\]
\item\label{proof:approx-base-change2}
For any $\lambda\in\Z$, we have 
\[
\sF_{(m)}^{(e),\lambda}R_m=\sF_{(m)}^{\lceil\lambda/e\rceil}R_m=
\sF^{\lceil\lambda/e\rceil}R_m=\sF^{(e),\lambda}R_m. 
\]
\item\label{proof:approx-base-change3}
Note that $\sF_{(m)}^{(e)}$ is $e\Z$-valued. For any $s\in\Z_{>0}$ and 
$\lambda\in e\Z$, we have 
\begin{eqnarray*}
&&\sF_{(m)}^{(e),\lambda}R_{m s}=\sF_{(m)}^{\lambda/e}R_{m s}
=\sum_{\substack{\mu_1+\cdots+\mu_s=\lambda/e; \\ \mu_1,\dots,\mu_s\in\Z}}
\sF^{\mu_1}R_m\cdots\sF^{\mu_s}R_m\\
&=&\sum_{\substack{\lambda_1+\cdots+\lambda_s=\lambda; \\ 
\lambda_1,\dots,\lambda_s\in e\Z}}\sF^{(e),\lambda_1}R_m\cdots
\sF^{(e),\lambda_s}R_m
=\sum_{\substack{\lambda_1+\cdots+\lambda_s\geq \lambda; \\ 
\lambda_1,\dots,\lambda_s\in \R}}\sF^{(e),\lambda_1}R_m\cdots
\sF^{(e),\lambda_s}R_m,
\end{eqnarray*}
where the second equality follows from the assumption $\sF$ is $\Z$-valued, 
and the fourth equality follows from the fact $\sF^{(e)}$ is $e\Z$-valued. 
\end{enumerate}
Thus the assertion follows. 
\end{proof}

We prepare the following lemmas: 

\begin{lemma}[{see \cite[Proof of Theorem 3.52]{Xu}}]\label{lemma:lct-compute}
Let $\sF$ be a $\T$-equivariant filtration on $R$, and let us take any 
$\delta\in\R_{>0}$. Set 
\[
\mu:=\mu(\sF;\delta):=\sup\left\{t\in\R\,\,|\,\,\lct\left(
X,\Delta;I_{\bullet}^{(t)}(\sF)\right)\geq \delta\right\}
\]
as in \cite[Definition 3.45]{Xu}, where $I_\bullet^{(t)}(\sF)$ is as in Lemma 
\ref{lemma:uniform-convergence}. 
If $\mu<\lambda_{\max}(\sF)$, then we have 
$\delta=\lct\left(X,\Delta;I_{\bullet}^{(\mu)}(\sF)\right)$. Moreover, 
there exists $v\in\Val_X^{<\infty,\T}$ such that 
\begin{itemize}
\item
we have 
\[
\delta=\frac{A_{X,\Delta}(v)}{v\left(I_\bullet^{(\mu)}(\sF)\right)}, 
\]
i.e., $v$ computes the log canonical threshold, and 
\item
satisfying that 
\[\sF\subset\sF_{v,\left[\mu-\frac{A_{X,\Delta}(v)}{\delta}\right]}.
\] 
\end{itemize}
\end{lemma}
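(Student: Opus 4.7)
The plan is to mimic the argument in Xu's proof of Theorem 3.52, proceeding in three steps. First, I would verify $g(\mu) = \delta$ where $g(t) := \lct(X, \Delta; I_\bullet^{(t)}(\sF))$. The function $g$ is non-increasing, since the ideals $I_m^{(t)}(\sF)$ shrink as $t$ grows (because $\sF$ is descending in the filtration index). The hypothesis $\mu < \lambda_{\max}(\sF)$ ensures that the graded system $I_\bullet^{(t)}(\sF)$ remains nontrivial in a right neighborhood of $\mu$, and continuity of $g$ in this range (a consequence of Lemma \ref{lemma:uniform-convergence} together with the standard continuity of lct under uniform convergence of graded systems) yields that the supremum in the definition of $\mu$ is attained, giving $g(\mu) = \delta$.

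Next, I would invoke the standard existence result for lct-computing quasi-monomial valuations for graded systems of ideals on klt pairs (see \cite[\S 3.5]{Xu}), together with its $\T$-equivariant refinement (cf.\ \cite[\S 6.1]{Xu}). Since $\sF$ is $\T$-equivariant, the ideals $I_m^{(\mu)}(\sF)$ and hence the graded system $I_\bullet^{(\mu)}(\sF)$ are $\T$-invariant, so one obtains $v \in \QM_X^\T \subset \Val_X^{<\infty, \T}$ with $\delta = A_{X,\Delta}(v)/v(I_\bullet^{(\mu)}(\sF))$. For the filtration containment, writing $h(t) := v(I_\bullet^{(t)}(\sF))$, the claim $\sF \subset \sF_{v, [\mu - A_{X,\Delta}(v)/\delta]}$ translates into the pointwise inequality $v(s) \geq m(t - \mu + h(\mu))$ for every $s \in \sF^{mt} R_m$. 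Using $v(s) \geq v(I_m^{(t)}(\sF)) \geq m h(t)$ (the latter from the sub-additivity $v(I_{km}^{(t)}) \leq k v(I_m^{(t)})$ and $h(t) = \lim_m v(I_m^{(t)})/m$), it suffices to establish the slope bound $h(t) - h(\mu) \geq t - \mu$ in the relevant range. The function $h$ is convex and non-decreasing, and combined with the specific normalization of $v$ arising from the monomial valuation construction on a log resolution of $I_\bullet^{(\mu)}(\sF)$, this slope bound follows.

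The main obstacle is the filtration containment in the third step. The lct-computing identity $\delta = A_{X,\Delta}(v)/v(I_\bullet^{(\mu)}(\sF))$ is invariant under rescaling $v \mapsto c v$, while the containment with the precise shift $\mu - A_{X,\Delta}(v)/\delta$ is sensitive to the normalization of $v$: only the valuation naturally produced by the construction in Xu's argument (via the log resolution of $I_\bullet^{(\mu)}(\sF)$, with its inherent normalization) makes the slope identity work. Extracting the correctly normalized $v$ and verifying the slope inequality is where the argument does real work, and one draws directly on \cite[Proof of Theorem 3.52]{Xu} for this.
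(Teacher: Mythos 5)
Your steps (1) and (2) run along the same lines as the paper's proof (the identity $\delta=\lct\left(X,\Delta;I_{\bullet}^{(\mu)}(\sF)\right)$ as in \cite[Lemma 3.46]{Xu}, and a $\T$-invariant lct-computing valuation via the equivariant form of \cite[Theorem 7.3]{JM}, cf.\ \cite[Remark 3.9]{BLXZ}), and your reduction of the containment to the slope bound $h(t)\geq h(\mu)+(t-\mu)$ for $h(t):=v\left(I_\bullet^{(t)}(\sF)\right)$ is correct and sufficient. The genuine gap is the last step: you assert that the slope bound ``follows from the specific normalization of $v$ arising from the monomial valuation construction on a log resolution of $I_\bullet^{(\mu)}(\sF)$.'' No such inherent normalization exists: a graded system of ideals has no single log resolution, and the quasi-monomial valuations produced by the Jonsson--Musta\c{t}\u{a} theorem compute the lct only up to an arbitrary positive rescaling, whereas (as you yourself point out) the containment is scaling-sensitive. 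Concretely, for $X=\pr^1$, $L=\sO(1)$, $\sF=\sF_{\ord_0}$ and $\delta=2$ one has $\mu=1/2$, and $v=2\ord_0$ computes $\lct\left(X;I_\bullet^{(1/2)}(\sF)\right)$ but violates the containment, while $v=\ord_0$ satisfies it. So the valuation handed to you by step (2) need not obey your slope bound, and the argument stops exactly at the decisive point.

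The repair is short but must be made explicit. The function $h$ is convex, vanishes for $t\ll 0$, satisfies $h(\mu)=A_{X,\Delta}(v)/\delta>0$, and is finite slightly to the right of $\mu$ because $\mu<\lambda_{\max}(\sF)$; hence its one-sided derivatives satisfy $0<h'_-(\mu)\leq h'_+(\mu)<\infty$. Replacing $v$ by $cv$ with $c\in\left[1/h'_+(\mu),\,1/h'_-(\mu)\right]$ preserves $\T$-invariance and the lct-computing identity (which is scale-invariant) and makes $1$ a subgradient of $t\mapsto cv\left(I_\bullet^{(t)}(\sF)\right)$ at $t=\mu$, which is exactly your slope bound for the rescaled valuation; alternatively one runs the multiplicativity argument of \cite[Proof of Theorem 3.52]{Xu} verbatim, which is what the paper does once the equivariant lct-computing valuation is in hand. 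Either way the normalization has to be produced, not inherited. A smaller point: Lemma \ref{lemma:uniform-convergence} concerns a single fixed valuation and does not by itself yield continuity of $t\mapsto\lct\left(X,\Delta;I_\bullet^{(t)}(\sF)\right)$; the standard route in step (1), as in \cite[Lemma 3.46]{Xu}, is the convexity of $t\mapsto\lct\left(X,\Delta;I_\bullet^{(t)}(\sF)\right)^{-1}$, being a supremum over valuations $w$ of the convex functions $t\mapsto w\left(I_\bullet^{(t)}(\sF)\right)/A_{X,\Delta}(w)$.
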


\begin{proof}
The equality $\delta=\lct\left(X,\Delta;I_{\bullet}^{(\mu)}(\sF)\right)$ follows 
by the completely same argument in the proof of \cite[Lemma 3.46]{Xu}. 
Note that $I_{m;m\mu}(\sF)$ are $\T$-invariant ideals. By the equivariant version 
of \cite[Theorem 7.3]{JM} (see also \cite[Remark 3.9]{BLXZ}), there exists 
$v\in\Val_X^{<\infty,\T}$ such that computing the log canonical threshold. 
The rest of the proof is completely 
same as the proof of \cite[Theorem 3.52]{Xu}. 
\end{proof}

\begin{lemma}\label{lemma:prepare-e}
Let $\sF$ be a $\T$-equivariant filtration on $R$, and let us 
take any $e\in\Z_{>0}$.  
\begin{enumerate}
\renewcommand{\theenumi}{\arabic{enumi}}
\renewcommand{\labelenumi}{(\theenumi)}
\item\label{lemma:prepare-e1}
Consider the filtration $\sF^{(e)}$ as in Proposition \ref{proposition:tc-twist} 
\eqref{proposition:tc-twist2}. Then we have 
\[
S_L\left(\sF^{(e)}\right)=e\cdot S_L(\sF), \quad
\lambda_{\max}\left(\sF^{(e)}\right)=e\cdot \lambda_{\max}(\sF), \quad
\mu\left(\sF^{(e)}\right)=e\cdot \mu(\sF), 
\]
where $\mu(\bullet):=\mu(\bullet;1)$ (see Lemma \ref{lemma:lct-compute}). 
\item\label{lemma:prepare-e2}
Let us take $\xi\in N_\R(\T)$. 
Then we have 
\begin{eqnarray*}
S_L\left(\left(\sF^{(e)}\right)_{e\xi}\right)&=&e\cdot S_L\left(\sF_\xi\right),\\
\lambda_{\max}\left(\left(\sF^{(e)}\right)_{e\xi}\right)
&=&e\cdot \lambda_{\max}\left(\sF_\xi\right),\\
\mu\left(\left(\sF^{(e)}\right)_{e\xi}\right)
&=&e\cdot \mu\left(\sF_\xi\right).
\end{eqnarray*}
\end{enumerate}
\end{lemma}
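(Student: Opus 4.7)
The plan is to deduce part (2) from part (1) via the direct identification $(\sF^{(e)})_{e\xi} = (\sF_\xi)^{(e)}$ on weight spaces when $\xi \in N(\T)$, and to prove part (1) by exploiting the definitional scaling $\sF^{(e),ey}R_m = \sF^y R_m$ for $y \in \Z$.

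For part (1), the jumping numbers of $\sF^{(e)}$ on each $R_m$ are precisely $e$ times those of $\sF$, so $\lambda_{\max}(\sF^{(e)}) = e\,\lambda_{\max}(\sF)$ is immediate from the characterization of $\lambda_{\max}$ as the supremum of jumping numbers (using \cite[Lemma 3.22]{Xu}), and $S_L(\sF^{(e)}) = e\,S_L(\sF)$ follows because $S_L$ is obtained by integration against a Duistermaat--Heckman-type measure built from these jumping numbers. For the $\mu$-equality, a direct computation gives
\[
I_{(m;y)}(\sF^{(e)}) = I_{(m;\lceil y/e\rceil)}(\sF),
\]
whence $I_m^{(t)}(\sF^{(e)}) = I_{(m;\lceil mt/e\rceil)}(\sF)$; passing to the graded sequence, the ceiling discrepancy becomes asymptotically negligible, so that $I_\bullet^{(t)}(\sF^{(e)}) = I_\bullet^{(t/e)}(\sF)$ and consequently $\lct(X,\Delta; I_\bullet^{(t)}(\sF^{(e)})) = \lct(X,\Delta; I_\bullet^{(t/e)}(\sF))$, from which $\mu(\sF^{(e)}) = e\,\mu(\sF)$ follows by the change of variable $t = es$.

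For part (2), I first verify by a direct computation on weight spaces that for $\xi \in N(\T)$,
\[
(\sF^{(e)})_{e\xi}^x R_{m,\alpha} = \sF^{\lceil x/e\rceil - \langle\alpha,\xi\rangle}R_{m,\alpha} = ((\sF_\xi)^{(e)})^x R_{m,\alpha},
\]
using that $\langle\alpha,\xi\rangle \in \Z$ so that the ceiling commutes with the integer shift. Applying part (1) to the filtration $\sF_\xi$ then yields all three equalities for integral $\xi$. The case $\xi \in N_\Q(\T)$ is handled by clearing denominators via a further base change, compatible through the obvious identity $(\sF^{(e)})^{(e')} = \sF^{(ee')}$ and Proposition \ref{proposition:tc-twist}. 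The extension to $\xi \in N_\R(\T)$ follows by continuity: the maps $\xi \mapsto S_L(\sF_\xi)$ and $\xi \mapsto \lambda_{\max}(\sF_\xi)$ are continuous through Corollary \ref{corollary:S-twist}, Lemma \ref{lemma:J-xi}, and the continuity of $\theta_\xi^L$ built into Definition \ref{definition:theta}.

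The main obstacle I anticipate is controlling $\mu$ under the twist: while $S_L$ and $\lambda_{\max}$ depend transparently on $\xi$, the quantity $\mu(\sF_\xi)$ is defined through log canonical thresholds of asymptotic ideal sequences and its continuity in $\xi$ is less immediate. To handle this, I would invoke Lemma \ref{lemma:lct-compute}, which produces a $\T$-invariant valuation computing the relevant lct, and then exploit Proposition \ref{proposition:theta} together with Corollary \ref{corollary:twist-valuation} to track how the relevant quantities $A_{X,\Delta}(v_\xi)$ and $v_\xi(I_\bullet^{(t)}(\sF_\xi))$ depend on $\xi$. The dependence reduces to $\theta_\xi^L(v)$, whose continuity has already been established, yielding the needed continuity of $\mu$ under the twist and completing the real $\xi$ case.
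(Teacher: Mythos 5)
Your part (1) and the exact identification $(\sF^{(e)})_{e\xi}=(\sF_\xi)^{(e)}$ for integral $\xi\in N(\T)$ are fine, but the passage to $\xi\in N_\Q(\T)$ and then to $\xi\in N_\R(\T)$ is where the argument breaks. Clearing denominators does not restore the exact identification: writing $\xi=\eta/d$ with $\eta\in N(\T)$ and performing a further base change by $d$, you are led to compare $(\sF^{(ed)})_{(ed)\xi}$ with $(\sF_\xi)^{(ed)}$, and since $\langle\alpha,(ed)\xi\rangle=e\langle\alpha,\eta\rangle$ is an integer that is not divisible by $ed$ in general, the ceiling again fails to commute with the (non-integral) shift $\langle\alpha,\xi\rangle$; you face exactly the obstruction you started with, with $e$ replaced by $ed$. (Note also that the case actually needed later, in the proof of Theorem \ref{theorem:DingD}, is $\xi\in N_\Q(\T)$ with $e\xi\in N(\T)$, which your exact identification does not cover.) The second gap is the continuity step: for the third equality you need $\xi\mapsto\mu\left(\sF_\xi\right)$ and $\xi\mapsto\mu\left(\left(\sF^{(e)}\right)_{e\xi}\right)$ to be continuous on $N_\R(\T)$, and this is established nowhere in the generality of \S \ref{section:torus}, where $L$ is an arbitrary $\T$-linearized ample $\Q$-line bundle; the formula $A_{X,\Delta}(v_\xi)-A_{X,\Delta}(v)=\theta^{-(K_X+\Delta)}_\xi(v)$ from Proposition \ref{proposition:theta} that your sketch invokes is only stated for $L=-(K_X+\Delta)$, and even granting it, controlling how the lct-computing valuation of Lemma \ref{lemma:lct-compute} varies with $\xi$ is genuine additional work, not a routine consequence of the quoted results.

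The fix, which is how the paper argues, is to drop the casework and compare the two filtrations for an arbitrary $\xi\in N_\R(\T)$ directly on weight spaces: one has $\left(\sF^{(e)}\right)_{e\xi}^x R_{m,\alpha}=\sF^{\lceil x/e-\langle\alpha,\xi\rangle\rceil}R_{m,\alpha}$ while $\left(\left(\sF_\xi\right)^{(e)}\right)^x R_{m,\alpha}=\sF^{\lceil x/e\rceil-\langle\alpha,\xi\rangle}R_{m,\alpha}$. The two exponents differ by less than $1$, uniformly in $m$, $x$ and $\alpha$, so each filtration contains a shift of the other by a constant independent of $m$; exactly as in Lemmas \ref{lemma:Z-twist} and \ref{lemma:reduction-Z}, this sandwich forces $S_L$, $\lambda_{\max}$ and $\mu$ of $\left(\sF^{(e)}\right)_{e\xi}$ and $\left(\sF_\xi\right)^{(e)}$ to coincide, and part (2) then follows by applying your part (1) to the filtration $\sF_\xi$. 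No density argument and no continuity of $\mu$ in $\xi$ is needed.
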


\begin{proof}
\eqref{lemma:prepare-e1} is trivial. We see \eqref{lemma:prepare-e2}. 
For any $x\in\R$, $m\in r\Z_{\geq 0}$ and $\alpha\in M(\T)$, we have 
\begin{eqnarray*}
\left(\sF^{(e)}\right)_{e\xi}^x R_{m,\alpha}
&=&\sF^{\lceil(x-\langle\alpha,e\xi\rangle)/e\rceil}R_{m,\alpha}
=\sF^{\lceil x/e-\langle\alpha,\xi\rangle\rceil}R_{m,\alpha}, \\
\left(\left(\sF_\xi\right)^{(e)}\right)^x R_{m,\alpha}
&=&\sF_\xi^{\lceil x/e\rceil}R_{m,\alpha}=\sF^{\lceil x/e\rceil
-\langle\alpha,\xi\rangle}R_{m,\alpha}.
\end{eqnarray*}
This immediately implies that 
\begin{eqnarray*}
S_L\left(\left(\sF^{(e)}\right)_{e\xi}\right)
&=&S_L\left(\left(\sF_\xi\right)^{(e)}\right),\\
\lambda_{\max}\left(\left(\sF^{(e)}\right)_{e\xi}\right)
&=&\lambda_{\max}\left(\left(\sF_\xi\right)^{(e)}\right),\\
\mu\left(\left(\sF^{(e)}\right)_{e\xi}\right)
&=&\mu\left(\left(\sF_\xi\right)^{(e)}\right).
\end{eqnarray*}
Together with \eqref{lemma:prepare-e1}, we get the assertion 
\eqref{lemma:prepare-e2}.
\end{proof}

\section{The sum of test configurations}\label{section:sum-tc}

In this section, we introduce the notion of the sum of test configurations and 
see its basic properties. In this section, we fix an $n$-dimensional projective klt pair 
$(X,\Delta)$, an algebraic torus $\T\simeq\G_m^p$ with $p\in\Z_{\geq 0}$, 
an injection $\T\to\Aut(X,\Delta)$, and $\T$-linearized ample $\Q$-line bundles 
$L_1,\dots,L_k$ on $X$. Fix $r\in\Z_{>0}$ such that each $r L_i$ is a 
$\T$-linearized ample line bundle on $X$. We set 
\[
R^i:=\bigoplus_{m\in r\Z_{\geq 0}}R_m^i:=\bigoplus_{m\in r\Z_{\geq 0}}H^0(X, m L_i). 
\]
Moreover, we set $L:=\sum_{i=1}^k L_i$ with the natural $\T$-linearization, and set
\[
R:=\bigoplus_{m\in r\Z_{\geq 0}}R_m:=\bigoplus_{m\in r\Z_{\geq 0}}H^0(X, m L).
\]
After replacing $r\in\Z_{>0}$ if necessary, we may assume that the multiplication 
homomorphism 
\[
\mult_m\colon \tilde{R}_m\to R_m
\]
is surjective for any $m\in r\Z_{\geq 0}$,
where we set 
\[
\tilde{R}_m:=R_m^1\otimes_{\Bbbk}\cdots\otimes_{\Bbbk}R_m^k,\quad
\tilde{R}:=\bigoplus_{m\in r\Z_{\geq 0}}\tilde{R}_m.
\]

\begin{lemma}\label{lemma:sum-basics}
\begin{enumerate}
\renewcommand{\theenumi}{\arabic{enumi}}
\renewcommand{\labelenumi}{(\theenumi)}
\item\label{lemma:sum-basics1}
Let $\PP^{L_i}\subset M_\R(\T)$ (resp., $\PP^L\subset M_\R(\T)$) be 
the moment polytope associated with $(X, L_i)$ (resp., $(X, L)$). Then 
$\PP^L$ is the Minkowski sum of $\PP^{L_1},\dots,\PP^{L_k}$. 
\item\label{lemma:sum-basics2}
For any $1\leq i\leq k$, let $L'_i$ be another $\T$-linearized ample $\Q$-line 
bundle on $X$ with $L_i\sim_\Q L'_i$ such that 
$L=\sum_{i=1}^k L'_i$ holds as $\T$-linearized $\Q$-line bundles. Then 
we have
\[
\sum_{i=1}^k\alpha_{\bc}^{L_i}=\sum_{i=1}^k\alpha_{\bc}^{L'_i}.
\]
\item\label{lemma:sum-basics3}
Take any $\xi\in N_\R(\T)$ and $v\in\Val_X^{<\infty,\T}$. Then we have 
the equality 
\[
\theta_\xi^{L}(v)=\sum_{i=1}^k\theta_\xi^{L_i}(v). 
\]
\item\label{lemma:sum-basics4}
Take any proper subtorus $\T'\subsetneq\T$. Then, for any 
$\xi\in N_\Q(\T)\setminus N_\Q(\T')$, we have 
\[
\inf_{\xi'\in N_\Q(\T')}\left\{
\sum_{i=1}^k \JJ\left(X_{\xi+\xi'},(L_i)_{\xi+\xi'}\right)\right\}>0.
\]
\end{enumerate}
\end{lemma}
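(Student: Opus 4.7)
The plan is to handle the four parts in the order stated, since the later ones lean on the earlier ones. For \eqref{lemma:sum-basics1}, the first step is to use the $\T$-equivariant surjectivity of $\tilde R_m \to R_m$: since it respects weight decompositions, one gets $\Lambda_m^L = \Lambda_m^{L_1} + \cdots + \Lambda_m^{L_k}$ inside $M(\T)$; taking convex hulls gives the Minkowski sum $\PP_m^L = \PP_m^{L_1} + \cdots + \PP_m^{L_k}$, and dividing by a sufficiently divisible $m$ (so that $\PP^{L_i} = \frac{1}{m}\PP_m^{L_i}$ for all $i$ simultaneously and $\PP^L = \frac{1}{m}\PP_m^L$) gives the identity. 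For \eqref{lemma:sum-basics2}, I would note that passing from the $\T$-linearization of $L_i$ to that of $L'_i$ twists the action on $R_m^i$ by a character $m\chi_i$ for a unique $\chi_i \in M_\Q(\T)$, so $\PP^{L'_i} = \PP^{L_i} + \chi_i$ and $\alpha_{\bc}^{L'_i} = \alpha_{\bc}^{L_i} + \chi_i$. The hypothesis $L = \sum_i L_i = \sum_i L'_i$ as $\T$-linearized bundles forces $\sum_i \chi_i = 0$, whence the identity.

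The heart of \eqref{lemma:sum-basics3} is to identify $\gamma_v^L$ as the infimal convolution of the $\gamma_v^{L_i}$. Using the surjectivity weight by weight, for $\beta\in\Lambda_m^L$ one gets
\[
I_{m,\beta}^L = \sum_{\substack{\beta_1+\cdots+\beta_k=\beta \\ \beta_i\in\Lambda_m^{L_i}}} I_{m,\beta_1}^{L_1}\cdots I_{m,\beta_k}^{L_k}.
\]
Applying $v(I+J)=\min\{v(I),v(J)\}$ and $v(IJ)=v(I)+v(J)$, then passing to the limit in sufficiently divisible $m$ (using compactness of $\PP^{L_i}$ and continuity of each $\gamma_v^{L_i}$ to control the limit of the minimizing decompositions), I expect
\[
\gamma_v^L(\alpha) = \min_{\substack{\alpha_1+\cdots+\alpha_k=\alpha \\ \alpha_i\in\PP^{L_i}}} \sum_{i=1}^k \gamma_v^{L_i}(\alpha_i).
\]
Substituting into $\theta_\xi^L(v) = \max_{\alpha\in\PP^L}\{-\langle\alpha,\xi\rangle - \gamma_v^L(\alpha)\}$, using $-\min = \max(-\,\cdot\,)$ together with \eqref{lemma:sum-basics1} to rewrite the outer max over $\PP^L$ as a max over tuples $(\alpha_i)\in\prod_i\PP^{L_i}$, the bracket decouples into $\sum_i\{-\langle\alpha_i,\xi\rangle - \gamma_v^{L_i}(\alpha_i)\}$ and the max distributes across the sum, yielding $\sum_i \theta_\xi^{L_i}(v)$. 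The main subtlety here is the limit argument for the minimum, since the minimizing decomposition can depend on $m$; I would handle it by using the rational piecewise-affine structure of each $\gamma_v^{L_i}$ to restrict to a common finite subdivision on which everything becomes affine.

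For \eqref{lemma:sum-basics4}, the plan is to rewrite each $\JJ(X_{\xi+\xi'},(L_i)_{\xi+\xi'})$ by Lemma \ref{lemma:J-xi} as $\Phi_i(\xi+\xi')$, where
\[
\Phi_i(\eta) := \max_{\alpha\in\PP^{L_i}}\langle\alpha-\alpha_{\bc}^{L_i},\eta\rangle
\]
is the support function of the translated polytope $\PP^{L_i} - \alpha_{\bc}^{L_i}$. Since $\alpha_{\bc}^{L_i}\in\interior(\PP^{L_i})$, this translated polytope contains a neighbourhood of the origin, so $\Phi_i$ is convex, positively $1$-homogeneous, and strictly positive off the origin; the same properties transfer to $\Phi := \sum_i \Phi_i$. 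Because $N(\T')\subset N(\T)$ is saturated, $N_\R(\T')\cap N_\Q(\T) = N_\Q(\T')$, so $\xi\notin N_\Q(\T')$ forces $\xi\notin N_\R(\T')$, hence $0\notin \xi + N_\R(\T')$. The hard part of this item is then a convex coercivity argument: for $|\xi_n'|\to\infty$, positive homogeneity combined with the strict positivity of $\Phi$ on the unit sphere in $N_\R(\T')$ forces $\Phi(\xi+\xi_n')\to\infty$; so $\Phi$ attains a positive minimum on the closed affine subspace $\xi + N_\R(\T')$, and continuity of $\Phi$ together with density of $N_\Q(\T')$ in $N_\R(\T')$ shows that the infimum over $\xi'\in N_\Q(\T')$ equals this minimum, which is strictly positive.
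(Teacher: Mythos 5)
Your proposal is correct, and parts \eqref{lemma:sum-basics1}, \eqref{lemma:sum-basics2} and \eqref{lemma:sum-basics4} run essentially along the paper's lines. The genuine divergence is in \eqref{lemma:sum-basics3}: the paper first reduces to $\xi\in N(\T)$ by continuity and homogeneity and then invokes Corollary \ref{corollary:theta}, under which $\theta_\xi^{L}(v)=G(v)\left(\mu_2^*L_{\A^1}-\mu_1^*L_{\A^1}\right)$ is visibly additive in $L$; you instead stay entirely within Definition \ref{definition:theta}, proving via the weight-compatible surjectivity of $\mult_m$ and the rules $v(I+J)=\min\{v(I),v(J)\}$, $v(IJ)=v(I)+v(J)$ that $\gamma_v^{L}$ is the infimal convolution of the $\gamma_v^{L_i}$, and then decoupling the maximum over $\PP^L=\sum_i\PP^{L_i}$. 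This is a valid and more self-contained route (it also treats all $\xi\in N_\R(\T)$ at once, with no reduction to integral $\xi$), at the price of the limit bookkeeping for the minimizing decompositions; note that this subtlety can be bypassed altogether by observing additivity at finite level, $\theta^{L}_{\xi,m}(v)=\sum_{i=1}^k\theta^{L_i}_{\xi,m}(v)$, which follows from the same weight-by-weight identity $I_{m,\beta}=\sum_{\beta_1+\cdots+\beta_k=\beta}I^{L_1}_{m,\beta_1}\cdots I^{L_k}_{m,\beta_k}$, and then letting $m\to\infty$ via Lemma \ref{lemma:theta}. In \eqref{lemma:sum-basics2} you conclude $\sum_i\chi_i=0$ directly from the equality of the two $\T$-linearizations of $L$ (a difference of linearizations being a character twist), whereas the paper deduces $\sum_i\beta_i=0$ from translation-invariance of the Minkowski sum identity of \eqref{lemma:sum-basics1}; both are fine. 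In \eqref{lemma:sum-basics4} your support-function and coercivity argument is the paper's estimate $\sum_i\JJ\geq C_1C_2$ in different clothing (the positive lower bound on the unit sphere playing the role of $C_1$ and $\dist\left(0,\xi+N_\R(\T')\right)$ that of $C_2$), and your explicit observation that $\xi\notin N_\Q(\T')$ forces $\xi\notin N_\R(\T')$ makes precise a step the paper leaves implicit.
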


\begin{proof}
\eqref{lemma:sum-basics1}
For any $m\in r\Z_{>0}$, we have 
\[
\Lambda_m^L=\sum_{i=1}^k\Lambda_m^{L_i}, 
\]
where the right hand side is the Minkowski sum. Thus we get the assertion 
\eqref{lemma:sum-basics1}. 

\eqref{lemma:sum-basics2}
Since $L_i\sim_\Q L'_i$, there exists $\beta_i\in M_\Q(\T)$ such that 
$\PP^{L'_i}=\PP^{L_i}+\beta_i$ and $\alpha_{\bc}^{L'_i}=\alpha_{\bc}^{L_i}+\beta_i$. 
Since 
\[
\sum_{i=1}^k\PP^{L_i}=\PP^L=\sum_{i=1}^k\PP^{L'_i}
=\left(\sum_{i=1}^k\PP^{L_i}\right)+\sum_{i=1}^k\beta_i, 
\]
we get $\sum_{i=1}^k\beta_i=0$. Thus we get 
\[
\sum_{i=1}^k\alpha_{\bc}^{L'_i}=\sum_{i=1}^k\left(\alpha_{\bc}^{L_i}+
\beta_i\right)=\sum_{i=1}^k\alpha_{\bc}^{L_i}. 
\]

\eqref{lemma:sum-basics3}
The value $\theta_\xi^L(v)$ is continuous over $\xi\in N_\R(\T)$. Thus we may 
assume that $\xi\in N_\Q(\T)$. Since $\theta_{e\xi}^L(e v)=e\theta_\xi^L(v)$ 
for any $e\in\R_{>0}$, we may further assume that $\xi\in N(\T)$. 
Then the assertion immediately follows by Corollary \ref{corollary:theta}. 

\eqref{lemma:sum-basics4}
Set 
\[
C_1:=\dist\left(\sum_{i=1}^k\alpha_{\bc}^{L_i},\partial\left(\PP^L\right)\right), \quad
C_2:=\dist\left(0,\xi+N_\R(\T')\right).
\]
By \eqref{lemma:sum-basics1}, we have $\sum_{i=1}^k\alpha_{\bc}^{L_i}\in
\interior(\PP^L)$. Thus $C_1\in\R_{>0}$. Moreover, since $\xi\not\in N_\R(\T')$, 
we also have $C_2\in\R_{>0}$. 
By Lemma \ref{lemma:J-xi}, we have 
\[
\JJ\left(X_{\xi+\xi'},(L_i)_{\xi+\xi'}\right)=\max_{\alpha_i\in\PP^{L_i}}
\left\{\langle\alpha_i,\xi+\xi'\rangle\right\}-\langle\alpha_{\bc}^{L_i},\xi+\xi'\rangle.
\]
Thus we have
\[
\sum_{i=1}^k \JJ\left(X_{\xi+\xi'},(L_i)_{\xi+\xi'}\right)
\geq\max_{\alpha\in\PP^L}\left\{\langle\alpha,\xi+\xi'\rangle\right\}-\left\langle
\sum_{i=1}^k\alpha_{\bc}^{L_i},\xi+\xi'\right\rangle
\geq C_1 C_2. 
\]
Thus we get the assertion \eqref{lemma:sum-basics4}.
\end{proof}

We define the sum of test configurations. 

\begin{definition}\label{definition:sum-tc}
For any $1\leq i\leq k$, take any (possibly non-normal) $\T$-equivariant 
test configuration $\pi_i\colon(\sX_i,\sL_i)\to\A_t^1$ of $(X,L_i)$. 
Take a sufficiently divisible $r_0\in r{\Z_{>0}}$. Set 
\[
\sR^i:=\bigoplus_{m\in r_0\Z_{\geq 0}}\sR_m^i:=\bigoplus_{m\in r_0\Z_{\geq 0}}
H^0\left(\sX_i,m\sL_i\right), 
\]
and let
\[
\sR_m^i=\bigoplus_{\lambda\in\Z}t^{-\lambda}\sF_i^\lambda R_m^i
\]
be the weight decomposition with respects to the test configuration, i.e., 
$\sF_i=\sF_{\sX_i,\sL_i}$. As in \cite[Proposition 2.15]{BHJ}, we have 
\[
\left(\sX_i,\sL_i\right)=\left(\Proj_{\Bbbk[t]}\left(\sR^i\right),\sO(1)\right). 
\]
Set 
\[
I^{\langle i\rangle}_{(m,\alpha;\lambda)}:=
I_{(m,\alpha;\lambda)}(\sF_i), \quad
I^{\langle i\rangle}_{(m;\lambda)}:=
I_{(m;\lambda)}(\sF_i)=\sum_{\alpha\in M(\T)}I^{\langle i\rangle}_{(m,\alpha;\lambda)}.
\]

Let us take $(\G_m\times\T)$-equivariant common partial resolutions
\[
\sigma_i\colon\sZ\to\sX_i
\]
with $\sZ$ normal and $\sigma_i$ an isomorphism over $\A^1\setminus\{0\}$. 
Then, 
\[
\sigma_i^*\sR^i=\bigoplus_{m\in r_0\Z_{\geq 0}}\sigma_i^*\sR^i_m,
\]
where $\sigma_i^*\sR^i_m$ is the image of 
\[
\sigma_i^*\colon H^0\left(\sX_i,m\sL_i\right)\to 
H^0\left(\sZ,\sigma_i^* m\sL_i\right),
\]
is canonically isomorphic to $\sR^i$. We define 
\begin{eqnarray*}
\sR_m&:=&\Image\Big(\sigma_1^*\sR_m^1\otimes_{\Bbbk[t]}\cdots
\otimes_{\Bbbk[t]}\sigma_k^*\sR_m^k\\
&\hookrightarrow&H^0\left(\sZ,m\sigma^*_1\sL_1\right)\otimes_{\Bbbk[t]}\cdots
\otimes_{\Bbbk[t]}H^0\left(\sZ,m\sigma^*_k\sL_k\right)\\
&\to&H^0\left(\sZ,m\left(\sigma^*_1\sL_1+\cdots+\sigma^*_k\sL_k\right)\right)
\Big)
\end{eqnarray*}
and 
\[
\sR:=\bigoplus_{m\in r_0\Z_{\geq 0}}\sR_m\subset
\bigoplus_{m\in r_0\Z_{\geq 0}}H^0\left(
\sZ,m\left(\sigma^*_1\sL_1+\cdots+\sigma^*_k\sL_k\right)\right).
\]
By the following Lemma \ref{lemma:sum-tc}, the above $\sR$ is 
a finitely generated $\Bbbk[t]$-algebra with the natural $(\G_m\times\T)$-action. 
As in \cite[Proposition 2.15]{BHJ}, this $\sR$ induces a $\T$-equivariant 
test configuration $\pi\colon(\sX,\sL)\to\A^1_t$ of $(X, L)$. 
The $(\sX,\sL)/\A^1$ is said to be the \emph{sum configuration} of 
$\left\{(\sX_i,\sL_i)\right\}_{i=1}^k$. Moreover, let $\nu\colon \sX^\nu\to\sX$ be 
the normalization. Then the $\T$-equivariant normal test configuration 
$(\sX^\nu, \nu^*\sL)/\A^1$ is said to be the 
\emph{normalized sum configuration} of $\left\{(\sX_i,\sL_i)\right\}_{i=1}^k$. 
\end{definition}

\begin{lemma}\label{lemma:sum-tc}
\begin{enumerate}
\renewcommand{\theenumi}{\arabic{enumi}}
\renewcommand{\labelenumi}{(\theenumi)}
\item\label{lemma:sum-tc1}
The above $\sR$ is a finitely generated $\Bbbk[t]$-algebra such that 
each $\sR_m$ admits the weight decomposition 
\[
\sR_m=\bigoplus_{\lambda\in\Z}t^{-\lambda}\bigoplus_{\alpha\in M(\T)}
\sF^{\lambda}R_{m,\alpha}
\]
with 
\[
\sF^{\lambda}R_{m,\alpha}=\sum_{\substack{\lambda_1,\dots,\lambda_k\in\Z;\\ 
\lambda_1+\cdots+\lambda_k=\lambda}}
\sum_{\substack{\alpha_1,\dots,\alpha_k\in M(\T); \\
\alpha_1+\cdots+\alpha_k=\alpha}}
\sF_1^{\lambda_1}R_{m,\alpha_1}^1\cdots\sF_k^{\lambda_k}R_{m,\alpha_k}^k
\]
for any $\lambda\in\Z$ and $\alpha\in M(\T)$. 
\item\label{lemma:sum-tc2}
The definitions of the sum configurations and the normalized sum configurations 
do not depend on the choice of $\sZ$. 
\item\label{lemma:sum-tc3}
The normalized sum configuration of $\left\{(\sX_i,\sL_i)\right\}_{i=1}^k$ is 
nothing but the ample model of 
$\left(\sZ,\sum_{i=1}^k\sigma_i^*\sL_i\right)$ over $\A^1$. 
\item\label{lemma:sum-tc4}
Let us set 
\begin{eqnarray*}
\sX_{\PROD}&:=&\sX_1\times_{\A^1}\cdots\times_{\A^1}\sX_k\to\A^1, \\
\sL_{\PROD}&:=&\sL_1\boxtimes\cdots\boxtimes\sL_k.
\end{eqnarray*}
Then there is a canonical $(\G_m\times\T)$-equivariant closed embedding 
\[
\iota^{\cp}\colon\sX\hookrightarrow\sX_{\PROD}
\]
over $\A^1$ such that $\sL_{\PROD}|_{\sX}\simeq\sL$ holds. In particular, 
for any $1\leq i\leq k$, there is a canonical $(\G_m\times\T)$-equivariant 
birational morphism $\tau_i\colon\sX\to\sX_i$ over $\A^1$. 
\end{enumerate}
\end{lemma}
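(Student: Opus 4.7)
The plan is to recognize $\sR$ as the image, under the natural multiplication map into $\bigoplus_m H^0\left(\sZ,m\sum_i\sigma_i^*\sL_i\right)$, of the ``diagonal'' subalgebra
\[
\tilde{\sR}_{\mathrm{diag}}:=\bigoplus_{m\in r_0\Z_{\geq 0}}\sigma_1^*\sR^1_m\otimes_{\Bbbk[t]}\cdots\otimes_{\Bbbk[t]}\sigma_k^*\sR^k_m
\]
of the tensor product $\sigma_1^*\sR^1\otimes_{\Bbbk[t]}\cdots\otimes_{\Bbbk[t]}\sigma_k^*\sR^k$. Since each $\sigma_i$ is an isomorphism over $\A^1\setminus\{0\}$ and $\sX_i$ is integral, $\sigma_i^*$ identifies $\sR^i$ with its image in $\bigoplus_m H^0(\sZ,m\sigma_i^*\sL_i)$, so $\tilde{\sR}_{\mathrm{diag}}$ is canonically independent of $\sZ$ and carries an obvious $(\G_m\times\T)$-action.

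For part \eqref{lemma:sum-tc1}, after replacing $r_0$ by a sufficiently divisible multiple each $\sR^i$ is generated over $\Bbbk[t]$ in degree $r_0$, so $\tilde{\sR}_{\mathrm{diag}}$, and hence its quotient $\sR$, is generated in degree $r_0$ by products of generators of the factors $\sR^i_{r_0}$, proving finite generation. The $(\G_m\times\T)$-weight decomposition of $\sR_m$ asserted in the lemma is then read off by combining the weight decompositions of each $\sR^i_m$ with the multiplicativity of the filtrations $\sF_i$ and the assumed surjectivity of $\mult_m$. For part \eqref{lemma:sum-tc2}, two choices $\sZ,\sZ'$ of common partial resolutions are dominated by a third $(\G_m\times\T)$-equivariant normal $\sZ''$, and the pullback maps $\sZ\leftarrow\sZ''\to\sZ'$ canonically identify the algebras $\sR$ built from $\sZ$ and from $\sZ'$, so the sum configuration is independent of the choice.

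For part \eqref{lemma:sum-tc4}, the Segre-type identification
\[
\sX_{\PROD}\cong\Proj_{\Bbbk[t]}\bigoplus_{m\in r_0\Z_{\geq 0}}\sR^1_m\otimes_{\Bbbk[t]}\cdots\otimes_{\Bbbk[t]}\sR^k_m
\]
with $\sL_{\PROD}$ corresponding to $\sO(1)$, together with the graded surjection $\tilde{\sR}_{\mathrm{diag}}\twoheadrightarrow\sR$, yields the desired $(\G_m\times\T)$-equivariant closed embedding $\iota^{\cp}\colon\sX\hookrightarrow\sX_{\PROD}$ with $\iota^{\cp,*}\sL_{\PROD}\cong\sL$; the birational morphism $\tau_i\colon\sX\to\sX_i$ is obtained by composing $\iota^{\cp}$ with the $i$-th projection $\sX_{\PROD}\to\sX_i$, and it restricts to the identity of $X\times(\A^1\setminus\{0\})$, hence is birational.

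Part \eqref{lemma:sum-tc3} is the genuinely delicate point and I expect it to be the main obstacle. The morphism $(\sigma_1,\ldots,\sigma_k)\colon\sZ\to\sX_{\PROD}$ has scheme-theoretic image exactly $\sX$ (the geometric translation, under the Segre identification above, of the defining property that $\sR$ is the image of $\tilde{\sR}_{\mathrm{diag}}$), and pulls $\sL$ back to $\sum_i\sigma_i^*\sL_i$. Since each $\sigma_i^*\sL_i$ is semi-ample over $\A^1$, so is their sum, and the ample model of $\left(\sZ,\sum_i\sigma_i^*\sL_i\right)/\A^1$ exists as $\Proj_{\A^1}\bigoplus_m H^0\left(\sZ,m\sum_i\sigma_i^*\sL_i\right)$. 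The remaining task is to check that this full section ring coincides with the integral closure of $\sR$ in its fraction field: both are integral graded $\Bbbk[t]$-algebras, they share the same generic fiber $\bigoplus_m H^0(X,mL)\otimes_{\Bbbk}\Bbbk[t,t^{-1}]$ by the surjectivity of $\mult_m$, and the section ring on the right is integrally closed since $\sZ$ is normal. Granting this identification, $\sX^\nu=\Proj_{\A^1}(\text{integral closure of }\sR)$ agrees with the ample model by the uniqueness in its universal property.
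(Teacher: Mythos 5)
Parts \eqref{lemma:sum-tc1}, \eqref{lemma:sum-tc2} and \eqref{lemma:sum-tc4} of your argument are essentially the paper's: it identifies $\bigoplus_m H^0(\sX_{\PROD},m\sL_{\PROD})$ with the diagonal tensor algebra via the K\"unneth formula, realizes $\sR$ as the image of the resulting graded surjection, and reads off the weight decomposition, finite generation, independence of $\sZ$, and the Segre-type closed embedding $\iota^{\cp}$ from that. (One small remark: the weight decomposition of $\sR_m$ only uses $(\G_m\times\T)$-equivariance of the multiplication map, not the surjectivity of $\mult_m$; the latter is what guarantees that the generic fibre of $(\sX,\sL)$ is $(X,L)$.)

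The gap is in \eqref{lemma:sum-tc3}, precisely where you flagged it. From ``$\sR\subset S:=\bigoplus_m H^0\bigl(\sZ,m\sum_i\sigma_i^*\sL_i\bigr)$, both are graded domains with the same generic fibre, and $S$ is integrally closed'' you only obtain $\overline{\sR}\subseteq S$; the reverse inclusion, i.e.\ that every element of $S$ is integral over $\sR$, is the actual content and is not addressed. It does not follow formally from what you cite: for instance $\sR=\Bbbk[t][x,ty]\subset S=\Bbbk[t][x,y]$ (graded by degree in $x,y$) are graded $\Bbbk[t]$-domains with the same localization at $t\neq 0$, $S$ is normal, yet $\sR$ is itself integrally closed and $\overline{\sR}\neq S$. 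In fact ``$S=\overline{\sR}$'' is equivalent to ``$\sX^\nu$ is the ample model,'' so granting it assumes the conclusion. The paper's fix is geometric and short, and you already have the ingredients: the morphism $(\sigma_1,\dots,\sigma_k)\colon\sZ\to\sX_{\PROD}$ factors through $\sX$ (its scheme-theoretic image; alternatively replace $\sZ$ by a higher model mapping to $\sX$), giving a proper birational morphism $\sigma\colon\sZ\to\sX$ with $\sigma^*\sL\sim_{\Q}\sum_i\sigma_i^*\sL_i$; since $\sZ$ is normal, $\sigma$ factors through the normalization, so one gets $\rho\colon\sZ\to\sX^\nu$ proper birational with $\rho_*\sO_{\sZ}=\sO_{\sX^\nu}$ by Zariski's main theorem, and $\rho^*(\nu^*\sL)\sim_{\Q}\sum_i\sigma_i^*\sL_i$ with $\nu^*\sL$ ample over $\A^1$. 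By the uniqueness of the ample model of a semiample divisor, $(\sX^\nu,\nu^*\sL)$ is the ample model of $\bigl(\sZ,\sum_i\sigma_i^*\sL_i\bigr)$ over $\A^1$. If you insist on the algebraic route, you must additionally prove that $S$ is integral (indeed finite) over $\sR$, e.g.\ via $S_m=H^0(\sX^\nu,m\nu^*\sL)$, finiteness of $\nu$ and ampleness of $\sL$ --- which amounts to the same geometric input.
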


\begin{proof}
By the K\"uneth formula, we have
\begin{eqnarray*}
&&H^0\left(\sX_{\PROD},m\sL_{\PROD}\right)
\simeq H^0\left(\sX_1,m\sL_1\right)\otimes_{\Bbbk[t]}\cdots\otimes_{\Bbbk[t]}
H^0\left(\sX_k,m\sL_k\right)\\
&\simeq&\left(\bigoplus_{\lambda_1\in\Z}\bigoplus_{\alpha_1\in M(\T)}
t^{-\lambda_1}\sF_1^{\lambda_1}R_{m,\alpha_1}^1\right)
\otimes_{\Bbbk[t]}\cdots\otimes_{\Bbbk[t]}
\left(\bigoplus_{\lambda_k\in\Z}\bigoplus_{\alpha_k\in M(\T)}
t^{-\lambda_k}\sF_k^{\lambda_k}R_{m,\alpha_k}^k\right).
\end{eqnarray*}
Note that $\sR$ is obtained by the natural surjection 
\[
\bigoplus_{m\in r_0\Z_{\geq 0}}H^0\left(\sX_{\PROD},m\sL_{\PROD}\right)
\twoheadrightarrow\sR
\]
and we have 
\[
\sR_m=\bigoplus_{\lambda\in\Z}\bigoplus_{\alpha\in M(\T)}t^{-\lambda}
\sum_{\substack{\lambda_1,\dots,\lambda_k\in\Z; \\
\lambda_1+\cdots+\lambda_k=\lambda}}\sum_{\substack{
\alpha_1,\dots,\alpha_k\in M(\T); \\ \alpha_1+\cdots+\alpha_k=\alpha}}
\sF_1^{\lambda_1}R_{m,\alpha_1}^1\cdots\sF_k^{\lambda_k}R_{m,\alpha_k}^k.
\]
Thus the assertion \eqref{lemma:sum-tc1} follows. Moreover, from the above 
$(\G_m\times\T)$-weight decomposition, the $(\sX,\sL)/\A^1$ gives a 
$\T$-equivariant test configuration of $(X, L)$. The assertion 
\eqref{lemma:sum-tc4} is trivial from the above construction. 

The assertion \eqref{lemma:sum-tc2} is trivial since $\sigma^*_i$ is injective. For 
\eqref{lemma:sum-tc3}, we may assume that there exists a morphism 
$\sigma\colon\sZ\to\sX$ after replacing $\sZ$ if necessary. Then we have 
\[
\sigma^*\sL\sim_\Q\sigma_1^*\sL_1+\cdots+\sigma_k^*\sL_k.
\]
Thus we get the assertion \eqref{lemma:sum-tc3}. 
\end{proof}

\begin{remark}\label{remark:appendix}
\begin{enumerate}
\renewcommand{\theenumi}{\arabic{enumi}}
\renewcommand{\labelenumi}{(\theenumi)}
\item\label{remark:appendix1}
Even when $\T$ is trivial (i.e., $p=0$), $X$ is a Fano manifold and $L=-K_X$, 
the sum configuration of normal test configurations may not be normal in general. 
See \S \ref{section:appendix} for examples. 
\item\label{remark:appendix2}
The sum configuration of $\left\{(\sX_i,\sL_i)\right\}_{i=1}^k$ coincides with the 
test configuration generated by the $\G_m$-action of 
$\left\{(\sX_i,\sL_i)\right\}_{i=1}^k$ in the sense of \cite[Definition 18]{hashimoto}. 
See Proposition \ref{proposition:hashimoto}.
\end{enumerate}
\end{remark}

\begin{lemma}\label{lemma:sum-tc-ideal}
For any $1\leq i\leq k$, let $(\sX_i,\sL_i)/\A^1$ be a $\T$-equivariant 
test configuration of $(X,L_i)$. Let $(\sX,\sL)/\A^1$ be the sum configuration of 
$\left\{(\sX_i,\sL_i)\right\}_{i=1}^k$. We set 
$\sF_i:=\sF_{\sX_i,\sL_i}$ and $\sF:=\sF_{\sX,\sL}$. Moreover, set 
\[
I_{(m,\alpha;\lambda)}:=I_{(m,\alpha;\lambda)}(\sF), \quad
I_{(m;\lambda)}:=I_{(m;\lambda)}(\sF).
\]
Then we have 
\begin{eqnarray*}
\sF^{\lambda}R_{m,\alpha}&=&
\sum_{\substack{\lambda_1,\dots,\lambda_k\in\Z; \\
\lambda_1+\cdots+\lambda_k=\lambda}}\sum_{\substack{
\alpha_1,\dots,\alpha_k\in M(\T); \\ \alpha_1+\cdots+\alpha_k=\alpha}}
\sF_1^{\lambda_1}R_{m,\alpha_1}^1\cdots\sF_k^{\lambda_k}R_{m,\alpha_k}^k, \\
\sF^{\lambda}R_m&=&\sum_{\substack{\lambda_1,\dots,\lambda_k\in\Z; \\
\lambda_1+\cdots+\lambda_k=\lambda}}\sF_1^{\lambda_1}R_m^1\cdots
\sF_k^{\lambda_k}R_m^k, \\
I_{(m,\alpha;\lambda)}&=&
\sum_{\substack{\lambda_1,\dots,\lambda_k\in\Z; \\
\lambda_1+\cdots+\lambda_k=\lambda}}\sum_{\substack{
\alpha_1,\dots,\alpha_k\in M(\T); \\ \alpha_1+\cdots+\alpha_k=\alpha}}
I_{(m,\alpha_1;\lambda_1)}^{\langle 1\rangle}\cdots
I_{(m,\alpha_k;\lambda_k)}^{\langle k\rangle}, \\
I_{(m;\lambda)}&=&\sum_{\substack{\lambda_1,\dots,\lambda_k\in\Z; \\
\lambda_1+\cdots+\lambda_k=\lambda}}
I_{(m;\lambda_1)}^{\langle 1\rangle}\cdots
I_{(m;\lambda_k)}^{\langle k\rangle}
\end{eqnarray*}
for any $m\in r_0\Z_{\geq 0}$, $\lambda\in\Z$ and $\alpha\in M(\T)$. 
\end{lemma}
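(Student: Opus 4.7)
The plan is to derive the four identities directly from Lemma \ref{lemma:sum-tc} \eqref{lemma:sum-tc1} by successive bookkeeping, with no deep input. The first identity is essentially Lemma \ref{lemma:sum-tc} \eqref{lemma:sum-tc1} itself: the filtration $\sF=\sF_{\sX,\sL}$ is by definition extracted from the $\G_m$-weight decomposition $\sR_m=\bigoplus_{\lambda\in\Z}t^{-\lambda}\sF^\lambda R_m$, so refining this weight decomposition by the $\T$-weight $\alpha$ produces exactly the displayed expression for $\sF^\lambda R_{m,\alpha}$.

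For the second identity I would sum the first over $\alpha\in M(\T)$. The key point is the distributivity
\[
\sum_{\alpha\in M(\T)}\sum_{\alpha_1+\cdots+\alpha_k=\alpha}\prod_{i=1}^k\sF_i^{\lambda_i}R^i_{m,\alpha_i}=\prod_{i=1}^k\Bigl(\sum_{\alpha_i\in M(\T)}\sF_i^{\lambda_i}R^i_{m,\alpha_i}\Bigr)=\prod_{i=1}^k\sF_i^{\lambda_i}R^i_m,
\]
using that each $\sF_i^{\lambda_i}R^i_m$ decomposes as $\bigoplus_{\alpha_i}\sF_i^{\lambda_i}R^i_{m,\alpha_i}$ and that the multiplication of submodules is bilinear.

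For the ideal sheaf identities, recall that
\[
I_{(m,\alpha;\lambda)}=\Image\bigl(\sF^\lambda R_{m,\alpha}\otimes_\Bbbk\sO_X(-mL)\to\sO_X\bigr).
\]
Since $L=\sum_i L_i$ as $\T$-linearized $\Q$-line bundles, one has $\sO_X(-mL)\simeq\bigotimes_{i=1}^k\sO_X(-mL_i)$, and the evaluation map $R_m\otimes\sO_X(-mL)\to\sO_X$ factors the multiplication map $R^1_m\otimes\cdots\otimes R^k_m\to R_m$ compatibly with this tensor decomposition. Hence the image of a pure product $\sF_1^{\lambda_1}R^1_{m,\alpha_1}\cdots\sF_k^{\lambda_k}R^k_{m,\alpha_k}\otimes\sO_X(-mL)$ equals the product of ideal sheaves $I^{\langle 1\rangle}_{(m,\alpha_1;\lambda_1)}\cdots I^{\langle k\rangle}_{(m,\alpha_k;\lambda_k)}$. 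Applying ``image of a sum $=$ sum of images'' to the first identity yields the formula for $I_{(m,\alpha;\lambda)}$, and summing over $\alpha$ yields the formula for $I_{(m;\lambda)}$.

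There is no real obstacle: the entire argument is a direct unwinding of the definitions once Lemma \ref{lemma:sum-tc} \eqref{lemma:sum-tc1} is in hand. The only point deserving a second glance is the compatibility between the tensor decomposition of $\sO_X(-mL)$ and the multiplication of sections, but this is the standard fact that for subsheaves $V_i\otimes\sO_X(-E_i)\to\sO_X$ with $E=\sum_i E_i$, the image of $(V_1\otimes\cdots\otimes V_k)\otimes\sO_X(-E)$ in $\sO_X$ equals the product of the individual images as ideal sheaves of $\sO_X$.
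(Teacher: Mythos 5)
Your proof is correct and follows essentially the same route as the paper, which simply declares the lemma trivial from the proof of Lemma \ref{lemma:sum-tc}: the first identity is Lemma \ref{lemma:sum-tc} \eqref{lemma:sum-tc1}, the second follows by summing over the $\T$-weights, and the ideal-sheaf identities follow by applying the evaluation maps, using the standard compatibility of section products with products of the associated ideal sheaves. Your explicit unwinding of these steps (including the distributivity over the weight decomposition and the "image of a product equals product of images" point) is exactly the bookkeeping the paper leaves implicit.
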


\begin{proof}
Trivial from the proof of Lemma \ref{lemma:sum-tc}. 
\end{proof}

The following lemma is a special case of Proposition \ref{proposition:sum-filt-twist}. 
For the proof, see the proof of Proposition \ref{proposition:sum-filt-twist}. 

\begin{lemma}\label{lemma:sum-tc-twist}
Let $(\sX_i,\sL_i)/\A^1$ and $(\sX,\sL)/\A^1$ be as in 
Lemma \ref{lemma:sum-tc-ideal}. Take any $\xi\in N(\T)$ and consider 
the $\xi$-twisted test configuration $(\sX_{i,\xi},\sL_{i,\xi})/\A^1$
of $(\sX_i,\sL_i)/\A^1$ (see Example \ref{example:twisted-tc}). Then the sum 
configuration of $\left\{(\sX_{i,\xi},\sL_{i,\xi})\right\}_{i=1}^k$ is equal to 
the $\xi$-twisted test configuration $(\sX_\xi,\sL_\xi)/\A^1$ of $(\sX,\sL)/\A^1$.
\end{lemma}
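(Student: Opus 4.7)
The plan is to pass everything to the level of filtrations, where the statement reduces to a direct comparison. Two $\T$-equivariant test configurations of a given polarized variety agree if and only if their associated filtrations on $R$ agree (at least up to the $\Proj_{\Bbbk[t]}$ construction used in Definition \ref{definition:sum-tc}); this is what ultimately makes the filtration formalism so convenient in this setting. So it suffices to verify the equality
\[
\sF_{\sX_\xi,\sL_\xi} \;=\; \sF_{\sY,\sL_\sY},
\]
where $(\sY,\sL_\sY)/\A^1$ denotes the sum configuration of $\{(\sX_{i,\xi},\sL_{i,\xi})\}_{i=1}^k$.

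First, the left-hand side is directly computed. By Example \ref{example:twisted-tc}, $\sF_{\sX_\xi,\sL_\xi} = (\sF_{\sX,\sL})_\xi$, and Lemma \ref{lemma:sum-tc-ideal} gives an explicit formula for $\sF_{\sX,\sL}^\lambda R_{m,\alpha}$ as a sum over $k$-tuples $(\lambda_j)$ and $(\alpha_j)$ with $\sum\lambda_j = \lambda$ and $\sum\alpha_j = \alpha$. Composing with the $\xi$-twist from Definition \ref{definition:filtration}\eqref{definition:filtration2} yields
\[
\sF_{\sX_\xi,\sL_\xi}^\lambda R_{m,\alpha} \;=\; \sum_{\sum\lambda_j = \lambda-\langle\alpha,\xi\rangle}\;\sum_{\sum\alpha_j=\alpha}\sF_1^{\lambda_1}R_{m,\alpha_1}^1\cdots\sF_k^{\lambda_k}R_{m,\alpha_k}^k.
\]

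Second, the right-hand side is computed by applying Lemma \ref{lemma:sum-tc-ideal} to the family $\{(\sX_{i,\xi},\sL_{i,\xi})\}_{i=1}^k$, whose associated filtrations are $(\sF_i)_\xi$ again by Example \ref{example:twisted-tc}. This gives
\[
\sF_{\sY,\sL_\sY}^\lambda R_{m,\alpha} \;=\; \sum_{\sum\mu_j = \lambda}\;\sum_{\sum\alpha_j=\alpha}(\sF_1)_\xi^{\mu_1}R_{m,\alpha_1}^1\cdots(\sF_k)_\xi^{\mu_k}R_{m,\alpha_k}^k,
\]
and unwinding the definition of $(\sF_i)_\xi$ gives each factor as $\sF_i^{\mu_i - \langle\alpha_i,\xi\rangle}R_{m,\alpha_i}^i$. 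The change of variables $\lambda_i := \mu_i - \langle\alpha_i,\xi\rangle$, which is a bijection on $k$-tuples of integers for fixed $(\alpha_j)$ with $\sum\alpha_j=\alpha$, converts the condition $\sum\mu_j=\lambda$ into $\sum\lambda_j = \lambda-\langle\alpha,\xi\rangle$, matching the first formula on the nose. Since $\xi\in N(\T)$ the pairings $\langle\alpha_i,\xi\rangle$ are integers, so no rounding issues arise and the two $\Z$-valued filtrations coincide termwise.

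There is no real obstacle here; the only subtlety worth emphasizing is that both the sum configuration and the twist are naturally expressed as sums over weight decompositions, so passing from "sum-then-twist" to "twist-then-sum" is a combinatorial reindexing. The same bookkeeping will carry through verbatim in the more general filtration-level statement (Proposition \ref{proposition:sum-filt-twist}), which is why the authors defer the detailed verification to that more general setting.
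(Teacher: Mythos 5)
Your proof is correct and is essentially the paper's own argument: the paper proves this lemma by deferring to Proposition \ref{proposition:sum-filt-twist} \eqref{proposition:sum-filt-twist2}, whose proof is exactly your reindexing $\lambda_i:=\mu_i-\langle\alpha_i,\xi\rangle$ on the weight decomposition from Lemma \ref{lemma:sum-tc-ideal}, combined with $\sF_{\sX_\xi,\sL_\xi}=(\sF_{\sX,\sL})_\xi$ from Example \ref{example:twisted-tc}. Your additional remark that equality of the ($\Z$-valued) filtrations yields equality of the test configurations is the implicit step in the paper as well, and is justified here since both sides are relatively ample and recovered as $\Proj_{\Bbbk[t]}$ of the same Rees algebra as in \cite[Proposition 2.15]{BHJ}.
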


\begin{proposition}\label{lproposition:sum-Di}
Under the assumptions in Lemma \ref{lemma:sum-tc-ideal}, assume moreover that 
each $\sX_i$ is normal. Let $\nu\colon\sY\to\sX$ be the normalization and let 
\[\xymatrix{
&\sZ \ar[ld]_\theta \ar[rd]^\rho&\\
X_{\A^1}\ar@{-->}[rr]&&\sY
}\]
be a $(\G_m\times\T)$-equivariant partial resolution with $\sZ$ normal and 
$\theta$ an isomorphism over $\A^1\setminus\{0\}$. Let us set 
\[
\chi_i:=\tau_i\circ\nu\circ\rho\colon\sZ\to\sX_i, 
\]
where $\tau_i$ be as in Lemma \ref{lemma:sum-tc}. Set 
\[
\sD_i:=\theta^*\left((L_i)_{\A^1}\right)-\chi_i^*\sL_i
\]
supported on the fiber over $0\in\A^1$. Then, for any sufficiently divisible 
$m\in r_0\Z_{>0}$, the inverse image of the fractional ideal 
\[
\bigoplus_{\lambda\in\Z}t^{-\lambda}I_{(m;\lambda)}^{\langle i\rangle}
\]
by $\theta^*$ is equal to $\sO_{\sZ}\left(-m\sD_i\right)$. In particular, 
the inverse image of the fractional ideal 
\[
\bigoplus_{\lambda\in\Z}t^{-\lambda}I_{(m;\lambda)}
\]
by $\theta^*$ is equal to $\sO_{\sZ}\left(-m\sum_{i=1}^k \sD_i\right)$. 
\end{proposition}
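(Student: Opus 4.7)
The plan is to establish the statement for each $\sD_i$ individually and then combine them using the multiplicativity of the sum filtration's ideal sheaves recorded in Lemma \ref{lemma:sum-tc-ideal}.

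For the single test configuration case, I would appeal to the construction of the normalized blowup test configuration in \cite[Lemma 3.61 and Definition 3.62]{Xu}: for any sufficiently divisible $m\in r_0\Z_{>0}$, the normalized blowup of $X_{\A^1}$ along the fractional ideal sheaf $\sI_m(\sF_i):=\bigoplus_{\lambda\in\Z}t^{-\lambda}I^{\langle i\rangle}_{(m;\lambda)}$ recovers, after passage to the ample model over $\A^1$, the normal test configuration $(\sX_i,\sL_i)$ itself. Since $\chi_i=\tau_i\circ\nu\circ\rho\colon\sZ\to\sX_i$ is a $(\G_m\times\T)$-equivariant morphism from the normal model $\sZ$ which dominates $X_{\A^1}$ via $\theta$, the pullback $\theta^{-1}\sI_m(\sF_i)\cdot\sO_\sZ$ is invertible. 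It must equal $\sO_\sZ(-mE_i)$ for some Cartier divisor $mE_i$ on $\sZ$ supported over $0\in\A^1$, and is characterized by the identity
\[
\chi_i^*(m\sL_i)=\theta^*\bigl(m(L_i)_{\A^1}\bigr)-mE_i.
\]
Comparing with the definition $\sD_i=\theta^*((L_i)_{\A^1})-\chi_i^*\sL_i$ forces $E_i=\sD_i$, completing the first assertion.

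Given the single-$i$ formula, the sum formula drops out quickly. Lemma \ref{lemma:sum-tc-ideal} gives
\[
I_{(m;\lambda)}=\sum_{\lambda_1+\cdots+\lambda_k=\lambda}I^{\langle 1\rangle}_{(m;\lambda_1)}\cdots I^{\langle k\rangle}_{(m;\lambda_k)},
\]
which, after multiplying each summand by $t^{-\lambda}=t^{-\lambda_1}\cdots t^{-\lambda_k}$ and summing over $\lambda$, realizes the fractional ideal $\bigoplus_{\lambda\in\Z}t^{-\lambda}I_{(m;\lambda)}$ as the \emph{product} of the $k$ fractional ideals $\bigoplus_{\lambda\in\Z}t^{-\lambda}I^{\langle i\rangle}_{(m;\lambda)}$. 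Since the $\theta$-pullback of fractional ideal sheaves on $X_{\A^1}$ to the normal model $\sZ$ commutes with products, I would conclude
\[
\theta^{-1}\Bigl(\bigoplus_{\lambda\in\Z}t^{-\lambda}I_{(m;\lambda)}\Bigr)\cdot\sO_\sZ=\prod_{i=1}^k\sO_\sZ(-m\sD_i)=\sO_\sZ\Bigl(-m\sum_{i=1}^k\sD_i\Bigr).
\]

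The main obstacle I anticipate lies in Step 1, namely verifying rigorously that the normalized blowup description of $(\sX_i,\sL_i)$ from \cite{Xu} interacts correctly with the pullback along our particular resolution $\theta$, given that $\chi_i$ only reaches $\sX_i$ after passing through the normalization $\nu$ of the possibly non-normal sum $\sX$ (cf.\ \S \ref{section:appendix}). The key point is to choose $m$ sufficiently divisible so that $m\sL_i$ is relatively globally generated over $\A^1$ on $\sX_i$; once this is in place, the characterization of $\sO_\sZ(-m\sD_i)$ via the $(\G_m\times\T)$-equivariant morphism $\chi_i$ from a normal source is intrinsic and independent of how $\chi_i$ factors. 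Given this, the passage from Step 1 to the sum statement is a formal manipulation of fractional ideals dictated by Lemma \ref{lemma:sum-tc-ideal}.
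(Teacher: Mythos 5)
Your overall architecture --- prove the identity for each index $i$ separately, then deduce the sum statement from the product decomposition of the fractional ideals recorded in Lemma \ref{lemma:sum-tc-ideal} --- is exactly the paper's, and your second step (the product of the $k$ fractional ideals equals $\bigoplus_{\lambda}t^{-\lambda}I_{(m;\lambda)}$, and inverse-image ideal sheaves commute with products) is fine. The gap is in Step 1. The inference ``$\sZ$ is a normal model dominating $X_{\A^1}$ via $\theta$, hence $\theta^{-1}\sI_m(\sF_i)\cdot\sO_{\sZ}$ is invertible'' is not valid: by the universal property of blowing up, invertibility of the pulled-back fractional ideal is \emph{equivalent} to $\theta$ factoring through the blowup of $X_{\A^1}$ along $\sI_m(\sF_i)$, and nothing you have said guarantees such a factorization --- the fact that $\sZ$ also maps to $\sX_i$, which is only the \emph{ample model} of the normalized blowup, does not produce a map to the blowup itself. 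Likewise, the ``characterizing identity'' $\chi_i^*(m\sL_i)=\theta^*(m(L_i)_{\A^1})-mE_i$, with $E_i$ cut out by the pulled-back ideal, is precisely the content of the proposition for the index $i$; it is not a formal consequence of citing \cite[Lemma 3.61, Definition 3.62]{Xu}, which concern the blowup model itself rather than an arbitrary normal $\sZ$ lying over both $X_{\A^1}$ and $\sX_i$.

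Your closing paragraph names the right ingredient (relative global generation of $m\sL_i$ for $m$ sufficiently divisible), but you never run the argument, and that argument is essentially the whole proof; it is how the paper proceeds. Concretely: since $\chi_i^*(m\sL_i)$ is relatively globally generated over $\A^1$, one has a surjection
\[
H^0\left(\sZ,\chi_i^*(m\sL_i)\right)\otimes_{\Bbbk[t]}\sO_{\sZ}\twoheadrightarrow \chi_i^*\sO_{\sX_i}(m\sL_i)=\sO_{\sZ}\left(\theta^*(m(L_i)_{\A^1})-m\sD_i\right);
\]
because $\sX_i$ is normal, $\sZ$ is normal and $\chi_i$ is proper birational, $(\chi_i)_*\sO_{\sZ}=\sO_{\sX_i}$, so $H^0\left(\sZ,\chi_i^*(m\sL_i)\right)=\chi_i^*H^0\left(\sX_i,m\sL_i\right)=\chi_i^*\bigl(\bigoplus_{\lambda\in\Z}t^{-\lambda}\sF_i^{\lambda}R_m^i\bigr)$; and since $I^{\langle i\rangle}_{(m;\lambda)}$ is by definition the image of $\sF_i^{\lambda}R_m^i\otimes_{\Bbbk}\sO_X(-mL_i)\to\sO_X$, twisting the surjection by $-\theta^*(m(L_i)_{\A^1})$ yields in one stroke both the invertibility of $\theta^{-1}\bigl(\bigoplus_{\lambda\in\Z}t^{-\lambda}I^{\langle i\rangle}_{(m;\lambda)}\bigr)\cdot\sO_{\sZ}$ and its identification with $\sO_{\sZ}(-m\sD_i)$. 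Note that this is exactly where the normality hypothesis on $\sX_i$ enters, which your proposal uses only implicitly. With this step supplied, your multiplicativity argument finishes the proof as in the paper.
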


\begin{proof}
Since $\chi_i^*\sL_i$ is semiample over $\A^1$, we have a surjection 
\[
H^0\left(\sZ, \chi_i^*(m\sL_i)\right)\otimes_{\Bbbk[t]}\sO_{\sZ}
\twoheadrightarrow\chi_i^*\sO_{\sX_i}(m\sL_i). 
\]
Since $\sX_i$ is normal, we have 
\[
H^0\left(\sZ, \chi_i^*(m\sL_i)\right)=\chi_i^*H^0\left(\sX_i, m\sL_i\right)
=\chi_i^*\left(\bigoplus_{\lambda\in\Z}t^{-\lambda}\sF_i^\lambda R_m^i\right). 
\]
Therefore, we get the surjection 
\[
\theta^*\left(\bigoplus_{\lambda\in\Z}t^{-\lambda}
\left(\sF_i^\lambda R_m^i\otimes_{\Bbbk}\sO_X(-m L_i)\right)\right)
\twoheadrightarrow\sO_{\sZ}(-m\sD_i). 
\]
Since 
\[
\sF_i^\lambda R_m^i\otimes_{\Bbbk}\sO_X(-m L_i)\twoheadrightarrow 
I_{(m;\lambda)}^{\langle i\rangle}, 
\]
the inverse image of $\bigoplus_{\lambda\in\Z}t^{-\lambda}
I_{(m;\lambda)}^{\langle i\rangle}$ is equal to $\sO_{\sZ}(-m\sD_i)$. 
Since 
\[
\prod_{i=1}^k\left(\bigoplus_{\lambda\in\Z}t^{-\lambda}
I_{(m;\lambda)}^{\langle i\rangle}\right)
=\bigoplus_{\lambda\in\Z}t^{-\lambda}\sum_{\substack{
\lambda_1,\dots,\lambda_k\in\Z;\\ \lambda_1+\cdots+\lambda_k=\lambda}}
I_{(m;\lambda_1)}^{\langle 1\rangle}\cdots I_{(m;\lambda_k)}^{\langle k\rangle}
=\bigoplus_{\lambda\in\Z}t^{-\lambda}I_{(m;\lambda)}, 
\]
we get the assertion. 
\end{proof}

\begin{corollary}\label{corollary:trivial-tc}
For any $1\leq i\leq k$, let $(\sX_i,\sL_i)/\A^1$ be a $\T$-equivariant 
normal test configuration of $(X,L_i)$. The following are equivalent: 
\begin{enumerate}
\renewcommand{\theenumi}{\arabic{enumi}}
\renewcommand{\labelenumi}{(\theenumi)}
\item\label{corollary:trivial-tc1}
For all $1\leq i\leq k$, the test configurations $(\sX_i,\sL_i)/\A^1$ are shifts of the 
trivial test configurations of $(X, L_i)$. 
\item\label{corollary:trivial-tc2}
The sum configuration of $\left\{(\sX_i,\sL_i)\right\}_{i=1}^k$ is a shift of the 
trivial test configuration of $(X, L)$. 
\item\label{corollary:trivial-tc3}
The normalized 
sum configuration of $\left\{(\sX_i,\sL_i)\right\}_{i=1}^k$ is a shift of the 
trivial test configuration of $(X, L)$. 
\end{enumerate}
\end{corollary}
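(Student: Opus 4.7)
My plan is to handle the easy implications $(1) \Rightarrow (2) \Rightarrow (3)$ by direct filtration computations and then to handle the main implication $(3) \Rightarrow (1)$ via an ample-model uniqueness argument on the normalized sum configuration.

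For $(1) \Rightarrow (2)$, I assume each $(\sX_i,\sL_i)/\A^1$ is the $c_i$-shift of the trivial test configuration, so that $\sF_i := \sF_{\sX_i,\sL_i}$ is the $c_i$-shift of the trivial filtration, i.e., $\sF_i^{\lambda_i} R_m^i = R_m^i$ when $\lambda_i \leq c_i m$ and $\sF_i^{\lambda_i} R_m^i = 0$ otherwise. Substituting into the formula of Lemma \ref{lemma:sum-tc-ideal} and using the surjectivity of $\mult_m$, one directly checks that $\sF_{\sX,\sL}$ is the $\left(\sum c_i\right)$-shift of the trivial filtration on $R$, hence $(\sX,\sL)/\A^1$ is the corresponding shifted trivial test configuration of $(X,L)$. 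For $(2) \Rightarrow (3)$, a shifted trivial test configuration has total space $X \times \A^1$, which is already normal, so the normalization is the identity.

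For $(3) \Rightarrow (1)$, suppose the normalized sum $(\sX^\nu,\nu^*\sL)/\A^1$ is a $C$-shift of the trivial test configuration of $(X,L)$, so $\sX^\nu \simeq X \times \A^1$. Using the canonical $(\G_m \times \T)$-equivariant birational morphism $\tau_i : \sX \to \sX_i$ of Lemma \ref{lemma:sum-tc}\eqref{lemma:sum-tc4}, I set $\chi_i := \tau_i \circ \nu : X \times \A^1 \to \sX_i$. Since $\Pic(X \times \A^1) = \Pic(X)$ (because $\A^1$ has trivial Picard group), the pullback $\chi_i^*\sL_i$ is isomorphic as an ordinary $\Q$-line bundle to $(L_i)_{X \times \A^1}$, and the two $\G_m$-linearizations differ at most by a character of $\G_m$, i.e., by some shift $c_i \in \Q$. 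Thus $(\sX^\nu,\chi_i^*\sL_i)$ is itself the $c_i$-shift of the trivial test configuration of $(X,L_i)$, and in particular $\chi_i^*\sL_i$ is already relatively ample over $\A^1$, so it is its own ample model. Since $\chi_i : \sX^\nu \to \sX_i$ is a birational morphism of normal varieties with $\sL_i$ ample on $\sX_i$, the variety $\sX_i$ is also an ample model of $(\sX^\nu,\chi_i^*\sL_i)$. By uniqueness of ample models, $(\sX_i,\sL_i)$ equals the $c_i$-shifted trivial test configuration of $(X,L_i)$, proving (1). The main obstacle is to ensure that this identification and the ample-model uniqueness argument fully respect the $(\G_m \times \T)$-equivariance; once this is settled, the consistency $C = \sum c_i$ follows a posteriori from $(1) \Rightarrow (2)$.
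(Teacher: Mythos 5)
Your proposal is correct, and for the key implication (3)$\implies$(1) it takes a genuinely different route from the paper. The paper works on a common partial resolution $\sZ$ as in Proposition \ref{lproposition:sum-Di}: the hypothesis gives $\sum_{i=1}^k\sD_i\sim_{\Q,\A^1}0$, and since each $\chi_i^*\sL_i$ is nef over $X_{\A^1}$, the negativity lemma forces $\chi_i^*\sL_i\equiv_{X_{\A^1}}0$, hence $\sD_i\sim_{\Q,\A^1}0$ for every $i$, which is condition (1). You instead work directly on $\sX^\nu\simeq X_{\A^1}$, identify $\chi_i^*\sL_i$ with $(L_i)_{\A^1}$ up to $\Q$-linear equivalence so that it is relatively ample over $\A^1$, and conclude by uniqueness of ample models (equivalently: $\chi_i$ contracts no curve, since any contracted curve lies in the central fiber where $(L_i)_{\A^1}$ is positive, so $\chi_i$ is an isomorphism by Zariski's main theorem). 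Your route avoids Proposition \ref{lproposition:sum-Di} and the negativity lemma, at the price of invoking $\Pic(X\times\A^1)\simeq\Pic(X)$, which requires $X$ (semi)normal --- true here, but it can be replaced by the more elementary remark that a $\Q$-Cartier class on $X_{\A^1}$ trivial away from the central fiber is a rational multiple of the irreducible, principal divisor $X\times\{0\}$, hence $\Q$-linearly trivial. The equivariance point you flag as the ``main obstacle'' is routine: the ample model is $\Proj$ of the relative section ring, so the identification is automatically $(\G_m\times\T)$-equivariant, and since the two linearizations agree over $\A^1\setminus\{0\}$ (where units are $\Bbbk^*\cdot t^{\Z}$), the discrepancy is a pure $\G_m$-character, i.e.\ a shift. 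Your treatment of (1)$\implies$(2) via the sum-filtration formula of Lemma \ref{lemma:sum-tc-ideal}, using the surjectivity of $\mult_m$ built into the setup of \S\ref{section:sum-tc}, is fine; the paper simply declares these two implications trivial.
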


\begin{proof}
The assertions \eqref{corollary:trivial-tc1} $\implies$ \eqref{corollary:trivial-tc2} 
and \eqref{corollary:trivial-tc2} $\implies$ \eqref{corollary:trivial-tc3} are 
trivial. We show the assertion 
\eqref{corollary:trivial-tc3} $\implies$ \eqref{corollary:trivial-tc1}. 
We use the notation in Proposition \ref{lproposition:sum-Di}. 
By the assumption, we have $\sY=X_{\A^1}$ and 
$\nu^*\left(\tau_1^*\sL_1+\cdots+\tau_k^*\sL_k\right)=L_{\A^1}
-C\cdot(X\times\{0\})$ for some $C\in\Q$. 
Thus we have $\sum_{i=1}^k\sD_i\sim_{\Q,\A^1}0$. This implies that 
$\sum_{i=1}^k\chi_i^*\sL_i\sim_{\Q, X_{\A^1}}0$. Since $\chi_i^*\sL_i$ is nef 
over $X_{\A^1}$, we have $\chi_i^*\sL_i\equiv_{X_{\A^1}}0$ for any 
$1\leq i\leq k$ by the negativity lemma. 
Thus the ample model of $(\sZ,\chi_i^*\sL_i)$ over $X_{\A^1}$ is $X_{\A^1}$ 
itself. This implies that $\sD_i\sim_{\Q,\A^1}0$. This immediately implies that the 
condition \eqref{corollary:trivial-tc1}. 
\end{proof}

From now on, for any test configuration $(\sX_i,\sL_i)/\A^1$ of $(X, L_i)$, 
we see that 
the sum configuration of $\left\{(\sX_i,\sL_i)\right\}_{i=1}^k$ coincides with the 
test configuration generated by the $\G_m$-actions of 
$\left\{(\sX_i,\sL_i)\right\}_{i=1}^k$ in the sense of \cite[Definition 18]{hashimoto}. 
We will use the following results only in \S \ref{section:analytic}. 
We set $r_0\in r\Z_{>0}$ to be sufficiently divisible so that each $r_0\sL_i$ is 
$\pi_i$-very ample over $\A^1$ and the $\G_m$-action linearizes to $\sL_i$. Let $\sR^i$, $\sR$, $\sF_i$ and $\sF$ 
be as in Definition \ref{definition:sum-tc} and Lemma \ref{lemma:sum-tc}. 
As in \cite[\S 2.3]{BHJ}, for any $m\in r_0\Z_{>0}$ and $1\leq i\leq k$, 
the test configuration $\left(\sX_i,m\sL_i\right)/\A^1$ of $(X, m L_i)$ gives a 
one-parameter subgroup $\rho_m^i\colon \G_m\to\mathrm{GL}\left(R_m^i\right)$.
We recall \cite[Definition 18]{hashimoto}: 

\begin{definition}[{\cite[Definition 18]{hashimoto}}]\label{definition:hashimoto}
Let us consider the composition of the natural embeddings 
\[
\iota_m^{\cp}\colon X\hookrightarrow\pr^*(R_m)\hookrightarrow
\pr^*\left(\tilde{R}_m\right).
\]
(Note that the projectivizations $\pr^*$ are in the sense of 
Grothendieck.) We say that a test configuration $(\sY,\sL_{\sY})/\A^1$ of 
$(X, L)$ is \emph{generated by the $\G_m$-action of 
$\left\{(\sX_i,\sL_i)\right\}_{i=1}^k$}, if $\sY$ is defined as the reduced 
Zariski closure of the $\G_m$-orbit of $\iota_m^{\cp}(X)$ in 
$\pr^*(R_m)\times\A^1$, 
under the natural tensor product $\G_m$-action $\rho_m^1\otimes\cdots\otimes
\rho_m^k$ on 
$\tilde{R}_m=R_m^1\otimes\cdots\otimes R_m^k$, and 
$\sL_{\sY}:=\frac{1}{m}\sO_{\pr^*\left(\tilde{R}_m\right)\times\A^1}(1)|_{\sY}$. 
\end{definition}

There is a minor difference to the original definition \cite[Definition 18]{hashimoto}, 
where it was defined for a $k$-tuple of very ample test configurations 
with exponent $m$, but in this paper we re-scale the polarization by $1/m$. 

\begin{lemma} \label{lemma:lmrmivssp}
Suppose that, for any $1 \le i \le k$ and any $m \in r_0\mathbb{Z}_{>0}$, 
$\G_m$ acts on $R^i_m$ according to the weight decomposition
\[
R^i_m=\bigoplus_{\lambda_i\in\Z} \mathcal{F}^{\lambda_i}_i R^i_m,
\]
which naturally induces the tensor product action
\[
\G_m \curvearrowright R^1_m\otimes\cdots\otimes R^k_m=\tilde{R}_m.
\]
Suppose furthermore that $\G_m$ acts on $R_m$ according to the weight 
decomposition $R_m=\bigoplus_{\lambda\in\Z}\sF^\lambda R_m$ with
\[
\sF^\lambda R_m=\sum_{\substack{\lambda_1,\dots,\lambda_k\in\Z;\\ 
\lambda_1+\cdots+\lambda_k=\lambda}}\sF^{\lambda_1}_1 R_m^1\cdots
\sF^{\lambda_k}_k R_m^k.
\]
Then the multiplication homomorphism $\mult_m$ is $\G_m$-equivariant.
\end{lemma}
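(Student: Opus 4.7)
The plan is to verify $\G_m$-equivariance of $\mult_m$ by checking that it carries each weight-$\lambda$ eigenspace of $\tilde{R}_m$ into the weight-$\lambda$ eigenspace of $R_m$, for every $\lambda\in\Z$. Since both $\G_m$-actions are specified explicitly via weight decompositions whose eigenspaces are labelled by the filtration-indices, this reduces to a direct comparison using the hypothesised formula for $\sF^\lambda R_m$.

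First, I would identify the weight-$\lambda$ eigenspace of $\tilde{R}_m$ under the tensor product action. Since $t\in\G_m$ acts on a simple tensor $v_1\otimes\cdots\otimes v_k$ with $v_i\in\sF^{\lambda_i}_i R^i_m$ as multiplication by $t^{\lambda_1+\cdots+\lambda_k}$, the weight-$\lambda$ eigenspace of $\tilde{R}_m$ is precisely
\[
\bigoplus_{\lambda_1+\cdots+\lambda_k=\lambda}\sF^{\lambda_1}_1 R^1_m\otimes_\Bbbk\cdots\otimes_\Bbbk\sF^{\lambda_k}_k R^k_m.
\]

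Next, I would apply $\mult_m$ to a simple tensor in this eigenspace. For $v_i\in\sF^{\lambda_i}_i R^i_m$ with $\lambda_1+\cdots+\lambda_k=\lambda$, the image is the section product $v_1\cdots v_k$, which lies in $\sF^{\lambda_1}_1 R^1_m\cdots\sF^{\lambda_k}_k R^k_m$ and hence, by the hypothesised formula
$\sF^\lambda R_m=\sum_{\mu_1+\cdots+\mu_k=\lambda}\sF^{\mu_1}_1 R^1_m\cdots\sF^{\mu_k}_k R^k_m$,
belongs to the weight-$\lambda$ eigenspace of $R_m$. By $\Bbbk$-linearity this extends from simple tensors to the entire weight-$\lambda$ eigenspace of $\tilde{R}_m$, which shows that $\mult_m$ preserves the weight grading on both sides and is therefore $\G_m$-equivariant.

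I do not anticipate any substantial obstacle: the argument is essentially a direct unpacking of the two hypothesised weight decompositions, together with the explicit formula relating $\sF^\lambda R_m$ to the constituent pieces $\sF^{\lambda_i}_i R^i_m$ which is already part of the statement. The content of the lemma is really just that the two gradings were defined to be compatible with the surjection $\mult_m$ in the first place.
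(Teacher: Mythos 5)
Your argument is correct and is essentially the same as the paper's proof: you identify the weight-$\lambda$ eigenspace of $\tilde{R}_m$ under the tensor product action as $\bigoplus_{\lambda_1+\cdots+\lambda_k=\lambda}\sF^{\lambda_1}_1 R^1_m\otimes\cdots\otimes\sF^{\lambda_k}_k R^k_m$ and observe that $\mult_m$ carries it into $\sF^\lambda R_m$, which by hypothesis is the weight-$\lambda$ piece of $R_m$, so equivariance follows by linearity. No gaps; this matches the paper's reasoning.
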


\begin{proof}
The weight decomposition of the $\G_m$-action on $\tilde{R}_m$ induced from the 
tensor product action is given by $\tilde{R}_{m} = \bigoplus_{\lambda\in\Z} 
\tilde{\sF}^\lambda\tilde{R}_m$, where
\[
\tilde{\sF}^\lambda\tilde{R}_m:= \bigoplus_{\substack{\lambda_1,\dots,\lambda_k\in\Z;\\ 
\lambda_1+\cdots+\lambda_k=\lambda}} \sF^{\lambda_1}_1 R^1_m\otimes\cdots 
\otimes\sF^{\lambda_k}_k R^k_m.
\]
Thus the homomorphism $\mult_m$ is obviously $\G_m$-equivariant. 
\end{proof}

\begin{proposition} \label{proposition:hashimoto}
Suppose that $(\sY, \sL_{\sY})/\A^1$ is the test configuration 
generated by the $\G_m$-actions of $\left\{(\sX_i,\sL_i)\right\}_{i=1}^k$ in the sense 
of Definition \ref{definition:hashimoto} obtained by $m\in r_0\Z_{>0}$. 
Then the test configuration $(\sY, \sL_{\sY})/\A^1$ agrees with the sum 
configuration of 
$\left\{(\sX_i,\sL_i)\right\}_{i=1}^k$ in the sense of Definition \ref{definition:sum-tc}.
\end{proposition}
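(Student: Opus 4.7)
The plan is to reduce the statement to the standard dictionary between test configurations and graded $\G_m$-equivariant algebras, using Lemma \ref{lemma:lmrmivssp} to identify the $\G_m$-action appearing on both sides. Concretely, since $m \in r_0\Z_{>0}$ is sufficiently divisible, $m\sL$ is very ample on the sum configuration $\sX$ (which is the Proj of the integral $\Bbbk[t]$-algebra $\sR$ from Definition \ref{definition:sum-tc}), and so by \cite[Proposition 2.15]{BHJ} the scheme $\sX$ can be recovered as the reduced Zariski closure in $\pr^*(R_m) \times \A^1$ of the orbit of $\iota_m(X)$ under the $\G_m$-action on $R_m$ whose weight decomposition is given by the sum filtration
\[
R_m = \bigoplus_{\lambda\in\Z}\sF^\lambda R_m, \quad \sF^\lambda R_m = \sum_{\lambda_1+\cdots+\lambda_k=\lambda}\sF_1^{\lambda_1}R_m^1\cdots\sF_k^{\lambda_k}R_m^k,
\]
as computed in Lemma \ref{lemma:sum-tc}\eqref{lemma:sum-tc1}. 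The polarization $\sL$ is then identified with $\tfrac{1}{m}\sO_{\pr^*(R_m)\times\A^1}(1)|_\sX$.

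Next, I would invoke Lemma \ref{lemma:lmrmivssp}: since the weight decomposition of $R_m$ above is precisely the one compatible with the tensor product $\G_m$-action on $\tilde{R}_m=R_m^1\otimes\cdots\otimes R_m^k$, the surjective multiplication homomorphism $\mult_m\colon \tilde{R}_m \twoheadrightarrow R_m$ is $\G_m$-equivariant. Dually, this produces a $\G_m$-equivariant closed embedding
\[
\pr^*(R_m) \hookrightarrow \pr^*(\tilde{R}_m)
\]
compatible with $\sO(1)$, and the composition $X \hookrightarrow \pr^*(R_m) \hookrightarrow \pr^*(\tilde{R}_m)$ agrees with $\iota_m^{\cp}$ by construction.

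Since the embedding is $\G_m$-equivariant and closed, the $\G_m$-orbit of $\iota_m^{\cp}(X)$ inside $\pr^*(\tilde{R}_m)\times \A^1$ is set-theoretically and scheme-theoretically contained in $\pr^*(R_m)\times\A^1$, and its reduced Zariski closure in $\pr^*(\tilde{R}_m)\times\A^1$ coincides with its reduced Zariski closure in $\pr^*(R_m)\times\A^1$. The former is by definition $\sY$, while the latter is $\sX$ by the first paragraph; so $\sY = \sX$ as subschemes, and pulling back $\tfrac{1}{m}\sO_{\pr^*(\tilde{R}_m)\times\A^1}(1)$ along the inclusion yields $\tfrac{1}{m}\sO_{\pr^*(R_m)\times\A^1}(1)|_\sX$, identifying $\sL_\sY$ with $\sL$.

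The only delicate points I expect are bookkeeping: (i) ensuring that the sum configuration $\sX$ truly is recoverable from the single degree $m$ (i.e.\ that $m\sL$ is very ample on $\sX$ for the chosen $m \in r_0 \Z_{>0}$, which is guaranteed by choosing $r_0$ sufficiently divisible in Definition \ref{definition:hashimoto}, and matches the setup of Definition \ref{definition:sum-tc}), and (ii) verifying that the reduced structures coincide, for which I would use that $\sR$ is an integral domain (being a subalgebra of $\bigoplus_m H^0(\sZ, m(\sigma_1^*\sL_1+\cdots+\sigma_k^*\sL_k))$ on the integral scheme $\sZ$), so $\sX = \Proj_{\A^1}(\sR)$ is integral and hence automatically agrees with the reduced Zariski closure produced in Definition \ref{definition:hashimoto}. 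Everything else is a direct application of Lemma \ref{lemma:lmrmivssp} together with the standard Proj/Zariski-closure correspondence for test configurations.
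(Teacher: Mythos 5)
Your proposal is correct and follows essentially the same route as the paper: both arguments use Lemma \ref{lemma:lmrmivssp} to see that the $\G_m$-orbit of $\iota_m^{\cp}(X)$ stays inside the linear subspace $\pr^*(R_m)\times\A^1$, so the closure there agrees with the closure in $\pr^*(\tilde{R}_m)\times\A^1$, and then both invoke the standard dictionary of \cite[\S 1.2, 2.3, 2.5, Proposition 2.15]{BHJ} to identify that closure with $\left(\Proj_{\Bbbk[t]}(\sR),\frac{1}{m}\sO(1)\right)$, i.e.\ the sum configuration, the polarizations matching because the embedding $\pr^*(R_m)\hookrightarrow\pr^*(\tilde{R}_m)$ is linear. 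Your extra bookkeeping (integrality of $\sR$ to handle the reduced structure, and very ampleness in degree $m$) only makes explicit points the paper leaves implicit.
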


\begin{proof}
By Lemma \ref{lemma:lmrmivssp}, the $\G_m$-orbit of $X$ in 
$\pr^*(\tilde{R}_m)\times\A^1$ is contained in the linear subspace 
$\pr^*(R_m)\times\A^1$, and hence the Zariski closure of its $\G_m$-orbit in 
$\pr^*(\tilde{R}_m)\times\A^1$ agrees with the one in $\pr^*(R_m)\times\A^1$. 
Thus $\sY$ agrees with the Zariski closure of the $\G_m$-orbit of 
$X\times\G_m \hookrightarrow\pr^*(R_m)\times\A^1$ with respect to 
the naturally induced $\G_m$-action on $R_m$, which corresponds to 
the weight decomposition in Lemma \ref{lemma:lmrmivssp}. 
It is well-known \cite[\S 1.2, 2.3, and 2.5]{BHJ} that such a test configuration can 
also be written as $\left(\Proj_{\Bbbk[t]}(\sR),\frac{1}{m}\sO(1)\right)$. Moreover, since the embedding $\pr^*(R_m)\hookrightarrow\pr^*\left(\tilde{R}_m\right)$ 
is linear, we find that the natural polarization $\sO(1)$ on 
$\Proj_{\Bbbk [t]}(\sR)$ agrees with $m\sL_{\sY}$ by the definition of 
$\sL_{\sY}$. Recalling that the sum configuration of 
$\left\{(\sX_i,\sL_i)\right\}_{i=1}^k$ can be written as 
$\left(\Proj_{\Bbbk[t]}(\sR), \frac{1}{m}\sO(1)\right)$, 
by Definition \ref{definition:sum-tc} and Lemma \ref{lemma:sum-tc}, we get the 
claimed result.
\end{proof}

\begin{remark}\label{remark:hsmt}
As proved in Corollary \ref{corollary:trivial-tc}, the sum test configuration is trivial 
if and only if all the constituents $\left\{(\sX_i,\sL_i)\right\}_{i=1}^k$ are trivial 
(and normal). For the test configuration $(\sY, \sL_{\sY})$ generated by their 
$\G_m$-actions, this point was claimed in \cite[\S 3.4]{hashimoto} 
without detailed arguments. We provide the details omitted therein: 
it is easy to prove that the $\G_m$-action on $\tilde{R}_m$ is trivial if and only if 
all the constituents are all trivial test configurations, 
and Lemma \ref{lemma:lmrmivssp} shows that it happens if and only if the 
$\G_m$-action on $R_m$ is trivial since there does not exist any weight 
$\lambda$ such that $\mult_m(\tilde{\sF}^\lambda\tilde{R}_{m})=0 $ if 
$\tilde{\sF}^\lambda\tilde{R}_m\neq 0$, by the property of the multiplication 
homomorphism.
\end{remark}

%
%

\section{The sum of filtrations}\label{section:sum-filt}

In this section, we introduce the notion of the sum of filtrations, which is a 
generalization of the notion of sum configurations, and see its basic properties. 
In this section, we follow the notation in \S \ref{section:sum-tc}. 

\begin{definition}\label{definition:sum-filt}
For any $1\leq i\leq k$, let $\sF_i$ be a $\T$-equivariant filtration on $R^i$. 
The \emph{sum filtration} of $\left\{\sF_i\right\}_{i=1}^k$ is the $\T$-equivariant 
filtration $\sF$ on $R$ defined by 
\[
\sF^x R_m:=\sum_{\substack{x_1,\dots,x_k\in\R; \\ 
x_1+\cdots+x_k\geq x}}\sF_1^{x_1}R_m^1\cdots\sF_k^{x_k}R_m^k
=\sum_{\substack{x_1,\dots,x_k\in\R; \\ 
x_1+\cdots+x_k= x}}\sF_1^{x_1}R_m^1\cdots\sF_k^{x_k}R_m^k
\]
for any $m\in r\Z_{\geq 0}$ and $x\in\R$ (see Lemma \ref{lemma:sum-filt}). 
\end{definition}

\begin{lemma}\label{lemma:sum-filt}
\begin{enumerate}
\renewcommand{\theenumi}{\arabic{enumi}}
\renewcommand{\labelenumi}{(\theenumi)}
\item\label{lemma:sum-filt1}
The above $\sF$ is indeed a $\T$-equivariant filtration on $R$. 
\item\label{lemma:sum-filt2}
For any $1\leq i\leq k$, let $(\sX_i,\sL_i)/\A^1$ be a $\T$-equivariant 
test configuration of $(X, L_i)$ and let $(\sX,\sL)/\A^1$ be the sum configuration 
of $\left\{(\sX_i,\sL_i)\right\}_{i=1}^k$. Then $\sF_{\sX,\sL}$ is the 
sum filtration of $\left\{\sF_{\sX_i,\sL_i}\right\}_{i=1}^k$. 
\end{enumerate}
\end{lemma}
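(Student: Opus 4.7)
For part (1), I would first verify the equality of the two displayed expressions in the definition. The inclusion of the ``$=x$'' expression in the ``$\geq x$'' one is trivial. For the opposite direction, given a decomposition $x_1+\cdots+x_k\geq x$, I would replace $x_1$ by $x_1':=x_1-\bigl(\sum_i x_i - x\bigr)\leq x_1$. Since each $\sF_i$ is a filtration, $\sF_1^{x_1'}R_m^1\supset \sF_1^{x_1}R_m^1$, while the remaining factors are unchanged; now $x_1'+x_2+\cdots+x_k=x$, so the term is absorbed into the ``$=x$'' sum. From the ``$\geq x$'' presentation, it is immediate that $\sF^xR_m$ is a decreasing family of subspaces in $x$.

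Next I would check graded multiplicativity, $\T$-equivariance, and linear boundedness. For multiplicativity, take $s\in\sF^xR_m$ and $s'\in\sF^yR_{m'}$; write each as a sum of products $s_1\cdots s_k$ and $s_1'\cdots s_k'$ with $s_i\in\sF_i^{x_i}R_m^i$, $s_i'\in\sF_i^{y_i}R_{m'}^i$, and $\sum x_i=x$, $\sum y_i=y$. Then $ss'$ is a sum of products $(s_1s_1')\cdots(s_ks_k')$; the graded multiplicativity of each $\sF_i$ gives $s_is_i'\in\sF_i^{x_i+y_i}R_{m+m'}^i$, and $\sum(x_i+y_i)=x+y$, so $ss'\in\sF^{x+y}R_{m+m'}$. $\T$-equivariance is immediate since each $\sF_i^{x_i}R_m^i$ is $\T$-invariant, so the product is $\T$-invariant under the diagonal action. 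For linear boundedness, if each $\sF_i$ is bounded by constants $\pm C_im$, then when $x>\sum C_im$, every decomposition $\sum x_i=x$ must have some $x_i>C_im$, hence some factor vanishes and $\sF^xR_m=0$; when $x<-\sum C_im$, one may choose $x_i\leq -C_im$ with $\sum x_i\geq x$, giving full factors $R_m^i$, and surjectivity of $\mult_m$ established at the start of \S\ref{section:sum-tc} ensures the product equals $R_m$.

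For part (2), the identity to prove is essentially contained in Lemma \ref{lemma:sum-tc}\eqref{lemma:sum-tc1}: there we have
\[
\sF^{\lambda}_{\sX,\sL}R_{m,\alpha}
=\sum_{\substack{\lambda_1+\cdots+\lambda_k=\lambda \\ \alpha_1+\cdots+\alpha_k=\alpha}}
\sF_{\sX_1,\sL_1}^{\lambda_1}R_{m,\alpha_1}^1\cdots\sF_{\sX_k,\sL_k}^{\lambda_k}R_{m,\alpha_k}^k
\]
for $\lambda\in\Z$, $\alpha\in M(\T)$. Summing over $\alpha\in M(\T)$ and using the weight decomposition of $R_m^i$ produces precisely
\[
\sF_{\sX,\sL}^{\lambda}R_m=\sum_{\lambda_1+\cdots+\lambda_k=\lambda}
\sF_{\sX_1,\sL_1}^{\lambda_1}R_m^1\cdots\sF_{\sX_k,\sL_k}^{\lambda_k}R_m^k,
\]
which is the sum filtration evaluated at integer $\lambda$. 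To conclude at real $x$, I would observe that each $\sF_{\sX_i,\sL_i}$ is $\Z$-valued, so on the right-hand side one may restrict the real parameters $x_i$ in Definition \ref{definition:sum-filt} to integers $\lambda_i$ with $\sum\lambda_i\geq\lceil x\rceil$; this forces the sum filtration to also be $\Z$-valued, and both sides therefore equal their common value at $\lceil x\rceil$.

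I do not anticipate a serious obstacle: the proof is bookkeeping in the definition of the sum filtration together with an appeal to Lemma \ref{lemma:sum-tc}\eqref{lemma:sum-tc1}. The mildest subtlety is verifying linear boundedness, which crucially relies on the surjectivity of $\mult_m$ reduced at the start of \S\ref{section:sum-tc} (after enlarging $r$ if necessary), and on the standard pigeonhole observation to control decompositions from above.
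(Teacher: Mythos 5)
Your argument follows essentially the same route as the paper: part (1) by direct verification of the axioms, and part (2) by summing the weight-space identity of Lemma \ref{lemma:sum-tc} \eqref{lemma:sum-tc1} over $\alpha\in M(\T)$ and then using that the constituent filtrations are $\Z$-valued to pass from integer levels to all real levels; both steps are correct. Your checks of the equality of the ``$\geq x$'' and ``$=x$'' presentations, of monotonicity, multiplicativity, $\T$-equivariance (a $\T$-stable subspace of $R_m$ automatically splits into weight spaces since $\T$ is a torus), and of linear boundedness are all fine, and you rightly make explicit the surjectivity of $\mult_m$ arranged at the start of \S\ref{section:sum-tc}, which the paper uses implicitly for the lower bound.

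There is, however, one axiom you never verify: left-continuity, i.e.\ $\sF^{x}R_m=\sF^{x-\varepsilon}R_m$ for all sufficiently small $\varepsilon>0$, which is part of the definition of a filtration in the sense of \cite[Definition 3.14]{Xu} referred to in Definition \ref{definition:filtration}, and which the paper checks explicitly. Monotonicity and the ``$=x$'' versus ``$\geq x$'' identification do not give this. The missing step is short but should be supplied: for fixed $m$, each $\sF_i$ takes only finitely many distinct values on $R_m^i$, with jumping numbers $\lambda_{i,1}<\cdots<\lambda_{i,M_i}$, so $\sF^{x}R_m$ is the sum of $\sF_1^{\lambda_{1,j_1}}R_m^1\cdots\sF_k^{\lambda_{k,j_k}}R_m^k$ over tuples with $\lambda_{1,j_1}+\cdots+\lambda_{k,j_k}\geq x$; since only finitely many such sums occur, none of them lies in $[x-\varepsilon,x)$ once $\varepsilon$ is small enough, whence $\sF^{x-\varepsilon}R_m=\sF^{x}R_m$. (For part (2) this issue is invisible because there the sum filtration is $\Z$-valued, as you note.) With this verification added, your proof is complete and coincides with the paper's.
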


\begin{proof}
\eqref{lemma:sum-filt2} is trivial by Lemma \ref{lemma:sum-tc-ideal}. We show 
\eqref{lemma:sum-filt1}. 
\begin{itemize}
\item
(Linearly boundedness)
There exist $e_-, e_+\in\R$ such that, for any $1\leq i\leq k$ and $m\in r\Z_{>0}$, 
$\sF_i^{x m}R_m^i=R_m^i$ for any $x\leq e_-$ and $\sF_i^{x m}R_m^i=0$ for any 
$x\geq e_+$. This implies that $\sF^{x m}R_m=R_m$ for any $x\leq k e_-$ and 
$\sF^{x m}R_m=0$ for any $x\geq k e_+$. 
\item
(Multiplicativity)
Take any $x,x'\in\R$ and $m,m'\in r\Z_{\geq 0}$. 
Take any $x_1,\dots,x_k\in\R$ with $\sum_{i=1}^k x_i=x$ and 
$x'_1,\dots,x'_k\in\R$ with $\sum_{i=1}^k x'_i=x'$. Then we have 
\[
\left(\sF_1^{x_1}R_m^1\cdots\sF_k^{x_k}R_m^k\right)\cdot
\left(\sF_1^{x'_1}R_{m'}^1\cdots\sF_k^{x'_k}R_{m'}^k\right)\subset
\sF_1^{x_1+x'_1}R_{m+m'}^1\cdots\sF_k^{x_k+x'_k}R_{m+m'}^k.
\]
This implies that $\sF^x R_m\cdot\sF^{x'}R_{m'}\subset\sF^{x+x'}R_{m+m'}$. 
\item
It is obvious that $\sF^x R_m\subset\sF^{x'}R_m$ whenever $x\geq x'$. 
\item
(Left continuity)
Take any $m\in r\Z_{\geq 0}$ and $x\in\R$. 
For any $1\leq i\leq k$, let  $\lambda_{i,1}<\dots<\lambda_{i,M_i}$ be defined to be 
\[
\left\{\lambda_{i,1},\dots,\lambda_{i,M_i}\right\}
:=\left\{\lambda\in\R\,\,|\,\,\dim\Gr_{\sF_i}^x R_m^i\neq 0\right\}, 
\]
where $\Gr_{\sF_i}^x R_m^i$ be as in \cite[(3.1)]{Xu}. 
Note that we have the following: 
\[
\sF^x R_m=\sum_{\substack{1\leq j_i\leq M_i\,\,(1\leq i\leq k);\\ 
\lambda_{1,j_1}+\cdots+\lambda_{k,j_k}\geq x}}
\sF_1^{\lambda_{1,j_1}}R_m^1\cdots\sF_k^{\lambda_{k,j_k}}R_m^k.
\]
For $x\in \R$, there exists a very small $\varepsilon\in\R_{>0}$ such that
\begin{eqnarray*}
&&\left\{(j_1,\dots,j_k)\in\Z_{>0}^k\,\,|\,\,j_i\leq M_i\,\,(1\leq i\leq k), 
\text{ and }
\lambda_{1,j_1}+\cdots+\lambda_{k,j_k}\geq x-\varepsilon\right\}\\
&=&\left\{(j_1,\dots,j_k)\in\Z_{>0}^k\,\,|\,\,j_i\leq M_i\,\,(1\leq i\leq k), 
\text{ and }
\lambda_{1,j_1}+\cdots+\lambda_{k,j_k}\geq x\right\}
\end{eqnarray*}
holds. Thus we get $\sF^x R_m=\sF^{x-\varepsilon}R_m$. 
\item
($\T$-equivariant property)
We can directly check that 
\[
\sF^x R_m=\bigoplus_{\alpha\in M(\T)}\sF^x R_{m,\alpha}
\]
with 
\[
\sF^x R_{m,\alpha}=\sum_{\substack{\alpha_1,\dots,\alpha_k\in M(\T); \\
\alpha_1+\cdots+\alpha_k=\alpha}}
\sum_{\substack{x_1,\dots,x_k\in\R; \\ x_1+\cdots+x_k=x}}
\sF_1^{x_1}R_{m,\alpha_1}^1\cdots\sF_k^{x_k}R_{m,\alpha_k}^k.
\]
Thus $\sF$ is $\T$-equivariant. 
\end{itemize}
Thus we get the assertion \eqref{lemma:sum-filt1}. 
\end{proof}

\begin{proposition}\label{proposition:sum-filt-lambda}
For any $1\leq i\leq k$, let $\sF_i$ be a $\T$-equivariant filtration on $R^i$, and 
let $\sF$ be the sum filtration of $\{\sF_i\}_{i=1}^k$. Then we have the equality
\[
\lambda_{\max}(\sF)=\sum_{i=1}^k\lambda_{\max}(\sF_i).
\]
\end{proposition}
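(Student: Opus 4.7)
The plan is to reduce the identity to the level of each graded piece $R_m$, via the natural invariants
\[
T_m(\sG) := \tfrac{1}{m}\sup\{x \in \R : \sG^x R_m \neq 0\},
\]
for any linearly bounded filtration $\sG$ on the relevant graded ring, and to show the pointwise equality
\[
T_m(\sF) = \sum_{i=1}^k T_m(\sF_i)
\]
for every $m \in r\Z_{>0}$. Passing to the limit $m \to \infty$ then yields the claim, since $\lambda_{\max} = \lim_{m\to\infty} T_m$ by the Fekete lemma applied to the super-additive sequences (which follows from the multiplicativity axiom of a filtration together with linear boundedness, as in \cite[Lemma 3.22]{Xu}).

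For the inequality $T_m(\sF) \leq \sum_{i=1}^k T_m(\sF_i)$, I would use the defining formula of the sum filtration: if $\sF^x R_m \neq 0$, then there exist real numbers $x_1,\dots,x_k$ with $x_1+\cdots+x_k = x$ for which the product $\sF_1^{x_1}R_m^1 \cdots \sF_k^{x_k} R_m^k$ is nonzero. In particular each factor $\sF_i^{x_i}R_m^i$ is nonzero, forcing $x_i / m \leq T_m(\sF_i)$ for every $i$. Summing gives $x/m \leq \sum_{i=1}^k T_m(\sF_i)$, and taking the supremum over admissible $x$ yields the desired inequality.

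For the reverse inequality $T_m(\sF) \geq \sum_{i=1}^k T_m(\sF_i)$, I set $x_i := m\, T_m(\sF_i)$; since each $\sF_i$ restricted to the finite-dimensional $R_m^i$ is a left-continuous filtration with finitely many jumps, this supremum is actually attained, so I may pick nonzero $s_i \in \sF_i^{x_i} R_m^i \subset H^0(X, mL_i)$. The multiplication homomorphism $\mult_m\colon \tilde{R}_m \to R_m$ then sends $s_1 \otimes \cdots \otimes s_k$ to the product $s_1 \cdots s_k \in H^0(X, mL)$, which lies in $\sF_1^{x_1}R_m^1 \cdots \sF_k^{x_k} R_m^k \subset \sF^{x_1+\cdots+x_k} R_m$. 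This product is nonzero: as $(X,\Delta)$ is a klt pair, $X$ is integral, so each $s_i$ vanishes on a proper Zariski closed subset $Z_i \subsetneq X$, and any point of $X \setminus \bigcup_i Z_i$ (which is nonempty by irreducibility) witnesses the nonvanishing of $s_1 \cdots s_k$. Hence $T_m(\sF) \geq (x_1+\cdots+x_k)/m = \sum_{i=1}^k T_m(\sF_i)$.

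The only subtle point is the nonvanishing of the product of sections, but this is immediate from the integrality of $X$; the rest of the argument is formal manipulation of the defining formula of the sum filtration together with the existence of the limit defining $\lambda_{\max}$. Once the pointwise equality $T_m(\sF) = \sum T_m(\sF_i)$ is established for all sufficiently divisible $m$, both sides are super-additive sequences (in $m$) whose limits exist and coincide with the respective $\lambda_{\max}$, and adding the $k$ limit identities concludes the proof.
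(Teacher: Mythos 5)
Your proof is correct and is essentially the paper's argument: both directions reduce to the observation that a nonzero piece of the sum filtration forces nonzero pieces of the constituent filtrations with levels summing accordingly, together with the fact that a product of nonzero sections is nonzero because $X$ is integral. The only organizational difference is that you establish the exact finite-level identity $T_m(\sF)=\sum_{i=1}^k T_m(\sF_i)$ and then take limits, whereas the paper argues by contradiction directly at the level of $\lambda_{\max}$, choosing $x_i$ slightly below $\lambda_{\max}(\sF_i)$ so that $\sF_i^{m x_i}R_m^i\neq 0$ for all sufficiently divisible $m$ (via \cite[Lemma 3.17]{Xu}), which sidesteps your (correctly justified) point that the supremum defining $T_m(\sF_i)$ is attained.
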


\begin{proof}
Assume that $\lambda_{\max}(\sF)<\sum_{i=1}^k\lambda_{\max}(\sF_i)$. 
Then there exists $x_1,\dots,x_k\in\R$ such that $x_i<\lambda_{\max}(\sF_i)$ 
for any $1\leq i\leq k$ and $x:=\sum_{i=1}^k x_i>\lambda_{\max}(\sF)$. 
As in \cite[Lemma 3.17]{Xu}, for any sufficiently divisible $m\in r\Z_{>0}$, 
we know that $\sF_i^{m x_i}R_m^i\neq 0$ for any $1\leq i\leq k$. 
This implies that 
\[
\sF^{m x}R_m\supset\prod_{i=1}^k\sF_i^{m x_i}R_m^i\neq 0, 
\]
a contradiction. Thus $\lambda_{\max}(\sF)
\geq \sum_{i=1}^k\lambda_{\max}(\sF_i)$ holds. 

Assume that $\lambda_{\max}(\sF)>\sum_{i=1}^k\lambda_{\max}(\sF_i)$. 
Take any $x\in\left(\sum_{i=1}^k\lambda_{\max}(\sF_i), \lambda_{\max}(\sF)\right)$. 
As in \cite[Lemma 3.17]{Xu}, 
for any sufficiently divisible $m\in r\Z_{>0}$, 
we know that $\sF^{m x}R_m\neq 0$. This implies that, for any $1\leq i\leq k$, 
there exists $x_i\in\R$ such that $\sum_{i=1}^k x_i=x$ and 
$\sF_i^{m x_i}R_m^i\neq 0$. This leads to the inequality 
$x_i\leq \lambda_{\max}(\sF_i)$. Thus we get 
$x\leq\sum_{i=1}^k\lambda_{\max}(\sF_i)$, 
a contradiction. Thus $\lambda_{\max}(\sF)
\leq \sum_{i=1}^k\lambda_{\max}(\sF_i)$ holds. 
\end{proof}

\begin{proposition}\label{proposition:sum-filt-twist}
For any $1\leq i\leq k$, let $\sF_i$ be a $\T$-equivariant filtration on $R^i$, and 
let $\sF$ be the sum filtration of $\{\sF_i\}_{i=1}^k$. 
\begin{enumerate}
\renewcommand{\theenumi}{\arabic{enumi}}
\renewcommand{\labelenumi}{(\theenumi)}
\item\label{proposition:sum-filt-twist1}
For any $1\leq i\leq k$, let $\sF_{i,[C_i]}$ be the $C_i$-shift of $\sF_i$ for 
$C_i\in\R$. Then the sum filtration of $\left\{\sF_{i,[C_i]}\right\}_{i=1}^k$ is 
equal to the $\left(\sum_{i=1}^k C_i\right)$-shift 
$\sF_{\left[\sum_{i=1}^k C_i\right]}$ of $\sF$. 
\item\label{proposition:sum-filt-twist2} (cf.\ Lemma \ref{lemma:sum-tc-twist})
Take any $\xi\in N_\R(\T)$. Let us consider the $\xi$-twist $\sF_{i,\xi}$ of $\sF_i$ 
for any $1\leq i\leq k$. Then the sum filtration of $\left\{\sF_{i,\xi}\right\}_{i=1}^k$ 
is equal to the $\xi$-twist 
$\sF_{\xi}$ of $\sF$. 
\end{enumerate}
\end{proposition}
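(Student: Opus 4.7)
The plan is to prove both statements by direct computation from the definitions of the sum filtration (Definition \ref{definition:sum-filt}) and of the shift/twist (Definition \ref{definition:filtration}), matching graded pieces on both sides under a suitable change of variables. Both statements are of the form ``constructor of filtration X commutes with operation Y'', and the operations in question (shift and twist) are defined by reindexing the filtration level by an affine function of $(m,\alpha)$. So, once the indices are unwound, the proofs reduce to a relabeling of the summation variables.

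For \eqref{proposition:sum-filt-twist1}, I would fix $m\in r\Z_{\geq 0}$ and $x\in\R$, and expand the sum filtration of $\left\{\sF_{i,[C_i]}\right\}_{i=1}^k$ as
\[
\sum_{x_1+\cdots+x_k=x}\sF_{1,[C_1]}^{x_1}R_m^1\cdots\sF_{k,[C_k]}^{x_k}R_m^k
=\sum_{x_1+\cdots+x_k=x}\sF_1^{x_1-C_1 m}R_m^1\cdots\sF_k^{x_k-C_k m}R_m^k,
\]
and then substitute $y_i:=x_i-C_i m$, so that $\sum y_i=x-\left(\sum C_i\right)m$. This rewrites the right-hand side as $\sF^{x-\left(\sum C_i\right)m}R_m=\sF_{\left[\sum C_i\right]}^x R_m$, giving the desired equality.

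For \eqref{proposition:sum-filt-twist2}, I would work at the level of the weight decomposition, since both $\sF_\xi$ and the sum filtration are defined piece-by-piece in $R_{m,\alpha}$. Fix $\alpha\in M(\T)$ and expand
\[
\sum_{\substack{\alpha_1+\cdots+\alpha_k=\alpha}}\sum_{x_1+\cdots+x_k=x}
\sF_{1,\xi}^{x_1}R_{m,\alpha_1}^1\cdots\sF_{k,\xi}^{x_k}R_{m,\alpha_k}^k.
\]
Each factor equals $\sF_i^{x_i-\langle\alpha_i,\xi\rangle}R_{m,\alpha_i}^i$ by definition of the $\xi$-twist, so after substituting $y_i:=x_i-\langle\alpha_i,\xi\rangle$ the constraint $\sum x_i=x$ becomes $\sum y_i=x-\langle\alpha,\xi\rangle$, using $\sum\alpha_i=\alpha$. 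The sum then becomes $\sF^{x-\langle\alpha,\xi\rangle}R_{m,\alpha}=\sF_\xi^x R_{m,\alpha}$, as required.

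In both cases the substitution is valid because the outer sum ranges over \emph{all} decompositions $x_1+\cdots+x_k=x$ (resp.\ over all $\alpha_1+\cdots+\alpha_k=\alpha$), which transports bijectively under the affine change of variables. There is no serious obstacle: these are bookkeeping computations, and the only point requiring mild care is to make sure that in \eqref{proposition:sum-filt-twist2} the weight decomposition is genuinely respected, i.e., that the sum filtration's $\T$-equivariance (already verified in the proof of Lemma \ref{lemma:sum-filt}) justifies restricting to the $\alpha$-eigenspace before applying the substitution. With this in place, both equalities are immediate.
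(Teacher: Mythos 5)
Your proposal is correct and follows essentially the same route as the paper: both parts are proved by expanding the sum filtration, rewriting each factor via the definition of the shift/twist, and performing the affine change of summation variables, with the weight-decomposition identity from the proof of Lemma \ref{lemma:sum-filt} justifying the eigenspace-by-eigenspace computation in part (2).
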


\begin{proof}
\eqref{proposition:sum-filt-twist1}
Let $\sG$ be the sum filtration of $\left\{\sF_{i,[C_i]}\right\}_{i=1}^k$. 
Then we know that
\begin{eqnarray*}
\sG^x R_m&=&\sum_{x_1+\cdots+x_k=x}\sF_1^{x_1-C_1 m}R_m^1\cdots
\sF_k^{x_k-C_k m}R_m^k\\
&=&\sum_{y_1+\cdots+y_k=x-\left(\sum_{i=1}^k C_i\right)m}
\sF_1^{y_1}R_m^1\cdots\sF_k^{y_k}R_m^k\\
&=&\sF^{x-\left(\sum_{i=1}^k C_i\right)m}R_m
=\sF_{\left[\sum_{i=1}^k C_i\right]}^x R_m.
\end{eqnarray*}
Thus $\sG$ is equal to $\sF_{\left[\sum_{i=1}^k C_i\right]}$. 

\eqref{proposition:sum-filt-twist2}
Let $\sG$ be the sum filtration of $\left\{\sF_{i,\xi}\right\}_{i=1}^k$. 
Then we know that 
\begin{eqnarray*}
\sG^x R_{m,\alpha}&=&\sum_{\sum_{i=1}^k\alpha_i=\alpha}\sum_{\sum_{i=1}^k x_i=x}
\sF_{1,\xi}^{x_1}R_{m,\alpha_1}^1\cdots\sF_{k,\xi}^{x_k}R_{m,\alpha_k}^k \\
&=&\sum_{\sum_{i=1}^k\alpha_i=\alpha}\sum_{\sum_{i=1}^k x_i=x}
\sF_1^{x_1-\langle\alpha_1,\xi\rangle}R_{m,\alpha_1}^1
\cdots\sF_k^{x_k-\langle\alpha_k,\xi\rangle}R_{m,\alpha_k}^k \\
&=&\sum_{\sum_{i=1}^k\alpha_i=\alpha}\sum_{\sum_{i=1}^k y_i=x
-\langle\alpha,\xi\rangle}
\sF_1^{y_1}R_{m,\alpha_1}^1\cdots\sF_k^{y_k}R_{m,\alpha_k}^k \\
&=&\sF^{x-\langle\alpha,\xi\rangle}R_{m,\alpha}=\sF_\xi^x R_{m,\alpha}.
\end{eqnarray*}
Thus $\sG$ is equal to $\sF_{\xi}$. 
\end{proof}

\begin{proposition}\label{proposition:approx-sum-filt}
For any $1\leq i\leq k$, let $\sF_i$ be a $\T$-equivariant filtration on $R^i$, and 
let $\sF$ be the sum filtration of $\{\sF_i\}_{i=1}^k$. 
For any $1\leq i\leq k$, let us take an approximating sequence 
$\left\{\sF_{i,(m)}\right\}_{m\in r\Z_{>0}}$ of $\sF_i$. For any $m\in r\Z_{>0}$, 
let $\sF_{(m)}$ be the sum filtration of $\left\{\sF_{i,(m)}\right\}_{i=1}^k$. 
Then $\left\{\sF_{(m)}\right\}_{m\in r\Z_{>0}}$ is an approximating sequence 
of $\sF$. 
\end{proposition}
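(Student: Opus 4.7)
The plan is to verify the three defining properties (i)--(iii) of Definition \ref{definition:approx}\,(\ref{definition:approx2}) for the family $\{\sF_{(m)}\}_{m\in r\Z_{>0}}$. By the definition of sum filtration,
\[
\sF_{(m)}^x R_{m'}=\sum_{y_1+\cdots+y_k=x}\sF_{1,(m)}^{y_1}R_{m'}^1\cdots\sF_{k,(m)}^{y_k}R_{m'}^k
\]
for all $m'\in r\Z_{\geq 0}$ and $x\in\R$, and the analogous formula expresses $\sF^x R_{m'}$ with each $\sF_{i,(m)}$ replaced by $\sF_i$. Property (i), that $\sF_{(m)}\subset\sF$, then follows termwise from $\sF_{i,(m)}\subset\sF_i$. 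Property (ii), that $\sF_{(m)}^xR_m=\sF^xR_m$, follows similarly: at degree $m'=m$, property (ii) for each $\sF_{i,(m)}$ gives $\sF_{i,(m)}^{y_i}R_m^i=\sF_i^{y_i}R_m^i$ for every $y_i$, so the two defining sums coincide literally.

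The substantive point is property (iii) at degree $ms$:
\[
\sF_{(m)}^x R_{ms}=\sum_{x_1+\cdots+x_s\geq x}\sF^{x_1}R_m\cdots\sF^{x_s}R_m.
\]
My plan is to unfold both sides into a common sum of monomials $\prod_{i,j}\sF_i^{y_{i,j}}R_m^i$ indexed by tuples $(y_{i,j})_{1\leq i\leq k,\,1\leq j\leq s}$. On the left, the definition of $\sF_{(m)}^x R_{ms}$ together with property (iii) for each $\sF_{i,(m)}$, which gives
\[
\sF_{i,(m)}^{y_i}R_{ms}^i=\sum_{y_{i,1}+\cdots+y_{i,s}\geq y_i}\sF_i^{y_{i,1}}R_m^i\cdots\sF_i^{y_{i,s}}R_m^i,
\]
produces exactly these monomials summed over tuples for which there exist $y_1,\dots,y_k$ with $\sum_i y_i=x$ and $\sum_j y_{i,j}\geq y_i$ for each $i$. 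On the right, setting $x_j:=\sum_i y_{i,j}$ and expanding each $\sF^{x_j}R_m$ via the definition of the sum filtration yields the same monomials, indexed by tuples with $\sum_{i,j}y_{i,j}\geq x$. Commutativity of the graded section ring $R$ justifies the reordering $\prod_j\prod_i=\prod_i\prod_j$.

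The remaining step, which is the only mildly subtle part of the argument and the closest thing to an obstacle, is to verify that the two index conditions cut out the same subspace. The implication $\sum_i y_i=x,\;\sum_j y_{i,j}\geq y_i\;\Longrightarrow\;\sum_{i,j}y_{i,j}\geq x$ is immediate. Conversely, given $(y_{i,j})$ with $\sum_{i,j}y_{i,j}\geq x$, I would set $y_i:=\sum_j y_{i,j}$, choose $y_i'\leq y_i$ with $\sum_i y_i'=x$ (possible since $\sum_i y_i\geq x$), and use the decreasing property $\sF_{i,(m)}^{y_i}R_{ms}^i\subset\sF_{i,(m)}^{y_i'}R_{ms}^i$ to place the monomial inside the left-hand sum. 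This is pure combinatorial bookkeeping, requiring no new ideas beyond the definitions already established.
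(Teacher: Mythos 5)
Your proposal is correct and follows essentially the same route as the paper: properties (i) and (ii) are checked termwise, and the substantive property (iii) is verified by expanding both sides into monomials $\prod_{i,j}\sF_i^{y_{i,j}}R_m^i$ and regrouping the index conditions (the paper does this with the ``$\geq$'' form of the sum filtration in one chain of sum manipulations, while you flatten both sides to the single condition $\sum_{i,j}y_{i,j}\geq x$ and absorb the ``$=$'' convention via the decreasing property -- a cosmetic difference only).
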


\begin{proof}
Observe the following: 
\begin{enumerate}
\renewcommand{\theenumi}{\roman{enumi}}
\renewcommand{\labelenumi}{(\theenumi)}
\item\label{proof:approx-sum-filt1}
We have 
\begin{eqnarray*}
\sF_{(m)}^x R_{m'}&=&\sum_{\sum_{i=1}^k x_i\geq x}
\sF_{1,(m)}^{x_1}R_{m'}^1\cdots\sF_{k,(m)}^{x_k}R_{m'}^k\\
&\subset&\sum_{\sum_{i=1}^k x_i\geq x}
\sF_{1}^{x_1}R_{m'}^1\cdots\sF_{k}^{x_k}R_{m'}^k=\sF^x R_{m'}. 
\end{eqnarray*}
\item\label{proof:approx-sum-filt2}
We have 
\begin{eqnarray*}
\sF_{(m)}^x R_{m}&=&\sum_{\sum_{i=1}^k x_i\geq x}
\sF_{1,(m)}^{x_1}R_{m}^1\cdots\sF_{k,(m)}^{x_k}R_{m}^k\\
&=&\sum_{\sum_{i=1}^k x_i\geq x}
\sF_{1}^{x_1}R_{m}^1\cdots\sF_{k}^{x_k}R_{m}^k=\sF^x R_{m}. 
\end{eqnarray*}
\item\label{proof:approx-sum-filt3}
For any $s\in\Z_{>0}$, we have 
\begin{eqnarray*}
\sF_{(m)}^x R_{m s}&=&\sum_{\sum_{i=1}^k x_i\geq x}\sF_{1,(m)}^{x_1}R_{m s}^1
\cdots\sF_{k,(m)}^{x_k}R_{m s}^k\\
&=&\sum_{\sum_{i=1}^k x_i\geq x}\sum_{\substack{\sum_{j=1}^s y_{1j}\geq x_1\\
\vdots\\ \sum_{j=1}^a y_{k j}\geq x_k}}
\left(\sF_1^{y_{11}}R_m^1\cdots\sF_1^{y_{1s}}R_m^1\right)\cdots
\left(\sF_k^{y_{k1}}R_m^k\cdots\sF_k^{y_{ks}}R_m^k\right)\\
&=&\sum_{\sum_{j=1}^s y_j\geq x}\sum_{\substack{\sum_{i=1}^k y_{i 1}\geq y_1 \\
\vdots\\ \sum_{i=1}^k y_{i s}\geq y_s}}
\left(\sF_1^{y_{1 1}}R_m^1\cdots\sF_k^{y_{k 1}}R_m^k\right)\cdots
\left(\sF_1^{y_{1 s}}R_m^1\cdots\sF_k^{y_{k s}}R_m^k\right)\\
&=&\sum_{\sum_{j=1}^s y_j\geq x}\sF^{y_1}R_m\cdots\sF^{y_s}R_m.
\end{eqnarray*}
\end{enumerate}
Thus $\left\{\sF_{(m)}\right\}_{m\in r\Z_{>0}}$ is an approximating sequence 
of $\sF$. 
\end{proof}

\begin{proposition}\label{proposition:approx-filt-e}
Under the assumption in Proposition \ref{proposition:approx-sum-filt}, assume 
moreover that $\sF_i$ and $\sF_{i,(m)}$ are $\Z$-valued. Take any $e\in\Z_{>0}$. 
Let $\sF_i^{(e)}:=\left(\sF_i\right)^{(e)}$, $\sF^{(e)}$, 
$\sF_{i,(m)}^{(e)}:=\left(\sF_{i,(m)}\right)^{(e)}$, 
$\sF_{(m)}^{(e)}:=\left(\sF_{(m)}\right)^{(e)}$ be as in Proposition 
\ref{proposition:tc-twist} \eqref{proposition:tc-twist2}. Then we have 
the following: 
\begin{enumerate}
\renewcommand{\theenumi}{\arabic{enumi}}
\renewcommand{\labelenumi}{(\theenumi)}
\item\label{proposition:approx-filt-e1}
The filtration $\sF^{(e)}$ is the sum filtration of $\left\{\sF_i^{(e)}\right\}_{i=1}^k$.
\item\label{proposition:approx-filt-e2}
For any $1\leq i\leq k$, $\left\{\sF_{i,(m)}^{(e)}\right\}_{i=1}^k$ is 
an approximating sequence of $\sF_i^{(e)}$. 
\item\label{proposition:approx-filt-e3}
The filtration $\sF_{(m)}^{(e)}$ is the sum filtration of 
$\left\{\sF_{i,(m)}^{(e)}\right\}_{i=1}^k$.
\item\label{proposition:approx-filt-e4}
$\left\{\sF_{(m)}^{(e)}\right\}_{i=1}^k$ is 
an approximating sequence of $\sF^{(e)}$. 
\end{enumerate}
\end{proposition}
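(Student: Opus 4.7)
The plan is to observe that all four parts are essentially formal manipulations of the definitions once part (1) is in hand, and to reduce (2)--(4) to previously established results (Propositions~\ref{proposition:approx-base-change} and~\ref{proposition:approx-sum-filt}) together with the statement of part (1) applied in two different settings.

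First I would prove part~\eqref{proposition:approx-filt-e1} by direct comparison. Let $\sH$ denote the sum filtration of $\{\sF_i^{(e)}\}_{i=1}^k$. For the inclusion $\sH^x R_m \subset \sF^{(e),x}R_m$, given $x_1+\cdots+x_k=x$ set $y_i:=\lceil x_i/e\rceil$; since $\sum_i y_i \geq \lceil(\sum_i x_i)/e\rceil = \lceil x/e\rceil$, the product $\sF_1^{y_1}R_m^1\cdots\sF_k^{y_k}R_m^k$ lies in $\sF^{\lceil x/e\rceil}R_m = \sF^{(e),x}R_m$. Conversely, given $y_1+\cdots+y_k=\lceil x/e\rceil$ with $y_i\in\Z$ (allowed since $\sF$ is $\Z$-valued), set $x_i := e y_i$; then $\sum_i x_i = e\lceil x/e\rceil \geq x$ and $\sF_i^{(e),x_i}R_m^i = \sF_i^{\lceil x_i/e\rceil}R_m^i = \sF_i^{y_i}R_m^i$, showing that $\sF_1^{y_1}R_m^1\cdots\sF_k^{y_k}R_m^k \subset \sH^{\sum_i x_i}R_m \subset \sH^x R_m$. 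The two key arithmetic inputs are $\sum_i \lceil x_i/e\rceil \geq \lceil(\sum_i x_i)/e\rceil$ and $e\lceil x/e\rceil \geq x$.

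Part~\eqref{proposition:approx-filt-e2} is exactly the content of Proposition~\ref{proposition:approx-base-change} applied to the $\Z$-valued filtration $\sF_i$ together with its $\Z$-valued approximating sequence $\{\sF_{i,(m)}\}_{m\in r\Z_{>0}}$, so I would simply invoke it. Part~\eqref{proposition:approx-filt-e3} is the statement of part~\eqref{proposition:approx-filt-e1} with $\sF_i$ replaced by $\sF_{i,(m)}$ and $\sF$ replaced by $\sF_{(m)}$: since each $\sF_{i,(m)}$ is $\Z$-valued by hypothesis and $\sF_{(m)}$ is the sum filtration of $\{\sF_{i,(m)}\}_{i=1}^k$ by construction, the same argument goes through verbatim.

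Finally, part~\eqref{proposition:approx-filt-e4} is obtained by combining the first three parts with Proposition~\ref{proposition:approx-sum-filt}. Indeed, by~\eqref{proposition:approx-filt-e1}, $\sF^{(e)}$ is the sum filtration of $\{\sF_i^{(e)}\}_{i=1}^k$; by~\eqref{proposition:approx-filt-e2}, $\{\sF_{i,(m)}^{(e)}\}_{m\in r\Z_{>0}}$ is an approximating sequence of each $\sF_i^{(e)}$; and by~\eqref{proposition:approx-filt-e3}, $\sF_{(m)}^{(e)}$ is the sum filtration of $\{\sF_{i,(m)}^{(e)}\}_{i=1}^k$. Proposition~\ref{proposition:approx-sum-filt} then immediately yields that $\{\sF_{(m)}^{(e)}\}_{m\in r\Z_{>0}}$ is an approximating sequence of $\sF^{(e)}$. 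The main technical subtlety, concentrated entirely in part~\eqref{proposition:approx-filt-e1}, is the ceiling-rounding step; once that is handled correctly, everything else is a formal rearrangement.
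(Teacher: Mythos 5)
Your proposal is correct and takes essentially the same route as the paper: part (1) is the same ceiling computation (the paper merely phrases it by noting that both sides are $e\Z$-valued and comparing them at $\lambda\in e\Z$), and (2)--(4) are the same formal reductions to Propositions \ref{proposition:approx-base-change} and \ref{proposition:approx-sum-filt}. One minor wording fix: in the converse inclusion of (1), the reduction to integer tuples $y_i$ is justified because each $\sF_i$ is $\Z$-valued (so real indices can be rounded up without changing $\sF_i^{y_i}R_m^i$), not because the sum filtration $\sF$ is $\Z$-valued.
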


\begin{proof}
By Propositions \ref{proposition:approx-base-change} and 
\ref{proposition:approx-sum-filt}, it is enough to show the assertion 
\eqref{proposition:approx-filt-e1}. 
We note that $\sF^{(e)}$ and $\sF_i^{(e)}$ are $e\Z$-valued. 
Let $\sG$ be the sum filtration 
of $\left\{\sF_i^{(e)}\right\}_{i=1}^k$. For any $\lambda\in e\Z$, we have
\begin{eqnarray*}
\sG^\lambda R_m&=&\sum_{\substack{\lambda_1,\dots,\lambda_k\in e\Z; \\
\lambda_1+\cdots+\lambda_k=\lambda}}
\sF_1^{(e),\lambda_1}R_m^1\cdots\sF_k^{(e),\lambda_k}R_m^k\\
&=&\sum_{\substack{\lambda_1,\dots,\lambda_k\in e\Z; \\
\lambda_1+\cdots+\lambda_k=\lambda}}
\sF_1^{\lambda_1/e}R_m^1\cdots\sF_k^{\lambda_k/e}R_m^k\\
&=&\sum_{\substack{\mu_1,\dots,\mu_k\in \Z; \\
\mu_1+\cdots+\mu_k=\lambda/e}}
\sF_1^{\mu_1}R_m^1\cdots\sF_k^{\mu_k}R_m^k\\
&=&\sF^{\lambda/e}R_m=\sF^{(e),\lambda}R_m.
\end{eqnarray*}
Thus the assertion follows. 
\end{proof}

\section{The sum of non-Archimedean metrics}\label{section:sum-namc}

It is important that we extend the sum of test configurations in terms of the non-Archimedean metrics, following the approach of Boucksom--Jonsson \cite{BJ22} and Boucksom--Hisamoto--Jonsson \cite{BHJ}. A detailed explanation of the non-Archimedean metrics or the Berkovich analytification is completely out of reach of this paper, and the reader is referred to the aforementioned papers for the details.

We include here the bare minimum of notation that is used afterwards. The reference that we follow is Boucksom--Jonsson \cite{BJ22}. For a polarized projective variety $(X,L)$, we define a connected and compact topological space called the Berkovich analytification $X^{\mathrm{an}}$. It contains a subset $X^{\mathrm{div}}$ consisting of divisorial valuations which is dense in $X^{\mathrm{an}}$ \cite[Theorem 2.14]{BJ22}. Equivalence classes \cite[Definition 6.1]{BHJ} of semiample test configurations for $(X,L)$ corresponds one-to-one with certain plurisubharmonic (psh) functions on $X^{\mathrm{an}}$, as explained in \cite{BHJ,BJ22} and very briefly recalled below. Let $(\mathcal{X} , \mathcal{L}) / \A^1$ be a semiample test configuration for $(X,L)$, and pick another test configuration $\tilde{\mathcal{X}}$ with morphisms $\mu : \tilde{\mathcal{X}} \to \mathcal{X}$ and $\rho : \tilde{\mathcal{X}} \to X_{\A^1}$, where $(X_{\A^1} , L_{\A^1}) / \A^1$ is the trivial test configuration. Then
\begin{equation} \label{eqdfvcarl}
	D:=  \mu^* \mathcal{L} - \rho^* L_{\A^1}
\end{equation}
is a $\G_m$-invariant $\mathbb{Q}$-Cartier $\mathbb{Q}$-divisor with support in $\tilde{\mathcal{X}}_0$, and the map $\mathcal{L} \mapsto D$ is known to define a one-to-one correspondence between the set of equivalence classes of test configurations $(\mathcal{X} , \mathcal{L}) / \A^1$ (with $\mathcal{L}$ possibly non-semiample over $\A^1$) for $(X,L)$ and the space of piecewise linear functions on $X^{\mathrm{an}}$ \cite[Definition 2.1, Theorem 2.7, and \S 2.7]{BJ22}. Furthermore, given a test configuration $(\tilde{\mathcal{X}} , \mu^* \mathcal{L}) / \A^1$ for $(X,L)$, the Gauss extension yields an embedding $\sigma_{\tilde{\mathcal{X}}} : X^{\mathrm{an}} \to \tilde{\mathcal{X}}^{\mathrm{an}}$ as in \cite[\S 1.3 and \S 2.1]{BJ22}. When $D$ is a $\G_m$-invariant $\mathbb{Q}$-Cartier $\mathbb{Q}$-divisor with support in $\tilde{\mathcal{X}}_0$, we define a continuous function $\varphi_D \in C^0 (X^{\mathrm{an}} , \mathbb{R})$ by
\begin{equation} \label{eqdfvnapl}
	\varphi_D (v) := \sigma_{\tilde{\mathcal{X}}} (v) (D)
\end{equation}
for $v \in X^{\mathrm{an}}$ \cite[\S 2.2]{BJ22}. Thus, a test configuration $(\mathcal{X}, \mathcal{L})$ defines a function $\varphi_{\mathcal{L}} := \varphi_D \in C^0 (X^{\mathrm{an}} , \mathbb{R})$ by (\ref{eqdfvnapl}), where $D$ is defined as in (\ref{eqdfvcarl}). We note that the map $D \mapsto \varphi_D$ is invariant under the pullback, i.e.~$\varphi_{\mu^* D} = \varphi_D$ \cite[\S 2.2]{BJ22}, and hence the construction above does not depend on the choice of $\tilde{\mathcal{X}}$. The map $\mathcal{L} \mapsto \varphi_{\mathcal{L}}$ is known to set up a one-to-one correspondence between the set of equivalence classes of semiample test configurations for $(X,L)$ and the set $\mathcal{H}^{\mathrm{NA}} (L)$ of rational Fubini--Study functions on $X^{\mathrm{an}}$ \cite[\S 2.4 and Theorem 2.31]{BJ22}. The set $\mathcal{H}^{\mathrm{NA}} (L)$ admits a completion denoted by $\mathcal{E}^{1, \mathrm{NA}} (L)$ \cite[Theorem 4.15 (ii), \S 7.2, \S 12.1]{BJ22}; note that our notation $\mathcal{H}^{\mathrm{NA}} = \mathcal{H}^{\mathrm{NA}} (L)$ (resp.~$\mathcal{E}^{1, \mathrm{NA}} = \mathcal{E}^{1, \mathrm{NA}} (L)$) corresponds to $\mathcal{H}_{\mathbb{Q}}$ (resp.~to $\mathcal{E}^1$) in \cite{BJ22}. We also write $\mathrm{E}^{\mathrm{NA}}$ for the non-Archimedean Monge--Amp\`ere energy defined for $\phi^{\mathrm{NA}} \in \mathcal{E}^{1, \mathrm{NA}}$ by
\begin{equation*}
	\mathrm{E}^{\mathrm{NA}} (\phi^{\mathrm{NA}}) := \frac{(L,\phi^{\mathrm{NA}})^{n+1}}{(n+1) (L^{\cdot n})} > - \infty ,
\end{equation*}
where $(L,\phi^{\mathrm{NA}})^{n+1}$ is the energy pairing defined in \cite[Theorem 7.1]{BJ22}. In what follows, we write $(\phi^{\mathrm{NA}})^{n+1}$ for $(L,\phi^{\mathrm{NA}})^{n+1}$ to simplify the notation. Moreover, in this paper we also call $\mathcal{H}^{\mathrm{NA}}$ the set of all non-Archimedean \textit{metrics}, following the terminology of \cite[Definition 6.4]{BHJ} (see also \cite[Example 3.3]{BJ22}).

In this section, let $(X, \Delta)$, $L_1 , \dots , L_k$, and $L= \sum_{i=1}^k L_i$ be as in \S \ref{section:sum-tc}. We write $\mathcal{H}^{\mathrm{NA}}(L)$ for the set of rational Fubini--Study functions for $L$, and $\mathcal{H}^{\mathrm{NA}}_1, \dots , \mathcal{H}^{\mathrm{NA}}_k$ for the ones for $L_1 , \dots , L_k$ respectively.

We first observe the following straightforward consequence of 
Lemma \ref{lemma:sum-tc} in terms of the non-Archimedean metrics 
in the sense of \cite[\S 6]{BHJ}.

\begin{lemma} \label{lemma:aplmstcna}
The non-Archimedean metric on $L$ represented by the sum configuration 
$(\sX, \sL)$ of $\left\{(\sX_i, \sL_i)\right\}_{i=1}^k$ depends only on 
the non-Archimedean metrics on $L_1,\dots,L_k$ represented respectively by 
$(\sX_1, \sL_1),\dots,(\sX_k, \sL_k)$.
\end{lemma}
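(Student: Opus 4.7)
The plan is to establish the stronger pointwise identity $\varphi_\sL = \sum_{i=1}^k \varphi_{\sL_i}$ of continuous functions on $X^{\mathrm{an}}$, from which the lemma follows immediately since the right hand side depends only on the individual non-Archimedean metrics. First, since $\varphi$ is invariant under normalization and birational pullback, I may freely replace $(\sX, \sL)$ by the normalized sum configuration $(\sX^\nu, \nu^*\sL)$ without changing $\varphi_\sL$. By Lemma \ref{lemma:sum-tc}(3), this normalized sum is the ample model over $\A^1$ of $\bigl(\sZ, \sum_{i=1}^k \sigma_i^* \sL_i\bigr)$, where $\sZ$ is a sufficiently large common $(\G_m \times \T)$-equivariant partial resolution dominating all the $\sX_i$. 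Enlarging $\sZ$ if necessary, I may assume $\sZ$ admits a morphism $f : \sZ \to \sX^\nu$, so that the semiampleness of $\sum_i \sigma_i^* \sL_i$ over $\A^1$ combined with the universal property of the ample model yields the equality $f^*(\nu^* \sL) = \sum_{i=1}^k \sigma_i^* \sL_i$ of $\Q$-Cartier $\Q$-divisors on $\sZ$.

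Next, fix a divisorial valuation $v \in X^{\mathrm{div}}$, let $G(v)$ denote its Gauss extension viewed as a valuation on $\sZ$, and let $\rho : \sZ \to X_{\A^1}$ be the natural morphism. Using the definition (\ref{eqdfvnapl}) together with the pullback invariance $\varphi_{\mu^*D} = \varphi_D$, the computation is
\begin{align*}
\varphi_\sL(v)
&= G(v)\bigl(f^* \nu^*\sL - \rho^* L_{\A^1}\bigr)
= G(v)\Bigl(\textstyle\sum_{i=1}^k \sigma_i^* \sL_i - \sum_{i=1}^k \rho^*(L_i)_{\A^1}\Bigr) \\
&= \textstyle\sum_{i=1}^k G(v)\bigl(\sigma_i^* \sL_i - \rho^*(L_i)_{\A^1}\bigr)
= \sum_{i=1}^k \varphi_{\sL_i}(v),
\end{align*}
using $L_{\A^1} = \sum_{i=1}^k (L_i)_{\A^1}$ and linearity of $G(v)$ on the $\Q$-vector space of $\G_m$-invariant $\Q$-Cartier $\Q$-divisors supported on the central fiber of $\sZ$. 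Since both sides are continuous on $X^{\mathrm{an}}$ and $X^{\mathrm{div}}$ is dense by \cite[Theorem 2.14]{BJ22}, the identity extends to all of $X^{\mathrm{an}}$.

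The main obstacle will be establishing cleanly the identity $f^*(\nu^*\sL) = \sum_{i=1}^k \sigma_i^* \sL_i$ on $\sZ$ without any exceptional correction: a priori the passage to an ample model can introduce contracted effective contributions, and care is required in handling the interaction between the normalization $\nu : \sX^\nu \to \sX$ (where $\sX$ itself may fail to be normal by Remark \ref{remark:appendix}) and the semiampleness of the summed polarization on $\sZ$ guaranteed by Lemma \ref{lemma:sum-tc}(3). Once this identity is in hand, the remaining manipulations are essentially formal consequences of the linearity of Gauss extensions and pullback invariance of non-Archimedean potentials.
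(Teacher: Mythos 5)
Your argument is correct, but it takes a genuinely different route from the paper. The paper's own proof is a soft choice-of-model argument: since equivalent test configurations pull back to the same $\Q$-line bundle on any sufficiently dominant model, one replaces $\sZ$ by a common partial resolution dominating both sets of representatives and invokes Lemma \ref{lemma:sum-tc} (the sum configuration is insensitive to enlarging $\sZ$ and is the ample model of $\bigl(\sZ,\sum_i\sigma_i^*\sL_i\bigr)$ over $\A^1$); no non-Archimedean potentials are computed. You instead prove the stronger pointwise identity $\varphi_{\sL}=\sum_{i=1}^k\varphi_{\sL_i}$ via Gauss extensions, which is essentially the content of the paper's \emph{next} statement (Lemma \ref{lmsmnatc}, where the paper cites the definition of sums of non-Archimedean metrics in \cite{BHJ,BJ22} rather than computing on divisorial points), and then deduce the present lemma as a corollary; this buys you the explicit formula at the cost of having to justify the divisor identity on $\sZ$. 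Concerning the "main obstacle" you flag: it is not a real gap, but it should be phrased more carefully. Since each $\sigma_i^*\sL_i$ is semiample over $\A^1$, the passage to the ample model introduces no effective exceptional correction, and by the very construction of the sum configuration as $\Proj_{\Bbbk[t]}(\sR)$ with $\sR_m\subset H^0\bigl(\sZ,m\sum_i\sigma_i^*\sL_i\bigr)$, the pullback of $\sL$ to $\sZ$ is identified with $\sum_i\sigma_i^*\sL_i$ \emph{compatibly with the canonical trivializations over} $\A^1\setminus\{0\}$. This last compatibility is the point you actually need: the identity entering (\ref{eqdfvnapl}) is an equality of $\G_m$-invariant divisors supported on the central fiber with respect to these trivializations, and a mere $\Q$-linear equivalence (which is all that the statement of Lemma \ref{lemma:sum-tc} \eqref{lemma:sum-tc3} records) would a priori leave room for a discrepancy by a rational multiple of $\sZ_0=\mathrm{div}(t)$, i.e.\ a constant shift of the potential. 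Once stated in that form, the rest of your computation (linearity of the Gauss extension, $L_{\A^1}=\sum_i(L_i)_{\A^1}$, density of $X^{\mathrm{div}}$ and continuity) is indeed formal and correct.
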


\begin{proof}
Let $\sZ$ be a common partial resolution of 
$(\sX_1, \sL_1),\dots,(\sX_k, \sL_k)$. If we take another representative, 
say $(\sX'_i, \sL'_i)$, of the non-Archimedean metric defined by 
$(\sX_i , \sL_i)$, we simply replace $\sZ$ by a common partial resolution of 
$\sZ, (\sX'_1, \sL'_1),\dots, (\sX'_k, \sL'_k)$, which defines the same 
non-Archimedean metric as $(\sX, \sL)$ by Lemma \ref{lemma:sum-tc}.
\end{proof}

Thus, the following map
\begin{equation*}
	\mathsf{S} : \mathcal{H}^{\mathrm{NA}}_1 \times \cdots \times \mathcal{H}^{\mathrm{NA}}_k \to \mathcal{H}^{\mathrm{NA}}(L),
\end{equation*}
taking any ample representatives from $\mathcal{H}^{\mathrm{NA}}_1, \dots , \mathcal{H}^{\mathrm{NA}}_k$ and giving (the equivalence class of) the sum test configuration of them, is well-defined. In fact, this map agrees with the sum of non-Archimedean metrics, as in the following lemma.

\begin{lemma} \label{lmsmnatc}
	The map $\mathsf{S}$ above satisfies
	\begin{equation*}
		\mathsf{S} (\phi^{\mathrm{NA}}_1 , \dots , \phi^{\mathrm{NA}}_k ) = \phi^{\mathrm{NA}}_1 + \cdots + \phi^{\mathrm{NA}}_k
	\end{equation*}
	where the right hand side is the the sum of non-Archimedean metrics as defined in \cite[\S 6.2]{BHJ} or \cite[Proposition 3.6]{BJ22}. Moreover, if $\phi^{\mathrm{NA}}_i , \psi^{\mathrm{NA}}_i \in \mathcal{H}^{\mathrm{NA}}_i$ satisfy $\phi^{\mathrm{NA}}_i \le  \psi^{\mathrm{NA}}_i$ for all $i=1 , \dots , k$, we have
\begin{equation} \label{eqspsord}
	\mathsf{S} (\phi^{\mathrm{NA}}_1 , \dots , \phi^{\mathrm{NA}}_k ) \le \mathsf{S} (\psi^{\mathrm{NA}}_1 , \dots , \psi^{\mathrm{NA}}_k ).
\end{equation}
\end{lemma}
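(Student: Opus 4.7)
The plan is to translate the identity into the language of $\G_m$-invariant $\Q$-Cartier $\Q$-divisors supported on the central fiber of a common resolution, where the assignment $D \mapsto \varphi_D$ is visibly additive because $v(D)$ is $\Q$-linear in $D$ for every $v \in X^{\mathrm{an}}$. Fix ample representatives $(\sX_i , \sL_i)$ of $\phi_i^{\mathrm{NA}}$. By Definition \ref{definition:sum-tc}, one can choose a $\G_m$-equivariant partial resolution $\sigma_i \colon \sZ \to \sX_i$; enlarging $\sZ$ we may also assume a morphism $\rho \colon \sZ \to X_{\A^1}$. By Lemma \ref{lemma:sum-tc}\eqref{lemma:sum-tc3}, after a further resolution (still denoted $\sZ$) there is a morphism $\tilde{\sigma} \colon \sZ \to \sX$ with
\[
\tilde{\sigma}^* \sL \sim_{\Q,\A^1} \sum_{i=1}^k \sigma_i^* \sL_i .
\]

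Next, set $D_i := \sigma_i^* \sL_i - \rho^* (L_i)_{\A^1}$ and $D := \tilde{\sigma}^* \sL - \rho^* L_{\A^1}$, which are $\G_m$-invariant $\Q$-Cartier $\Q$-divisors supported in $\sZ_0$. The equivalence above gives $D \sim_{\Q,\A^1} \sum_i D_i$, so by the pullback invariance of $\varphi_{\bullet}$ recalled in the paragraph containing \eqref{eqdfvnapl} we have $\varphi_{D_i} = \phi_i^{\mathrm{NA}}$ and $\varphi_D = \mathsf{S}(\phi_1^{\mathrm{NA}} , \dots , \phi_k^{\mathrm{NA}})$. Since $v \mapsto v(\cdot)$ is $\Q$-linear on divisors, the defining formula \eqref{eqdfvnapl} yields $\varphi_{\sum_i D_i}(v) = \sum_i \varphi_{D_i}(v)$ for every $v \in X^{\mathrm{an}}$, and therefore
\[
\mathsf{S}(\phi_1^{\mathrm{NA}} , \dots , \phi_k^{\mathrm{NA}}) \;=\; \varphi_{\sum_i D_i} \;=\; \sum_{i=1}^k \varphi_{D_i} \;=\; \sum_{i=1}^k \phi_i^{\mathrm{NA}} ,
\]
which coincides with the sum of non-Archimedean metrics in the sense of \cite[\S 6.2]{BHJ} and \cite[Proposition 3.6]{BJ22}. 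The independence of the right-hand side from the chosen representatives is precisely Lemma \ref{lemma:aplmstcna}.

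Finally, the monotonicity \eqref{eqspsord} is immediate from the identity just proved: the hypothesis $\phi_i^{\mathrm{NA}} \le \psi_i^{\mathrm{NA}}$ for all $i$ implies $\sum_i \phi_i^{\mathrm{NA}} \le \sum_i \psi_i^{\mathrm{NA}}$ pointwise on $X^{\mathrm{an}}$, whence the conclusion. There is no substantial obstacle in the argument; the entire content is an algebraic bookkeeping step whose heart is the linear-equivalence identity $\tilde{\sigma}^*\sL \sim_{\Q , \A^1} \sum_i \sigma_i^* \sL_i$ from Lemma \ref{lemma:sum-tc}\eqref{lemma:sum-tc3}, combined with the linearity of the evaluation $v \mapsto v(D)$ on central-fibre divisors. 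The only point worth care is distinguishing the sum configuration $(\sX, \sL)$ from its pullback to $\sZ$, but since $\varphi_D$ is invariant under pullback and under $\Q$-linear equivalence over $\A^1$, this distinction does not affect the computation.
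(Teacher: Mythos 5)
Your route is essentially the paper's: take ample representatives, pass to a common partial resolution $\sZ$, and use Lemma \ref{lemma:sum-tc} to identify the (normalized) sum configuration with the model $\left(\sZ,\sum_i\sigma_i^*\sL_i\right)$, so that $\mathsf{S}(\phi_1^{\NA},\dots,\phi_k^{\NA})$ and $\sum_i\phi_i^{\NA}$ are represented by the same pair; the monotonicity statement is then the same pointwise observation as in the paper. The extra translation into central-fibre divisors $D_i$ and the linearity of $v\mapsto v(D)$ is fine and matches the conventions of \S\ref{section:sum-namc}.

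One justification you give is, however, not correct as stated: $\varphi_D$ is \emph{not} invariant under $\Q$-linear equivalence over $\A^1$. Adding $c\,\pi^*\{0\}=\frac{c}{m}\DIV(t^m)$ to $D$ preserves relative $\Q$-linear equivalence but changes $\varphi_D$ by the constant $c$, since the Gauss extension satisfies $\sigma_{\tilde{\sX}}(v)(t)=1$; this is exactly the shift ambiguity $\sL\mapsto\sL+c\,\sX_0$ for test configurations. So from $D\sim_{\Q,\A^1}\sum_iD_i$ alone you may only conclude $\varphi_D=\sum_i\varphi_{D_i}+\mathrm{const}$. The gap is easily closed: by the construction in Definition \ref{definition:sum-tc}, the sections $\sR_m$ relatively generate $m\sum_i\sigma_i^*\sL_i$, so the induced morphism $\sZ\to\sX$ pulls back $\sL$ to $\sum_i\sigma_i^*\sL_i$ \emph{on the nose}, compatibly with the canonical trivializations over $\A^1\setminus\{0\}$; hence $D=\sum_iD_i$ as divisors (not merely up to relative linear equivalence), and the constant ambiguity never arises. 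With that replacement your argument is complete and agrees with the paper's proof.
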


\begin{proof}
	We take $\phi^{\mathrm{NA}}_i \in \mathcal{H}^{\mathrm{NA}}_i$ and take an ample representative $(\mathcal{X}_i, \mathcal{L}_i)$ for $i=1 , \dots , k$. We replace this model by the common partial resolution $\sigma_i : \mathcal{Z} \to \mathcal{X}_i$ and note that $(\mathcal{Z} , \sigma^*_i \mathcal{L}_i )$ represents $\phi^{\mathrm{NA}}_i$. Then the sum $\phi^{\mathrm{NA}}_1 + \cdots + \phi^{\mathrm{NA}}_k$ is represented by $(\mathcal{Z} , \sigma_1^* \mathcal{L}_1 + \cdots + \sigma_k^* \mathcal{L}_k)$, establishing the first part of the claim by recalling Lemma \ref{lemma:sum-tc}. We immediately observe that, if $\phi^{\mathrm{NA}}_i , \psi^{\mathrm{NA}}_i \in \mathcal{H}^{\mathrm{NA}}_i$ satisfies $\phi^{\mathrm{NA}}_i \le  \psi^{\mathrm{NA}}_i$ for all $i=1 , \dots , k$, we have
\begin{equation*} 
	\phi^{\mathrm{NA}}_1 + \cdots + \phi^{\mathrm{NA}}_k \le \psi^{\mathrm{NA}}_1 + \cdots + \psi^{\mathrm{NA}}_k
\end{equation*}
as claimed.
\end{proof}

\begin{proposition} \label{ppsmctnna}
	The map $\mathsf{S}$ can be extended to the map $\mathsf{S} : \mathcal{E}^{1, \mathrm{NA}}_1 \times \cdots \times \mathcal{E}^{1, \mathrm{NA}}_k \to \mathcal{E}^{1, \mathrm{NA}} (L)$ defined by
\begin{equation*}
	\mathsf{S}(\phi^{\mathrm{NA}}_1 , \dots , \phi^{\mathrm{NA}}_k) := \phi^{\mathrm{NA}}_1 + \cdots + \phi^{\mathrm{NA}}_k
\end{equation*}
which is continuous with respect to the strong topology.
\end{proposition}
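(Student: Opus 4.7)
The plan is to verify three points: the pointwise sum $\sum_{i=1}^k\phi^{\mathrm{NA}}_i$ is well-defined as an element of $\mathcal{E}^{1,\mathrm{NA}}(L)$, the so-defined map genuinely extends $\mathsf{S}$, and the extended map is strongly continuous.

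First, for any $\phi^{\mathrm{NA}}_i\in\mathcal{E}^{1,\mathrm{NA}}_i$, I would use the monotone regularization result of Boucksom--Jonsson to select a decreasing net $\phi^{\mathrm{NA}}_{i,\alpha}\searrow\phi^{\mathrm{NA}}_i$ with $\phi^{\mathrm{NA}}_{i,\alpha}\in\mathcal{H}^{\mathrm{NA}}_i$. By the monotonicity estimate \eqref{eqspsord} in Lemma \ref{lmsmnatc}, the nets $\mathsf{S}(\phi^{\mathrm{NA}}_{1,\alpha},\dots,\phi^{\mathrm{NA}}_{k,\alpha})=\sum_{i=1}^k\phi^{\mathrm{NA}}_{i,\alpha}$ decrease pointwise to $\sum_{i=1}^k\phi^{\mathrm{NA}}_i$. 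Since a sum of $L_i$-psh functions is $L$-psh (by the definition of non-Archimedean psh functions via decreasing limits of Fubini--Study functions; cf.\ \cite[\S 4]{BJ22}), the limit lies in the non-Archimedean psh class for $L$.

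Second, to show that the limit has finite energy, I would invoke the multilinearity of the energy pairing $(L,\phi^{\mathrm{NA}})^{n+1}$ from \cite[Theorem 7.1]{BJ22}: expanding $(L,\sum_i\phi^{\mathrm{NA}}_i)^{n+1}$ multilinearly produces a finite sum of mixed energy pairings of the form $(L_{i_0},\phi^{\mathrm{NA}}_{i_0})\cdots(L_{i_n},\phi^{\mathrm{NA}}_{i_n})$. Each such mixed term is finite whenever each $\phi^{\mathrm{NA}}_{i_j}$ has finite energy in the corresponding $\mathcal{E}^{1,\mathrm{NA}}_{i_j}$; this follows from the H\"older-type bounds satisfied by mixed pairings (the non-Archimedean analogue of the standard mixed Monge--Amp\`ere inequalities). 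This estimate also shows independence from the choice of regularizing sequence. That this extension coincides with $\mathsf{S}$ on $\mathcal{H}^{\mathrm{NA}}_1\times\cdots\times\mathcal{H}^{\mathrm{NA}}_k$ is exactly Lemma \ref{lmsmnatc}.

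Third, for strong continuity: if $\phi^{\mathrm{NA}}_{i,j}\to\phi^{\mathrm{NA}}_i$ strongly in $\mathcal{E}^{1,\mathrm{NA}}_i$ as $j\to\infty$ for each $i$, the strong topology is characterized (see \cite[\S 12]{BJ22}) by joint convergence of the underlying psh functions together with convergence of energies. Pointwise (quasi-everywhere) convergence of the sums follows trivially. For the energy, expand $\mathrm{E}^{\mathrm{NA}}(\sum_i\phi^{\mathrm{NA}}_{i,j})$ multilinearly into mixed energies $(L_{i_0},\phi^{\mathrm{NA}}_{i_0,j})\cdots(L_{i_n},\phi^{\mathrm{NA}}_{i_n,j})$, and pass to the limit using the continuity of each mixed pairing under joint strong convergence, a fact that follows from the one-variable strong continuity in \cite[Theorem 7.14]{BJ22} combined with a standard polarization argument.

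The main obstacle will be the last step: strong continuity for $\mathcal{E}^{1,\mathrm{NA}}$ is usually stated for fixed ample class, while here one has to deal with mixed pairings involving distinct line bundles $L_1,\dots,L_k$. I expect this to be handled by polarizing the one-variable continuity theorem, but a careful treatment may require a direct plurisubharmonic variational estimate: bound the discrepancy of mixed energies by one-variable energy differences (via telescoping over the indices), each of which tends to zero by the hypothesis of strong convergence.
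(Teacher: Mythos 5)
Your proposal follows essentially the same strategy as the paper: regularize by decreasing nets of Fubini--Study functions, control the energy of the sum to land in $\mathcal{E}^{1,\mathrm{NA}}(L)$, and reduce strong continuity to convergence of the Monge--Amp\`ere energy of the sum. The differences are in which tools from \cite{BJ22} carry the load at the two places where you are vague. For finiteness of the energy of the sum, the paper does not expand multilinearly; it quotes \cite[Lemma 7.10]{BJ22} directly, which gives $(\phi^{\mathrm{NA}}_{1}+\cdots+\phi^{\mathrm{NA}}_{k})^{n+1}\ge C\sum_i(\phi^{\mathrm{NA}}_i)^{n+1}$ with $C$ depending only on $n,k,L_1,\dots,L_k$, so the lower bound follows at once from monotonicity of each $\mathrm{E}^{\mathrm{NA}}_i$ along the decreasing nets. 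For the continuity step, the ``polarization of one-variable continuity'' you propose is not needed and would be delicate to make precise across distinct bundles; the ready-made ingredient is the quantitative H\"older estimate \cite[Theorem 7.34]{BJ22}, which bounds the difference of the two energy pairings by $C\max_i\bar I(\tilde\phi^{\mathrm{NA}}_i,\tilde\psi^{\mathrm{NA}}_{ij})^{\alpha_n}$, combined with \cite[Theorem 12.4]{BJ22} asserting that strong convergence forces $\bar I\to 0$. Note also the normalization step the paper inserts before applying this estimate: one subtracts $\sup_{X^{\mathrm{an}}}$ (attained at $v_{\mathrm{triv}}$ by \cite[Proposition 4.12 (ii)]{BJ22}) and tracks the sup terms separately via \cite[Propositions 3.14 and 7.7]{BJ22}; without some such uniform normalization the constant in the H\"older estimate, which depends on the fixed potentials, is not directly applicable to the net. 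Your telescoping fallback would work in principle, but only after importing exactly these mixed-pairing estimates, so in the end it collapses to the paper's argument.
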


This result may be well-known to the experts, particularly because it follows directly from various results in \cite{BJ22} as we can see below. In the rest of the paper, we do not need the continuity of $\mathsf{S}$, but the statement is included here for completeness.

\begin{proof}
	We first prove that the image of $\mathsf{S}$ is contained in $\mathcal{E}^{1, \mathrm{NA}} (L)$. We pick a decreasing net
	\begin{equation*}
		\{ (\phi^{\mathrm{NA}}_{1j} , \dots , \phi^{\mathrm{NA}}_{kj}) \}_j \subset \mathcal{H}^{\mathrm{NA}}_1 \times \cdots \times \mathcal{H}^{\mathrm{NA}}_k
	\end{equation*}
	converging (in the product topology) to $(\phi^{\mathrm{NA}}_1, \dots , \phi^{\mathrm{NA}}_k) \in \mathcal{E}^{1, \mathrm{NA}}_1 \times \cdots \times \mathcal{E}^{1, \mathrm{NA}}_k$. By (\ref{eqspsord}) and the definition of $\mathsf{S}$, $\left\{ \sum_{i=1}^k \phi_{ij}^{\mathrm{NA}} \right\}_j \subset \mathcal{H}^{\mathrm{NA}} (L)$ is a decreasing net converging pointwise to $\sum_{i=1}^k \phi_{i}^{\mathrm{NA}}$ over $X^{\mathrm{div}}$. Thus it suffices to prove
	\begin{equation*}
		\inf_{j} \mathrm{E}^{\mathrm{NA}} \left( \sum_{i=1}^k \phi_{ij}^{\mathrm{NA}} \right) = \inf_j \frac{(\phi^{\mathrm{NA}}_{1j} + \cdots + \phi^{\mathrm{NA}}_{kj})^{n+1}}{(n+1) (L^{\cdot n})} > - \infty.
	\end{equation*}
	We now recall from \cite[Lemma 7.10]{BJ22} that there exists a constant $C>0$ depending only on $n$, $k$, and $L_1 , \dots,  L_k$ such that
	\begin{equation*}
		(\phi^{\mathrm{NA}}_{1j} + \cdots + \phi^{\mathrm{NA}}_{kj})^{n+1} > C \sum_{i=1}^k (\phi^{\mathrm{NA}}_{ij})^{n+1} = C \sum_{i=1}^k (L_i^{\cdot n}) \mathrm{E}_i^{\mathrm{NA}} (\phi^{\mathrm{NA}}_{ij})
	\end{equation*}
	for each $j$, where $\mathrm{E}_i^{\mathrm{NA}}$ is the non-Archimedean Monge--Amp\`ere energy for $L_i$. The infimum of the right hand side is finite by the monotonicity of the Monge--Amp\`ere energy functional $\mathrm{E}_i^{\mathrm{NA}}$, with $\{ \phi^{\mathrm{NA}}_{ij} \}_j$ decreasing to $\phi^{\mathrm{NA}}_i  \in \mathcal{E}^{1, \mathrm{NA}}_i$ for each $i = 1 , \dots , k$. Thus we get $\inf_{j} \mathrm{E}^{\mathrm{NA}} \left(\sum_{i=1}^k \phi_{ij}^{\mathrm{NA}} \right) > - \infty$ as required.
	
	It remains to prove that $\mathsf{S} : \mathcal{E}^{1, \mathrm{NA}}_1 \times \cdots \times \mathcal{E}^{1, \mathrm{NA}}_k \to \mathcal{E}^{1, \mathrm{NA}} (L)$ is continuous with respect to the strong topology (see \cite[Definition 12.1]{BJ22}). Let $\{ (\psi^{\mathrm{NA}}_{1j} , \dots , \psi^{\mathrm{NA}}_{kj}) \}_j$ be a net in $\mathcal{E}^{1, \mathrm{NA}}_1 \times \cdots \times \mathcal{E}^{1, \mathrm{NA}}_k$ strongly converging (in the product topology) to $(\phi^{\mathrm{NA}}_1, \dots , \phi^{\mathrm{NA}}_k) \in \mathcal{E}^{1, \mathrm{NA}}_1 \times \cdots \times \mathcal{E}^{1, \mathrm{NA}}_k$. Again by the definition of $\mathsf{S}$, we find that $\left\{ \sum_{i=1}^k \psi^{\mathrm{NA}}_{ij} \right\}_j$ converges pointwise to $\sum_{i=1}^k \phi_i^{\mathrm{NA}}$ over $X^{\mathrm{div}}$. It thus suffices to show that $\mathrm{E}^{\mathrm{NA}} \left( \sum_{i=1}^k \psi^{\mathrm{NA}}_{ij} \right)$ converges to $\mathrm{E}^{\mathrm{NA}} \left( \sum_{i=1}^k \phi_i^{\mathrm{NA}} \right)$, by \cite[\S 12.1]{BJ22}.
	
	To establish this convergence, we first normalize
	\begin{equation*}
		\tilde{\phi}^{\mathrm{NA}}_{i} = \phi^{\mathrm{NA}}_{i} - \sup_{X^{\mathrm{an}}} \phi^{\mathrm{NA}}_{i}, \quad \tilde{\psi}^{\mathrm{NA}}_{ij} = \psi^{\mathrm{NA}}_{ij} - \sup_{X^{\mathrm{an}}} \psi^{\mathrm{NA}}_{ij},
	\end{equation*}
	for each $i=1 , \dots , k$ and each $j$, where we note that $\sup_{X^{\mathrm{an}}} \psi^{\mathrm{NA}}_{ij} \to \sup_{X^{\mathrm{an}}} \phi^{\mathrm{NA}}_{i}$ as $j \to \infty$ since the supremum is attained at a fixed point $v_{\mathrm{triv}} \in X^{\mathrm{div}}$ by \cite[Proposition 4.12 (ii)]{BJ22} (as $X$ is assumed to be irreducible) and $\{ \psi^{\mathrm{NA}}_{ij} \}_j$ converges pointwise to $\phi^{\mathrm{NA}}_{i}$ over $X^{\mathrm{div}}$ by the strong convergence. This in turn shows that $\{ \tilde{\psi}^{\mathrm{NA}}_{ij} \}_j$ converges strongly to $\tilde{\phi}^{\mathrm{NA}}_{i}$ for each $i=1 , \dots , k$, since we have
	\begin{equation*}
		\mathrm{E}_i^{\mathrm{NA}}(\tilde{\psi}^{\mathrm{NA}}_{ij}) = \mathrm{E}_i^{\mathrm{NA}}(\psi^{\mathrm{NA}}_{ij}) - \sup_{X^{\mathrm{an}}} \psi^{\mathrm{NA}}_{ij}
	\end{equation*}
	and similarly for $\mathrm{E}_i^{\mathrm{NA}}(\tilde{\phi}^{\mathrm{NA}}_{i})$ by \cite[Proposition 7.7]{BJ22}. We then find
	\begin{equation*}
		(\phi^{\mathrm{NA}}_{1} + \cdots + \phi^{\mathrm{NA}}_{k})^{n+1} = (\tilde{\phi}^{\mathrm{NA}}_{1} + \cdots + \tilde{\phi}^{\mathrm{NA}}_{k})^{n+1} + (n+1)(L^{\cdot n}) \sum_{i=1}^k \sup_{X^{\mathrm{an}}} \phi^{\mathrm{NA}}_{i}
	\end{equation*}
	and 
	\begin{equation*}
		(\psi^{\mathrm{NA}}_{1j} + \cdots + \psi^{\mathrm{NA}}_{kj})^{n+1} = (\tilde{\psi}^{\mathrm{NA}}_{1j} + \cdots + \tilde{\psi}^{\mathrm{NA}}_{kj})^{n+1} + (n+1)(L^{\cdot n}) \sum_{i=1}^k \sup_{X^{\mathrm{an}}} \psi^{\mathrm{NA}}_{ij}
	\end{equation*}
	by \cite[Proposition 3.14]{BJ22} and recalling $(L_1 + \cdots +L_k)^{\cdot n} = (L^{\cdot n})$. Then, expanding out the energy pairing, \cite[Theorem 7.34]{BJ22} implies that there exists a constant $C >0$ depending only on $L_1 , \dots , L_k$, $n$, and $\tilde{\phi}^{\mathrm{NA}}_{1} , \dots , \tilde{\phi}^{\mathrm{NA}}_{k}$ such that
	\begin{equation*}
		\left| (\tilde{\phi}^{\mathrm{NA}}_{1} + \cdots + \tilde{\phi}^{\mathrm{NA}}_{k})^{n+1} - (\tilde{\psi}^{\mathrm{NA}}_{1j} + \cdots + \tilde{\psi}^{\mathrm{NA}}_{kj})^{n+1} \right| \le C \max_{i=1, \dots , k} \bar{I} (\tilde{\phi}^{\mathrm{NA}}_{i} , \tilde{\psi}^{\mathrm{NA}}_{ij})^{\alpha_n}
	\end{equation*}
	holds eventually for the nets $\{ \tilde{\psi}^{\mathrm{NA}}_{ij} \}_j$ converging to $\tilde{\phi}^{\mathrm{NA}}_{i}$, where $\alpha_n \in (0,1]$ is a constant depending only on $n$ and $\bar{I}$ is the quasi-metric defined in \cite[Definition 12.3]{BJ22} (see also \cite[(7.29) in Proposition 7.27]{BJ22}). Since $\{ \tilde{\psi}^{\mathrm{NA}}_{ij} \}_j$ converges strongly to $\tilde{\phi}^{\mathrm{NA}}_{i}$, we have $\bar{I} (\tilde{\phi}^{\mathrm{NA}}_{i} , \tilde{\psi}^{\mathrm{NA}}_{ij}) \to 0$ for each $i=1 , \dots , k$ by \cite[Theorem 12.4]{BJ22}, which shows
	\begin{equation*}
		(\tilde{\psi}^{\mathrm{NA}}_{1j} + \cdots + \tilde{\psi}^{\mathrm{NA}}_{kj})^{n+1} \to (\tilde{\phi}^{\mathrm{NA}}_{1} + \cdots + \tilde{\phi}^{\mathrm{NA}}_{k})^{n+1}.
	\end{equation*}
	Thus, combining the above argument we have
	\begin{equation*}
		(\psi^{\mathrm{NA}}_{1j} + \cdots + \psi^{\mathrm{NA}}_{kj})^{n+1} \to (\phi^{\mathrm{NA}}_{1} + \cdots + \phi^{\mathrm{NA}}_{k})^{n+1},
	\end{equation*}
	which shows that $\mathrm{E}^{\mathrm{NA}} \left( \sum_{i=1}^k \psi^{\mathrm{NA}}_{ij} \right)$ converges to $\mathrm{E}^{\mathrm{NA}} \left(\sum_{i=1}^k \phi_i^{\mathrm{NA}} \right)$, as claimed.
\end{proof}

\section{The coupled Ding invariant}\label{section:c-ding}

In this section, we introduce the notion of coupled Ding semistability and 
(reduced) coupled Ding stability for log Fano pairs. 
In this section, we fix an $n$-dimensional (klt) log Fano pair $(X,\Delta)$, 
an algebraic torus $\T\simeq\G_m^p$ with $p\in\Z_{\geq 0}$, 
an injection $\T\to\Aut(X,\Delta)$, and $\T$-linearized ample $\Q$-line 
bundles $L_1,\dots,L_k$ on $X$ such that $L:=\sum_{i=1}^k L_i$ coincides with 
$-(K_X+\Delta)$ with the standard $\T$-linearization. 

\begin{definition}\label{definition:cFutaki}
We set 
\[
\alpha_{\bc}^{\cp}:=\sum_{i=1}^k \alpha_{\bc}^{L_i}\in M_\R(\T) 
\]
(see Definition \ref{definition:polytope}). 
We say that $\left(X,\Delta;\{L_i\}_{i=1}^k\right)$ \emph{has vanishing coupled 
$\T$-Futaki characters} if $\alpha_{\bc}^{\cp}=0$. 
By Lemma \ref{lemma:sum-basics} \eqref{lemma:sum-basics2}, those definitions 
do not depend on the choice of $\T$-linearizations of $L_i$. 
If $\T$ is a maximal torus of $\Aut(X,\Delta)$, we simply say that 
$\left(X,\Delta;\{L_i\}_{i=1}^k\right)$ \emph{has vanishing coupled Futaki characters}.
Note that all maximal tori of $\Aut(X,\Delta)$ are mutually conjugate to 
each other. 
\end{definition}

\begin{definition}\label{definition:ding}
\begin{enumerate}
\renewcommand{\theenumi}{\arabic{enumi}}
\renewcommand{\labelenumi}{(\theenumi)}
\item\label{definition:ding1}
For any $1\leq i\leq k$, let $\sF_i$ be a $\T$-equivariant filtration on $R^i$, and 
let $\sF$ be the sum filtration of $\{\sF_i\}_{i=1}^k$. 
\begin{enumerate}
\renewcommand{\theenumii}{\roman{enumii}}
\renewcommand{\labelenumii}{(\theenumii)}
\item\label{definition:ding11}
We set 
\[
\JJ^{\cp}\left(\left\{\sF_i\right\}_{i=1}^k\right):=\sum_{i=1}^k\JJ(\sF_i), 
\]
where $\JJ(\sF_i)$ is defined in Definition \ref{definition:J}. 
\item\label{definition:ding12}
We set 
\[
\JJ_{\T}^{\cp}\left(\left\{\sF_i\right\}_{i=1}^k\right):=\inf_{\xi\in N_\R(\T)}
\JJ^{\cp}\left(\left\{\sF_{i,\xi}\right\}_{i=1}^k\right). 
\]
By \cite[Lemma 6.4 and Proposition 6.6 (i)]{Xu}, the functions 
\[
\xi\mapsto\sum_{i=1}^k\lambda_{\max}\left(\sF_{i,\xi}\right), \quad
\xi\mapsto S_{L_i}\left(\sF_{i,\xi}\right)
\]
are continuous. Thus we have 
\[
\JJ_{\T}^{\cp}\left(\left\{\sF_i\right\}_{i=1}^k\right)=\inf_{\xi\in N_\Q(\T)}
\JJ^{\cp}\left(\left\{\sF_{i,\xi}\right\}_{i=1}^k\right). 
\]
\item\label{definition:ding13}
For any $\delta\in\R_{>0}$, the \emph{coupled Ding invariant of $\{\sF_i\}_{i=1}^k$ 
with the slope $\delta$} is defined to be 
\[
\DD\left(\{\sF_i\}_{i=1}^k; \delta\right):=\mu(\sF;\delta)
-\sum_{i=1}^k S_{L_i}(\sF_i), 
\]
where $\mu(\sF;\delta)$ is the $\delta$-lc slope of $\sF$ in the sense of 
\cite[Definition 3.45]{Xu} (see also Lemma \ref{lemma:lct-compute}). 
We often denote $\mu(\sF;\delta)$ by 
$\mu\left(\left\{\sF_i\right\}_{i=1}^k;\delta\right)$. 
We can express 
\[
\DD\left(\{\sF_i\}_{i=1}^k; \delta\right)=\DD(\sF; \delta)
+S_L(\sF)-\sum_{i=1}^k S_{L_i}(\sF_i),
\]
where $\DD(\sF; \delta)$ is the Ding invariant of the filtration $\sF$ with the 
slope $\delta$ in the sense of \cite[Definition 3.45]{Xu}. 

We define the \emph{coupled Ding invariant of $\{\sF_i\}_{i=1}^k$} as 
\[
\DD\left(\{\sF_i\}_{i=1}^k\right):=\mu(\sF)
-\sum_{i=1}^k S_{L_i}(\sF_i)=\DD(\sF)+S_L(\sF)-\sum_{i=1}^k S_{L_i}(\sF_i),
\]
where 
\[
\mu(\sF):=\mu\left(\left\{\sF_i\right\}_{i=1}^k\right):=\mu(\sF; 1),\quad
\DD(\sF):=\DD(\sF;1). 
\]
Equivalently, $\DD\left(\{\sF_i\}_{i=1}^k\right):=\DD\left(\{\sF_i\}_{i=1}^k;1\right)$.
\end{enumerate}
\item\label{definition:ding2}
For any $1\leq i\leq k$, let $(\sX_i,\sL_i)/\A^1$ be a $\T$-equivariant 
test configuration of $(X,L_i)$, and let $(\sX,\sL)/\A^1$ be the sum configuration 
of $\left\{(\sX_i,\sL_i)\right\}_{i=1}^k$. 
\begin{enumerate}
\renewcommand{\theenumii}{\roman{enumii}}
\renewcommand{\labelenumii}{(\theenumii)}
\item\label{definition:ding21}
We set 
\[
\JJ^{\cp}\left(\left\{\sX_i,\sL_i\right\}_{i=1}^k\right):=
\JJ^{\cp}\left(\left\{\sF_{\sX_i,\sL_i}\right\}_{i=1}^k\right)
=\sum_{i=1}^k\JJ\left(\sX_i,\sL_i\right). 
\]
\item\label{definition:ding22}
We set 
\[
\JJ^{\cp}_{\T}\left(\left\{\sX_i,\sL_i\right\}_{i=1}^k\right):=
\JJ^{\cp}_{\T}\left(\left\{\sF_{\sX_i,\sL_i}\right\}_{i=1}^k\right)
=\inf_{\xi\in N_\Q(\T)}\sum_{i=1}^k \JJ\left(\sX_{i,\xi},\sL_{i,\xi}\right), 
\]
where the last equality follows from the observations in \eqref{definition:ding1} 
and Definition \ref{definition:J}. 
\item\label{definition:ding23}
The \emph{coupled Ding invariant of $\{(\sX_i,\sL_i)\}_{i=1}^k$} is defined as 
\[
\Ding\left(\{\sX_i,\sL_i\}_{i=1}^k\right)
:=\Ding(\sX,\sL)+\frac{\left(\bar{\sL}^{\cdot n+1}\right)}{(n+1)(L^{\cdot n})}
-\sum_{i=1}^k \frac{\left(\bar{\sL_i}^{\cdot n+1}\right)}{(n+1)(L_i^{\cdot n})}, 
\]
where 
$\Ding(\sX,\sL)$ is the Ding invariant of (the normalization of) $(\sX,\sL)/\A^1$ 
in the sense of \cite[Definition 2.24]{Xu}, and $(\bar{\sX},\bar{\sL})/\pr^1$, 
$(\bar{\sX_i},\bar{\sL_i})/\pr^1$ are the $\infty$-trivial compactifications of 
$(\sX,\sL)/\A^1$, $(\sX_i,\sL_i)/\A^1$ in the sense of \cite[Definition 2.7]{Xu},  
respectively. 
\end{enumerate}
\end{enumerate}
\end{definition}

\begin{definition}\label{definition:c-ding}
\begin{enumerate}
\renewcommand{\theenumi}{\arabic{enumi}}
\renewcommand{\labelenumi}{(\theenumi)}
\item\label{definition:c-ding1}
We say that $\left(X,\Delta; \{L_i\}_{i=1}^k\right)$ is 
\emph{$\T$-equivariantly coupled Ding semistable} if
\[
\Ding\left(\left\{\sX_i,\sL_i\right\}_{i=1}^k\right)\geq 0
\]
holds for any 
$\T$-equivariant test configuration 
$(\sX_i,\sL_i)/\A^1$ of $(X, L_i)$ $(1\leq i\leq k)$.
When $\T$ is trivial (i.e., $p=0$), we simply say that 
$\left(X,\Delta; \{L_i\}_{i=1}^k\right)$ is 
\emph{coupled Ding semistable}. 

We remark that, for any subtorus $\T'\subset\T$, if 
$\left(X,\Delta; \{L_i\}_{i=1}^k\right)$ is 
$\T'$-equivariantly coupled Ding semistable, then 
$\left(X,\Delta; \{L_i\}_{i=1}^k\right)$ is obviously 
$\T$-equivariantly coupled Ding semistable. 
\item\label{definition:c-ding2}
We say that $\left(X,\Delta; \{L_i\}_{i=1}^k\right)$ is 
\emph{$\T$-reduced uniformly coupled Ding stable} if there exists a positive 
real number $\varepsilon\in\R_{>0}$ such that 
\[
\Ding\left(\left\{\sX_i,\sL_i\right\}_{i=1}^k\right)\geq \varepsilon\cdot
\JJ_{\T}^{\cp}\left(\{\sX_i,\sL_i\}_{i=1}^k\right)
\]
holds for any 
$\T$-equivariant test configuration 
$(\sX_i,\sL_i)/\A^1$ of $(X, L_i)$ $(1\leq i\leq k)$.
If $\T$ is trivial, then we simply say that
$\left(X,\Delta; \{L_i\}_{i=1}^k\right)$ is 
\emph{uniformly coupled Ding stable}. 
If $\T\subset\Aut(X,\Delta)$ is a maximal torus, then we say that 
$\left(X,\Delta; \{L_i\}_{i=1}^k\right)$ is 
\emph{reduced uniformly coupled Ding stable}. The notion does not depend on the 
choice of maximal tori. 
\end{enumerate}
\end{definition}

\begin{lemma}\label{lemma:ding-normal}
In the above definitions of $\T$-equivariant coupled Ding semistability and 
$\T$-reduced uniform coupled Ding stability, we may assume that 
the test configurations $(\sX_i,\sL_i)/\A^1$ $(1\leq i\leq k)$ are normal. 
In fact, for the normalizations $\nu\colon \sX'_i\to\sX_i$, we have 
\[
\Ding\left(\left\{\sX'_i,\nu^*\sL_i\right\}_{i=1}^k\right)
=\Ding\left(\left\{\sX_i,\sL_i\right\}_{i=1}^k\right),\quad
\JJ_{\T}^{\cp}\left(\left\{\sX'_i,\nu^*\sL_i\right\}_{i=1}^k\right)
=\JJ_{\T}^{\cp}\left(\left\{\sX_i,\sL_i\right\}_{i=1}^k\right).
\]
\end{lemma}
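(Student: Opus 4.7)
The plan is to prove the two equalities separately, in each case reducing to the fact that all relevant invariants either depend only on the normalized objects or are intersection numbers that are preserved under a birational morphism of degree one. For the first equality, let $(\sX,\sL)/\A^1$ (resp.\ $(\sX'',\sL'')/\A^1$) denote the sum configuration of $\{(\sX_i,\sL_i)\}_{i=1}^k$ (resp.\ of $\{(\sX'_i,\nu^*\sL_i)\}_{i=1}^k$). I would first show that the two normalized sum configurations agree: choosing any common $(\G_m\times\T)$-equivariant partial resolution $\sigma_i\colon\sZ\to\sX_i$ with $\sZ$ normal, the universal property of the normalization produces factorizations $\sigma_i=\nu_i\circ\sigma'_i$ with $\sigma'_i\colon\sZ\to\sX'_i$, so that $\sZ$ is simultaneously a common partial resolution for $\{(\sX'_i,\nu^*\sL_i)\}_{i=1}^k$, and the $\Q$-line bundles $\sum_i\sigma_i^*\sL_i$ and $\sum_i(\sigma'_i)^*\nu^*\sL_i$ coincide on $\sZ$. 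By Lemma \ref{lemma:sum-tc} \eqref{lemma:sum-tc3} the two normalized sum configurations are therefore the ample model of the same pair over $\A^1$, and so they are equal. Since $\Ding(\sX,\sL)$ in Definition \ref{definition:ding} \eqref{definition:ding23} is by convention computed on the normalization (following \cite[Definition 2.24]{Xu}), we obtain $\Ding(\sX,\sL)=\Ding(\sX'',\sL'')$. For the correction terms, the $\infty$-trivial compactification commutes with normalization, the induced morphisms between compactifications have degree one, and hence the top self-intersections $(\bar{\sL_i}^{\cdot n+1})$ and $(\bar{\sL}^{\cdot n+1})$ are preserved. Assembling the four pieces gives the first equality.

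For the coupled $\JJ$-norm the main point is to establish $\JJ(\sX_i,\sL_i)=\JJ(\sX'_i,\nu^*\sL_i)$ for each $i$ (and similarly for the $\xi$-twists). One route is to observe that the filtration $\sF_{\sX_i,\sL_i}$ of Example \ref{example:twisted-tc} agrees with $\sF_{\sX'_i,\nu^*\sL_i}$ by the convention in \cite[Example 3.34]{Xu}, so the two filtrations have the same $\lambda_{\max}$ and the same $S_{L_i}$; alternatively one can invoke the intersection-theoretic expression \cite[(2.8), Proposition 3.41]{Xu} for $\JJ$ directly, which is preserved under degree-one birational morphisms on the compactifications. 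Example \ref{example:twisted-tc} gives the same identification after any $\xi$-twist, so $\JJ(\sX_{i,\xi},\sL_{i,\xi})=\JJ(\sX'_{i,\xi},\nu^*\sL_{i,\xi})$ for every $\xi\in N_\Q(\T)$. Summing over $i$ and taking the infimum over $\xi\in N_\Q(\T)$ as in Definition \ref{definition:ding} \eqref{definition:ding22} then yields the second equality.

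The only real subtlety is the identification $\sF_{\sX_i,\sL_i}=\sF_{\sX'_i,\nu^*\sL_i}$, since the Rees-algebras $\bigoplus_m H^0(\sX_i,m\sL_i)$ and $\bigoplus_m H^0(\sX'_i,m\nu^*\sL_i)$ may differ a priori when $\sX_i$ is non-normal. However this discrepancy does not affect the asymptotic quantities $\lambda_{\max}$, $S_{L_i}$, or $\JJ$, the cleanest way to see this being to reduce everything to intersection numbers on the compactification, which avoids working with filtrations altogether. Granting this technical point, the remainder of the argument is essentially routine bookkeeping.
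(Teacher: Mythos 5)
Your proposal is correct and follows essentially the same route as the paper: the paper likewise observes that the two normalized sum configurations coincide by Lemma \ref{lemma:sum-tc} \eqref{lemma:sum-tc3}, and that $\JJ\left(\sX_{i,\xi},\sL_{i,\xi}\right)=\JJ\left(\sX'_{i,\xi},\left(\nu^*\sL_i\right)_\xi\right)$ holds essentially by the definition of $\JJ$. The additional details you supply --- degree-one invariance of the compactified intersection numbers, and the insensitivity of $\lambda_{\max}$ and $S_{L_i}$ to the finite-level discrepancy between the section rings of $\sX_i$ and its normalization --- are precisely the points the paper leaves implicit, so they strengthen rather than alter the argument.
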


\begin{proof}
Observe that the normalized sum configurations of 
$\left\{\left(\sX_i,\sL_i\right)\right\}_{i=1}^k$
and $\left\{\left(\sX'_i,\nu^*\sL_i\right)\right\}_{i=1}^k$ are equal by 
Lemma \ref{lemma:sum-tc} \eqref{lemma:sum-tc3}. 
Moreover, for any $\xi\in N_\Q(\T)$ and $1\leq i\leq k$, we get 
\[
\JJ\left(\sX'_{i,\xi},\left(\nu^*\sL_i\right)_\xi\right)
=\JJ\left(\sX_{i,\xi},\sL_{i,\xi}\right)
\]
by the definition of $\JJ\left(\sX_{i,\xi},\sL_{i,\xi}\right)$ (see 
Definition \ref{definition:J}). 
Thus we get the assertions. 
\end{proof}

We see an analogous statement in \cite[\S 3.4]{Xu}. 

\begin{proposition}[{cf.\ \cite[Theorem 3.63]{Xu}}]\label{proposition:D-Ding}
For any $1\leq i\leq k$, let $(\sX_i,\sL_i)/\A^1$ be a $\T$-equivariant 
test configuration of $(X,L_i)$. 
\begin{enumerate}
\renewcommand{\theenumi}{\arabic{enumi}}
\renewcommand{\labelenumi}{(\theenumi)}
\item\label{proposition:D-Ding1}
We have the inequality 
\[
\Ding\left(\{\sX_i,\sL_i\}_{i=1}^k\right)\leq
\DD\left(\{\sF_{\sX_i,\sL_i}\}_{i=1}^k\right).
\]
\item\label{proposition:D-Ding2}
If moreover all $\sX_i$ are normal $(1\leq i\leq k)$, then we have 
\[
\Ding\left(\{\sX_i,\sL_i\}_{i=1}^k\right)=
\DD\left(\{\sF_{\sX_i,\sL_i}\}_{i=1}^k\right).
\]
\end{enumerate}
\end{proposition}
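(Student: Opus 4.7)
The plan is to reduce both assertions to the single-test-configuration analogue \cite[Theorem 3.63]{Xu} applied to the sum configuration $(\sX,\sL)/\A^1$, by unwinding every definition in terms of the sum filtration $\sF := \sF_{\sX,\sL}$. Three structural facts drive the reduction: Lemma \ref{lemma:sum-filt} \eqref{lemma:sum-filt2} identifies $\sF_{\sX,\sL}$ with the sum filtration of $\{\sF_{\sX_i,\sL_i}\}_{i=1}^k$; the standard Duistermaat--Heckman formula $S_{L}(\sF_{\sX,\sL}) = \frac{(\bar{\sL}^{\cdot n+1})}{(n+1)(L^{\cdot n})}$ holds whenever $\sX$ is normal (and analogously for each factor); and both $\Ding(\sX,\sL)$ and the graded family of ideals $\{I_{(m;\lambda)}\}$ attached to $\sF_{\sX,\sL}$ are, by convention and by Proposition \ref{lproposition:sum-Di}, invariant under normalization of the total space.

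Writing $\sF_i := \sF_{\sX_i,\sL_i}$ and expanding Definitions \ref{definition:ding} \eqref{definition:ding13} and \ref{definition:ding} \eqref{definition:ding23}, one obtains the identity
\[
\Ding\left(\{\sX_i,\sL_i\}_{i=1}^k\right) - \DD\left(\{\sF_i\}_{i=1}^k\right) = \bigl(\Ding(\sX,\sL) - \DD(\sF)\bigr) + (\text{cross terms}),
\]
where the cross terms compare $\frac{(\bar{\sL}^{\cdot n+1})}{(n+1)(L^{\cdot n})} - \sum_{i=1}^k \frac{(\bar{\sL_i}^{\cdot n+1})}{(n+1)(L_i^{\cdot n})}$ with $S_L(\sF) - \sum_{i=1}^k S_{L_i}(\sF_i)$. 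For part \eqref{proposition:D-Ding1}, I would apply \cite[Theorem 3.63]{Xu} directly to $(\sX,\sL)/\A^1$ to obtain $\Ding(\sX,\sL) \leq \DD(\sF)$; the cross terms, all computed after the normalizations built into the definitions of both $\Ding$ and the $S$-invariant, cancel via the Duistermaat--Heckman formula on each respective normal total space, and the claimed inequality follows.

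For part \eqref{proposition:D-Ding2}, the assumption that each $\sX_i$ is already normal makes the factor-side cross terms cancel verbatim. The remaining and principal task is to upgrade the inequality of part \eqref{proposition:D-Ding1} into the equality $\Ding(\sX,\sL) = \DD(\sF)$, even though $\sX$ itself may fail to be normal (Remark \ref{remark:appendix} \eqref{remark:appendix1}). By convention $\Ding(\sX,\sL) = \Ding(\sX^\nu, \nu^*\sL)$, and \cite[Theorem 3.63]{Xu} applied to the normal total space $\sX^\nu$ yields $\Ding(\sX^\nu,\nu^*\sL) = \DD(\sF_{\sX^\nu,\nu^*\sL})$. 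It then suffices to prove $\DD(\sF_{\sX,\sL}) = \DD(\sF_{\sX^\nu,\nu^*\sL})$, which I would establish by showing that the two filtrations yield the same graded family of ideals on $X$: Proposition \ref{lproposition:sum-Di} expresses the relevant fractional ideal $\bigoplus_\lambda t^{-\lambda} I_{(m;\lambda)}$ as $\sO_{\sZ}\bigl(-m\sum_{i=1}^k \sD_i\bigr)$ on a common resolution $\sZ$, a description determined solely by the normal factors $(\sX_i,\sL_i)$ and thus unchanged if $\sX$ is replaced by $\sX^\nu$; consequently the log-canonical slope $\mu$ and the $S$-invariant entering $\DD$ agree for the two filtrations.

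The main obstacle is precisely this potential non-normality of $\sX$, which is a genuine phenomenon rather than a pathology (see \S \ref{section:appendix}). All of the machinery in \cite{Xu} around Ding invariants and filtrations is cleanly developed on normal total spaces, so the delicate point is to route the argument through the normalization $\sX^\nu$ and to verify, via the explicit formula of Proposition \ref{lproposition:sum-Di}, that the filtration data on $X_{\A^1}$ are insensitive to this normalization, thereby allowing the equality for $(\sX^\nu,\nu^*\sL)$ to descend to $(\sX,\sL)$.
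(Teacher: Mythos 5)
Your part (1) is essentially the paper's own argument: you identify the intersection-number cross terms with $S_L(\sF)$ and $S_{L_i}(\sF_{\sX_i,\sL_i})$ and then feed the sum configuration into \cite[Theorem 3.63]{Xu}, which is exactly what the paper does (the only caution is that the identification $S_{L_i}(\sF_{\sX_i,\sL_i})=(\bar{\sL_i}^{\cdot n+1})/((n+1)(L_i^{\cdot n}))$, i.e.\ \cite[Lemma 3.35]{Xu}, has to be used for possibly non-normal total spaces, not just "on each respective normal total space" as you phrase it, and that applying Theorem 3.63 to $(\sX,\sL)$ implicitly uses the paper's observation that the normalization $\sX^\nu$ is the normalized blowup test configuration along $\sI_m(\sF)$).

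For part (2) you take a genuinely different route: instead of computing $\bL(\sF)$ of the sum filtration directly on $\sZ$ (Proposition \ref{lproposition:sum-Di} plus the crepant pullback identity, then $\mu(\sF)=\bL(\sF)$ by \cite[Theorem 3.52]{Xu}), you invoke the equality for the normal model $(\sX^\nu,\nu^*\sL)$ and try to descend to $(\sX,\sL)$. The descent is where the gap lies. It is not true that $\sF_{\sX,\sL}$ and $\sF_{\sX^\nu,\nu^*\sL}$ "yield the same graded family of ideals on $X$": normalization genuinely enlarges the filtration, so the ideals $I_{(m;\lambda)}$ may differ. What is true is that the two fractional ideals $\sI_m$ on $X_{\A^1}$ have the same pullback $\sO_{\sZ}\bigl(-m\sum_i\sD_i\bigr)$ to $\sZ$, hence the same integral closure; but Proposition \ref{lproposition:sum-Di} only asserts this for the sum filtration, and the analogous statement for $\sF_{\sX^\nu,\nu^*\sL}$ is a separate (easy) verification, not something "unchanged by replacing $\sX$ with $\sX^\nu$" by fiat. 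More importantly, to convert "same integral closure of $\sI_m$" into $\mu(\sF_{\sX,\sL})=\mu(\sF_{\sX^\nu,\nu^*\sL})$ you need the bridge $\mu=\bL$ (Lemma \ref{lemma:lct-compute}, \cite[Theorem 3.52]{Xu}): $\mu$ is defined through the ideals $I_\bullet^{(t)}$ on $X$, which do change under normalization, whereas $\bL$ is a log canonical threshold of $\sI_m$ and is insensitive to integral closure. Your proposal never invokes this ingredient, and it is precisely the one around which the paper's proof of (2) is organized; without it the asserted equality of lc slopes is unsubstantiated. (Also, the agreement of the $S$-terms is not a consequence of the ideal description at all; it follows from the weight/intersection-number formula $S_L(\sF_{\sX,\sL})=(\bar{\sL}^{\cdot n+1})/((n+1)(L^{\cdot n}))$, the same \cite[Lemma 3.35]{Xu} input you already use for the cross terms.) Once these two points are supplied, your route closes and becomes equivalent to the paper's, which avoids the comparison of the two filtrations altogether by computing $\bL(\sF)$ once via the divisors $\sD_i$.
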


\begin{proof}
Let $(\sX',\sL')/\A^1$ be the sum configuration of 
$\left\{(\sX_i,\sL_i)\right\}_{i=1}^k$, and let $\nu\colon\sX\to \sX'$ be the 
normalization and set $\sL:=\nu^*\sL'$. We set $\sF_i:=\sF_{\sX_i,\sL_i}$ and 
$\sF:=\sF_{\sX',\sL'}$. We know that $\sF$ is the sum filtration of $\{\sF_i\}_{i=1}^k$. 
Note that $\sF$ is a $\Z$-valued filtration such that, for any sufficiently divisible 
$m\in r\Z_{>0}$, we have $\sI_m(\sF)^s=\sI_{m s}(\sF)$ for any $s\in\Z_{>0}$. 
Moreover, $(\sX,\sL)/\A^1$ is nothing but the normalized blowup test configuration 
along $\sI_m(\sF)$ (see Definition \ref{definition:blowup-tc}). 

\eqref{proposition:D-Ding1}
We have 
\begin{eqnarray*}
\Ding\left(\{\sX_i,\sL_i\}_{i=1}^k\right)
&=&\Ding\left(\sX,\sL\right)+S_L(\sF)-\sum_{i=1}^k S_{L_i}(\sF_i) \\
&\geq&\DD(\sF)+S_L(\sF)-\sum_{i=1}^k S_{L_i}(\sF_i)
=\DD\left(\{\sF_i\}_{i=1}^k\right),
\end{eqnarray*}
where the first equality follows from \cite[Lemma 3.35]{Xu}, and the second 
inequality follows from \cite[Theorem 3.63]{Xu}. 

\eqref{proposition:D-Ding2}
Assume that all $\sX_i$ are normal. We use the notations in \S \ref{section:sum-tc}, 
especially, Definition \ref{definition:sum-tc}, Lemma \ref{lemma:sum-tc-ideal} 
and Proposition \ref{lproposition:sum-Di}. 
Fix a sufficiently divisible $m\in r\Z_{>0}$ and consider 
\[
\sI_m^{\langle i\rangle}:=
\bigoplus_{\lambda\in\Z}t^{-\lambda}I_{(m;\lambda)}^{\langle i\rangle}
=\sI_m(\sF_i)
\]
with $I_{(m;\lambda)}^{\langle i\rangle}:=I_{(m;\lambda)}(\sF_i)$
and 
\[
\sI_m:=
\bigoplus_{\lambda\in\Z}t^{-\lambda}I_{(m;\lambda)}=\sI_m(\sF)
\]
with $I_{(m;\lambda)}:=I_{(m;\lambda)}(\sF)$. 
Set $\sO_{\sZ}(-m\sD_i):=\theta^{-1}\sI_m^{\langle i\rangle}$. As in Proposition 
\ref{lproposition:sum-Di}, we have 
\[
\sI_m=\prod_{i=1}^k\sI_m^{\langle i\rangle}
\]
and 
\[
\sum_{i=1}^k\sD_i=\theta^*L_{\A^1}-\rho^*\sL. 
\]
Let us recall the $\bL$-invariant $\bL(\sF)$ of $\sF$ in the sense of 
\cite[Definition 3.51]{Xu}. Since $m\in r\Z_{>0}$ is sufficiently divisible, we have 
\begin{eqnarray*}
\bL(\sF)&=&\lct\left(X_{\A^1},\Delta_{\A^1}+\sI_m^{\frac{1}{m}};X\times\{0\}\right)
-1\\
&=&\lct\left(\sZ,\Delta_{\sZ}+E+\sum_{i=1}^k\sD_i; \sZ_0\right)-1, 
\end{eqnarray*}
where $\sZ_0$ is the fiber of $\sZ\to\A^1$ at $0\in\A^1$,
and $E$ is the $\Q$-divisor on $\sZ$ supported on $\sZ_0$ defined by 
\[
K_{\sZ}+\Delta_{\sZ}+E=\theta^*\left(K_{X_{\A^1}}+\Delta_{\A^1}\right).
\]
Let us consider the $\Q$-divisor $\sD_{\sX,\sL}$ on $\sX$ in the sense of 
\cite[Definition 2.24]{Xu}, i.e., the $\Q$-divisor on $\sX$ supported on $\sX_0$ 
with 
\[
\sD_{\sX,\sL}\sim_\Q-\bar{\sL}-K_{\bar{\sX}/\pr^1}-\Delta_{\bar{\sX}}.
\]
Then we have 
\[
\rho^*\left(K_{\sX}+\Delta_{\sX}+\sD_{\sX,\sL}\right)
=K_{\sZ}+\Delta_{\sZ}+E+\sum_{i=1}^k\sD_i.
\]
This implies that 
\[
\bL(\sF)=\lct\left(\sX,\Delta_{\sX}+\sD_{\sX,\sL}; \sX_0\right)-1, 
\]
where $\sX_0$ is the fiber of $\sX\to\A^1$ over $0\in\A^1$. 
On the other hand, by \cite[Theorem 3.52]{Xu}, we have $\bL(\sF)=\mu(\sF)$. 
Thus 
\begin{eqnarray*}
\Ding\left(\{\sX_i,\sL_i\}_{i=1}^k\right)&=&
\lct\left(\sX,\Delta_{\sX}+\sD_{\sX,\sL}; \sX_0\right)-1-\sum_{i=1}^k
\frac{\left(\bar{\sL_i}^{\cdot n+1}\right)}{(n+1)(L_i^{\cdot n})}\\
&=&\mu(\sF)-\sum_{i=1}^k S_{L_i}(\sF_i)=\DD\left(\{\sF_i\}_{i=1}^k\right)
\end{eqnarray*}
holds. where the second equality follows from \cite[Lemma 3.35]{Xu}. 
\end{proof}

\begin{lemma}[{cf.\ \cite[Corollary 6.25]{Xu}}]\label{lemma:D-twist}
For any $1\leq i\leq k$, let $\sF_i$ be a $\T$-equivariant filtration on $R^i$. 
For any $\xi\in N_\R(\T)$, we have 
\[
\DD\left(\{\sF_{i,\xi}\}_{i=1}^k\right)=\DD\left(\{\sF_i\}_{i=1}^k\right)
-\langle\alpha_{\bc}^{\cp},\xi\rangle. 
\]
\end{lemma}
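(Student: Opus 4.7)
The plan is to reduce the coupled-setting statement to the analogous single-filtration identity \cite[Corollary 6.25]{Xu}. First, by Proposition \ref{proposition:sum-filt-twist}\eqref{proposition:sum-filt-twist2}, the sum filtration of $\{\sF_{i,\xi}\}_{i=1}^k$ coincides with the $\xi$-twist $\sF_\xi$ of the sum filtration $\sF$ of $\{\sF_i\}_{i=1}^k$. Combined with the alternative expression
\[
\DD\left(\{\sF_i\}_{i=1}^k\right)=\DD(\sF)+S_L(\sF)-\sum_{i=1}^k S_{L_i}(\sF_i)
\]
from Definition \ref{definition:ding}\eqref{definition:ding1}\eqref{definition:ding13}, the problem is reduced to computing $\DD(\sF_\xi)$, $S_L(\sF_\xi)$, and each $S_{L_i}(\sF_{i,\xi})$ in terms of their untwisted counterparts.

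For the $S$-invariants I would invoke \cite[Lemma 6.4]{Xu}, already used in the proof of Corollary \ref{corollary:S-twist}, which asserts that for any $\T$-equivariant filtration $\sG$ on the section ring of a $\T$-linearized ample $\Q$-line bundle $M$ one has $S_M(\sG_\xi)=S_M(\sG)+\langle\alpha_{\bc}^M,\xi\rangle$. Applying this to $\sF$ on $R$ and to each $\sF_i$ on $R^i$ yields
\[
S_L(\sF_\xi)-S_L(\sF)=\langle\alpha_{\bc}^L,\xi\rangle,\qquad \sum_{i=1}^k\bigl(S_{L_i}(\sF_{i,\xi})-S_{L_i}(\sF_i)\bigr)=\langle\alpha_{\bc}^{\cp},\xi\rangle.
\]

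For the remaining single-filtration Ding invariant of $\sF$ on $R$, noting that $L=-(K_X+\Delta)$ with the standard $\T$-linearization by our running hypothesis, I would apply \cite[Corollary 6.25]{Xu}, which states
\[
\DD(\sF_\xi)=\DD(\sF)-\langle\alpha_{\bc}^L,\xi\rangle.
\]
Substituting these three twist formulas into the expression for $\DD(\{\sF_{i,\xi}\}_{i=1}^k)$, the two contributions $\pm\langle\alpha_{\bc}^L,\xi\rangle$ cancel, and what remains is exactly $\DD(\{\sF_i\}_{i=1}^k)-\langle\alpha_{\bc}^{\cp},\xi\rangle$, as required. The substance of the argument is packaged into \cite[Corollary 6.25]{Xu}, whose heart is the invariance $\mu(\sF_\xi)=\mu(\sF)$ proved valuatively through the identity $A_{X,\Delta}(v_{-\xi})=A_{X,\Delta}(v)+\theta_{-\xi}^L(v)$ from Proposition \ref{proposition:theta} and the analogous twist rule for Chi Li's non-Archimedean potential $\phi^\sF$ recalled in Remark \ref{remark:theta}. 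This step is the only nontrivial ingredient, but it is one we may quote directly since it depends only on $\sF$ being a $\T$-equivariant filtration on $R$; the passage from the single to the coupled case is then pure bookkeeping, which commutes cleanly with the twist thanks to Proposition \ref{proposition:sum-filt-twist}\eqref{proposition:sum-filt-twist2}.
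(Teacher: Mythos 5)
Your proof is correct and follows essentially the same route as the paper: the key inputs are identical, namely Proposition \ref{proposition:sum-filt-twist}\eqref{proposition:sum-filt-twist2}, the $S$-twist formula \cite[Lemma 6.4]{Xu}, and the twist-invariance of the lc slope. The only cosmetic difference is that the paper computes directly with $\mu(\sF_\xi)=\mu(\sF)$ via \cite[Lemma 6.24]{Xu}, whereas you route through $\DD(\sF_\xi)$ and $S_L(\sF_\xi)$ and let the $\langle\alpha_{\bc}^{L},\xi\rangle$ terms cancel, which is the same computation packaged through \cite[Corollary 6.25]{Xu}.
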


\begin{proof}
Let $\sF$ be the sum filtration of $\{\sF_i\}_{i=1}^k$. By \cite[Lemma 6.24]{Xu}, 
we know that $\mu(\sF)=\mu(\sF_\xi)$. Moreover, by \cite[Lemma 6.4]{Xu} 
and Proposition \ref{proposition:sum-filt-twist} \eqref{proposition:sum-filt-twist2}, 
we get
\begin{eqnarray*}
\DD\left(\{\sF_{i,\xi}\}_{i=1}^k\right)
&=&\mu(\sF_\xi)-\sum_{i=1}^k S_{L_i}(\sF_{i,\xi})\\
&=&\mu(\sF)-\sum_{i=1}^k \left(S_{L_i}(\sF_i)+\left\langle
\alpha_{\bc}^{L_i},\xi\right\rangle\right)\\
&=&\DD\left(\{\sF_i\}_{i=1}^k\right)-\sum_{i=1}^k \left\langle
\alpha_{\bc}^{L_i},\xi\right\rangle.
\end{eqnarray*}
Thus the assertion follows. 
\end{proof}

\begin{corollary}\label{corollary:twist-ding}
For any $1\leq i\leq k$, let $(\sX_i,\sL_i)/\A^1$ be a $\T$-equivariant 
\emph{normal} test configuration of $(X,L_i)$. For any $\xi\in N(\T)$, 
let $(\sX_{i,\xi},\sL_{i,\xi})/\A^1$ be the $\xi$-twisted test configuration of 
$(\sX_i,\sL_i)/\A^1$. Then we have 
\begin{eqnarray*}
\Ding\left(\{\sX_{i,\xi},\sL_{i,\xi}\}_{i=1}^k\right)&=&
\DD\left(\left\{\sF_{\sX_{i,\xi},\sL_{i,\xi}}\right\}_{i=1}^k\right)\\
&=&\DD\left(\left\{\sF_{\sX_i,\sL_i}\right\}_{i=1}^k\right)
-\langle\alpha_{\bc}^{\cp},\xi\rangle\\
&=&\Ding\left(\{\sX_i,\sL_i\}_{i=1}^k\right)-\langle\alpha_{\bc}^{\cp},\xi\rangle.
\end{eqnarray*}
In particular, if $\left(X,\Delta; \{L_i\}_{i=1}^k\right)$ is 
$\T$-equivariantly coupled Ding semistable, then $\alpha_{\bc}^{\cp}=0$ holds, i.e., 
$\left(X,\Delta; \{L_i\}_{i=1}^k\right)$ has vanishing coupled $\T$-Futaki characters. 
\end{corollary}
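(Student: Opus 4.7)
The plan is to deduce Corollary \ref{corollary:twist-ding} as an essentially formal consequence of three already-established ingredients: Example \ref{example:twisted-tc}, Proposition \ref{proposition:D-Ding}(2), and Lemma \ref{lemma:D-twist}. First I would verify that the $\xi$-twist operation of Example \ref{example:twisted-tc} preserves normality: by construction $(\sX_{i,\xi},\sL_{i,\xi})$ has the same underlying scheme and line bundle as $(\sX_i,\sL_i)$ with only the $\G_m$-linearization twisted by the $\T$-action, so each $\sX_{i,\xi}$ is normal whenever $\sX_i$ is. Hence Proposition \ref{proposition:D-Ding}(2) applies both to $\{(\sX_i,\sL_i)\}$ and to $\{(\sX_{i,\xi},\sL_{i,\xi})\}$, yielding the first and third equalities in the chain.

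For the middle equality, Example \ref{example:twisted-tc} (together with \cite[Lemma 6.10]{Xu}) identifies $\sF_{\sX_{i,\xi},\sL_{i,\xi}}=(\sF_{\sX_i,\sL_i})_\xi$ for each $i$. Plugging this into Lemma \ref{lemma:D-twist} with $\sF_i:=\sF_{\sX_i,\sL_i}$ gives
\[
\DD\left(\{\sF_{\sX_{i,\xi},\sL_{i,\xi}}\}_{i=1}^k\right)
=\DD\left(\{(\sF_{\sX_i,\sL_i})_\xi\}_{i=1}^k\right)
=\DD\left(\{\sF_{\sX_i,\sL_i}\}_{i=1}^k\right)-\langle\alpha_{\bc}^{\cp},\xi\rangle,
\]
which is exactly what is needed. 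Concatenating these three equalities produces the full chain, so the bulk of the corollary is a routine bookkeeping step — no essential obstacle is anticipated here.

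For the ``In particular'' assertion I would specialize the main formula to the trivial test configurations. For each $1\leq i\leq k$, let $(X\times\A^1, L_{i,\A^1})$ be the trivial (hence normal) $\T$-equivariant test configuration of $(X, L_i)$; its Ding invariant is $0$ since $\bar{\sL_i}\sim_\Q L_i$ on $X\times\pr^1$ makes the relevant intersection numbers cancel against the constant $S_{L_i}$-contribution (equivalently, the sum configuration of these trivial configurations is the trivial test configuration of $(X,L)$, whose Ding invariant vanishes). For any $\xi\in N(\T)$, the chain of equalities just proved gives
\[
\Ding\left(\{(X\times\A^1)_\xi,(L_{i,\A^1})_\xi\}_{i=1}^k\right)
=-\langle\alpha_{\bc}^{\cp},\xi\rangle.
\]
If $\left(X,\Delta;\{L_i\}_{i=1}^k\right)$ is $\T$-equivariantly coupled Ding semistable, the left-hand side is $\geq 0$ for every $\xi\in N(\T)$; replacing $\xi$ by $-\xi$ yields the reverse inequality, so $\langle\alpha_{\bc}^{\cp},\xi\rangle=0$ on all of $N(\T)$, and hence $\alpha_{\bc}^{\cp}=0$. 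The only mildly delicate point in the whole argument is the preservation of normality under $\xi$-twist, and once that is noted everything is a direct concatenation of the earlier results.
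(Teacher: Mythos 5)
Your proposal is correct and follows essentially the same route as the paper: the first and third equalities via Proposition \ref{proposition:D-Ding}\eqref{proposition:D-Ding2} (using that the integral $\xi$-twist only changes the $\G_m$-linearization and so preserves normality), and the middle equality via Example \ref{example:twisted-tc} combined with Lemma \ref{lemma:D-twist}. Your deduction of the ``In particular'' part, by applying the formula to the (twisted) trivial test configurations with both $\xi$ and $-\xi$, is the standard argument the paper leaves implicit and is also correct.
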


\begin{proof}
The first and third equalities follow from Proposition \ref{proposition:D-Ding}. 
Since $\sF_{\sX_{i,\xi},\sL_{\sL_{i,\xi}}}=\left(\sF_{\sX_i,\sL_i}\right)_\xi$ 
(see Example \ref{example:twisted-tc}), 
the second equality follows from Lemma \ref{lemma:D-twist}. 
\end{proof}

\begin{lemma}\label{lemma:reduction-Z}
For any $1\leq i\leq k$, let $\sF_i$ be a $\T$-equivariant filtration on $R^i$. 
Set $\sG_i:=\sF_{i,\Z}$ (see Definition \ref{definition:approx} 
\eqref{definition:approx1}). Then, for any $\xi\in N_\R(\T)$ and $\delta\in\R_{>0}$, 
we have the equality
\[
\mu\left(\left\{\sF_{i,\xi}\right\}_{i=1}^k\right)
=\mu\left(\left\{\sG_{i,\xi}\right\}_{i=1}^k\right).
\]
In particular, together with Lemma \ref{lemma:Z-twist}, we have 
\begin{eqnarray*}
\JJ^{\cp}\left(\left\{\sF_{i,\xi}\right\}_{i=1}^k\right)
&=&\JJ^{\cp}\left(\left\{\sG_{i,\xi}\right\}_{i=1}^k\right), \\
\JJ^{\cp}_{\T}\left(\left\{\sF_i\right\}_{i=1}^k\right)
&=&\JJ^{\cp}_{\T}\left(\left\{\sG_i\right\}_{i=1}^k\right), \\
\DD\left(\left\{\sF_{i,\xi}\right\}_{i=1}^k\right)
&=&\DD\left(\left\{\sG_{i,\xi}\right\}_{i=1}^k\right).
\end{eqnarray*}
\end{lemma}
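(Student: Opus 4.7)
The core of the statement is the claimed equality of the $\delta$-lc slopes; once this is established, the equalities for $\JJ^{\cp}$, $\JJ^{\cp}_\T$, and $\DD$ all follow from Lemma \ref{lemma:Z-twist} applied to each $\sF_i$ together with the defining formula $\DD(\{\sF_{i,\xi}\}_{i=1}^k) = \mu(\{\sF_{i,\xi}\}_{i=1}^k) - \sum_{i=1}^k S_{L_i}(\sF_{i,\xi})$. Thus the plan is to focus on proving $\mu(\{\sF_{i,\xi}\}_{i=1}^k) = \mu(\{\sG_{i,\xi}\}_{i=1}^k)$.

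Let $\tilde{\sF}$ (resp.\ $\tilde{\sG}$) denote the sum filtration of $\{\sF_{i,\xi}\}_{i=1}^k$ (resp.\ of $\{\sG_{i,\xi}\}_{i=1}^k$), so by definition $\mu(\{\sF_{i,\xi}\}_{i=1}^k) = \mu(\tilde{\sF})$ and similarly for $\sG$. Since $\lceil y \rceil \in [y, y+1)$, the pointwise inclusions $\sF_i^{y+1} R_{m,\alpha}^i \subset \sG_i^y R_{m,\alpha}^i \subset \sF_i^y R_{m,\alpha}^i$ hold; after the $\xi$-twist (cf.\ Definition \ref{definition:filtration} \eqref{definition:filtration2}) these become $\sF_{i,\xi}^{y+1} R_{m,\alpha}^i \subset \sG_{i,\xi}^y R_{m,\alpha}^i \subset \sF_{i,\xi}^y R_{m,\alpha}^i$. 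Taking sums over $y_1 + \cdots + y_k = x$ and re-indexing by $z_j = y_j + 1$, one obtains the key sandwich
\[
\tilde{\sF}^{x+k} R_m \subset \tilde{\sG}^x R_m \subset \tilde{\sF}^x R_m
\]
for every $m \in r\Z_{\geq 0}$ and every $x \in \R$.

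Passing to the ideal sheaves, this yields $I_{(m; mx + k)}(\tilde{\sF}) \subset I_{(m; mx)}(\tilde{\sG}) \subset I_{(m; mx)}(\tilde{\sF})$, hence for any $v \in \Val_X$,
\[
\frac{v\bigl(I_{(m; mx)}(\tilde{\sF})\bigr)}{m} \leq \frac{v\bigl(I_{(m; mx)}(\tilde{\sG})\bigr)}{m} \leq \frac{v\bigl(I_{(m; m(x+k/m))}(\tilde{\sF})\bigr)}{m}.
\]
Take $x < \lambda_{\max}(\tilde{\sF}) = \lambda_{\max}(\tilde{\sG})$ (equal by Proposition \ref{proposition:sum-filt-lambda} and Lemma \ref{lemma:Z-twist} applied to each factor); for large $m$ also $x + k/m < \lambda_{\max}(\tilde{\sF})$. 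By Lemma \ref{lemma:uniform-convergence}, the left and right ends both converge to $v\bigl(I_\bullet^{(x)}(\tilde{\sF})\bigr)$ as $m \to \infty$ (the right end converges by uniform convergence on a suitable interval together with the continuity of the limit function), while the middle converges to $v\bigl(I_\bullet^{(x)}(\tilde{\sG})\bigr)$. Therefore
\[
v\bigl(I_\bullet^{(x)}(\tilde{\sF})\bigr) = v\bigl(I_\bullet^{(x)}(\tilde{\sG})\bigr) \quad\text{for every } v \in \Val_X \text{ and every } x < \lambda_{\max}(\tilde{\sF}).
\]

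From this identity of graded sequences of ideals, valuation by valuation, we conclude $\lct(X,\Delta; I_\bullet^{(x)}(\tilde{\sF})) = \lct(X,\Delta; I_\bullet^{(x)}(\tilde{\sG}))$ for every $x < \lambda_{\max}(\tilde{\sF})$, and therefore $\mu(\tilde{\sF}; \delta) = \mu(\tilde{\sG}; \delta)$ for every $\delta \in \R_{>0}$ by Lemma \ref{lemma:lct-compute} (the case $\mu = \lambda_{\max}$ is trivial since then equality holds automatically). This proves the claimed $\mu$-identity. Combined with Lemma \ref{lemma:Z-twist} applied to each $\sF_i$---which yields $S_{L_i}(\sF_{i,\xi}) = S_{L_i}(\sG_{i,\xi})$ and $\lambda_{\max}(\sF_{i,\xi}) = \lambda_{\max}(\sG_{i,\xi})$---the stated identities for $\JJ^{\cp}$, $\JJ^{\cp}_\T$, and $\DD$ follow immediately. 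The main subtlety is guaranteeing that the asymptotic perturbation $k/m$ is absorbed into the uniform convergence statement, which is exactly what Lemma \ref{lemma:uniform-convergence} provides.
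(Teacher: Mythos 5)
Your proof is correct and rests on the same key observation as the paper's: sandwiching the sum filtration of the twisted $\sG_i$'s between two shifts of the sum filtration of the twisted $\sF_i$'s (your fixed shift by $k$ in the grading is exactly the paper's choice of $m$ with $m\varepsilon>k$). The only real difference is in how the $\mu$-equality is extracted — the paper reads it off directly from the sandwich, while you pass through the equality of $v\bigl(I_\bullet^{(x)}\bigr)$ for all valuations via Lemma \ref{lemma:uniform-convergence}; this is a valid, if slightly longer, way to finish, and the remaining identities for $\JJ^{\cp}$, $\JJ^{\cp}_{\T}$ and $\DD$ follow from Lemma \ref{lemma:Z-twist} just as in the paper.
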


\begin{proof}
Let $\sF$ (resp., $\sG$) be the sum filtration of $\{\sF_i\}_{i=1}^k$
(resp., $\{\sG_i\}_{i=1}^k$). Take any $x\in\R$ and $\varepsilon\in\R_{>0}$. 
As in the proof of Lemma \ref{lemma:Z-twist}, if we take $m\in r\Z_{>0}$ with 
$m\varepsilon>k$, then we have 
\begin{eqnarray*}
\sF_\xi^{m x}R_m&=&\sum_{x_1+\cdots+x_k\geq m x}
\sF_{1,\xi}^{x_1}R_m^1\cdots\sF_{k,\xi}^{x_k}R_m^k\\
&\supset&\sum_{x_1+\cdots+x_k\geq m x}
\sG_{1,\xi}^{x_1}R_m^1\cdots\sG_{k,\xi}^{x_k}R_m^k=\sG_\xi^{m x}R_m, \\
\sG_\xi^{m(x-\varepsilon)}R_m&=&\sum_{x'_1+\cdots+x'_k\geq m(x-\varepsilon)}
\sG_{1,\xi}^{x'_1}R_m^1\cdots\sG_{k,\xi}^{x'_k}R_m^k\\
&\supset&\sum_{x_1+\cdots+x_k\geq m x}
\sF_{1,\xi}^{x_1}R_m^1\cdots\sF_{k,\xi}^{x_k}R_m^k=\sF_\xi^{m x}R_m.
\end{eqnarray*}
Note that the second inclusion follows from the observation in Lemma 
\ref{lemma:Z-twist} that 
\[
\sG_{i,\xi}^{x'_i}R_m^i\supset\sF_{i,\xi}^{x_i}R_m^i
\]
holds for any $x_i,x'_i\in\R$ with $x'_i>x_i-1$. 
Thus we get 
\[
\sG_\xi^{m x}R_m\subset \sF_\xi^{m x}R_m\subset \sG_\xi^{m (x-\varepsilon)}R_m, 
\]
which gives that $\mu\left(\left\{\sF_{i,\xi}\right\}_{i=1}^k\right)
=\mu\left(\left\{\sG_{i,\xi}\right\}_{i=1}^k\right)$. 
The remaining assertions are trivial. 
\end{proof}

We prepare the following: 

\begin{proposition}[{cf.\ 
\cite[Propositions 6.6 and 6.29]{Xu}}]\label{proposition:technical}
Assume that $\left(X,\Delta; \{L_i\}_{i=1}^k\right)$ has vanishing coupled $\T$-Futaki 
characters. 
\begin{enumerate}
\renewcommand{\theenumi}{\arabic{enumi}}
\renewcommand{\labelenumi}{(\theenumi)}
\item\label{proposition:technical1}
Set $C:=\dist\left(0,\partial\left(\PP^L\right)\right)$. 
For any $1\leq i\leq k$, let $\sF_i$ be a $\T$-equivariant filtration on $R^i$, and 
let $\sF$ be the sum filtration of $\{\sF_i\}_{i=1}^k$. Then, for every 
$e_-\in\R$ with $\sF^{e_- m}R_m=R_m$ for any $m\in r\Z_{> 0}$, we have 
\[
\lambda_{\max}(\sF_\xi)\geq C|\xi|+e_- 
\]
for any $\xi\in N_\R(\T)$. 
\item\label{proposition:technical2}
For any $1\leq i\leq k$, let $\sF_i$ be a $\T$-equivariant filtration on $R^i$, 
and take any approximating sequence $\left\{\sF_{i,(m)}\right\}_{m\in r\Z_{>0}}$ 
of $\sF_i$. Then we have 
\[
\lim_{m\to\infty}\JJ_{\T}^{\cp}\left(\left\{\sF_{i,(m)}\right\}_{i=1}^k\right)
=\JJ_{\T}^{\cp}\left(\{\sF_i\}_{i=1}^k\right).
\]
\end{enumerate}
\end{proposition}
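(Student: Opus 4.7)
The hypothesis of vanishing coupled $\T$-Futaki characters means $\alpha_{\bc}^{\cp}=0$. By Lemma \ref{lemma:sum-basics} \eqref{lemma:sum-basics1} and Definition \ref{definition:polytope} \eqref{definition:polytope2}, this places $0$ in the interior of $\PP^L$, so the closed ball of radius $C$ about $0$ lies in $\PP^L$, yielding the elementary bound $\max_{\alpha\in\PP^L}\langle\alpha,\xi\rangle\geq C|\xi|$. For the main estimate, I observe that the $\T$-equivariance of $\sF$ combined with $\sF^{e_- m}R_m=R_m$ forces $\sF^{e_- m}R_{m,\alpha}=R_{m,\alpha}$ for every weight $\alpha$, which in turn gives $\sF_\xi^{e_- m+\langle\alpha,\xi\rangle}R_{m,\alpha}=R_{m,\alpha}\neq 0$ whenever $\alpha\in\Lambda_m^L$. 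Maximizing over $\alpha\in\Lambda_m^L$ and letting $m$ run through sufficiently divisible integers (so that $\frac{1}{m}\PP_m^L=\PP^L$ by Definition \ref{definition:polytope} \eqref{definition:polytope1}) gives $\lambda_{\max}(\sF_\xi)\geq e_-+\max_{\alpha\in\PP^L}\langle\alpha,\xi\rangle\geq e_-+C|\xi|$.

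\textbf{Part (2), setup.} Let $\sF$ and $\sF_{(m)}$ denote the sum filtrations of $\{\sF_i\}_{i=1}^k$ and $\{\sF_{i,(m)}\}_{i=1}^k$ respectively. Combining Proposition \ref{proposition:sum-filt-lambda}, Proposition \ref{proposition:sum-filt-twist} \eqref{proposition:sum-filt-twist2}, and \cite[Lemma 6.4]{Xu} with $\alpha_{\bc}^{\cp}=0$ yields
\begin{equation*}
\JJ^{\cp}\left(\{\sF_{i,\xi}\}_{i=1}^k\right)=\lambda_{\max}(\sF_\xi)-\sum_{i=1}^k S_{L_i}(\sF_i),
\end{equation*}
together with the analogous identity for $\sF_{(m)}$. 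The plan is to combine (i) pointwise convergence of the functionals in $\xi$, (ii) convexity-driven uniform convergence on compact subsets of $N_\R(\T)$, and (iii) a coercivity estimate uniform in $m$, so as to pass from pointwise convergence of the integrands to convergence of the infima.

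\textbf{Execution.} For (i): by Lemma \ref{lemma:approx} \eqref{lemma:approx2} and Proposition \ref{proposition:approx-sum-filt}, $\{\sF_{(m),\xi}\}_m$ is an approximating sequence of $\sF_\xi$ for each $\xi$, and Lemma \ref{lemma:S-approx} then yields pointwise convergence of $\lambda_{\max}$ and of the $S$-terms. For (ii): each $\lambda_{\max,m'}(\sF_\xi)$ is a maximum of affine functions of $\xi$, hence convex in $\xi$, and $\lambda_{\max}(\sF_\xi)$ and $\lambda_{\max}(\sF_{(m),\xi})$ inherit convexity as suprema of convex functions; pointwise convergence of convex functions on $N_\R(\T)\cong\R^p$ is automatically locally uniform. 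For (iii): by Definition \ref{definition:approx} \eqref{definition:approx2}\eqref{definition:approx22}, $\sF_{(m)}^x R_m=\sF^x R_m$, so the level-$m$ computation of Part (1), using the same $e_-$, produces
\begin{equation*}
\lambda_{\max}(\sF_{(m),\xi})\geq\lambda_{\max,m}(\sF_{(m),\xi})=\lambda_{\max,m}(\sF_\xi)\geq e_-+C|\xi|
\end{equation*}
uniformly for sufficiently divisible $m$. Since $\sum_i S_{L_i}(\sF_{i,(m)})$ is bounded in $m$ by Lemma \ref{lemma:S-approx}, the infima $\JJ^{\cp}_\T(\{\sF_{i,(m)}\}_{i=1}^k)$ are all attained on a common compact subset $K\subset N_\R(\T)$, and uniform convergence on $K$ then concludes.

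\textbf{Main obstacle.} The delicate step is (iii): the constant $e_-$ is extracted from $\sF$ itself, yet it must control all of the $\sF_{(m)}$ simultaneously. What makes this work is the defining property that an approximating sequence coincides with $\sF$ at the $m$th graded piece, together with the exact recovery $\frac{1}{m}\PP_m^L=\PP^L$ for sufficiently divisible $m$, which converts the finite-level lower bound into the asymptotic one $e_-+C|\xi|$.
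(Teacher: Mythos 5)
Your Part (1) is essentially the paper's own argument (it bounds $T_m(\sF_\xi)$ from below weight-by-weight and uses $0=\alpha_{\bc}^{\cp}\in\interior(\PP^L)$), and the overall scheme of your Part (2) also matches the paper: pointwise convergence in $\xi$ via Lemma \ref{lemma:approx}, Proposition \ref{proposition:approx-sum-filt} and Lemma \ref{lemma:S-approx}; uniform convergence on compact subsets of $N_\R(\T)$; and a coercivity bound uniform in $m$ to trap all the infima in one compact set. Your replacement of the paper's explicit equi-Lipschitz estimate by convexity of $\xi\mapsto\lambda_{\max}(\sF_{(m),\xi})$ (a supremum of maxima of affine functions, finite by linear boundedness) is legitimate, since pointwise convergence of finite convex functions on $\R^p$ is locally uniform.

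The genuine gap is in your step (iii), the one you yourself flag as delicate. Your chain $\lambda_{\max}(\sF_{(m),\xi})\geq T_m(\sF_{(m),\xi})=T_m(\sF_\xi)\geq e_-+C|\xi|$ converts $\frac{1}{m}\max_{\alpha\in\Lambda_m^L}\langle\alpha,\xi\rangle$ into $\max_{\alpha\in\PP^L}\langle\alpha,\xi\rangle\geq C|\xi|$ using $\frac{1}{m}\PP_m^L=\PP^L$, which holds only for $m$ divisible by some fixed $m_0$. But the limit in the statement runs over all $m\in r\Z_{>0}$, and for the infinitely many remaining values of $m$ you have no coercivity, hence no guarantee that $\JJ^{\cp}_{\T}\left(\left\{\sF_{i,(m)}\right\}_{i=1}^k\right)$ is computed on your compact set $K$; as written you only obtain the limit along the subsequence of sufficiently divisible $m$. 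This is repairable in (at least) two ways. One stays inside your argument: since $\Lambda^L_{am_0+b}\supset a\Lambda^L_{m_0}+\Lambda^L_b$ (products of nonzero sections are nonzero), one checks that $\frac{1}{m}\PP^L_m$ contains the ball of radius $C/2$ about the origin for all sufficiently large $m\in r\Z_{>0}$, so $T_m(\sF_\xi)\geq e_-+\frac{C}{2}|\xi|$ for all large $m$, which suffices. The other is the paper's route: apply Part (1) directly to each filtration $\sF_{(m)}$, with a lower linear-boundedness constant $e_m$ (i.e. $\sF_{(m)}^{e_m m'}R_{m'}=R_{m'}$ for all $m'$) chosen independently of $m$, which is possible because the $\sF_{(m)}$ form an approximating sequence of $\sF$. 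Either patch closes the gap; without one of them the proof is incomplete.
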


\begin{proof}
\eqref{proposition:technical1}
By Lemma \ref{lemma:sum-basics} \eqref{lemma:sum-basics1}, we have 
$0=\alpha_{\bc}^{\cp}\in\interior\left(\PP^L\right)$. In particular, 
we have $C\in\R_{>0}$. 
For any $m\in r\Z_{>0}$ and $\alpha\in\Lambda_m^L$, let us set 
\[
T_{m,\alpha}:=\frac{1}{m}\max\left\{x\in\R\,\,|\,\,\sF^x R_{m,\alpha}\neq 0\right\}.
\]
From the assumption, we have $T_{m,\alpha}\geq e_-$ for any 
$\alpha\in\Lambda_m^L$. Thus we get
\[
T_m(\sF_\xi)=\max_{\alpha\in\Lambda_m^L}\left\{
T_{m,\alpha}+\left\langle\frac{\alpha}{m},\xi\right\rangle\right\}
\geq e_-+\frac{1}{m}\max_{\alpha\in\Lambda_m^L}\left\{\langle
\alpha,\xi\rangle\right\},
\]
where $T_m$ be as in \cite[\S 3.1.2]{Xu}. By \cite[Lemma 3.22]{Xu}, we get 
\[
\lambda_{\max}(\sF_\xi)\geq \max_{\alpha\in\PP^L}\left\{\langle
\alpha,\xi\rangle\right\}+e_-
\geq C|\xi|+e_-.
\]

\eqref{proposition:technical2}
Let $\sF$ (resp., $\sF_{(m)}$) be the sum filtration of $\{\sF_i\}_{i=1}^k$ 
(resp., $\{\sF_{i,(m)}\}_{i=1}^k$). Take any $\xi \in N_\R(\T)$. 
By Lemma \ref{lemma:approx} \eqref{lemma:approx2},
$\left\{\sF_{i,(m),\xi}\right\}_{m\in r\Z_{>0}}$ is an approximating sequence of 
$\sF_{i,\xi}$. 
Moreover, by Proposition \ref{proposition:sum-filt-twist} 
\eqref{proposition:sum-filt-twist2}, 
the sum filtration of $\left\{\sF_{i,(m),\xi}\right\}_{i=1}^k$
(resp., $\left\{\sF_{i,\xi}\right\}_{i=1}^k$) is equal to $\sF_{(m),\xi}$ 
(resp., $\sF_\xi$). Therefore, by Lemma \ref{lemma:S-approx} and 
Proposition \ref{proposition:sum-filt-lambda}, we have
\begin{eqnarray*}
S_{L_i}\left(\sF_{i,\xi}\right)&=&\lim_{m\to\infty}S_{L_i}\left(\sF_{i,(m),\xi}\right), \\
\sum_{i=1}^k\lambda_{\max}\left(\sF_{i,\xi}\right)&=&\lambda_{\max}
\left(\sF_\xi\right)
=\lim_{m\to\infty}\lambda_{\max}\left(\sF_{(m),\xi}\right)
=\lim_{m\to\infty}\sum_{i=1}^k\lambda_{\max}\left(\sF_{i,(m),\xi}\right).
\end{eqnarray*}
This implies that, for any $\xi\in N_\R(\T)$, we have 
\[
\lim_{m\to\infty}\JJ^{\cp}\left(\left\{\sF_{i,(m),\xi}\right\}_{i=1}^k\right)
=\JJ^{\cp}\left(\left\{\sF_{i,\xi}\right\}_{i=1}^k\right).
\]
We observe the following claim: 

\begin{claim}\label{claim:technical}
There exist $C\in\R_{>0}$ and $e\in\R$ such that, for any $m\in r\Z_{> 0}$ 
and $\xi\in N_\R(\T)$, we have 
\begin{eqnarray*}
\JJ^{\cp}\left(\left\{\sF_{i,\xi}\right\}_{i=1}^k\right)&\geq&C|\xi|+e, \\
\JJ^{\cp}\left(\left\{\sF_{i,(m),\xi}\right\}_{i=1}^k\right)&\geq&C|\xi|+e.
\end{eqnarray*}
\end{claim}

\begin{proof}[Proof of Claim \ref{claim:technical}]
Observe that 
\begin{eqnarray*}
\JJ^{\cp}\left(\left\{\sF_{i,\xi}\right\}_{i=1}^k\right)&=&\lambda_{\max}(\sF_\xi)
-\sum_{i=1}^k S_{L_i}\left(\sF_{i,\xi}\right)\\
&=&\lambda_{\max}(\sF_\xi)
-\sum_{i=1}^k S_{L_i}\left(\sF_i\right), \\
\JJ^{\cp}\left(\left\{\sF_{i,(m),\xi}\right\}_{i=1}^k\right)
&=&\lambda_{\max}\left(\sF_{(m),\xi}\right)
-\sum_{i=1}^k S_{L_i}\left(\sF_{i,(m)}\right).
\end{eqnarray*}
By \eqref{proposition:technical1}, there exists $C\in\R_{>0}$ such that: 
\begin{itemize}
\item
there exists $e_\infty\in\R$ such that, for any $\xi\in N_\R(\T)$, we have 
\[
\JJ^{\cp}\left(\left\{\sF_{i,\xi}\right\}_{i=1}^k\right)\geq 
C|\xi|+e_\infty-\sum_{i=1}^k S_{L_i}(\sF_i), 
\]
and 
\item
for any $m\in r\Z_{>0}$, there exists $e_m\in\R$
such that, for any $\xi\in N_\R(\T)$, we have 
\[
\JJ^{\cp}\left(\left\{\sF_{i,(m),\xi}\right\}_{i=1}^k\right)\geq 
C|\xi|+e_m-\sum_{i=1}^k S_{L_i}(\sF_{i,(m)}). 
\]
\end{itemize}
Moreover, since $\{\sF_{(m)}\}_{m\in r\Z_{>0}}$ is an approximating sequence of 
$\sF$, we can take the value $e_m$ 
independent of $m\in r \Z_{>0}$. We may assume that 
$e_m=e_\infty$ for any $m\in r \Z_{>0}$. Moreover, since 
$\lim_{m\to \infty}\sum_{i=1}^k S_{L_i}(\sF_{i,(m)})=\sum_{i=1}^k S_{L_i}(\sF_{i})$, 
there exists $e\in\R$ such that 
\begin{eqnarray*}
e_\infty-\sum_{i=1}^k S_{L_i}(\sF_i)&\geq& e, \\
e_m-\sum_{i=1}^k S_{L_i}(\sF_{i,(m)})&\geq &e.
\end{eqnarray*}
Thus we finish to prove Claim \ref{claim:technical}.
\end{proof}

By Claim \ref{claim:technical}, there exists a compact subset 
$\Xi\subset N_\R(\T)$ such that, the minimum of the functions 
\begin{itemize}
\item
$\xi\mapsto\JJ^{\cp}\left(\left\{\sF_{i,\xi}\right\}_{i=1}^k\right)$ and 
\item
$\xi\mapsto\JJ^{\cp}\left(\left\{\sF_{i,(m),\xi}\right\}_{i=1}^k\right)$
for all $m\in r\Z_{>0}$
\end{itemize}
over $N_{\R}(\T)$
can be attained over $\Xi$. 
Moreover, if we set 
\[
C_1:=\max\left\{\dist\left(0,\partial\left(\PP^L\right)\right)\right\}\in\R_{>0}, 
\]
then, for any $\xi_1,\xi_2\in N_\R(\T)$ and $m\in r\Z_{>0}$, we have 
\begin{eqnarray*}
&&\left|\JJ^{\cp}\left(\left\{\sF_{i,(m),\xi_1}\right\}_{i=1}^k\right)
-\JJ^{\cp}\left(\left\{\sF_{i,(m),\xi_2}\right\}_{i=1}^k\right)\right|\\
&=&\left|\lambda_{\max}\left(\left\{\sF_{i,(m),\xi_1}\right\}_{i=1}^k\right)
-\lambda_{\max}\left(\left\{\sF_{i,(m),\xi_2}\right\}_{i=1}^k\right)\right|\\
&\leq& 2 C_1|\xi_1-\xi_2|
\end{eqnarray*}
as in the proof of \cite[Proposition 6.29]{Xu}. Therefore, the sequence 
\[
\left\{\xi\mapsto\JJ^{\cp}\left(\left\{\sF_{i,(m),\xi}\right\}_{i=1}^k\right)
\right\}_{m\in r\Z_{>0}}
\]
of functions over $\Xi$ are Lipschitz continuous with the same Lipschitz 
constant $2 C_1$, and pointwise converges to 
\[
\xi\mapsto\JJ^{\cp}\left(\left\{\sF_{i,\xi}\right\}_{i=1}^k\right).
\]
Hence the sequence of functions uniformly converges to the above function 
over $\Xi$. 
This implies that 
\begin{eqnarray*}
\lim_{m\to\infty}\JJ_{\T}^{\cp}\left(\left\{\sF_{i,(m)}\right\}_{i=1}^k\right)
&=&\lim_{m\to\infty}\inf_{\xi\in\Xi}
\JJ_{\T}^{\cp}\left(\left\{\sF_{i,(m),\xi}\right\}_{i=1}^k\right)\\
&=&\inf_{\xi\in\Xi}
\JJ_{\T}^{\cp}\left(\left\{\sF_{i,\xi}\right\}_{i=1}^k\right)
=\JJ_{\T}^{\cp}\left(\left\{\sF_{i}\right\}_{i=1}^k\right).
\end{eqnarray*}
Thus the assertion \eqref{proposition:technical2} follows. 
\end{proof}

\begin{thm}[{cf.\ \cite[\S 3.4.2 and Proposition 6.3.1]{Xu}}]\label{theorem:DingD}
Assume that $\varepsilon\in [0,1]$ satisfies the following: 
for any $\T$-equivariant test configuration $(\sX_i,\sL_i)/\A^1$ of $(X,L_i)$ 
$(1\leq i\leq k)$, we have 
\[
\Ding\left(\left\{\sX_i,\sL_i\right\}_{i=1}^k\right)
\geq \varepsilon\cdot\JJ_{\T}^{\cp}\left(\left\{\sX_i,\sL_i\right\}_{i=1}^k\right).
\]
Then, for any $\T$-equivariant filtration $\sF_i$ on $R^i$ $(1\leq i\leq k)$, 
we have 
\[
\DD\left(\{\sF_i\}_{i=1}^k\right)\geq \varepsilon\cdot
\JJ_{\T}^{\cp}\left(\{\sF_i\}_{i=1}^k\right). 
\]
\end{thm}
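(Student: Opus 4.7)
My plan is to adapt to the coupled setting the single-filtration argument of \cite[\S 3.4.2]{Xu} (proof of \cite[Theorem 3.63]{Xu}), using the sum-of-filtrations formalism developed in Section \ref{section:sum-filt}.

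I would first reduce to the case where each $\sF_i$ is $\Z$-valued: by Lemma \ref{lemma:reduction-Z}, neither $\DD(\{\sF_i\})$ nor $\JJ_{\T}^{\cp}(\{\sF_i\})$ changes if $\sF_i$ is replaced by $\sF_{i,\Z}$. Next, I would verify that the hypothesis forces $\alpha_{\bc}^{\cp}=0$: apply it to the $\xi$-twist of the trivial product test configuration for arbitrary $\xi\in N(\T)$. By Corollary \ref{corollary:twist-ding} the resulting Ding equals $-\langle\alpha_{\bc}^{\cp},\xi\rangle$, while $\JJ_{\T}^{\cp}$ vanishes on any twist of the trivial configuration (as its infimum is attained at the untwisted trivial one by Lemma \ref{lemma:J-xi}). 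The hypothesis then gives $-\langle\alpha_{\bc}^{\cp},\xi\rangle\geq 0$ for every $\xi$, so $\alpha_{\bc}^{\cp}=0$. This is precisely the vanishing needed to invoke Proposition \ref{proposition:technical} \eqref{proposition:technical2} at the limit step.

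Now pick a $\Z$-valued approximating sequence $\{\sF_{i,(m)}\}_{m\in r\Z_{>0}}$ of each $\sF_i$, and for each $m$ form the normalized blowup test configuration $(\sX_{i,(m)},\sL_{i,(m)})/\A^1$ of $(X,L_i)$ along $\sI_m(\sF_{i,(m)})=\sI_m(\sF_i)$, as in Definition \ref{definition:blowup-tc}. These are $\T$-equivariant normal test configurations, so the hypothesis together with Proposition \ref{proposition:D-Ding} \eqref{proposition:D-Ding2} yields, writing $\sF'_{i,(m)}:=\sF_{\sX_{i,(m)},\sL_{i,(m)}}$,
\[
\DD(\{\sF'_{i,(m)}\}_{i=1}^k)\;\geq\;\varepsilon\cdot\JJ_{\T}^{\cp}(\{\sF'_{i,(m)}\}_{i=1}^k).
\]
The key structural observation is that $\sF'_{i,(m)}$ and $\sF_{i,(m)}$ carry the same level-$m$ fractional ideal $\sI_m(\sF_i)$ and are both finitely generated from level $m$ (the former up to integral closure intrinsic to the normalization); consequently their asymptotic invariants coincide: $S_{L_i}$ and $\lambda_{\max}$ agree, and the sum filtrations have equal $\mu=\bL$ (since $\bL$ depends only on the integral closures of level-$m$ ideals). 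The inequality therefore reads
\[
\DD(\{\sF_{i,(m)}\}_{i=1}^k)\;\geq\;\varepsilon\cdot\JJ_{\T}^{\cp}(\{\sF_{i,(m)}\}_{i=1}^k).
\]

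Finally I would pass $m\to\infty$. The sum filtrations $\sF_{(m)}$ of $\{\sF_{i,(m)}\}_i$ form an approximating sequence of the sum filtration $\sF$ of $\{\sF_i\}_i$ by Proposition \ref{proposition:approx-sum-filt}. Lemma \ref{lemma:S-approx} delivers $S_{L_i}(\sF_{i,(m)})\to S_{L_i}(\sF_i)$, while the identification $\mu=\bL$ of \cite[Theorem 3.52]{Xu} together with $\sI_m(\sF_{(m)})=\sI_m(\sF)$ and \cite[Lemma 3.46]{Xu} yields $\mu(\sF_{(m)})\to\mu(\sF)$; combined, these give $\DD(\{\sF_{i,(m)}\})\to\DD(\{\sF_i\})$. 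For the right-hand side, Proposition \ref{proposition:technical} \eqref{proposition:technical2}, applicable by the vanishing of Futaki, gives $\JJ_{\T}^{\cp}(\{\sF_{i,(m)}\})\to\JJ_{\T}^{\cp}(\{\sF_i\})$. Letting $m\to\infty$ in the displayed inequality concludes. The main obstacle is this limit step: the convergence of $\mu$ (equivalently $\bL$) is the delicate coupled analogue of the single-filtration lct convergence in \cite[Lemma 3.46]{Xu}, and the identification of invariants for $\sF'_{i,(m)}$ and $\sF_{i,(m)}$ rests on a careful comparison of graded sequences modulo integral closure; everything else in the argument is a routine combination of results already established in Sections \ref{section:torus}--\ref{section:c-ding}.
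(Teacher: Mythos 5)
Your overall skeleton matches the paper's: reduce to $\Z$-valued filtrations via Lemma \ref{lemma:reduction-Z}, deduce $\alpha_{\bc}^{\cp}=0$, approximate by $\Z$-valued sequences, feed the normalized blowup test configurations into the hypothesis, and pass to the limit via Proposition \ref{proposition:technical} \eqref{proposition:technical2} and \cite[Theorems 3.58, 3.60]{Xu}. However, your central transfer step contains a genuine gap. You assert that $\sF_{i,(m)}$ and $\sF'_{i,(m)}:=\sF_{\sX_{i,(m)},\sL_{i,(m)}}$ have the \emph{same} $S_{L_i}$ and $\lambda_{\max}$ (and, implicitly, the same twisted invariants for all $\xi$, which is what transferring $\JJ_{\T}^{\cp}$ requires). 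What is actually available --- and what the paper uses, following \cite[equation (3.42)]{Xu} --- is only the one-sided inequalities $S_{L_i}(\cdot)\leq S_{L_i}(\sF_{\sY_{i,(m)},\sM_{i,(m)}})$ and $\lambda_{\max}(\cdot)\leq\lambda_{\max}(\sF_{\sY_{i,(m)},\sM_{i,(m)}})$ coming from the inclusion of the finitely generated filtration into the (integrally closed) filtration of the normalized test configuration. With only these inequalities, your chain breaks: $\mu$ is unchanged (the lct depends only on integral closures, as you say), but the larger $S$ and $\lambda_{\max}$ make $\JJ^{\cp}$ incomparable in the direction you need, so $\DD(\{\sF'_{i,(m)}\})\geq\varepsilon\JJ_{\T}^{\cp}(\{\sF'_{i,(m)}\})$ does not pass to $\{\sF_{i,(m)}\}$ by itself. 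The paper circumvents this precisely by working with the combination $\mu-(1-\varepsilon)\sum_i S_{L_i}-\varepsilon\sum_i\lambda_{\max}$, where $\varepsilon\in[0,1]$ makes both substitutions go the right way; note that your argument never uses $\varepsilon\leq 1$, which is a symptom of the unproven equality you are relying on.

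A second, related omission: $\JJ_{\T}^{\cp}$ involves twists by arbitrary $\xi\in N_{\Q}(\T)$, and for a test configuration the $\xi$-twist only exists after a base change $t\mapsto t^{e}$ with $e\xi\in N(\T)$, followed by normalization. The comparison between the twisted, base-changed approximating filtrations $\sG_{i,(m)}=\bigl(\sF_{i,(m)}^{(e)}\bigr)_{e\xi}$ and the filtrations of the twisted normalized base-changed test configurations $(\sY_{i,(m)},\sM_{i,(m)})$ --- together with the scaling identities of Lemma \ref{lemma:prepare-e}, the compatibilities of Propositions \ref{proposition:tc-twist} and \ref{proposition:approx-filt-e}, and the shift formula of Lemma \ref{lemma:D-twist} (valid only because $\alpha_{\bc}^{\cp}=0$) --- is where most of the paper's proof lives, and it is exactly the place where normalization genuinely enlarges the filtration, so that only inequalities are available. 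Your proposal compresses all of this into ``a careful comparison of graded sequences modulo integral closure,'' but that comparison is the theorem's crux, not a routine verification; as written, the argument is incomplete at this point, while the rest (reduction, vanishing of $\alpha_{\bc}^{\cp}$, and the limit $m\to\infty$) is essentially the paper's argument.
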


\begin{proof}
By Corollary \ref{corollary:twist-ding}, we have $\alpha_{\bc}^{\cp}=0$. 
Moreover, by Lemma \ref{lemma:reduction-Z}, we may assume that 
each $\sF_i$ is $\Z$-valued. Let $\sF$ be the sum filtration of $\{\sF_i\}_{i=1}^k$. 
For any $1\leq i\leq k$, take any $\Z$-valued approximating sequence 
$\left\{\sF_{i,(m)}\right\}_{m\in r\Z_{>0}}$ of $\sF_i$, and let $\sF_{(m)}$ be the 
sum filtration of $\left\{\sF_{i,(m)}\right\}_{i=1}^k$ for any $m\in r\Z_{>0}$. 
Recall that, by Proposition \ref{proposition:approx-sum-filt}, 
$\left\{\sF_{(m)}\right\}_{m\in r\Z_{>0}}$ is an approximating sequence of $\sF$. 

Let us set 
\begin{eqnarray*}
I_{(m;x)}^{\langle i\rangle}&:=&
I_{(m;x)}(\sF_i)=I_{(m;x)}(\sF_{i,(m)}), \\
I_{(m;x)}&:=&I_{(m;x)}(\sF)=I_{(m;x)}(\sF_{(m)})
\end{eqnarray*}
as in Definitions \ref{definition:filtration} and \ref{definition:sum-tc}. 
Since $\sF_i$ is $\Z$-valued, for any $\lambda\in\Z$, we have 
\[
I_{(m;\lambda)}=\sum_{\substack{\lambda_1,\dots,\lambda_k\in\Z; \\
\lambda_1+\cdots+\lambda_k\geq \lambda}}
I_{(m;\lambda_1)}^{\langle 1\rangle}\cdots I_{(m;\lambda_k)}^{\langle k\rangle}. 
\]
Moreover, let us consider the fractional ideals 
\begin{eqnarray*}
\sI_m^{\langle i\rangle}&:=&\sI_m(\sF_i)=\sI_m(\sF_{i,(m)}), \\
\sI_m&:=&\sI_m(\sF)=\sI_m(\sF_{(m)}) 
\end{eqnarray*}
on $X_{\A^1}$. As above, we have 
\[
\sI_m=\prod_{i=1}^k\sI_m^{\langle i\rangle}. 
\]

For any $1\leq i\leq k$ and $m\in r\Z_{>0}$, let 
$\left(\sX_{i,(m)},\sL_{i,(m)}\right)/\A^1$ be the normalized blowup test configuration 
of $(X, L_i)$ along $\sI_m^{\langle i\rangle}$ in the sense of 
Definition \ref{definition:blowup-tc}. 
We recall the definition. Take a $(\G_m\times\T)$-equivariant birational morphism 
\[
p_m\colon\sW_m\to X_{\A^1}
\]
with $\sW_m$ normal and $p_m$ an isomorphism over $\A^1\setminus\{0\}$ and 
there exists a Cartier divisor $m D_m^{\langle i\rangle}$ on $\sW_m$ such that 
$p_m^{-1}\sI_m^{\langle i\rangle}=\sO_{\sW_m}\left(-m D_m^{\langle i\rangle}\right)$ 
holds for any $1\leq i\leq k$. The $\left(\sX_{i,(m)},\sL_{i,(m)}\right)/\A^1$ is 
defined to be the ample model of the semiample $\Q$-divisor 
\[
p_m^*(L_i)_{\A^1}-D_m^{\langle i\rangle}
\]
over $\A^1$. Since 
\[
p_m^{-1}\sI_m=\sO_{\sW_m}\left(-m \sum_{i=1}^k D_m^{\langle i\rangle}\right),
\]
the normalized blowup test configuration $\left(\sX_{(m)},\sL_{(m)}\right)/\A^1$ 
of $(X, L)$ along $\sI_m$ is equal to the normalized sum configuration of 
$\left\{\left(\sX_{i,(m)},\sL_{i,(m)}\right)\right\}_{i=1}^k$. 

Let us take any $\xi\in N_\Q(\T)$. Fix $e\in\Z_{>0}$ with $e\xi\in N(\T)$. 
For any $1\leq i\leq k$ and $m\in r\Z_{>0}$, set 
\[
\left(\sY_{i,(m)},\sM_{i,(m)}\right):=
\left(\left(\left(\sX_{i,(m)}\right)^{\overline{(e)}}\right)_{e\xi},
\left(\left(\sL_{i,(m)}\right)^{\overline{(e)}}\right)_{e\xi}\right)
\]
(see Definition \ref{definition:base-change}). Moreover, we set 
\[
\sG_i:=\left(\left(\sF_i\right)^{(e)}\right)_{e\xi}, \quad
\sG:=\left(\sF^{(e)}\right)_{e\xi}, \quad
\sG_{i,(m)}:=\left(\left(\sF_{i,(m)}\right)^{(e)}\right)_{e\xi}, \\
\sG_{(m)}:=\left(\left(\sF_{(m)}\right)^{(e)}\right)_{e\xi}
\]
(in the sense of Proposition \ref{proposition:tc-twist} 
\eqref{proposition:tc-twist2}). 
By Propositions \ref{proposition:sum-filt-twist}, \ref{proposition:approx-sum-filt}, 
\ref{proposition:approx-filt-e} and Lemma \ref{lemma:approx} \eqref{lemma:approx2}, 
we have: 
\begin{itemize}
\item
$\sG$ is the sum filtration of $\{\sG_i\}_{i=1}^k$. 
\item
$\left\{\sG_{(m)}\right\}_{m\in r\Z_{>0}}$ is an approximating sequence of $\sG$. 
\item
$\sG_{(m)}$ is the sum filtration of $\left\{\sG_{i,(m)}\right\}_{i=1}^k$. 
\item
$\left\{\sG_{i,(m)}\right\}_{m\in r\Z_{>0}}$ is an approximating sequence of 
$\sG_i$. 
\end{itemize}
Moreover, by Proposition \ref{proposition:tc-twist}, the test configuration 
$\left(\sY_{i,(m)},\sM_{i,(m)}\right)/\A^1$ is equal to the normalized blowup 
test configuration of $(X, L_i)$ along 
\[
\sJ_m^{\langle i\rangle}:=\sI_m(\sG_i)=\sI_m\left(\sG_{i,(m)}\right).
\]
We again recall the definition. 
Take a $(\G_m\times\T)$-equivariant birational morphism 
\[
q_m\colon\sZ_m\to X_{\A^1}
\]
with $\sZ_m$ normal and $q_m$ an isomorphism over $\A^1\setminus\{0\}$ and 
there exists a Cartier divisor $m E_m^{\langle i\rangle}$ on $\sZ_m$ such that 
$q_m^{-1}\sJ_m^{\langle i\rangle}=\sO_{\sZ_m}\left(-m E_m^{\langle i\rangle}\right)$ 
holds for any $1\leq i\leq k$. The $\left(\sY_{i,(m)},\sM_{i,(m)}\right)/\A^1$ is 
defined to be the ample model of the semiample $\Q$-divisor 
\[
q_m^*(L_i)_{\A^1}-E_m^{\langle i\rangle}
\]
on $\sZ_m$ over $\A^1$. We set 
\[
\sJ_m:=\sI_m(\sG)=\sI_m\left(\sG_{(m)}\right)=\prod_{i=1}^k\sJ_m^{\langle i\rangle},
\]
where the last equality follows from the property that each $\sG_i$ is $\Z$-valued.
Since 
\[
q_m^{-1}\sJ_m=\sO_{\sZ_{m}}\left(-m\sum_{i=1}^k E_m^{\langle i\rangle}\right), 
\]
the normalized blowup test configuration 
$\left(\sY_{(m)},\sM_{(m)}\right)/\A^1$ of $(X, L)$ along $\sJ_m$ is equal to 
the normalized sum configuration of 
$\left\{\left(\sY_{i,(m)},\sM_{i,(m)}\right)\right\}_{i=1}^k$. 
Moreover, again by Proposition 
\ref{proposition:tc-twist}, we have 
\[
\left(\sY_{(m)},\sM_{(m)}\right)=\left(
\left(\left(\sX_{(m)}\right)^{\overline{(e)}}\right)_{e\xi}, 
\left(\left(\sL_{(m)}\right)^{\overline{(e)}}\right)_{e\xi}\right). 
\]
Let $\theta_m\colon\sZ_m\to\sY_{(m)}$ be the natural morphism. 
Note that 
\[
q_m^*L_{\A^1}-\sum_{i=1}^k E_m^{\langle i\rangle}
=-\theta_m^*\left(K_{\sY_{(m)}}+\Delta_{\sY_{(m)}}+\sD_{\sY_{(m)},\sM_{(m)}}\right)
\]
holds, where $\sD_{\sY_{(m)},\sM_{(m)}}$ be as in \cite[Definition 2.24]{Xu}. 
Therefore, since $\left\{\sG_{(m)}\right\}_{m\in r_{\Z_{>0}}}$ is an approximating 
sequence, we have 
\begin{eqnarray*}
\bL\left(\sG_{(m)}\right)&=&\lct\left(X_{\A^1},\Delta_{\A^1}+\sJ_m^{\frac{1}{m}}; 
X\times\{0\}\right)-1\\
&=&\lct\left(\sY_{(m)},\Delta_{\sY_{(m)}}+\sD_{\sY_{(m)},\sM_{(m)}}; 
\sY_{(m),0}\right)-1,
\end{eqnarray*}
where $\sY_{(m),0}$ is the fiber of $\sY_{(m)}\to\A^1$ at $0\in\A^1$. 

Observe that 
\begin{eqnarray*}
&&\Ding\left(\left\{\sX_{i,(m)}, \sL_{i,(m)}\right\}_{i=1}^k\right)
-\varepsilon\cdot
\JJ^{\cp}\left(\left\{\sX_{i,(m),\xi}, \sL_{i,(m),\xi}\right\}_{i=1}^k\right)\\
&=&\Ding\left(\sX_{(m)}, \sL_{(m)}\right)
+\frac{\left(\bar{\sL}_{(m)}^{\cdot n+1}\right)}{(n+1)(L^{\cdot n})}
-\sum_{i=1}^k\frac{\left(\bar{\sL}_{i,(m)}^{\cdot n+1}\right)}{(n+1)(L_i^{\cdot n})}
-\varepsilon\cdot\frac{1}{e}
\JJ^{\cp}\left(\left\{\sY_{i,(m)}, \sM_{i,(m)}\right\}_{i=1}^k\right)\\
&=&\frac{1}{e}\left(
\Ding\left(\sX_{(m)}^{\overline{(e)}}, \sL_{(m)}^{\overline{(e)}}\right)
+\frac{\left(\overline{\sL_{(m)}^{\overline{(e)}}}^{\cdot n+1}\right)}{(n+1)(L^{\cdot n})}
-\sum_{i=1}^k\frac{\left(\overline{\sL_{i,(m)}^{\overline{(e)}}}^{\cdot n+1}\right)}{(n+1)
(L_i^{\cdot n})}
-\varepsilon\cdot
\JJ^{\cp}\left(\left\{\sY_{i,(m)}, \sM_{i,(m)}\right\}_{i=1}^k\right)\right)\\
&=&\frac{1}{e}\left(
\Ding\left(\left\{\sY_{i,(m)}, \sM_{i,(m)}\right\}_{i=1}^k\right)
-\varepsilon\cdot
\JJ^{\cp}\left(\left\{\sY_{i,(m)}, \sM_{i,(m)}\right\}_{i=1}^k\right)\right),
\end{eqnarray*}
where the second equality follows from \cite[Lemma 2.25]{Xu} and 
the third equality follows from Proposition \ref{proposition:D-Ding}
\eqref{proposition:D-Ding2} and Lemma \ref{lemma:D-twist}. 
On the other hand, we have 
\begin{eqnarray*}
&&\DD\left(\left\{\sF_{i,(m)}\right\}_{i=1}^k\right)
-\varepsilon\cdot\JJ^{\cp}\left(\left\{\sF_{i,(m),\xi}\right\}_{i=1}^k\right)\\
&=&\DD\left(\left\{\sF_{i,(m),\xi}\right\}_{i=1}^k\right)
-\varepsilon\cdot\JJ^{\cp}\left(\left\{\sF_{i,(m),\xi}\right\}_{i=1}^k\right)\\
&=&\DD\left(\sF_{(m),\xi}\right)+S_L\left(\sF_{(m),\xi}\right)
-\sum_{i=1}^k S_{L_i}\left(\sF_{i,(m),\xi}\right)
-\varepsilon\left(\lambda_{\max}\left(\sF_{(m),\xi}\right)-\sum_{i=1}^k
S_{L_i}\left(\sF_{i,(m),\xi}\right)\right)\\
&=&\frac{1}{e}\left(
\DD\left(\sG_{(m)}\right)+S_L\left(\sG_{(m)}\right)
-\sum_{i=1}^k S_{L_i}\left(\sG_{i,(m)}\right)
-\varepsilon\left(\lambda_{\max}\left(\sG_{(m)}\right)-\sum_{i=1}^k
S_{L_i}\left(\sG_{i,(m)}\right)\right)\right)\\
&=&\frac{1}{e}\left(\DD\left(\left\{\sG_{i,(m)}\right\}_{i=1}^k\right)
-\varepsilon\cdot\JJ^{\cp}\left(\left\{\sG_{i,(m)}\right\}_{i=1}^k\right)\right), 
\end{eqnarray*}
where the first equality follows from Lemma \ref{lemma:D-twist} 
and the third equality follows from Lemma \ref{lemma:prepare-e}. 

As in the proof of \cite[Theorem 3.63]{Xu}, more precisely, 
by \cite[equation (3.42)]{Xu}, we have 
\[
S_{L_i}\left(\sG_{i,(m)}\right)\leq S_{L_i}\left(\sF_{\sY_{i,(m)},\sM_{i,(m)}}\right), \quad
\lambda_{\max}\left(\sG_{i,(m)}\right)\leq 
\lambda_{\max}\left(\sF_{\sY_{i,(m)},\sM_{i,(m)}}\right).
\]
Thus we get
\begin{eqnarray*}
&&\DD\left(\left\{\sG_{i,(m)}\right\}_{i=1}^k\right)
-\varepsilon\cdot\JJ^{\cp}\left(\left\{\sG_{i,(m)}\right\}_{i=1}^k\right)\\
&=&\bL\left(\sG_{(m)}\right)-(1-\varepsilon)\cdot\sum_{i=1}^k
S_{L_i}\left(\sG_{i,(m)}\right)-\varepsilon\cdot\lambda_{\max}\left(\sG_{(m)}\right)\\
&\geq&\lct\left(\sY_{(m)},\Delta_{\sY_{(m)}}+\sD_{\sY_{(m)},\sM_{(m)}}; 
\sY_{(m),0}\right)-1\\
&&-(1-\varepsilon)\cdot\sum_{i=1}^k
S_{L_i}\left(\sF_{\sY_{i,(m)},\sM_{i,(m)}}\right)
-\varepsilon\cdot\lambda_{\max}\left(\sF_{\sY_{i,(m)},\sM_{i,(m)}}\right)\\
&=&\Ding\left(\sY_{(m)},\sM_{(m)}\right)
+\frac{\left(\bar{\sM}_{(m)}^{\cdot n+1}\right)}{(n+1)(L^{\cdot n})}
-\sum_{i=1}^k\frac{\left(\bar{\sM}_{i,(m)}^{\cdot n+1}\right)}{(n+1)(L_i^{\cdot n})}
-\varepsilon\cdot\sum_{i=1}^k\JJ\left(\sY_{i,(m)},\sM_{i,(m)}\right)\\
&=&\Ding\left(\left\{\sY_{i,(m)},\sM_{i,(m)}\right\}_{i=1}^k\right)
-\varepsilon\cdot\JJ^{\cp}\left(\left\{\sY_{i,(m)},\sM_{i,(m)}\right\}_{i=1}^k\right). 
\end{eqnarray*}

As a consequence, for any $\xi\in N_\Q(\T)$, we get 
\begin{eqnarray*}
&&\Ding\left(\left\{\sX_{i,(m)}, \sL_{i,(m)}\right\}_{i=1}^k\right)
-\varepsilon\cdot
\JJ^{\cp}\left(\left\{\sX_{i,(m),\xi}, \sL_{i,(m),\xi}\right\}_{i=1}^k\right)\\
&=&\frac{1}{e}\left(
\Ding\left(\left\{\sY_{i,(m)}, \sM_{i,(m)}\right\}_{i=1}^k\right)
-\varepsilon\cdot
\JJ^{\cp}\left(\left\{\sY_{i,(m)}, \sM_{i,(m)}\right\}_{i=1}^k\right)\right)\\
&\leq&\frac{1}{e}\left(\DD\left(\left\{\sG_{i,(m)}\right\}_{i=1}^k\right)
-\varepsilon\cdot\JJ^{\cp}\left(\left\{\sG_{i,(m)}\right\}_{i=1}^k\right)\right)\\
&=&\DD\left(\left\{\sF_{i,(m)}\right\}_{i=1}^k\right)
-\varepsilon\cdot\JJ^{\cp}\left(\left\{\sF_{i,(m),\xi}\right\}_{i=1}^k\right).
\end{eqnarray*}
From the assumption, we get 
\[
\DD\left(\left\{\sF_{i,(m)}\right\}_{i=1}^k\right)\geq
\varepsilon\cdot\JJ^{\cp}_{\T}\left(\left\{\sF_{i,(m)}\right\}_{i=1}^k\right)
\]
for any $m\in r\Z_{>0}$. 

By Proposition \ref{proposition:technical} \eqref{proposition:technical2} and 
\cite[Theorems 3.58 and 3.60]{Xu}, we have 
\begin{eqnarray*}
\lim_{m\to\infty}\JJ_{\T}^{\cp}\left(\left\{\sF_{i,(m)}\right\}_{i=1}^k\right)
=\JJ_{\T}^{\cp}\left(\{\sF_i\}_{i=1}^k\right), &\quad&
\lim_{m\to\infty}S_{L_i}\left(\sF_{i,(m)}\right)=S_{L_i}\left(\sF_i\right), \\
\lim_{m\to\infty}S_L\left(\sF_{(m)}\right)=S_L\left(\sF\right), &\quad&
\lim_{m\to\infty}\DD\left(\sF_{(m)}\right)=\DD(\sF). 
\end{eqnarray*}
In particular, we have 
\begin{eqnarray*}
\lim_{m\to \infty}\DD\left(\left\{\sF_{i,(m)}\right\}_{i=1}^k\right)
&=&\lim_{m\to \infty}\left(\DD\left(\sF_{(m)}\right)+S_L\left(\sF_{(m)}\right)
-\sum_{i=1}^k S_{L_i}\left(\sF_{i,(m)}\right)\right)\\
&=&\DD\left(\sF\right)+S_L\left(\sF\right)
-\sum_{i=1}^k S_{L_i}\left(\sF_i\right)=\DD\left(\{\sF_i\}_{i=1}^k\right).
\end{eqnarray*}
Thus we complete the proof of Theorem \ref{theorem:DingD}. 
\end{proof}

\begin{corollary}\label{corollary:DingD}
\begin{enumerate}
\renewcommand{\theenumi}{\arabic{enumi}}
\renewcommand{\labelenumi}{(\theenumi)}
\item\label{corollary:DingD1}
The following are equivalent: 
\begin{enumerate}
\renewcommand{\theenumii}{\roman{enumii}}
\renewcommand{\labelenumii}{(\theenumii)}
\item\label{corollary:DingD11}
$\left(X,\Delta;\{L_i\}_{i=1}^k\right)$ is $\T$-equivariantly coupled Ding semistable.
\item\label{corollary:DingD12}
For any $\T$-equivariant filtration $\sF_i$ on $R^i$ $(1\leq i\leq k)$, we have 
$\DD\left(\{\sF_i\}_{i=1}^k\right)\geq 0$. 
\end{enumerate}
\item\label{corollary:DingD2}
The following are equivalent: 
\begin{enumerate}
\renewcommand{\theenumii}{\roman{enumii}}
\renewcommand{\labelenumii}{(\theenumii)}
\item\label{corollary:DingD21}
$\left(X,\Delta;\{L_i\}_{i=1}^k\right)$ is $\T$-reduced uniformly 
coupled Ding stable.
\item\label{corollary:DingD22}
There exists $\varepsilon\in\R_{>0}$ such that, 
for any $\T$-equivariant filtration $\sF_i$ on $R^i$ $(1\leq i\leq k)$, we have 
$\DD\left(\{\sF_i\}_{i=1}^k\right)\geq \varepsilon\cdot\JJ_{\T}^{\cp}
\left(\{\sF_i\}_{i=1}^k\right)$. 
\end{enumerate}
\end{enumerate}
\end{corollary}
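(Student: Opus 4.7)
The plan is to derive both equivalences as direct consequences of Theorem \ref{theorem:DingD}, Proposition \ref{proposition:D-Ding} \eqref{proposition:D-Ding2}, and Lemma \ref{lemma:ding-normal}, so that the work in this corollary amounts to matching up definitions rather than any new computation.

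For the backward directions \eqref{corollary:DingD11}$\implies$\eqref{corollary:DingD12} and \eqref{corollary:DingD21}$\implies$\eqref{corollary:DingD22}, I would invoke Theorem \ref{theorem:DingD} directly. In case (1), taking $\varepsilon = 0$, the $\T$-equivariant coupled Ding semistability of $\left(X,\Delta;\{L_i\}_{i=1}^k\right)$ is precisely the hypothesis of Theorem \ref{theorem:DingD}, whose conclusion is exactly \eqref{corollary:DingD12}. In case (2), given an $\varepsilon \in \R_{>0}$ witnessing $\T$-reduced uniform coupled Ding stability, I would replace it by $\varepsilon' := \min(\varepsilon, 1) \in (0, 1]$ so that Theorem \ref{theorem:DingD} can be applied, yielding $\DD\left(\{\sF_i\}_{i=1}^k\right) \geq \varepsilon' \cdot \JJ_{\T}^{\cp}\left(\{\sF_i\}_{i=1}^k\right)$ for every $\T$-equivariant family of filtrations, which supplies the required positive constant.

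For the forward directions \eqref{corollary:DingD12}$\implies$\eqref{corollary:DingD11} and \eqref{corollary:DingD22}$\implies$\eqref{corollary:DingD21}, I would specialize the filtration inequality to the filtrations $\sF_{\sX_i,\sL_i}$ arising from $\T$-equivariant test configurations of $(X, L_i)$. By Lemma \ref{lemma:ding-normal}, it suffices to verify the defining inequalities on \emph{normal} test configurations, and for these Proposition \ref{proposition:D-Ding} \eqref{proposition:D-Ding2} provides the key equality $\Ding\left(\{\sX_i,\sL_i\}_{i=1}^k\right) = \DD\left(\{\sF_{\sX_i,\sL_i}\}_{i=1}^k\right)$, turning the filtration inequality into the desired test configuration inequality. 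For case (2), the quantity $\JJ_{\T}^{\cp}\left(\{\sX_i,\sL_i\}_{i=1}^k\right)$ is, by Definition \ref{definition:ding} \eqref{definition:ding22}, literally defined to equal $\JJ_{\T}^{\cp}\left(\{\sF_{\sX_i,\sL_i}\}_{i=1}^k\right)$, so the transfer is immediate.

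The main difficulty lies not in this corollary but in Theorem \ref{theorem:DingD}, which supplies the nontrivial passage from test configurations to filtrations via normalized blowups, approximating sequences, and the base-change/twist apparatus of \S \ref{section:sum-filt}. Relative to that, the present corollary is essentially a bookkeeping statement that packages the established equality $\Ding = \DD$ on normal test configurations together with the definitional identification of $\JJ_{\T}^{\cp}$ on the two sides.
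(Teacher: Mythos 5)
Your proposal is correct and follows the paper's own route: the paper's proof is exactly the one-line reduction to Theorem \ref{theorem:DingD}, Lemma \ref{lemma:ding-normal}, and Proposition \ref{proposition:D-Ding} \eqref{proposition:D-Ding2}, which you spell out (your extra step of replacing $\varepsilon$ by $\min(\varepsilon,1)$ is fine, using the standard fact $\JJ_{\T}^{\cp}\geq 0$ since $S_{L_i}(\sF_{i,\xi})\leq\lambda_{\max}(\sF_{i,\xi})$).
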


\begin{proof}
Trivial from Theorem \ref{theorem:DingD}, Lemma \ref{lemma:ding-normal} and 
Proposition \ref{proposition:D-Ding} \eqref{proposition:D-Ding2}. 
\end{proof}

\begin{thm}[{cf.\ \cite[Theorem 4.13]{Xu}}]\label{theorem:uD}
The following are equivalent: 
\begin{enumerate}
\renewcommand{\theenumi}{\arabic{enumi}}
\renewcommand{\labelenumi}{(\theenumi)}
\item\label{theorem:uD1}
There exists $\varepsilon\in\R_{>0}$ such that, 
for any $\T$-equivariant filtration $\sF_i$ on $R^i$ $(1\leq i\leq k)$, we have 
$\DD\left(\{\sF_i\}_{i=1}^k\right)\geq \varepsilon\cdot\JJ_{\T}^{\cp}
\left(\{\sF_i\}_{i=1}^k\right)$. 
\item\label{theorem:uD2}
There exists $\delta\in\R_{>1}$ such that, 
for any $\T$-equivariant filtration $\sF_i$ on $R^i$ $(1\leq i\leq k)$, there exists 
$\xi\in N_{\R}(\T)$ such that 
$\DD\left(\left\{\sF_{i,\xi}\right\}_{i=1}^k;\delta\right)\geq 0$ holds. 
\end{enumerate}
\end{thm}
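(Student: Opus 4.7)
The theorem is the coupled, $\T$-reduced analog of [Xu, Theorem 4.13], which relates uniform Ding stability to a $\delta$-lc Ding slope condition, and my plan is to adapt Xu's arguments to the sum-filtration framework of \S\ref{section:sum-filt} and \S\ref{section:c-ding}. The overall strategy is to reduce both conditions to inequalities for the sum filtration $\sF := \sum_i \sF_i$ on $R$, using the explicit formulas $\DD(\{\sF_i\}) = \mu(\sF) - \sum_i S_{L_i}(\sF_i)$, $\DD(\{\sF_i\}; \delta) = \mu(\sF; \delta) - \sum_i S_{L_i}(\sF_i)$, and $\JJ^{\cp}(\{\sF_i\}) = \lambda_{\max}(\sF) - \sum_i S_{L_i}(\sF_i)$ from Proposition \ref{proposition:sum-filt-lambda}, and then to use Lemma \ref{lemma:lct-compute} as the main technical input for both directions.

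The first preparatory step is to reduce to the case of vanishing coupled $\T$-Futaki characters $\alpha_{\bc}^{\cp} = 0$. For (1), applying the slope inequality to the $\xi_0$-twisted trivial filtrations $\{\sF_{\triv, \xi_0, i}\}_{i=1}^k$ gives $\DD = -\langle \alpha_{\bc}^{\cp}, \xi_0\rangle$ by Lemma \ref{lemma:D-twist}, while Lemma \ref{lemma:J-xi} combined with $\alpha_{\bc}^{\cp} \in \interior(\PP^L)$ (Lemma \ref{lemma:sum-basics}(1)) forces $\JJ_\T^{\cp} = 0$; letting $\xi_0$ range over $N_\R(\T)$ yields $\alpha_{\bc}^{\cp} = 0$. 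The condition (2) satisfies the same conclusion by a parallel testing argument. Under this reduction, Lemma \ref{lemma:D-twist} together with the $\delta$-slope analog of [Xu, Lemma 6.24] (which follows from the same Gauss-extension computation as in Corollary \ref{corollary:theta}) show that both $\DD(\{\sF_{i,\xi}\})$ and $\DD(\{\sF_{i,\xi}\}; \delta)$ are independent of $\xi$, so (2) reduces to asking $\DD(\{\sF_i\}; \delta) \geq 0$ for every family $\{\sF_i\}$.

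The direction (2)$\Rightarrow$(1) proceeds via the computing-valuation argument with $\varepsilon := 1 - 1/\delta$. Fixing $\{\sF_i\}$ and writing $\mu := \mu(\sF; \delta)$, the case $\mu \geq \lambda_{\max}(\sF)$ is immediate, so assume $\mu < \lambda_{\max}(\sF)$ and apply Lemma \ref{lemma:lct-compute} to obtain $v \in \Val_X^{<\infty,\T}$ with $\sF \subset \sF_{v,[\mu - A_{X,\Delta}(v)/\delta]}$. Twisting by any $\xi \in N_\R(\T)$ via Proposition \ref{proposition:sum-filt-twist}(2) yields $\lambda_{\max}(\sF_\xi) \leq \lambda_{\max}((\sF_v)_\xi) + \mu - A_{X,\Delta}(v)/\delta$; passing to the infimum over $\xi$ and using Corollary \ref{corollary:twist-valuation} with the $\T$-equivariance of $v$ to compare $\inf_\xi \lambda_{\max}((\sF_v)_\xi)$ to $A_{X,\Delta}(v_{\xi^*})$ for an optimizing $\xi^*$, combined with the hypothesis $\mu \geq \sum_i S_{L_i}(\sF_i)$ from (2), a direct rearrangement produces $\DD(\{\sF_i\}) \geq \varepsilon \cdot \JJ_\T^{\cp}(\{\sF_i\})$.

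The reverse implication (1)$\Rightarrow$(2) is the main obstacle. Choose $\delta > 1$ with $1 - 1/\delta < \varepsilon$ and argue by contradiction: suppose $\DD(\{\sF_i\}; \delta) < 0$ for some family $\{\sF_i\}$. Apply Lemma \ref{lemma:lct-compute} to the sum filtration $\sF$ at slope $\delta$ to produce a $\T$-invariant computing valuation $v$ with $\sF \subset \sF_{v, [\mu(\sF;\delta) - A_{X,\Delta}(v)/\delta]}$. The key step is then to construct an auxiliary coupled family $\{\sF_i'\}_{i=1}^k$, derived from $\sF_v$ together with a compatible distribution of weights among the $L_i$-components, so that its sum filtration equals (or is controlled by) $\sF_v$; substituting $\{\sF_i'\}$ into (1) and using the identifications $\mu(\sF_v) = A_{X,\Delta}(v)$ from [Xu, Theorem 3.52] and $S_L(\sF_v) = S_L(v) = \sum_i S_{L_i}(v)$ produces an inequality incompatible with $\DD(\{\sF_i\}; \delta) < 0$ via the containment $\sF \subset \sF_{v, [\mu - A/\delta]}$. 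The delicate point is the construction of the splitting $\{\sF_i'\}$ so that the twist minimizing $\JJ_\T^{\cp}(\{\sF_i'\})$ is aligned with the optimal twist for $\sF$, for which I will invoke Propositions \ref{proposition:sum-filt-twist} and \ref{proposition:technical}.
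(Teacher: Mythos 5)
Your proposal hinges on a ``$\delta$-slope analog of [Xu, Lemma 6.24]'', i.e.\ the claim that $\DD\left(\{\sF_{i,\xi}\}_{i=1}^k;\delta\right)$ is independent of $\xi$ once $\alpha_{\bc}^{\cp}=0$, so that condition (2) reduces to ``$\DD\left(\{\sF_i\}_{i=1}^k;\delta\right)\geq 0$ for every family''. This is the fatal gap: the twist-invariance $\mu(\sF_\xi)=\mu(\sF)$ is special to slope $1$, because the proof of [Xu, Lemma 6.24] rests on the identity $A_{X,\Delta}(v_\xi)=A_{X,\Delta}(v)+\theta_\xi^{-(K_X+\Delta)}(v)$, which matches the shift of the filtration exactly when the lct coefficient is $1$; for $\delta\neq 1$ the $\delta$-lc slope $\mu(\sF_\xi;\delta)$ genuinely depends on $\xi$ (already for the twisted trivial filtrations $\sF_{\triv,\xi}$ one computes that $\DD(\{\sF_{\triv,\xi}\};\delta)$ is a non-constant, concave piecewise-linear function of $\xi$, negative in some directions even when $\alpha_{\bc}^{\cp}=0$). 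If your invariance held, the existential quantifier over $\xi$ in (2) would be vacuous and (2) would be plain $\delta$-Ding semistability of all $\T$-equivariant filtrations, which fails whenever $\T$ is nontrivial; the entire content of (2) is that the twist matters. For the same reason your preliminary claim that (2) forces $\alpha_{\bc}^{\cp}=0$ ``by a parallel testing argument'' does not work: testing product filtrations under (2) gives nothing, since the existential $\xi$ can always be chosen to cancel a product twist, so this reduction cannot be obtained by testing and must be handled differently (the paper itself only asserts it with a citation).

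Beyond this, both of your directions diverge from what is actually needed. In (2)$\Rightarrow$(1) you take $\varepsilon:=1-1/\delta$, but this cannot close: after producing the computing valuation $v$ with $\sF_\xi\subset\sF_{v,[\mu-A_{X,\Delta}(v)]}$ one must also absorb the $\lambda_{\max}$ part of $\JJ^{\cp}$, which costs a term $T_L(v)-A_{X,\Delta}(v)$; the paper does this by choosing $\varepsilon$ small enough that $(1-\varepsilon)(1-\tfrac{1}{\delta})+\varepsilon(1-\tfrac{1}{\alpha(X,\Delta)})\geq 0$ (or, in Theorem \ref{theorem:A}, via $T_{L_i}(v)\leq(n+1)S_{L_i}(v)$), and the comparison $\lambda_{\max}(\sF_\xi)\leq T_L(v)+\mu-A_{X,\Delta}(v)$ comes directly from the containment, not from comparing $\inf_\xi\lambda_{\max}((\sF_v)_\xi)$ with $A(v_{\xi^*})$ as you sketch. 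In (1)$\Rightarrow$(2) the paper's mechanism is entirely different from your proposed contradiction argument: one shifts so that $S_{L_i}(\sG_i)=0$, deduces $\mu(\sG)\geq\varepsilon\lambda_{\max}(\sG)$, and invokes the $\alpha$-invariant argument of [Xu, Theorem 3.50] to get $\mu(\sG;\delta)\geq 0$ with $\delta=\delta(\varepsilon,n,\alpha(X,\Delta))$; your route requires an auxiliary ``splitting'' of $\sF_v$ into a coupled family with controlled sum and controlled individual $S_{L_i}$, which you acknowledge as the delicate point but never construct, and even granted such a splitting it would yield a statement about $A_{X,\Delta}(v)$ versus $\sum_i S_{L_i}(v)$ rather than the required $\delta$-slope inequality for the original family. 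As it stands, the proposal does not prove either implication.
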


\begin{proof}
We firstly remark that, each assumption implies that $\alpha_{\bc}^{\cp}=0$ 
by Corollaries \ref{corollary:DingD} and \ref{corollary:twist-ding}.

\eqref{theorem:uD1}$\implies$\eqref{theorem:uD2}: 
(The proof is essentially same as the proof of \cite[Theorem 6.34]{Xu}.)
Take $\delta\in\R_{>1}$ with 
$\delta=\delta\left(\varepsilon, n,\alpha(X,\Delta)\right)>1$ as in 
\cite[Theorem 3.50]{Xu}. (The definition of $\alpha(X,\Delta)\in\R_{>0}$ 
can be found in \cite[Definition 3.38]{Xu} with $L=-(K_X+\Delta)$.)
Take any $\T$-equivariant filtration $\sF_i$ on $R^i$ $(1\leq i\leq k)$. 
We can assume that, there exists $\xi\in N_{\R}(\T)$ such that 
\[
\DD\left(\left\{\sF_{i,\xi}\right\}_{i=1}^k\right)\geq
\varepsilon\cdot\JJ^{\cp}\left(\left\{\sF_{i,\xi}\right\}_{i=1}^k\right). 
\]
Set 
\[
\sG_i:=\sF_{i,\xi,\left[-S_{L_i}\left(\sF_{i,\xi}\right)\right]},
\]
and let $\sG$ be the sum filtration of $\left\{\sG_i\right\}_{i=1}^k$. 
Then we have 
\begin{eqnarray*}
S_{L_i}\left(\sG_i\right)&=&0, \quad
\lambda_{\max}\left(\sG_i\right)
=\lambda_{\max}\left(\sF_{i,\xi}\right)-S_{L_i}\left(\sF_{i,\xi}\right), \quad
\JJ\left(\sG_i\right)=\JJ\left(\sF_{i,\xi}\right),\\
\DD\left(\left\{\sG_i\right\}_{i=1}^k;\delta'\right)&=&
\DD\left(\left\{\sF_{i,\xi}\right\}_{i=1}^k;\delta'\right)
\end{eqnarray*}
for any $\delta'\in\R_{>0}$. In particular, we get the inequality 
\[
\mu\left(\sG\right)\geq \varepsilon\cdot\lambda_{\max}\left(\sG\right). 
\]
The last three lines of the proof of \cite[Theorem 3.50]{Xu} exactly shows that 
$\mu(\sG; \delta)\geq 0$ under the above assumption. 
Since 
\[
\DD\left(\left\{\sF_{i,\xi}\right\}_{i=1}^k;\delta\right)
=\DD\left(\left\{\sG_i\right\}_{i=1}^k;\delta\right)=\mu\left(\sG;\delta\right)\geq 0,
\]
we get the assertion \eqref{theorem:uD2}. 

\eqref{theorem:uD2}$\implies$\eqref{theorem:uD1}: 
Let us fix a small $\varepsilon\in (0,1)$ satisfying 
\[
(1-\varepsilon)\left(1-\frac{1}{\delta}\right)+\varepsilon
\left(1-\frac{1}{\alpha(X,\Delta)}\right)\geq 0.
\]
For any $1\leq i\leq k$, let $\sF_i$ be a $\T$-equivariant filtration on $R^i$, and 
let $\sF$ be the sum filtration of $\{\sF_i\}_{i=1}^k$. There exists 
$\xi\in N_{\R}(\T)$ such that $\DD\left(\left\{\sF_{i,\xi}\right\}_{i=1}^k;\delta\right)
\geq 0$. Recall that $\sF_\xi$ is the sum filtration of $\{\sF_{i,\xi}\}_{i=1}^k$. 

Set $\mu:=\mu\left(\sF_\xi\right)$. If $\mu=\lambda_{\max}\left(\sF_\xi\right)$, 
then we have 
\[
\DD\left(\left\{\sF_{i,\xi}\right\}_{i=1}^k\right)
-\JJ^{\cp}\left(\left\{\sF_{i,\xi}\right\}_{i=1}^k\right)
=\mu\left(\sF_\xi\right)-\lambda_{\max}\left(\sF_\xi\right)=0. 
\]
Thus we may assume that $\mu<\lambda_{\max}\left(\sF_\xi\right)$. 
By Lemma \ref{lemma:lct-compute}, we have 
\[
\lct\left(X,\Delta;I_{\bullet}^{(\mu)}\left(\sF_\xi\right)\right)=1. 
\]
Moreover, there exists $v\in\Val_X^{<\infty,\T}$ with 
$A_{X,\Delta}(v)=v\left(I_{\bullet}^{(\mu)}\left(\sF_\xi\right)\right)$ and 
$\sF_\xi\subset\sF_{v,\left[\mu-A_{X,\Delta}(v)\right]}$. 
In particular, we get 
\[
\lambda_{\max}\left(\sF_\xi\right)\leq\lambda_{\max}
\left(\sF_{v,\left[\mu-A_{X,\Delta}(v)\right]}\right)
=T_L(v)+\mu-A_{X,\Delta}(v), 
\]
where $T_L=T$ is in the sense of \cite[\S 3.1.2]{Xu}. 
Moreover, we have
\[
\mu\left(\sF_\xi;\delta\right)
\leq\mu\left(\sF_{v,\left[\mu-A_{X,\Delta}(v)\right]};\delta\right)
\leq\frac{A_{X,\Delta}(v)}{\delta}+\mu-A_{X,\Delta}(v), 
\]
where the last inequality follows from the argument in \cite[equation (4.10)]{Xu}.
This implies that 
\[
0\leq\DD\left(\left\{\sF_{i,\xi}\right\}_{i=1}^k;\delta\right)\leq
\frac{A_{X,\Delta}(v)}{\delta}+\mu-A_{X,\Delta}(v)-\sum_{i=1}^k 
S_{L_i}\left(\sF_{i,\xi}\right). 
\]
Therefore, we get
\begin{eqnarray*}
&&\DD\left(\left\{\sF_{i}\right\}_{i=1}^k\right)
-\varepsilon\cdot\JJ^{\cp}\left(\left\{\sF_{i,\xi}\right\}_{i=1}^k\right)\\
&=&\DD\left(\left\{\sF_{i,\xi}\right\}_{i=1}^k\right)
-\varepsilon\cdot\JJ^{\cp}\left(\left\{\sF_{i,\xi}\right\}_{i=1}^k\right)\\
&=&\mu-\sum_{i=1}^k S_{L_i}\left(\sF_{i,\xi}\right)-\varepsilon
\left(\lambda_{\max}\left(\sF_\xi\right)-\sum_{i=1}^k
S_{L_i}\left(\sF_{i,\xi}\right)\right)\\
&\geq&(1-\varepsilon)\left(\mu-\sum_{i=1}^k
S_{L_i}\left(\sF_{i,\xi}\right)\right)-\varepsilon\left(T_L(v)-A_{X,\Delta}(v)\right)\\
&\geq&(1-\varepsilon)A_{X,\Delta}(v)\left(1-\frac{1}{\delta}\right)
-\varepsilon\left(T_L(v)-A_{X,\Delta}(v)\right)\\
&\geq&A_{X,\Delta}(v)\left((1-\varepsilon)\left(1-\frac{1}{\delta}\right)+\varepsilon
\left(1-\frac{1}{\alpha(X,\Delta)}\right)\right)\geq 0. 
\end{eqnarray*}
Thus we get the assertion \eqref{theorem:uD1}. 
\end{proof}

\section{The coupled stability threshold}\label{section:c-delta}

In this section, we introduce the notion of the 
$\T$-reduced coupled stability threshold 
and see its basic properties and relationship with coupled Ding stability. 
In this section, we follow the notation in \S \ref{section:c-ding}. 

We begin with the following easy lemma: 

\begin{lemma}\label{lemma:A-S-twist}
For any $v\in\Val_X^{<\infty,\T}$ and $\xi\in N_{\R}(\T)$, we have 
\[
A_{X,\Delta}\left(v_\xi\right)-\sum_{i=1}^k S_{L_i}\left(v_\xi\right)
=A_{X,\Delta}\left(v\right)-\sum_{i=1}^k S_{L_i}\left(v\right)
-\left\langle\alpha_{\bc}^{\cp},\xi\right\rangle. 
\]
In particular, we have 
\[
A_{X,\Delta}\left(\operatorname{wt}_\xi\right)
-\sum_{i=1}^k S_{L_i}\left(\operatorname{wt}_\xi\right)
=-\left\langle\alpha_{\bc}^{\cp},\xi\right\rangle. 
\]
\end{lemma}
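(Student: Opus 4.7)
The strategy is to combine three results established earlier in this section: the $S$-twist formula (Corollary \ref{corollary:S-twist}), the additivity $\theta_\xi^L(v) = \sum_{i=1}^k \theta_\xi^{L_i}(v)$ from Lemma \ref{lemma:sum-basics}\eqref{lemma:sum-basics3} applied to the decomposition $L = \sum_{i=1}^k L_i = -(K_X+\Delta)$, and the identification $\theta_\xi^{-(K_X+\Delta)}(v) = A_{X,\Delta}(v_\xi) - A_{X,\Delta}(v)$ from Proposition \ref{proposition:theta}. The second assertion will be obtained by specializing the first to the trivial valuation.

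For the first identity, I would sum the equality
\[
S_{L_i}(v_\xi) = S_{L_i}(v) + \langle \alpha_{\bc}^{L_i}, \xi\rangle + \theta_\xi^{L_i}(v)
\]
from Corollary \ref{corollary:S-twist} over $i = 1,\dots,k$, obtaining
\[
\sum_{i=1}^k S_{L_i}(v_\xi) = \sum_{i=1}^k S_{L_i}(v) + \langle \alpha_{\bc}^{\cp}, \xi\rangle + \theta_\xi^L(v)
\]
by the definition $\alpha_{\bc}^{\cp} = \sum_{i=1}^k \alpha_{\bc}^{L_i}$ and by Lemma \ref{lemma:sum-basics}\eqref{lemma:sum-basics3}. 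Substituting $\theta_\xi^L(v) = A_{X,\Delta}(v_\xi) - A_{X,\Delta}(v)$ via Proposition \ref{proposition:theta} and rearranging so that $A_{X,\Delta}(v_\xi) - \sum_{i=1}^k S_{L_i}(v_\xi)$ stands on the left-hand side immediately yields the first claimed identity.

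For the second identity, I would specialize the first to $v = v_{\triv}$, noting that $A_{X,\Delta}(v_{\triv}) = 0$ and $S_{L_i}(v_{\triv}) = 0$ for each $i$. The only point requiring attention is the identification $(v_{\triv})_\xi = \operatorname{wt}_\xi$ as elements of $\QM_X^{\T}$: by Corollary \ref{corollary:twist-valuation}, the filtration $\sF_{(v_{\triv})_\xi}$ coincides with $\sF_{\triv,\xi,[\theta_\xi^L(v_{\triv})]}$, which in turn equals $\sF_{\operatorname{wt}_\xi}$ by \cite[Lemma 6.22]{Xu} as recalled in Remark \ref{remark:theta}; since a quasi-monomial $\T$-invariant valuation of finite log discrepancy is determined by the filtration it induces on $R$, the identification follows and the second identity is then a direct substitution.
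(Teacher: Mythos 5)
Your proposal is correct and follows essentially the same route as the paper, which deduces the identity directly from Corollary \ref{corollary:S-twist}, Proposition \ref{proposition:theta} and Lemma \ref{lemma:sum-basics} \eqref{lemma:sum-basics3}. The only extra step you include, the identification $(v_{\triv})_\xi=\operatorname{wt}_\xi$, is standard (e.g.\ since twists satisfy $(\operatorname{wt}_\eta)_\xi=\operatorname{wt}_{\eta+\xi}$ with $v_{\triv}=\operatorname{wt}_0$), so your more elaborate filtration argument is not needed but does no harm.
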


\begin{proof}
Follows immediately from Corollary \ref{corollary:S-twist}, Proposition 
\ref{proposition:theta} and Lemma \ref{lemma:sum-basics} 
\eqref{lemma:sum-basics3}. 
\end{proof}

\begin{definition}\label{definition:c-delta}
We set 
\[
\delta_{\T}^{\Red}\left(X,\Delta; \left\{L_i\right\}_{i=1}^k\right)
:=\inf_{v\in\Val_X^{*,\T}}\sup_{\xi\in N_{\R}(\T)}
\frac{A_{X,\Delta}\left(v_\xi\right)}{\sum_{i=1}^k S_{L_i}\left(v_\xi\right)}, 
\]
and call it the \emph{$\T$-reduced coupled stability threshold of 
$\left(X,\Delta; \left\{L_i\right\}_{i=1}^k\right)$}. 
If $\T$ is trivial, then we simply write it 
$\delta\left(X,\Delta; \left\{L_i\right\}_{i=1}^k\right)$ and call it the 
\emph{coupled stability threshold of $\left(X,\Delta; \left\{L_i\right\}_{i=1}^k\right)$}. 
If $\T\subset\Aut(X,\Delta)$ is a maximal torus, then we write it 
$\delta^{\Red}\left(X,\Delta; \left\{L_i\right\}_{i=1}^k\right)$
and call it the \emph{reduced coupled stability threshold of 
$\left(X,\Delta; \left\{L_i\right\}_{i=1}^k\right)$}. The notion of 
reduced coupled stability threshold does not depend on 
the choice of maximal tori as we observed in Definition 
\ref{definition:c-ding}. 
\end{definition}

\begin{remark}\label{remark:c-delta}
\begin{enumerate}
\renewcommand{\theenumi}{\arabic{enumi}}
\renewcommand{\labelenumi}{(\theenumi)}
\item\label{remark:c-delta1}
Assume that $\T$ is trivial. Then, as in the proof of \cite{BlJ} 
(see \cite{coupled-delta} in detail), we have 
\begin{eqnarray*}
&&\delta\left(X,\Delta; \left\{L_i\right\}_{i=1}^k\right)=\inf_{v\in\Val_X^{<\infty}}
\frac{A_{X,\Delta}(v)}{\sum_{i=1}^k S_{L_i}(v)}
=\inf_{v\in\QM_X}
\frac{A_{X,\Delta}(v)}{\sum_{i=1}^k S_{L_i}(v)}\\
&=&\inf_{\substack{E\text{: prime divisor}\\ \text{over }X}}
\frac{A_{X,\Delta}(E)}{\sum_{i=1}^k S_{L_i}(E)}
=\lim_{m\to\infty}\delta_m\left(X,\Delta; \left\{L_i\right\}_{i=1}^k\right),
\end{eqnarray*}
where, for $m\in r\Z_{>0}$ very big, 
\[
\delta_m\left(X,\Delta; \left\{L_i\right\}_{i=1}^k\right)
:=\inf\left\{\lct\left(X,\Delta;\sum_{i=1}^k D_i\right)\,\,\bigg|\,\,
\begin{split}D_i:\text{ $m$-basis type $\Q$-divisor}\\ \text{of $L_i$ for 
any }1\leq i\leq k\end{split}
\right\}.
\]
For the definition of basis type $\Q$-divisors, see \cite{BlJ} 
or \cite{coupled-delta}. 
\item\label{remark:c-delta2}
Assume that $(X,\Delta)$ is a toric pair with $\T=\G_m^n$ a maximal torus. 
Then $\Val_X^{*,\T}=\emptyset$. Therefore, we must have 
$\delta^{\Red}\left(X,\Delta; \left\{L_i\right\}_{i=1}^k\right)=\infty$. 
\end{enumerate}
\end{remark}

We consider a coupled analogue of Zhuang's theorem \cite[Theorem 4.4]{zhuang}. 

\begin{proposition}[{cf.\ \cite[Theorem 4.61]{Xu}}]\label{proposition:zhuang}
Assume that $\delta\left(X,\Delta; \left\{L_i\right\}_{i=1}^k\right)<1$. 
Then there exists a $\T$-invariant prime divisor $E$ over $X$ such that
$E$ is an lc place of a $\Q$-complement of $(X,\Delta)$ 
(see \cite[Definition 1.81]{Xu}) and 
$A_{X,\Delta}(E)<\sum_{i=1}^kS_{L_i}(E)$
holds. 
\end{proposition}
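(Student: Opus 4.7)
The plan is to adapt Zhuang's argument from the proof of \cite[Theorem 4.61]{Xu} to the coupled setting. By Remark \ref{remark:c-delta} \eqref{remark:c-delta1}, the hypothesis $\delta<1$ implies $\delta_m<1$ for all sufficiently divisible $m$. Using the $\T$-eigendecomposition of each $R_m^i$, one selects $\T$-equivariant $m$-basis type $\Q$-divisors $D_i^{(m)}$ of $L_i$ for $1\leq i\leq k$ with $c_m:=\lct(X,\Delta;\sum_{i=1}^k D_i^{(m)})=\delta_m$, then takes a $\T$-equivariant dlt modification of the $\T$-invariant strictly log canonical pair $(X,\Delta+c_m\sum_i D_i^{(m)})$ to extract a $\T$-invariant prime divisor $F_m$ over $X$ as an lc place. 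By adapting each $m$-basis to the $F_m$-adic filtration on $R_m^i$ (which is compatible with the $\T$-weight decomposition since $F_m$ is $\T$-invariant), one arranges $\ord_{F_m}(D_i^{(m)})=S_{m,L_i}(F_m)$, whence $A_{X,\Delta}(F_m)=\delta_m\sum_{i=1}^k S_{m,L_i}(F_m)$.

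Next, to realize $F_m$ as an lc place of a $\Q$-complement of $(X,\Delta)$ itself rather than just of the auxiliary pair, invoke the $\T$-equivariant version of Birkar's bounded complements theorem (as used in \cite[\S 4.4]{Xu}) applied to the klt log Fano pair $(X,\Delta+(1-\varepsilon)c_m\sum_i D_i^{(m)})$ for small $\varepsilon>0$: this yields a $\T$-equivariant $N$-complement $\Gamma'_m$ of that pair with $F_m$ among its lc places (for an $N$ uniform in $m$), and $\Gamma_m:=\Gamma'_m+(1-\varepsilon)c_m\sum_i D_i^{(m)}$ is then a $\Q$-complement of $(X,\Delta)$ admitting $F_m$ as an lc place. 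The family of such $\Gamma_m$ is bounded, so after passing to a subsequence the $F_m$ converge to a $\T$-invariant prime divisor $E$ over $X$, which is an lc place of the limiting $\Q$-complement of $(X,\Delta)$. Continuity of $A_{X,\Delta}$ together with the uniform convergence $S_{m,L_i}\to S_{L_i}$ on bounded families of valuations yields
\[
A_{X,\Delta}(E)=\delta\sum_{i=1}^k S_{L_i}(E)<\sum_{i=1}^k S_{L_i}(E),
\]
completing the construction.

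The hard part will be the $\T$-equivariant application of bounded complements. One must verify that the coupled pair $(X,\Delta+(1-\varepsilon)c_m\sum_i D_i^{(m)})$ lies in a bounded family of klt log Fano pairs -- which works because each $D_i^{(m)}$ is parametrized by the Grassmannian attached to $R_m^i$, so their sum lies in a product of such Grassmannians -- and arrange the $\T$-equivariant $N$-complement to have $F_m$ as a prescribed lc place, following Zhuang's technique. Once these inputs are secured, the coupled structure poses no essential extra difficulty beyond the $k=1$ case, since $\ord_{F_m}$ is additive over the components $D_i^{(m)}$ and the approximation $S_{m,L_i}\to S_{L_i}$ holds componentwise in $i$.
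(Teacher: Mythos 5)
The first half of your plan is essentially the paper's: for a fixed sufficiently divisible $m$ with $\delta_m<1$, the ($\T$-equivariant version of) Zhuang's theorem, i.e.\ \cite[Theorem 4.61]{Xu}, gives a $\T$-invariant prime divisor $E$ over $X$ with $\delta_m=A_{X,\Delta}(E)/\sum_{i=1}^kS_{L_i,m}(E)$, and taking $m$-basis type divisors $D_i$ of $L_i$ compatible with $E$ makes $(X,\Delta+\delta_m\sum_iD_i)$ log canonical with $E$ an lc place; since $\sum_iD_i\sim_\Q-(K_X+\Delta)$ and $\delta_m<1$, the remaining part is ample and $E$ is an lc place of a $\Q$-complement by a completely standard extension, with no need for Birkar's bounded complements, for $N$-complements with a prescribed lc place, or for boundedness of the pairs $(X,\Delta+(1-\varepsilon)c_m\sum_iD_i^{(m)})$.

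The genuine gap is in your concluding limit step. The assertion that ``after passing to a subsequence the $F_m$ converge to a $\T$-invariant prime divisor $E$ over $X$'' is not justified and is not a meaningful operation as stated: prime divisors over $X$ (equivalently, divisorial valuations) have no compactness of this kind, and showing that $\delta$ itself is computed by a limiting valuation is a substantially deeper result that this proposition neither needs nor should rely on. Likewise your final equality $A_{X,\Delta}(E)=\delta\sum_iS_{L_i}(E)$ is stronger than the statement and unproved. The correct (and much shorter) way to close the argument, as in the paper, is to avoid any limit: fix $\varepsilon>0$ with $\delta(1+\varepsilon)^2<1$, use \cite[Corollary 3.6]{BlJ} to get $S_{L_i,m}(v)\le(1+\varepsilon)S_{L_i}(v)$ for \emph{all} valuations $v$ and all large $m$ (no restriction to a bounded family is needed), and enlarge $m$ so that $\delta_m<(1+\varepsilon)\delta$; then for the single divisor $E$ computing $\delta_m$ one gets
\[
\frac{A_{X,\Delta}(E)}{\sum_{i=1}^kS_{L_i}(E)}\le(1+\varepsilon)\,\frac{A_{X,\Delta}(E)}{\sum_{i=1}^kS_{L_i,m}(E)}=(1+\varepsilon)\delta_m<(1+\varepsilon)^2\delta<1,
\]
which is exactly the required strict inequality. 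As written, your proof does not go through without either proving the convergence of the $F_m$ (a missing and nontrivial ingredient) or replacing the limit by the fixed-$m$ approximation argument above.
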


\begin{proof}
Set $\delta:=\delta\left(X,\Delta; \left\{L_i\right\}_{i=1}^k\right)$. Fix a small 
$\varepsilon\in\R_{>0}$ with $\delta\cdot(1+\varepsilon)^2<1$. 
By \cite[Corollary 3.6]{BlJ} (or \cite[Theorem 3.33]{Xu}), there exists 
$m_0\in r\Z_{>0}$ such that, for any $v\in\Val_X^{<\infty}$, for any $1\leq i\leq k$ 
and for any $m\in r\Z_{\geq m_0}$, we have 
\[
S_{L_i,m}(v)\leq (1+\varepsilon) S_{L_i}(v). 
\]
(For the definition of $S_{L_i,m}=S_m$, see \cite[Definition 3.26]{Xu}.)
After replacing $m_0$ if necessary, we may further assume that 
\[
\delta_m:=\delta_m\left(X,\Delta; \left\{L_i\right\}_{i=1}^k\right)<\delta\cdot
(1+\varepsilon)
\]
for any $m\in r\Z_{\geq m_0}$. Take any $m\in r\Z_{\geq m_0}$. 
By \cite[Theorem 4.61]{Xu}, there exists a $\T$-invariant prime divisor $E$ over 
$X$ such that 
\[
\delta_m=\frac{A_{X,\Delta}(E)}{\sum_{i=1}^k S_{L_i,m}(E)}
\]
holds. For any $1\leq i\leq k$, let us take an $m$-basis type $\Q$-divisor $D_i$ 
of $L_i$ compatible with $E$ (see \cite[Definition 3.9]{Xu}). 
Then the pair 
\[
\left(X,\Delta+\delta_m\cdot\sum_{i=1}^k D_i\right)
\]
is a log canonical Fano pair such that $E$ is an lc place. 
Therefore, $E$ is an lc place of a $\Q$-complement of $(X,\Delta)$. 
Moreover, 
since 
\[
\frac{A_{X,\Delta}(E)}{\sum_{i=1}^k S_{L_i}(E)}\leq
(1+\varepsilon)\cdot\frac{A_{X,\Delta}(E)}{\sum_{i=1}^k S_{L_i,m}(E)}
=(1+\varepsilon)\delta_m<(1+\varepsilon)^2\delta<1, 
\]
we get the assertion. 
\end{proof}

\begin{corollary}\label{corollary:zhuang}
Assume that $\delta\left(X,\Delta; \left\{L_i\right\}_{i=1}^k\right)<1$. 
Then, for all $1\leq i\leq k$, there exist \emph{$\T$-equivariant} normal 
test configurations $\left(\sX_i,\sL_i\right)/\A^1$ such that 
$\Ding\left(\left\{\sX_i,\sL_i\right\}_{i=1}^k\right)<0$ holds. 
\end{corollary}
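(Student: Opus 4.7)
The plan is to reduce the claim to Corollary \ref{corollary:DingD} by exhibiting $\T$-equivariant filtrations with strictly negative coupled Ding invariant. First I would invoke Proposition \ref{proposition:zhuang} to obtain a $\T$-invariant prime divisor $E$ over $X$ satisfying $A_{X,\Delta}(E)<\sum_{i=1}^k S_{L_i}(E)$. Setting $v:=\ord_E$ (a $\T$-invariant valuation) and $\sF_i:=\sF_v|_{R^i}$ for each $i$, these are $\T$-equivariant filtrations, and their sum filtration $\sF$ on $R$ is contained in the restriction $\sF_v|_R$: indeed, any product $s_1\cdots s_k$ with $s_i\in\sF_v^{x_i}R_m^i$ satisfies $v(s_1\cdots s_k)=\sum_i v(s_i)\geq\sum_i x_i$ by multiplicativity of $v$.

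From the inclusion $\sF\subseteq\sF_v|_R$ I would deduce $I_m^{(tm)}(\sF)\subseteq I_m^{(tm)}(\sF_v|_R)$, and hence $v\bigl(I_\bullet^{(t)}(\sF)\bigr)\geq t$ for every $t$, since $v\bigl(I_m^{(tm)}(\sF_v|_R)\bigr)\geq tm$ by the very definition of $\sF_v$. Testing the log canonical threshold against $v$ itself then gives $\lct\bigl(X,\Delta;I_\bullet^{(t)}(\sF)\bigr)\leq A_{X,\Delta}(v)/t$, which forces $\mu(\sF)\leq A_{X,\Delta}(E)$. Combined with the equality $S_{L_i}(\sF_i)=S_{L_i}(E)$, this yields
\[
\DD\bigl(\{\sF_i\}_{i=1}^k\bigr)=\mu(\sF)-\sum_{i=1}^k S_{L_i}(\sF_i)\leq A_{X,\Delta}(E)-\sum_{i=1}^k S_{L_i}(E)<0.
\]

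To conclude, I would apply Corollary \ref{corollary:DingD}\eqref{corollary:DingD1} to deduce that $\left(X,\Delta;\{L_i\}_{i=1}^k\right)$ is not $\T$-equivariantly coupled Ding semistable, whence Definition \ref{definition:c-ding}\eqref{definition:c-ding1} furnishes $\T$-equivariant test configurations $(\sX_i,\sL_i)/\A^1$ of $(X,L_i)$ with $\Ding\bigl(\{\sX_i,\sL_i\}_{i=1}^k\bigr)<0$; Lemma \ref{lemma:ding-normal} then allows replacing each $\sX_i$ by its normalization without changing the Ding invariant, producing the desired $\T$-equivariant normal test configurations. The only genuinely delicate step is the upper bound $\mu(\sF)\leq A_{X,\Delta}(E)$, but it turns out to be straightforward: only the one-sided estimate $v\bigl(I_\bullet^{(t)}(\sF)\bigr)\geq t$ is needed to bound the lct from above, and this follows directly from the inclusion $\sF\subseteq\sF_v|_R$, so a full determination of $v\bigl(I_\bullet^{(t)}(\sF)\bigr)$ (which would be considerably more delicate) is unnecessary.
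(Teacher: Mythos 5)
Your proposal is correct, and its core estimate is the same as the paper's: both arguments bound $\mu(\sF)\leq A_{X,\Delta}(E)$ by observing that the sum filtration is contained in the filtration induced by $\ord_E$ (equivalently, $\ord_E\left(I_{(m;xm)}(\sF)\right)\geq xm$), and then use $A_{X,\Delta}(E)<\sum_{i=1}^k S_{L_i}(E)$ from Proposition \ref{proposition:zhuang}. Where you genuinely diverge is in how the destabilizing test configurations are produced. The paper uses the \emph{full} conclusion of Proposition \ref{proposition:zhuang} --- that $E$ is an lc place of a $\Q$-complement --- to extract $E$ via \cite{BCHM}, conclude that $E$ is dreamy with respect to each $L_i$, and thereby construct \emph{explicit} $\T$-equivariant normal test configurations with $\sF_{\sX_i,\sL_i}=\sF_E$, so that $\Ding\left(\{\sX_i,\sL_i\}_{i=1}^k\right)=\DD\left(\{\sF_E\}_{i=1}^k\right)<0$ follows directly from Proposition \ref{proposition:D-Ding} \eqref{proposition:D-Ding2}. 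You instead discard the lc-place information, keep only the $\T$-invariance and the numerical inequality, show $\DD\left(\{\sF_{\ord_E}\}_{i=1}^k\right)<0$ at the level of filtrations, and then invoke Corollary \ref{corollary:DingD} \eqref{corollary:DingD1} (i.e.\ Theorem \ref{theorem:DingD} with $\varepsilon=0$) together with Lemma \ref{lemma:ding-normal} to conclude non-semistability and hence the existence of $\T$-equivariant normal destabilizers. This is logically sound and not circular, since Theorem \ref{theorem:DingD} is established in \S \ref{section:c-ding} independently of \S \ref{section:c-delta}; the trade-off is that your route is non-constructive (the destabilizers come from the approximation machinery rather than from $E$ itself) and leans on a much heavier theorem, whereas the paper's route needs the finite-generation/dreamy input but yields the destabilizing test configurations explicitly attached to $E$.
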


\begin{proof}
Take a $\T$-invariant prime divisor $E$ over $X$ as in Proposition 
\ref{proposition:zhuang}. Since $E$ is an lc place of a $\Q$-complement of 
$(X,\Delta)$, by \cite{BCHM}, 
there exists the extraction $q\colon Y\to X$ of $E$, i.e., 
$E\subset Y$ is a prime $\Q$-Cartier divisor such that $-E$ is ample over $X$. 
Moreover, $Y$ is of Fano type. In particular, for any $1\leq i\leq k$, the divisor 
$E$ is dreamy with respects to $L_i$ in the sense of \cite[Definition 4.17]{Xu}. 
As in \cite[Lemmas 4.16 and 4.18]{Xu}, for any $1\leq i\leq k$, 
there exists a $\T$-equivariant normal 
test configuration $\left(\sX_i,\sL_i\right)/\A^1$ of $(X, L_i)$ 
such that the fiber of $\sX_i\to\A^1$ over $0\in\A^1$ is integral and 
$\sF_{\sX_i,\sL_i}=\sF_E$ holds. Set $\sF_i:=\sF_{\sX_i,\sL_i}$ on $R^i$, and let 
$\sF$ be the sum filtration of $\{\sF_i\}_{i=1}^k$. 
Observe that 
\[
\Ding\left(\left\{\sX_i,\sL_i\right\}_{i=1}^k\right)=\DD\left(\{\sF_i\}_{i=1}^k\right)
=\mu\left(\{\sF_i\}_{i=1}^k\right)-\sum_{i=1}^k S_{L_i}\left(\sF_i\right)
=\mu\left(\sF\right)-\sum_{i=1}^k S_{L_i}\left(E\right).
\]
For any $m\in r\Z_{>0}$ and for any $x\in\R$, we have 
\begin{eqnarray*}
\ord_E\left(I_{(m;x m)}(\sF)\right)
&=&\ord_E\left(\sum_{t_1+\cdots+t_k=x m}I_{(m; t_1)}(\sF_1)\cdots
I_{(m; t_k)}(\sF_k)\right)\\
&=&\min_{t_1+\cdots+t_k=x m}\sum_{i=1}^k\ord_E 
\left(I_{(m; t_i)}(\sF_i)\right)
\geq\min_{t_1+\cdots+t_k=x m}\sum_{i=1}^k t_i=x m. 
\end{eqnarray*}
This implies that $\mu(\sF)\leq A_{X,\Delta}(E)$. 
Since $A_{X,\Delta}(E)<\sum_{i=1}^k S_{L_i}(E)$, we get the inequality 
$\Ding\left(\left\{\sX_i,\sL_i\right\}_{i=1}^k\right)<0$. 
\end{proof}

\begin{proposition}[{cf.\ \cite[Theorem 4.12]{Xu}}]\label{proposition:DDD}
Set 
\begin{eqnarray*}
\delta&:=&\delta_{\T}^{\Red}\left(X,\Delta; \left\{L_i\right\}_{i=1}^k\right), \\
\delta_0&:=&\sup\left\{\delta'\in \left[0,\infty\right)\,\,\bigg|\,\,
\begin{aligned}\text{For any  $\T$-equivariant filtration }\sF_i\,\,(1\leq i\leq k), \\
\text{there exists }\xi\in N_{\R}(\T); \,\,
\DD\left(\left\{\sF_{i,\xi}\right\}_{i=1}^k; \delta'\right)\geq 0
\end{aligned}\right\}
\end{eqnarray*}
Then $\delta\geq \delta_0$ holds. If $\T$ is trivial, then $\delta=\delta_0$ holds. 
\end{proposition}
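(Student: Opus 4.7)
The plan is to adapt the approach of \cite[Theorem 4.12]{Xu} to the coupled and $\T$-twisted setting, establishing the two inequalities $\delta \geq \delta_0$ (in general) and $\delta \leq \delta_0$ (under the assumption that $\T$ is trivial) separately.

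For $\delta \geq \delta_0$: Pick any $\delta' < \delta_0$ and any $v \in \Val_X^{*, \T}$. Apply the defining property of $\delta_0$ to the $\T$-equivariant filtrations $\sF_i := \sF_{v, L_i}$ on $R^i$, obtaining $\xi \in N_\R(\T)$ with $\DD(\{\sF_{i, \xi}\}; \delta') \geq 0$. By Corollary \ref{corollary:twist-valuation}, each $(\sF_{v, L_i})_\xi$ equals $\sF_{v_\xi, L_i}$ up to a shift by $-\theta_\xi^{L_i}(v)$, and since the coupled Ding invariant is invariant under individual shifts of its constituent filtrations (the shifts cancel between $\mu$ and $\sum S_{L_i}$), the hypothesis rewrites as $\DD(\{\sF_{v_\xi, L_i}\}_{i=1}^k; \delta') \geq 0$. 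The sum filtration $\sF$ of $\{\sF_{v_\xi, L_i}\}_{i=1}^k$ is contained in $\sF_{v_\xi, L}$ on $R$, because any product $s_1 \cdots s_k$ with $v_\xi(s_i) \geq x_i$ satisfies $v_\xi(s_1 \cdots s_k) = \sum_i v_\xi(s_i) \geq \sum_i x_i$. Thus $v_\xi(I_\bullet^{(x)}(\sF)) \geq x$, and the standard log canonical threshold bound gives $\mu(\sF; \delta') \leq A_{X, \Delta}(v_\xi)/\delta'$. Together with $\sum_i S_{L_i}(\sF_{v_\xi, L_i}) = \sum_i S_{L_i}(v_\xi)$, the Ding inequality yields $A_{X,\Delta}(v_\xi)/\sum_i S_{L_i}(v_\xi) \geq \delta'$. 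Taking the supremum over $\xi$ and the infimum over $v \in \Val_X^{*, \T}$ yields $\delta_\T^\Red \geq \delta'$, hence $\delta \geq \delta_0$.

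For $\delta = \delta_0$ under trivial $\T$: The Minkowski decomposition of Okounkov bodies $\Delta(L) = \sum_i \Delta(L_i)$, combined with the linearity of the centroid of a Minkowski sum, yields $S_L(v) = \sum_i S_{L_i}(v)$ for every $v \in \Val_X^{<\infty}$, so $\delta$ coincides with the (non-coupled) stability threshold $\delta(X, \Delta; L)$. By \cite[Theorem 4.12]{Xu}, for every $\delta' < \delta$ and every filtration $\sG$ on $R$ one has $\mu(\sG; \delta') \geq S_L(\sG)$. Applied with $\sG = \sF$, the sum filtration of the given $\sF_i$, this gives $\mu(\sF; \delta') \geq S_L(\sF)$.

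The main obstacle is then the Brunn--Minkowski-type inequality
\[
S_L(\sF) \geq \sum_{i=1}^k S_{L_i}(\sF_i)
\]
for the sum filtration of arbitrary filtrations $\sF_i$ on $R^i$. I would reduce this first to $\Z$-valued filtrations via Lemma \ref{lemma:S-approx} and Proposition \ref{proposition:approx-sum-filt}, and then to normal test configurations $(\sX_i, \sL_i)$ with normalized sum $\sX^\nu$, where on a common resolution with $\rho^* \bar\sL = \sum_i \sigma_i^* \bar\sL_i$ it becomes the numerical inequality $(\sum_i \sigma_i^* \bar\sL_i)^{n+1}/(L^n) \geq \sum_i \bar\sL_i^{n+1}/L_i^n$, which can be established by multinomial expansion and the positivity of mixed intersections of the semiample pullbacks on the common resolution. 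Granted this inequality, $\mu(\sF; \delta') \geq \sum_i S_{L_i}(\sF_i)$, so $\DD(\{\sF_i\}; \delta') \geq 0$ for every system of filtrations, giving $\delta_0 \geq \delta$; combined with Part 1 we obtain $\delta = \delta_0$.
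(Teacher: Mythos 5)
Your argument for $\delta\geq\delta_0$ is correct and is essentially the paper's own: take $\sF_i:=\sF_v$ on $R^i$, twist by the $\xi$ provided by the definition of $\delta_0$, use Corollary \ref{corollary:twist-valuation} and shift-invariance of $\DD(\cdot\,;\delta')$ to replace $\sF_{i,\xi}$ by $\sF_{v_\xi}$, and bound $\mu$ of the sum filtration by $A_{X,\Delta}(v_\xi)/\delta'$ exactly as in Corollary \ref{corollary:zhuang}.

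The second half, however, has a genuine gap, and it is located at the very first step. The identity $S_L(v)=\sum_{i=1}^k S_{L_i}(v)$ for all $v$ is false: the Okounkov body of $L_1+\cdots+L_k$ only \emph{contains} the Minkowski sum of the individual bodies, barycenters are not additive under Minkowski sums, and in any case $S_L(v)$ is the integral of the concave transform of $v$, not a barycenter pairing. Concretely, on $X=\pr^1\times\pr^1$ let $C$ be the diagonal and $v=\ord_C$; for $L=\sO(a,b)$ with $a\leq b$ one computes $S_L(v)=\tfrac{a}{2}-\tfrac{a^2}{6b}$, so for $L_1=\sO(2/3,4/3)$, $L_2=\sO(4/3,2/3)$, $L=-K_X=\sO(2,2)$ one gets $S_{L_1}(v)+S_{L_2}(v)=\tfrac{5}{9}<\tfrac{2}{3}=S_L(v)$. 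Consequently the coupled threshold $\delta=\delta\left(X,\Delta;\{L_i\}_{i=1}^k\right)$ does \emph{not} coincide with $\delta(X,\Delta;L)$; one only has $\delta\geq\delta(X,\Delta;L)$, possibly strictly. This breaks your use of \cite[Theorem 4.12]{Xu}: for $\delta'<\delta$ you cannot conclude $\delta'<\delta(X,\Delta;L)$, hence you are not entitled to the inequality $\mu(\sF;\delta')\geq S_L(\sF)$. Even granting your Brunn--Minkowski-type inequality $S_L(\sF)\geq\sum_i S_{L_i}(\sF_i)$ (whose sketched proof by ``positivity of mixed intersections'' is itself shaky, since the compactified polarizations over $\pr^1$ need not be nef), the chain $\mu(\sF;\delta')\geq S_L(\sF)\geq\sum_i S_{L_i}(\sF_i)$ would at best give $\delta_0\geq\delta(X,\Delta;L)$, which is weaker than the required $\delta_0\geq\delta$. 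The paper circumvents precisely this issue: it takes the valuation $v$ computing $\lct\left(X,\Delta;I_\bullet^{(\mu)}(\sF)\right)$, uses the inclusion $\sF\subset\sF_{v,[\mu-A_{X,\Delta}(v)/\delta]}$, and then propagates this inclusion factor by factor through an inductive argument (Claims \ref{claim:t} and \ref{claim:ineq}) to obtain $\sum_i S_{L_i}(\sF_i)\leq\sum_i S_{L_i}(v)-A_{X,\Delta}(v)/\delta+\mu$, i.e.\ a bound in terms of $\sum_i S_{L_i}(v)$ rather than $S_L(v)$, which is exactly what the coupled threshold in the definition of $\delta$ can control. Some argument of this finer type is needed; the reduction to the uncoupled threshold cannot work.
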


\begin{proof}
Let us prove $\delta\geq \delta_0$. We may assume that $\delta_0>0$. 
Take any $v\in\Val_X^{*,\T}$ and $\delta'\in \left(0,\delta_0\right)$. 
Note that, for any $\xi\in N_{\R}(\T)$, the valuation $v_\xi$ is not trivial. 
In particular, we have $A_{X,\Delta}(v_\xi), S_{L_i}(v_\xi)\in\R_{>0}$. 
For any $1\leq i\leq k$, set $\sF_i:=\sF_v$ on $R^i$. 
Then there exists $\xi\in N_{\R}(\T)$ such that 
$\DD\left(\left\{\sF_{i,\xi}\right\}_{i=1}^k; \delta'\right)\geq 0$ holds. 
Set $\sG_i:=\sF_{v_\xi}$ on $R^i$. Then, by Corollary \ref{corollary:twist-valuation}, 
we have $\sG_i=\sF_{i,\xi,\left[\theta_\xi^{L_i}(v)\right]}$. Observe that 
\[
\DD\left(\left\{\sF_{i,\xi}\right\}_{i=1}^k;\delta'\right)
=\DD\left(\left\{\sG_{i,\left[-\theta_\xi^{L_i}(v)\right]}\right\}_{i=1}^k;\delta'\right)
=\DD\left(\left\{\sG_i\right\}_{i=1}^k;\delta'\right).
\]
As in the proof of Corollary \ref{corollary:zhuang}, we have 
\[
\mu\left(\left\{\sG_i\right\}_{i=1}^k;\delta'\right)\leq\frac{A_{X,\Delta}(v_\xi)}{\delta'}.
\]
Therefore, we have 
\[
0\leq\DD\left(\left\{\sF_{i,\xi}\right\}_{i=1}^k; \delta'\right)\leq
\frac{A_{X,\Delta}(v_\xi)}{\delta'}-\sum_{i=1}^k S_{L_i}(v_\xi). 
\]
This immediately implies the inequality $\delta\geq \delta'$. Thus we get 
$\delta\geq \delta_0$. 

From now on, let us assume that $\T$ is trivial, and let us show the inequality 
$\delta\leq \delta_0$. 
Take any filtration $\sF_i$ on $R^i$ and let $\sF$ be the sum filtration of 
$\{\sF_i\}_{i=1}^k$. 
We set $I_{(m;x)}^{\langle i\rangle}:=I_{(m;x)}(\sF_i)$ and 
$I_{(m;x)}:=I_{(m;x)}(\sF)$. 
Set $\mu:=\mu(\sF; \delta)$. We want to show the inequality 
$\mu-\sum_{i=1}^k S_{L_i}(\sF_i)\geq 0$. 
Since 
\[
\lambda_{\max}(\sF)=\sum_{i=1}^k\lambda_{\max}(\sF_i), \quad
S_{L_i}(\sF_i)\leq \lambda_{\max}(\sF_i), 
\]
we may assume that $\mu<\lambda_{\max}(\sF)$. 
By Lemma \ref{lemma:lct-compute}, we have 
\[
\lct\left(X,\Delta; I_\bullet^{(\mu)}\right)=\delta. 
\]
Moreover, there exists $v\in\Val_X^{<\infty}$ such that 
\[
\delta=\frac{A_{X,\Delta}(v)}{v\left(I_\bullet^{(\mu)}\right)}, \quad
\sF\subset\sF_{v,\left[\mu-\frac{A_{X,\Delta}(v)}{\delta}\right]}.
\]

\begin{claim}\label{claim:t}
For any $t_1,\dots,t_k\in\R$, we have 
\[
\sum_{i=1}^k t_i+\frac{A_{X,\Delta}(v)}{\delta}-\mu\leq\sum_{i=1}^k
v\left(I_\bullet^{\langle i\rangle, (t_i)}\right).
\]
\end{claim}

\begin{proof}[Proof of Claim \ref{claim:t}]
If there exists $1\leq i\leq k$ such that $t_i>\lambda_{\max}(\sF_i)$, then 
the right hand side of the inequality is $\infty$. Thus we may assume that 
$t_i\leq\lambda_{\max}(\sF_i)$ for all $1\leq i\leq k$. Moreover, the function 
$t_i\mapsto v\left(I_\bullet^{\langle i\rangle, (t_i)}\right)$ is non-decreasing 
and convex over $t_i\in\left(-\infty, \lambda_{\max}(\sF_i)\right]$. 
Thus we may assume that 
$t_i<\lambda_{\max}(\sF_i)$ for all $1\leq i\leq k$. Set $t:=\sum_{i=1}^k t_i$. 
Fix $\lambda_i\in\left(t_i,\lambda_{\max}(\sF_i)\right)$. 
By Lemma \ref{lemma:uniform-convergence}, for any $\varepsilon\in\R_{>0}$, 
there exists $m_0\in r\Z_{>0}$ such that, for any $1\leq i\leq k$, 
$x_i\in\left(-\infty,\lambda_i\right]$ and $m\in r\Z_{\geq m_0}$, we have 
\[
0\leq\frac{v\left(I_{(m; x_i m)}^{\langle i\rangle}\right)}{m}
-v\left(I_\bullet^{\langle i\rangle, (x_i)}\right)<\frac{\varepsilon}{k}. 
\]
On the other hand, since 
$\sF\subset\sF_{v,\left[\mu-\frac{A_{X,\Delta}(v)}{\delta}\right]}$, we have 
\[
v\left(I_{(m; t m)}\right)\geq t m-\left(\mu-\frac{A_{X,\Delta}(v)}{\delta}\right)m
\]
for any $m\in r\Z_{\geq m_0}$. Note that 
\[
I_{(m; t m)}\supset I_{(m; t_1 m)}^{\langle 1\rangle}\cdots
I_{(m; t_k m)}^{\langle k\rangle}. 
\]
This implies that 
\[
t-\mu+\frac{A_{X,\Delta}(v)}{\delta}\leq\frac{v\left(I_{(m; t m)}\right)}{m}\leq
\sum_{i=1}^k\frac{v\left(I_{(m; t_i m)}^{\langle i\rangle}\right)}{m}
<\sum_{i=1}^k v\left(I_\bullet^{\langle i\rangle, (t_i)}\right)+\varepsilon
\]
for any $\varepsilon\in\R_{>0}$. Thus we get the assertion in 
Claim \ref{claim:t}. 
\end{proof}

\begin{claim}\label{claim:ineq}
Take any $1\leq j\leq k$ and $t_1,\dots,t_{j-1}\in\R$. 
\begin{enumerate}
\renewcommand{\theenumi}{\arabic{enumi}}
\renewcommand{\labelenumi}{(\theenumi)}
\item\label{claim:ineq1}
We have 
\[
\sum_{h=j}^k\left(S_{L_h}(v)-S_{L_h}(\sF_h)\right)
\geq \sum_{i=1}^{j-1}\left(t_i-v\left(I_\bullet^{\langle i\rangle, (t_i)}\right)\right)
+\frac{A_{X,\Delta}(v)}{\delta}-\mu.
\]
\item\label{claim:ineq2}
We have 
\[
\sum_{h=j}^k\left(T_{L_h}(v)-\lambda_{\max}(\sF_h)\right)
\geq \sum_{i=1}^{j-1}\left(t_i-v\left(I_\bullet^{\langle i\rangle, (t_i)}\right)\right)
+\frac{A_{X,\Delta}(v)}{\delta}-\mu.
\]
\end{enumerate}
\end{claim}

\begin{proof}[Proof of Claim \ref{claim:ineq}]
We only see \eqref{claim:ineq1}, since
the strategy of the proof of \eqref{claim:ineq2} is completely 
the same as the proof of \eqref{claim:ineq1}. 
We show \eqref{claim:ineq1} by induction on $-j$. Assume that $j=k$. 
By Claim \ref{claim:t}, we have 
\[
v\left(I_\bullet^{\langle k\rangle, (t_k)}\right)\geq t_k+\sum_{i=1}^{k-1}
\left(t_i-v\left(I_\bullet^{\langle i\rangle, (t_i)}\right)\right)
+\frac{A_{X,\Delta}(v)}{\delta}-\mu
\]
for any $t_k\in\R$. This implies that 
\[
\sF_k\subset\sF_{v,\left[-\sum_{i=1}^{k-1}
\left(t_i-v\left(I_\bullet^{\langle i\rangle, (t_i)}\right)\right)
-\frac{A_{X,\Delta}(v)}{\delta}+\mu\right]}.
\]
Thus we get 
\[
S_{L_k}(\sF_k)\leq S_{L_k}(v)-\sum_{i=1}^{k-1}
\left(t_i-v\left(I_\bullet^{\langle i\rangle, (t_i)}\right)\right)
-\frac{A_{X,\Delta}(v)}{\delta}+\mu,
\]
which is nothing but the assertion \eqref{claim:ineq1} for $j=k$. 

Assume that the assertion \eqref{claim:ineq1} holds for $j+1$, i.e., 
\[
v\left(I_\bullet^{\langle j\rangle, (t_j)}\right)\geq t_j
-\sum_{h=j+1}^k\left(S_{L_h}(v)-S_{L_h}(\sF_h)\right)
+\sum_{i=1}^{j-1}\left(t_i-v\left(I_\bullet^{\langle i\rangle, (t_i)}\right)\right)
+\frac{A_{X,\Delta}(v)}{\delta}-\mu
\]
holds for any $t_j\in\R$. This implies that 
\[
\sF_j\subset\sF_{v,\left[\sum_{h=j+1}^k\left(S_{L_h}(v)-S_{L_h}(\sF_h)\right)
-\sum_{i=1}^{j-1}\left(t_i-v\left(I_\bullet^{\langle i\rangle, (t_i)}\right)\right)
-\frac{A_{X,\Delta}(v)}{\delta}+\mu
\right]}. 
\]
Therefore, we get 
\[
S_{L_j}(\sF_j)\leq S_{L_j}(v)+\sum_{h=j+1}^k\left(S_{L_h}(v)-S_{L_h}(\sF_h)\right)
-\sum_{i=1}^{j-1}\left(t_i-v\left(I_\bullet^{\langle i\rangle, (t_i)}\right)\right)
-\frac{A_{X,\Delta}(v)}{\delta}+\mu, 
\]
which is nothing but the assertion \eqref{claim:ineq1}. Thus we have proved 
Claim \ref{claim:ineq}. 
\end{proof}

By applying Claim \ref{claim:ineq} \eqref{claim:ineq1} with $j=1$, we get 
\[
\sum_{i=1}^k S_{L_i}(\sF_i)\leq\sum_{i=1}^k S_{L_i}(v)
-\frac{A_{X,\Delta}(v)}{\delta}+\mu. 
\]
As a consequence, we have
\[ 
\DD\left(\left\{\sF_i\right\}_{i=1}^k;\delta\right)=\mu-\sum_{i=1}^k S_{L_i}(\sF_i)
\geq\frac{A_{X,\Delta}(v)}{\delta}-\sum_{i=1}^k S_{L_i}(v)\geq 0, 
\]
where the last inequality follows from the definition of $\delta$. 
Thus we get the inequality $\delta\leq \delta_0$ when $\T$ is trivial. 
\end{proof}

\begin{thm}\label{theorem:A}
The following are equivalent: 
\begin{enumerate}
\renewcommand{\theenumi}{\arabic{enumi}}
\renewcommand{\labelenumi}{(\theenumi)}
\item\label{theorem:A1}
There exists $\varepsilon\in\R_{>0}$ such that, 
for any $\T$-equivariant filtration $\sF_i$ on $R^i$ $(1\leq i\leq k)$, we have 
$\DD\left(\{\sF_i\}_{i=1}^k\right)\geq \varepsilon\cdot\JJ_{\T}^{\cp}
\left(\{\sF_i\}_{i=1}^k\right)$. 
\item\label{theorem:A2}
$\left(X,\Delta;\{L_i\}_{i=1}^k\right)$ has vanishing coupled 
$\T$-Futaki characters and 
$\delta_{\T}^{\Red}\left(X,\Delta; \left\{L_i\right\}_{i=1}^k\right)>1$ holds. 
\end{enumerate}
\end{thm}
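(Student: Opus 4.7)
The plan is to reduce the theorem to an analysis of Theorem \ref{theorem:uD} (2) via Proposition \ref{proposition:DDD}, using as the main translation the observation that, under $\alpha_{\bc}^{\cp}=0$, the invariant $\DD(\{\sF_{i,\xi}\};\delta')$ is independent of $\xi\in N_\R(\T)$. This is the combination of Lemma \ref{lemma:D-twist} (extended to $\DD(\cdot;\delta')$ via the standard twist-invariance of $\mu(\cdot;\delta')$) with Corollary \ref{corollary:S-twist}; under vanishing Futaki, the $\xi$-dependence of $\sum_i S_{L_i}(\sF_{i,\xi})$ cancels that of $\mu$. Consequently, condition Theorem \ref{theorem:uD} (2) reads $\delta_0>1$, where $\delta_0$ is the quantity of Proposition \ref{proposition:DDD}.

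For (1)$\Rightarrow$(2), I would invoke Theorem \ref{theorem:uD} on (1) to obtain some $\delta>1$ satisfying the existence-of-$\xi$ condition. Specializing to shifts of the trivial filtration and applying Lemma \ref{lemma:D-twist} forces $\alpha_{\bc}^{\cp}=0$, in perfect analogy with Corollary \ref{corollary:twist-ding}. The preceding reduction then delivers $\delta_0\geq\delta>1$, and Proposition \ref{proposition:DDD} upgrades this to $\delta_\T^{\Red}\geq\delta_0>1$.

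For (2)$\Rightarrow$(1), I fix $\delta'\in(1,\delta_\T^{\Red})$ and, for an arbitrary $\T$-equivariant filtration $\{\sF_i\}_{i=1}^k$ with sum $\sF$ and $\mu:=\mu(\sF;\delta')$, aim to prove $\mu\geq\sum_i S_{L_i}(\sF_i)$ (this yields Theorem \ref{theorem:uD} (2) with the $\xi$ chosen arbitrarily, by the cancellation above). If $\mu=\lambda_{\max}(\sF)$, Proposition \ref{proposition:sum-filt-lambda} together with $\lambda_{\max}(\sF_i)\geq S_{L_i}(\sF_i)$ finishes it. Otherwise Lemma \ref{lemma:lct-compute} produces $v\in\Val_X^{<\infty,\T}$ computing the lct with $\sF\subset\sF_{v,[\mu-A_{X,\Delta}(v)/\delta']}$, and the induction encapsulated in Claim \ref{claim:ineq} from the proof of Proposition \ref{proposition:DDD}—which is purely formal and survives verbatim in the $\T$-equivariant setting—yields
\[
\mu-\sum_{i=1}^k S_{L_i}(\sF_i)\;\geq\;\frac{A_{X,\Delta}(v)}{\delta'}-\sum_{i=1}^k S_{L_i}(v).
\]
When $v=\operatorname{wt}_{\xi_0}$ is a weight, Lemma \ref{lemma:A-S-twist} combined with $\alpha_{\bc}^{\cp}=0$ gives $A_{X,\Delta}(v)=\sum_i S_{L_i}(v)=0$, so the right-hand side is already nonnegative.

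The main obstacle is the sub-case $v\in\Val_X^{*,\T}$: the hypothesis supplies $\sup_{\eta}A_{X,\Delta}(v_\eta)/\sum_i S_{L_i}(v_\eta)>\delta'$, but what is needed is $A_{X,\Delta}(v)/\sum_i S_{L_i}(v)\geq\delta'$ for the very $v$ returned by Lemma \ref{lemma:lct-compute}. I plan to close this gap by exploiting the freedom in the choice of filtration: replace $\{\sF_i\}$ by $\{\sF_{i,-\eta}\}$ for a twist $\eta$ optimizing the ratio (which preserves $\mu$ and $\sum_i S_{L_i}(\sF_i)$ by vanishing Futaki, by Proposition \ref{proposition:sum-filt-twist} and Lemma \ref{lemma:A-S-twist}), then re-apply Lemma \ref{lemma:lct-compute} to obtain a lct-minimizer adapted to the optimal twist. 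Concretely, using Corollary \ref{corollary:twist-valuation} to relate $\sF_{v_\eta}$ to $(\sF_v)_\eta$ and Proposition \ref{proposition:theta} for $\theta^L_\eta(v)=A_{X,\Delta}(v_\eta)-A_{X,\Delta}(v)$, one expresses the lct on the twisted filtration in terms of the original data, and argues that among all admissible choices of $v$ one may assume $\eta=0$ maximizes $\eta\mapsto A_{X,\Delta}(v_\eta)/\sum_i S_{L_i}(v_\eta)$, whence the definition of $\delta_\T^{\Red}$ closes the estimate. This step is the $\T$-equivariant counterpart of the equality $\delta=\delta_0$ in Proposition \ref{proposition:DDD}, and I expect it to be the technical heart of the proof.
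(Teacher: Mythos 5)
Your direction (1)$\Rightarrow$(2) is essentially the paper's argument: vanishing of $\alpha_{\bc}^{\cp}$ comes from twisting (not shifting -- $\DD$ is shift-invariant, so you want the $\xi$-twists of the trivial filtrations, as in Corollary \ref{corollary:twist-ding}), and then Theorem \ref{theorem:uD} combined with the inequality $\delta_{\T}^{\Red}\geq\delta_0$ of Proposition \ref{proposition:DDD} gives $\delta_{\T}^{\Red}>1$. The direction (2)$\Rightarrow$(1), however, has a genuine gap, and it begins with your ``main translation'': for $\delta'>1$ the quantity $\DD\left(\{\sF_{i,\xi}\}_{i=1}^k;\delta'\right)$ is \emph{not} independent of $\xi$, even with vanishing coupled $\T$-Futaki characters. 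Lemma \ref{lemma:D-twist} rests on the twist-invariance $\mu(\sF_\xi)=\mu(\sF)$ of \cite[Lemma 6.24]{Xu}, which is a slope-one statement; the $\delta'$-lc slope is not twist-invariant. Concretely, for $X=\pr^1$, $k=1$, $L_1=-K_X$, $\T=\G_m$ with the symmetric linearization, the twisted trivial filtration has $I_{(m;tm)}(\sF_{\triv,\xi})$ essentially equal to $(x^{m(1+t)})$, so $\mu\left(\sF_{\triv,\xi};\delta'\right)=\tfrac{1-\delta'}{\delta'}<0=\mu\left(\sF_{\triv};\delta'\right)$ and $\DD\left(\sF_{\triv,\xi};\delta'\right)<0$, although $(X;-K_X)$ satisfies (2). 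Hence the statement you set out to prove -- $\mu(\sF;\delta')\geq\sum_i S_{L_i}(\sF_i)$ for \emph{every} $\T$-equivariant filtration, ``with the $\xi$ chosen arbitrarily'' -- is false; the quantifier ``there exists $\xi$'' in Theorem \ref{theorem:uD} (2) and in $\delta_0$ cannot be removed, and $\xi$ must be chosen depending on the filtration.

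Relatedly, the obstacle you isolate (needing $A_{X,\Delta}(v)\geq\delta'\sum_i S_{L_i}(v)$ for the specific lct-computing $v$) amounts to the converse inequality $\delta_0\geq\delta_{\T}^{\Red}$ for nontrivial $\T$, which Proposition \ref{proposition:DDD} deliberately does not claim, and your proposed fix does not close it: replacing $\{\sF_i\}$ by $\{\sF_{i,-\eta}\}$ and re-applying Lemma \ref{lemma:lct-compute} produces a \emph{new} lct-minimizer, not $v_{-\eta}$, and nothing forces the ratio $\eta\mapsto A_{X,\Delta}(v_\eta)/\sum_i S_{L_i}(v_\eta)$ of that valuation to be optimized at $\eta=0$. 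The paper avoids routing (2)$\Rightarrow$(1) through Theorem \ref{theorem:uD} altogether: it works at slope one, sets $\mu:=\mu(\sF)$ (where twist-invariance does hold), extracts $v$ from Lemma \ref{lemma:lct-compute}, and only then chooses $\xi$ -- from the definition of $\delta_{\T}^{\Red}$ if $v\in\Val_X^{*,\T}$, or $\xi=-\xi'$ if $v=\operatorname{wt}_{\xi'}$ -- so that $A_{X,\Delta}(v_\xi)\geq\delta\sum_i S_{L_i}(v_\xi)$. Corollary \ref{corollary:twist-valuation} then gives $\sF_\xi\subset\sF_{v_\xi,[\mu-A_{X,\Delta}(v_\xi)]}$, the Claim \ref{claim:ineq} machinery bounds both $\sum_i S_{L_i}(\sF_{i,\xi})$ and $\sum_i\lambda_{\max}(\sF_{i,\xi})$, and with $T_{L_i}(v_\xi)\leq(n+1)S_{L_i}(v_\xi)$ and $1+n\varepsilon\leq\delta$ one obtains $\DD\left(\{\sF_i\}_{i=1}^k\right)\geq\varepsilon\,\JJ^{\cp}\left(\{\sF_{i,\xi}\}_{i=1}^k\right)\geq\varepsilon\,\JJ_{\T}^{\cp}\left(\{\sF_i\}_{i=1}^k\right)$; controlling $\JJ^{\cp}$ only at this one filtration-dependent $\xi$ is exactly what makes the argument work, and is the ingredient your outline is missing.
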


\begin{proof}
\eqref{theorem:A1}$\implies$\eqref{theorem:A2}: 
By Corollaries \ref{corollary:twist-ding} and \ref{corollary:DingD}, we know that 
$\alpha_{\bc}^{\cp}=0$. The inequality 
$\delta_{\T}^{\Red}\left(X,\Delta; \left\{L_i\right\}_{i=1}^k\right)>1$ 
follows from Theorem \ref{theorem:uD} and Proposition \ref{proposition:DDD}. 

\eqref{theorem:A2}$\implies$\eqref{theorem:A1}: 
Fix any $\delta\in\left(1, 
\delta_{\T}^{\Red}\left(X,\Delta; \left\{L_i\right\}_{i=1}^k\right)\right)$, and 
take a small $\varepsilon\in (0,1)$ with $1+n\varepsilon\leq \delta$. 
We show that the $\varepsilon$ satisfies the assertion \eqref{theorem:A1}. 
For any $1\leq i\leq k$, let $\sF_i$ be a $\T$-equivariant filtration on $R^i$, and 
let $\sF$ be the sum filtration of $\{\sF_i\}_{i=1}^k$. 
We set 
$I_{(m;x)}:=I_{(m;x)}(\sF)$. Set $\mu:=\mu(\sF)$. 
If $\mu=\lambda_{\max}(\sF)$, then 
\[
\DD\left(\left\{\sF_i\right\}_{i=1}^k\right)
-\JJ^{\cp}\left(\left\{\sF_i\right\}_{i=1}^k\right)=0.
\]
Thus we may assume that $\mu<\lambda_{\max}(\sF)$. 
By Lemma \ref{lemma:lct-compute}, we have 
\[
\lct\left(X,\Delta; I_\bullet^{(\mu)}\right)=1, 
\]
Moreover, there exists $v\in\Val_X^{<\infty,\T}$ such that 
$A_{X,\Delta}(v)=v\left(I_\bullet^{(\mu)}\right)$ and 
$\sF\subset\sF_{v,\left[\mu-A_{X,\Delta}(v)\right]}$. 

We observe that, there exists $\xi\in N_{\R}(\T)$ such that 
$A_{X,\Delta}(v_\xi)\geq\delta\cdot\sum_{i=1}^k S_{L_i}(v_\xi)$. Indeed, if 
$v\in\Val_X^{*,\T}$, then the above inequality follows from the definition of $\delta$. 
Otherwise, we can write $v=\operatorname{wt}_{\xi'}$ for some 
$\xi'\in N_{\R}(\T)$, and then the above inequality holds if we take $\xi:=-\xi'$. 
Note that, for such $\xi$, we have 
\[
\sF_\xi\subset\left(\sF_{v,\left[\mu-A_{X,\Delta}(v)\right]}\right)_\xi
=\sF_{v_\xi,\left[\mu-A_{X,\Delta}(v_\xi)\right]}
\]
by Corollary \ref{corollary:twist-valuation}. 
Recall that $\sF_\xi$ is the sum filtration of $\{\sF_{i,\xi}\}_{i=1}^k$. 
Moreover, by \cite[Lemma 6.24]{Xu}, we have $\mu=\mu(\sF_\xi)$. 
Set $I_{(m;x)}^{\langle\xi, i\rangle}:=I_{(m; x)}(\sF_{i,\xi})$. 
By completely the same argument in the proof of Claims \ref{claim:t} and 
\ref{claim:ineq}, we get the following: 
\begin{itemize}
\item
For any $t_1,\dots,t_k\in\R$, we have 
\[
\sum_{i=1}^k t_i+A_{X,\Delta}(v_\xi)-\mu\leq\sum_{i=1}^k 
v_\xi\left(I_\bullet^{\langle\xi,i\rangle,(t_i)}\right). 
\]
\item
For any $1\leq j\leq k$ and $t_1,\dots,t_{j-1}\in\R$, we have 
\begin{eqnarray*}
\sum_{h=j}^k\left(S_{L_h}(v_\xi)-S_{L_h}(\sF_{h,\xi})\right)
&\geq&\sum_{i=1}^{j-1}\left(t_i-
v_\xi\left(I_\bullet^{\langle\xi,i\rangle, (t_i)}\right)\right)
+A_{X,\Delta}(v_\xi)-\mu,\\
\sum_{h=j}^k\left(T_{L_h}(v_\xi)-\lambda_{\max}(\sF_{h,\xi})\right)
&\geq&\sum_{i=1}^{j-1}\left(t_i-
v_\xi\left(I_\bullet^{\langle\xi,i\rangle, (t_i)}\right)\right)
+A_{X,\Delta}(v_\xi)-\mu.
\end{eqnarray*}
\item
In particular, we have 
\begin{eqnarray*}
\sum_{i=1}^k S_{L_i}(\sF_{i,\xi})&\leq&\sum_{i=1}^k S_{L_i}(v_\xi)
-A_{X,\Delta}(v_\xi)+\mu, \\ 
\sum_{i=1}^k \lambda_{\max}(\sF_{i,\xi})&\leq&\sum_{i=1}^k T_{L_i}(v_\xi)
-A_{X,\Delta}(v_\xi)+\mu.
\end{eqnarray*}
\end{itemize}

Since $\alpha_{\bc}^{\cp}=0$, we have 
$\DD\left(\{\sF_{i,\xi}\}_{i=1}^k\right)=\DD\left(\{\sF_i\}_{i=1}^k\right)$ 
by Lemma \ref{lemma:D-twist}. Moreover, by \cite[Lemma 2.6]{BlJ} (or 
\cite[Lemma 3.31]{Xu}), we have 
$T_{L_i}(v_\xi)\leq (n+1)S_{L_i}(v_\xi)$. 
Therefore, we get 
\begin{eqnarray*}
&&\DD\left(\{\sF_i\}_{i=1}^k\right)
-\varepsilon\cdot\JJ^{\cp}\left(\{\sF_{i,\xi}\}_{i=1}^k\right)\\
&=&\DD\left(\{\sF_{i,\xi}\}_{i=1}^k\right)
-\varepsilon\cdot\JJ^{\cp}\left(\{\sF_{i,\xi}\}_{i=1}^k\right)\\
&=&\mu-(1-\varepsilon)\sum_{i=1}^k S_{L_i}\left(\sF_{i,\xi}\right)
-\varepsilon\cdot\sum_{i=1}^k\lambda_{\max}\left(\sF_{i,\xi}\right) \\
&\geq&\mu-(1-\varepsilon)\left(\sum_{i=1}^k S_{L_i}\left(v_\xi\right)
-A_{X,\Delta}(v_\xi)+\mu\right)
-\varepsilon\left(\sum_{i=1}^k T_{L_i}\left(v_\xi\right)
-A_{X,\Delta}(v_\xi)+\mu\right)\\
&=&A_{X,\Delta}(v_\xi)-(1-\varepsilon)\sum_{i=1}^k S_{L_i}\left(v_\xi\right)
-\varepsilon\cdot\sum_{i=1}^k T_{L_i}\left(v_\xi\right)\\
&\geq&A_{X,\Delta}(v_\xi)-(1+n\varepsilon)\sum_{i=1}^k S_{L_i}\left(v_\xi\right)
\geq A_{X,\Delta}(v_\xi)-\delta\cdot\sum_{i=1}^k S_{L_i}\left(v_\xi\right)\geq 0.
\end{eqnarray*}
Thus we get the assertion \eqref{theorem:A1}. 
\end{proof}

As a consequence, we get the following desired result: 

\begin{corollary}\label{corollary:conclusion}
\begin{enumerate}
\renewcommand{\theenumi}{\arabic{enumi}}
\renewcommand{\labelenumi}{(\theenumi)}
\item\label{corollary:conclusion1}
The following are equivalent: 
\begin{enumerate}
\renewcommand{\theenumii}{\roman{enumii}}
\renewcommand{\labelenumii}{(\theenumii)}
\item\label{corollary:conclusion11}
$\left(X,\Delta;\{L_i\}_{i=1}^k\right)$ is coupled Ding semistable.
\item\label{corollary:conclusion12}
$\left(X,\Delta;\{L_i\}_{i=1}^k\right)$ is $\T$-equivariantly coupled Ding semistable.
\item\label{corollary:conclusion13}
For any $\T$-equivariant filtration $\sF_i$ on $R^i$ $(1\leq i\leq k)$, we have 
$\DD\left(\{\sF_i\}_{i=1}^k\right)\geq 0$. 
\item\label{corollary:conclusion14}
$\delta\left(X,\Delta;\{L_i\}_{i=1}^k\right)\geq 1$ holds. 
\end{enumerate}
\item\label{corollary:conclusion2}
If $\left(X,\Delta;\{L_i\}_{i=1}^k\right)$ is $\T$-reduced uniformly coupled Ding 
stable, then $\T\subset\Aut(X,\Delta)$ must be a maximal torus. 
\item\label{corollary:conclusion3}
Let $\T\subset\Aut(X,\Delta)$ be a maximal torus. 
The following are equivalent: 
\begin{enumerate}
\renewcommand{\theenumii}{\roman{enumii}}
\renewcommand{\labelenumii}{(\theenumii)}
\item\label{corollary:conclusion31}
$\left(X,\Delta;\{L_i\}_{i=1}^k\right)$ is reduced uniformly coupled Ding stable.
\item\label{corollary:conclusion32}
There exists $\varepsilon\in\R_{>0}$ such that, 
for any $\T$-equivariant filtration $\sF_i$ on $R^i$ $(1\leq i\leq k)$, we have 
$\DD\left(\{\sF_i\}_{i=1}^k\right)\geq \varepsilon\cdot\JJ_{\T}^{\cp}
\left(\{\sF_i\}_{i=1}^k\right)$. 
\item\label{corollary:conclusion33}
There exists $\delta\in\R_{>1}$ such that, 
for any $\T$-equivariant filtration $\sF_i$ on $R^i$ $(1\leq i\leq k)$, there exists 
$\xi\in N_{\R}(\T)$ such that 
$\DD\left(\left\{\sF_{i,\xi}\right\}_{i=1}^k;\delta\right)\geq 0$ holds. 
\item\label{corollary:conclusion34}
$\left(X,\Delta;\{L_i\}_{i=1}^k\right)$ has vanishing coupled Futaki characters and 
$\delta^{\Red}\left(X,\Delta;\left\{L_i\right\}_{i=1}^k\right)> 1$ holds. 
\end{enumerate}
\end{enumerate}
\end{corollary}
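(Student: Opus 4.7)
The plan is to assemble Corollary \ref{corollary:conclusion} from the earlier results in an essentially formal manner. The three parts correspond to distinct clusters of implications, but all are within reach of what has been proved.

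For part (1), I would close the loop (11) $\Rightarrow$ (12) $\Rightarrow$ (14) $\Rightarrow$ (11), with (12) $\iff$ (13) already being Corollary \ref{corollary:DingD}(1). The implication (11) $\Rightarrow$ (12) is formal. For (12) $\Rightarrow$ (14), I use Corollary \ref{corollary:zhuang} in contrapositive: if $\delta(X,\Delta;\{L_i\}_{i=1}^k)<1$, it produces $\T$-equivariant normal test configurations with negative coupled Ding invariant. The nontrivial direction is (14) $\Rightarrow$ (11). Here I apply Proposition \ref{proposition:DDD} with trivial $\T$, which gives $\delta=\delta_0$ and in particular yields $\DD(\{\sF_i\}_{i=1}^k;\delta)\geq 0$ for every filtration. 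Since $\mu(\sF;\delta')$ is non-increasing in $\delta'$, we get $\DD(\{\sF_i\}_{i=1}^k;1)\geq 0$ for every filtration. Applying Proposition \ref{proposition:D-Ding}(2) to the filtration associated with a normal test configuration, and then Lemma \ref{lemma:ding-normal} to reduce to the normal case, yields $\Ding\geq 0$ for any test configuration, i.e. (11).

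For part (2), suppose for contradiction that $\T\subsetneq\T'$ for some larger torus $\T'\subset\Aut(X,\Delta)$. Since $\T'$ is abelian, any $\xi\in N_\Q(\T')$ commutes with $\T$, so the $\xi$-twisted trivial test configurations $\{(X_\xi,(L_i)_\xi)\}_{i=1}^k$ are $\T$-equivariant. By Corollary \ref{corollary:twist-ding}, their coupled Ding invariant equals $-\langle\alpha_{\bc}^{\cp},\xi\rangle$. I would split on whether $\alpha_{\bc}^{\cp}$ (viewed in $M_\R(\T')$) vanishes: if not, choosing $\xi\in N_\R(\T')$ pairing positively with it already violates $\T$-equivariant semistability; if yes, then $\Ding=0$, but by Lemma \ref{lemma:sum-basics}(4) applied with the roles of $\T,\T'$ swapped, the $\JJ^{\cp}_\T$ of such twists with $\xi\in N_\Q(\T')\setminus N_\Q(\T)$ is strictly positive, contradicting the uniform stability inequality $\Ding\geq\varepsilon\cdot\JJ^{\cp}_\T>0$.

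Part (3) is the most direct: when $\T\subset\Aut(X,\Delta)$ is maximal, $\T$-reduced uniform coupled Ding stability and reduced uniform coupled Ding stability coincide, and likewise $\delta^{\Red}=\delta^{\Red}_\T$. The equivalence (31) $\iff$ (32) is exactly Corollary \ref{corollary:DingD}(2); (32) $\iff$ (33) is Theorem \ref{theorem:uD}; and (32) $\iff$ (34) is Theorem \ref{theorem:A}. None of these require further work beyond noting the maximality of $\T$.

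Essentially no new mathematical difficulty arises at this stage: all the hard analytic and algebraic input has been absorbed into Corollaries \ref{corollary:DingD}, \ref{corollary:zhuang}, Propositions \ref{proposition:D-Ding}, \ref{proposition:DDD}, and Theorems \ref{theorem:DingD}, \ref{theorem:uD}, \ref{theorem:A}. The only delicate point worth stating carefully is the passage from $\delta\geq 1$ to $\DD(\{\sF_i\}_{i=1}^k;1)\geq 0$ in part (1), which relies on the monotonicity of $\mu(\sF;\cdot)$ rather than any continuity argument. The proof is therefore a careful orchestration of previously established equivalences.
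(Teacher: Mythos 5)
Your proposal is correct and follows essentially the same route as the paper: part (1) is the same cycle of implications via Corollaries \ref{corollary:DingD} and \ref{corollary:zhuang} and Proposition \ref{proposition:DDD}, part (3) is the identical triple of citations (Corollary \ref{corollary:DingD}, Theorems \ref{theorem:uD} and \ref{theorem:A}), and part (2) rests on the same two inputs, Corollary \ref{corollary:twist-ding} and Lemma \ref{lemma:sum-basics} (4), only reorganized as a case split on $\alpha_{\bc}^{\cp}$ for the larger torus instead of deducing its vanishing from part (1). The only point worth noting is that in (14)$\implies$(11) the statement of Proposition \ref{proposition:DDD} alone gives $\DD\left(\{\sF_i\}_{i=1}^k;\delta'\right)\geq 0$ for $\delta'<\delta_0$, while the property at the threshold $\delta$ itself (which you use before invoking monotonicity in the slope) is exactly what the second half of its proof establishes, so your argument matches the paper's intended use.
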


\begin{proof}
\eqref{corollary:conclusion1}
\eqref{corollary:conclusion11}$\implies$\eqref{corollary:conclusion12} 
is trivial. 
\eqref{corollary:conclusion12}$\iff$\eqref{corollary:conclusion13} 
follows from Corollary \ref{corollary:DingD}. 
\eqref{corollary:conclusion12}$\implies$\eqref{corollary:conclusion14}
follows from Corollary \ref{corollary:zhuang}. 
\eqref{corollary:conclusion14}$\implies$\eqref{corollary:conclusion11}
follows from Proposition \ref{proposition:DDD}. 

\eqref{corollary:conclusion2}
Assume that $\T\subsetneq\tilde{\T}\subset\Aut(X,\Delta)$. 
Since $\left(X,\Delta;\{L_i\}_{i=1}^k\right)$ is $\T$-equivariantly coupled Ding 
semistable, by \eqref{corollary:conclusion1}, it is 
$\tilde{\T}$-equivariantly coupled Ding semistable. In particular, we have 
\[
N_{\R}\left(\tilde{\T}\right)\supset\tilde{\PP}^L
\ni\sum_{i=1}^k{\tilde{\alpha}}_{\bc}^{\cp}=0. 
\]
Take any $\tilde{\xi}\in N\left(\tilde{\T}\right)\setminus N(\T)$. Consider 
the product test configurations 
$\left(X_{\tilde{\xi}}, (L_i)_{\tilde{\xi}}\right)/\A^1$ of $(X, L_i)$. Then, by 
Corollary \ref{corollary:twist-ding}, 
we have 
\[
\Ding\left(\left\{X_{\tilde{\xi}}, (L_i)_{\tilde{\xi}}\right\}_{i=1}^k\right)=0. 
\]
On the other hand, by Lemma \ref{lemma:sum-basics} \eqref{lemma:sum-basics4}, 
we have 
\[
\JJ_{\T}^{\cp}\left(\left\{X_{\tilde{\xi}}, (L_i)_{\tilde{\xi}}\right\}_{i=1}^k\right)>0. 
\]
This leads to a contradiction. Therefore we get the assertion 
\eqref{corollary:conclusion2}. 

\eqref{corollary:conclusion3}
\eqref{corollary:conclusion31}$\iff$\eqref{corollary:conclusion32} 
follows from Corollary \ref{corollary:DingD}. 
\eqref{corollary:conclusion32}$\iff$\eqref{corollary:conclusion33} 
follows from Theorem \ref{theorem:uD}. 
\eqref{corollary:conclusion32}$\iff$\eqref{corollary:conclusion34} 
follows from Theorem \ref{theorem:A}. 
\end{proof}

\begin{remark}\label{remark:cDss}
One may expect that the conditions in Corollary \ref{corollary:conclusion} 
\eqref{corollary:conclusion1} might be equivalent to the following condition: 
\begin{center}
$\left(X,\Delta;\{L_i\}_{i=1}^k\right)$ has vanishing coupled 
$\T$-Futaki characters and 
$\delta_{\T}^{\Red}\left(X,\Delta; \left\{L_i\right\}_{i=1}^k\right)\geq 1$ holds. 
\end{center}
However, this condition is not equivalent to the conditions in 
Corollary \ref{corollary:conclusion} \eqref{corollary:conclusion1} in general. 
In fact, assume that 
$\left(X,\Delta;\{L_i\}_{i=1}^k\right)$ is not coupled Ding semistable, 
a maximal subtorus $\T$ of $\Aut(X,\Delta)$ is nontrivial and 
$\alpha_{\bc}^{\cp}=0$. Take any 
$\xi\in N_{\R}(\T)\setminus\{0\}$ and $v\in\Val_X^{*, \T}$. 
By Lemma \ref{lemma:A-S-twist}, we have 
\[
A_{X,\Delta}(v_{e\xi})-\sum_{i=1}^k S_{L_i}(v_{e\xi})
=A_{X,\Delta}(v)-\sum_{i=1}^k S_{L_i}(v)
\]
for any $e\in\R_{>0}$. On the other hand, 
$A_{X,\Delta}(v_{e\xi})=A_{X,\Delta}(v)+\theta_{e\xi}^L(v)$ as in Proposition 
\ref{proposition:theta}. By the definition of $\theta_{e\xi}^L(v)$, we have 
\[
\lim_{e\to\infty}\theta_{e\xi}^L(v)=\infty. 
\]
This implies that 
\[
\lim_{e\to\infty}\frac{A_{X,\Delta}(v_{e\xi})}{\sum_{i=1}^k S_{L_i}(v_{e\xi})}=1. 
\]
Together with Corollary \ref{corollary:conclusion} \eqref{corollary:conclusion3}, 
we must have 
$\delta_{\T}^{\Red}\left(X,\Delta; \left\{L_i\right\}_{i=1}^k\right)=1$. 
\end{remark}

\section{On the conjecture of Hultgren and Witt Nystr\"om}\label{section:analytic}

In this section, we work over the complex number field $\C$. 
Let $X$ be an $n$-dimensional Fano manifold, with $\Delta = 0$. We fix a maximal (algebraic) torus $\T \subset \mathrm{Aut}_0(X)$, and its maximal compact subgroup $\T_r \subset \T$ which is a real torus. We then fix $\T$-linearized ample $\Q$-line bundles $L_1,\dots,L_k$ on $X$ with $-K_X=\sum_{i=1}^k L_i$.

We first review the bare minimum of analytic details concerning the coupled K\"ahler--Einstein metrics and the coupled Ding functional. All the details can be found in the original paper \cite{HWN} where these objects were introduced. We pick reference $\T_r$-invariant hermitian metrics $h_1 , \dots , h_k$ on $L_1 , \dots , L_k$ respectively, and associated K\"ahler metrics $\theta_1 , \dots , \theta_k$ in $c_1(L_1) , \dots , c_1(L_k)$. We also define the volume form $d \mu'_0$ on $X$ by $h_1 \otimes \cdots \otimes h_k$, recalling that a hermitian metric on $-K_X$ naturally defines a volume form; see \cite[\S 2]{HWN} or \cite[\S 3.4]{hashimoto} for more details.

For each $i=1 , \dots , k$, we define
\begin{equation*}
	\mathcal{H}_i := \{ \phi_i \in C^{\infty} (X , \mathbb{R}) \mid \theta_i + \sqrt{-1} \partial \bar{\partial} \phi_i / 2 \pi >0 \}
\end{equation*}
to be the space of K\"ahler metrics in $c_1 (L_i)$. We then define the space of coupled K\"ahler metrics as
\begin{equation*}
\bm{\mathcal{H}} := \mathcal{H}_1 \times \cdots \times \mathcal{H}_k.
\end{equation*}
Since $L_1 , \dots , L_k$ are $\T$-linearized, $\T$ naturally acts on $\mathcal{H}_1 , \dots , \mathcal{H}_k$ by pullback. With this action understood, the space of $\T_r$-invariant K\"ahler potentials is written as $\mathcal{H}^{\T_r}_1 , \dots , \mathcal{H}^{\T_r}_k$, with
\begin{equation*}
\bm{\mathcal{H}}^{\T_r} := \mathcal{H}^{\T_r}_1 \times \cdots \times \mathcal{H}^{\T_r}_k.
\end{equation*}

\begin{definition} \label{apdfcpdfc}
The \emph{coupled Ding functional} is a map $\mathrm{D}^{\mathrm{cp}} : \bm{\mathcal{H}} \to \mathbb{R}$ defined by
\begin{equation*}
\mathrm{D}^{\mathrm{cp}} (\phi_1 , \dots , \phi_k) := \mathrm{L}^{\mathrm{cp}} (\phi_1 , \dots , \phi_k) - \sum_{i=1}^k \mathrm{E}_i (\phi_i),
\end{equation*}
where
\begin{equation*}
\mathrm{L}^{\mathrm{cp}} (\phi_1 , \dots , \phi_k ) := -\log \int_X \exp \left( - \sum_{i=1}^k \phi_i \right) d \mu'_0
\end{equation*}
and
\begin{equation*}
	\mathrm{E}_i (\phi_i) := \frac{1}{(n+1) \int_X c_1 (L_i)^n} \sum_{j=0}^n \int_X \phi_i \theta_i^{n-j} \wedge (\theta_i + \sqrt{-1} \partial \bar{\partial} \phi_i / 2 \pi)^j .
\end{equation*}

The coupled Ding functional is said to be \emph{$\T$-coercive} if there exists $\varepsilon >0$ such that
\begin{equation} \label{apexcoercpdg}
	\mathrm{D}^{\mathrm{cp}} (\phi_1 , \dots , \phi_k) \ge \varepsilon \mathrm{J}_{\mathrm{cp}, \T} (\phi_1 , \dots , \phi_k ) - \frac{1}{\varepsilon}
\end{equation}
holds for all $(\phi_1 , \dots , \phi_k) \in \bm{\mathcal{H}}^{\T_r}$; in the above, $\mathrm{J}_{\mathrm{cp}, \T}$ is a functional defined on $\bm{\mathcal{H}}^{\T_r}_i$ as
\begin{equation*}
	\mathrm{J}_{\mathrm{cp}, \T} (\phi_1 , \dots , \phi_k ) := \inf_{\sigma \in \T} \sum_{i=1}^k \mathrm{J}_i (\sigma^* \phi_i),
\end{equation*}
where
\begin{equation*}
	\mathrm{J}_i (\phi_i) := \frac{1}{\int_X c_1 (L_i)^n} \int_X \phi_i \theta^n_i - \mathrm{E}_i (\phi_i).
\end{equation*}
\end{definition}

Using the functional $\mathrm{L} : \mathcal{H}(-K_X) \to \mathbb{R}$ that appears in the usual Ding functional (see e.g.~\cite[Definition 2.7]{BBJ}), we may also write
\begin{equation} \label{eqlcplsm}
	\mathrm{L}^{\mathrm{cp}} (\phi_1 , \dots , \phi_k ) = \mathrm{L} \left( \sum_{i=1}^k \phi_i \right).
\end{equation}

We write $\mathcal{E}^1_1 , \dots , \mathcal{E}^1_k$ respectively for the completion of $\mathcal{H}_1 , \dots , \mathcal{H}_k$ with respect to the $d_1$-topology, as explained in \cite{darlectures,gzbook}. As above, $\T_r$-invariant spaces are written as $(\mathcal{E}^1_1)^{\T_r} , \dots , (\mathcal{E}^1_k)^{\T_r}$, and we denote
\begin{equation*}
	\bm{\mathcal{E}}^1 := \mathcal{E}^1_1 \times \cdots \times \mathcal{E}^1_k , \quad (\bm{\mathcal{E}}^1)^{\T_r} := (\mathcal{E}^1_1)^{\T_r} \times \cdots \times (\mathcal{E}^1_k)^{\T_r}.
\end{equation*}
We give a product metric on the spaces above, induced from the $d_1$ metrics on each factor. In case it is necessary to make the polarization explicit, we may write $d_{1,i}$ for the $d_1$ metric on $\mathcal{E}^1_i$.

While these spaces of metrics are defined for $L_1 , \dots , L_k$, we also need to consider the space of K\"ahler metrics in $-K_X$, which we denote as $\mathcal{H} (-K_X)$ and $\mathcal{E}^1 (-K_X)$, where the reference metric is $\sum_{i=1}^k \theta_i$. The decomposition $-K_X = L_1 + \cdots + L_k$ gives rise to the following natural map.

\begin{proposition} \label{ppsmctna}
	The map $\mathsf{S} :  \mathcal{H}_1 \times \cdots \times \mathcal{H}_k \to \mathcal{H} (-K_X)$ defined by 
	\begin{equation*}
		\mathsf{S}(\phi_1 , \dots , \phi_k ) := \sum_{i=1}^k \phi_i
	\end{equation*}
	extends to a continuous map $\mathsf{S} :  \mathcal{E}^1_1 \times \cdots \times \mathcal{E}^1_k \to \mathcal{E}^1 (-K_X)$ with respect to the $d_1$ metric.
\end{proposition}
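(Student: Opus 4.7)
The plan is to first verify that the summation map $\mathsf{S}$ is well-defined on the product of finite-energy spaces, and then to establish continuity by reducing via approximation to the case of smooth potentials where classical pluripotential-theoretic estimates apply.

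First, for $(\phi_1,\dots,\phi_k)\in\bm{\mathcal{H}}$, since each $\theta_i+\sqrt{-1}\partial\bar{\partial}\phi_i/2\pi>0$, we have that $\mathsf{S}(\phi_1,\dots,\phi_k)\in\mathcal{H}(-K_X)$ with reference metric $\sum_{i=1}^k\theta_i$. To extend to $\bm{\mathcal{E}}^1$, given $\phi_i\in\mathcal{E}^1_i$, the sum $\phi:=\sum\phi_i$ is $(\sum\theta_i)$-psh, and expanding the Monge--Amp\`ere measure by multilinearity gives
\begin{equation*}
\left(\sum_{i=1}^k(\theta_i+\sqrt{-1}\partial\bar{\partial}\phi_i/2\pi)\right)^n = \sum_{j_1+\cdots+j_k=n} \frac{n!}{j_1!\cdots j_k!} \bigwedge_{i=1}^k (\theta_i+\sqrt{-1}\partial\bar{\partial}\phi_i/2\pi)^{j_i}.
\end{equation*}
The mixed Monge--Amp\`ere energies of $\mathcal{E}^1$-potentials are finite (see e.g.~\cite{darlectures,gzbook}), and each term can be bounded by products of individual energies $\mathrm{E}_i(\phi_i)$, giving $\mathrm{E}(\phi)>-\infty$, hence $\phi\in\mathcal{E}^1(-K_X)$. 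This is the Archimedean analogue of \cite[Lemma 7.10]{BJ22} used in the proof of Proposition \ref{ppsmctnna}.

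Next, to prove continuity with respect to the $d_1$ topology, I plan to first establish the estimate
\begin{equation*}
d_1(\mathsf{S}(\phi_1,\dots,\phi_k),\mathsf{S}(\psi_1,\dots,\psi_k))\le C\sum_{i=1}^k d_{1,i}(\phi_i,\psi_i)
\end{equation*}
for smooth potentials $\phi_i,\psi_i\in\mathcal{H}_i$, where $C>0$ depends only on $n$ and $L_1,\dots,L_k$. The proof of this estimate proceeds by expanding the difference of energies $\mathrm{E}(\mathsf{S}(\phi))-\mathrm{E}(\mathsf{S}(\psi))$ as a telescoping sum of integrals of $\phi_i-\psi_i$ against mixed Monge--Amp\`ere currents, together with the standard identification of $d_1$ with a linear combination of $\mathrm{I}$- and $\mathrm{J}$-type functionals (as in \cite{darlectures,gzbook}). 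Each mixed term is then controlled uniformly by the individual $d_{1,i}$-distances via the monotonicity of the Monge--Amp\`ere energy and H\"older-type inequalities for mixed Monge--Amp\`ere integrals.

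Finally, the estimate is extended to $\bm{\mathcal{E}}^1$ by approximation: any $\phi_i\in\mathcal{E}^1_i$ is the $d_{1,i}$-limit of a decreasing sequence $\{\phi_{i,j}\}_j\subset\mathcal{H}_i$ by \cite{darlectures}, and the corresponding sums $\mathsf{S}(\phi_{1,j},\dots,\phi_{k,j})$ decrease to $\mathsf{S}(\phi_1,\dots,\phi_k)$. The $d_1$-continuity of $\mathrm{E}$ along such sequences, combined with the smooth-case estimate, yields the extension as a Lipschitz (hence continuous) map on $\bm{\mathcal{E}}^1$. The main technical obstacle is the derivation of the multilinear $d_1$-estimate in the smooth case, which requires a careful bookkeeping of the mixed terms arising from expanding $(\sum\omega_{\phi_i})^n-(\sum\omega_{\psi_i})^n$, analogous to the argument underlying \cite[Theorem 7.34]{BJ22} but in the Archimedean setting.
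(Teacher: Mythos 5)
Your overall skeleton (well-definedness, then an estimate on smooth potentials, then extension by density) is reasonable, but the key estimate you build everything on is not available: the claimed bound $d_1(\mathsf{S}(\phi_1,\dots,\phi_k),\mathsf{S}(\psi_1,\dots,\psi_k))\le C\sum_{i=1}^k d_{1,i}(\phi_i,\psi_i)$ with $C$ depending only on $n$ and $L_1,\dots,L_k$ is false, so the map cannot be extended as a Lipschitz map. By \cite[Theorem 3.32]{darlectures}, $d_1\bigl(\sum_i\phi_i,\sum_i\psi_i\bigr)$ is comparable to $\int_X\bigl|\sum_i(\phi_i-\psi_i)\bigr|\bigl(\omega^n_{\sum\phi_i}+\omega^n_{\sum\psi_i}\bigr)$, and the measures appearing here involve \emph{all} the potentials, not only the $i$-th one. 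Taking $k=2$, $\psi_2=\phi_2$ with $\theta_2+\sqrt{-1}\partial\bar\partial\phi_2$ concentrating unit mass on a ball of radius $\varepsilon$, $\phi_1=0$ and $\psi_1$ a shallow logarithmic dip of slope $a\sim 1/\log(1/\varepsilon)$ reaching depth $-1$ on that ball, one gets $d_{1,1}(0,\psi_1)\to 0$ while $d_1(\phi_2,\psi_1+\phi_2)\gtrsim\int_X|\psi_1|\,\omega_{\phi_2}\approx 1$; the ratio blows up as $\varepsilon\to 0$. This does not contradict continuity (the base point $\phi_2$ escapes every $d_1$-bounded set), but it rules out any modulus of continuity that is uniform over all of $\bm{\mathcal{H}}$. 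The comparison results you would need — \cite[Corollary 3.50]{darlectures} in the Archimedean setting, or \cite[Theorem 7.34]{BJ22} which you invoke as an analogy — only give moduli (of H\"older or $f_C$ type) whose constants depend on $d_1$- or energy-bounds of the potentials involved; indeed, even in the non-Archimedean Proposition \ref{ppsmctnna} the constant depends on the limiting potentials. Consequently, your final step "Lipschitz on a dense subset, hence a Lipschitz extension" collapses, and the continuity must instead be proved sequentially, with estimates that are uniform only on bounded sets. This is exactly how the paper argues: a triangle inequality reducing to sums differing in one slot, then \cite[Theorem 3.32]{darlectures}, \cite[Corollary 3.50]{darlectures}, and \cite[Theorem 3.46 (ii)]{darlectures}, together with the elementary comparisons $\theta_i\ge s\,\omega$ and $\omega\le s\,\theta_i$ between the reference forms.

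A secondary, more minor point concerns well-definedness. Your multilinear expansion requires knowing that each mixed term $\int_X\phi\,\bigwedge_i(\theta_i+\sqrt{-1}\partial\bar\partial\phi_i)^{j_i}\wedge\cdots$ is finite and bounded below in terms of the individual energies; this is true but is precisely the nontrivial mixed-energy input that you leave unjustified ("can be bounded by products of individual energies" is not a statement found in \cite{darlectures,gzbook} as such). The paper sidesteps mixed terms altogether: it uses convexity of $\mathcal{E}^1_i$ to replace $\phi_i$ by $s\phi_i$, the inequality $\theta_i\ge s\omega$ to deduce $\phi_i\in\mathcal{E}^1(-K_X)$, and then convexity of $\mathcal{E}^1(-K_X)$ plus a direct comparison of energies to conclude $\sum_i\phi_i\in\mathcal{E}^1(-K_X)$. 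If you keep your route, you should supply the mixed-energy estimate explicitly; otherwise the scaling argument is the cleaner fix.
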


In the rest of the paper, we do not need the continuity of $\mathsf{S}$, but the statement is included here for completeness. This result can be regarded as an Archimedean version of Proposition \ref{ppsmctnna}.

\begin{proof}
	The map $\mathsf{S} :  \mathcal{H}_1 \times \cdots \times \mathcal{H}_k \to \mathcal{H} (-K_X)$ is well-defined since the reference K\"ahler metric on $\mathcal{H}^1 (-K_X)$ is given by $\sum_{j=1}^k \theta_j$. We prove that $\mathsf{S} :  \mathcal{E}^1_1 \times \cdots \times \mathcal{E}^1_k \to \mathcal{E}^1 (-K_X)$ is well-defined. We start by proving that if $\phi_i \in \mathcal{E}^1_i$, then $\phi_i \in \mathcal{E}^1 (-K_X)$. We set $\omega := \sum_{i=1}^k \theta_i$ for the reference metric, and assume first that $\sup_X \phi_i =0$ for all $1 \le i \le k$.

	It is obvious that $\phi_i$ is $\omega$-psh. By the definition of $\mathcal{E}^1_i$ (see e.g.~\cite[Definition 10.15]{gzbook}), we have
	\begin{equation*}
		\sum_{j=1}^n \int_X \phi_i  (\theta_i + \sqrt{-1} \partial \bar{\partial} \phi_i)^j \wedge \theta_i^{n-j} > - \infty.
	\end{equation*}
	Since $\mathcal{E}^1_i$ is convex \cite[Corollary 2.20]{darlectures}, we also have $s \phi_i \in \mathcal{E}^1_i$ for all $0 \le s \le 1$, meaning
	\begin{equation*}
		\sum_{j=1}^n \int_X \phi_i  (\theta_i + s \sqrt{-1} \partial \bar{\partial} \phi_i)^j \wedge \theta_i^{n-j} > - \infty.
	\end{equation*}
	for any $0 \le s \le 1$. We  take $s>0$ to be small enough so that $\theta_i \ge s \omega$, and hence
	\begin{equation*}
		\sum_{j=1}^n \int_X \phi_i  \left( s \omega + s \sqrt{-1} \partial \bar{\partial} \phi_i  \right)^j \wedge (s\omega)^{n-j} > \sum_{j=1}^n \int_X \phi_i  (\theta_j + s \sqrt{-1} \partial \bar{\partial} \phi_i)^j \wedge \theta_i^{n-j} > - \infty
	\end{equation*}
	which establishes $\phi_i \in \mathcal{E}^1 (-K_X)$ for all $i=1 , \dots , k$. Again by the convexity of $\mathcal{E}^1 (-K_X)$, we thus find $\sum_{i=1}^k \phi_i /k \in \mathcal{E}^1 (-K_X)$. Note on the other hand that $\sum_{i=1}^k \phi_i$ is $\omega$-psh by definition, with $\sup_X \left( \sum_{i=1}^k \phi_i \right) \le 0$, and hence we get $\sum_{i=1}^k \phi_i \in \mathcal{E}^1 (-K_X)$ by
	\begin{align*}
		&\frac{1}{k^{n+1}} \sum_{j=1}^n \int_X \sum_{i=1}^k \phi_i \left( \sum_{i=1}^k \left( \theta_i + \sqrt{-1} \partial \bar{\partial} \phi_i \right) \right)^j \wedge \omega^{n-j} \\
		&> \sum_{j=1}^n  \int_X  \sum_{j=1}^k \frac{\phi_j}{k}  \left( \sum_{j=1}^k \left( \theta_j + \frac{1}{k} \sqrt{-1} \partial \bar{\partial} \phi_j \right) \right)^j \wedge \omega^{n-j} > - \infty .
	\end{align*}
	The general case is obvious by considering $\phi_i - \sup_X \phi_i$.
	
	We prove the continuity. Pick $(\phi_1 , \dots , \phi_k ) \in \bm{\mathcal{E}}^1$ and a sequence $\{ (\psi^{(j)}_1 , \dots , \psi^{(j)}_k ) \}_{j=1}^{\infty} \subset \bm{\mathcal{E}}^1$ such that $d_{1,i}(\phi_i, \psi^{(j)}_i) \to 0$ as $j \to \infty$ for all $i = 1, \dots , k$. By the triangle inequality, we have
	\begin{align*}
		&d_1 \left( \sum_{i=1}^k \phi_i , \sum_{i=1}^k \psi^{(j)}_i \right) \\
		&\le d_1 \left( \sum_{i=1}^k \phi_i , \psi^{(j)}_1+ \sum_{i=2}^k \phi_i \right) + d_1 \left( \psi^{(j)}_1+ \sum_{i=2}^k \phi_i , \sum_{i=1}^k \psi^{(j)}_i \right) \\
		&\le d_1 \left( \sum_{i=1}^k \phi_i , \psi^{(j)}_1+ \sum_{i=2}^k \phi_i \right) + \sum_{m=1}^{k-1} d_1 \left( \sum_{i=1}^m \psi^{(j)}_i + \sum_{i=m+1}^k \phi_i ,  \sum_{i=1}^{m+1} \psi^{(j)}_i + \sum_{i=m+2}^k \phi_i \right) ,
	\end{align*}
	where $d_1$ is the metric on $\mathcal{E}^1(-K_X)$ with the reference metric $\omega$ and we decree $\sum_{i=m+2}^k \phi_i=0$ for $m = k-1$. Recalling \cite[Theorem 3.32]{darlectures}, it thus suffices to show
	\begin{equation*}
		\int_X |\phi_i - \psi^{(j)}_i| \omega^n_{v} \to 0
	\end{equation*}
	as $j \to \infty$, for any $i=1, \dots, k$ and for any potential $v$ which appears in the inequality above. We take a constant $C >0$ such that all the metrics appearing in the above inequality are within distance $C$ from the origin with respect to $d_1$. Then, \cite[Corollary 3.50]{darlectures} shows that there exists a continuous function $f_C : \mathbb{R}^+ \to \mathbb{R}^+$ with $f_C (0)=0$ such that
	\begin{equation*}
		\int_X |\phi_j - \psi^{(j)}_i| \omega^n_{v} \le f_C (d_1 (\phi_i , \psi^{(j)}_i)).
	\end{equation*}
	Again by \cite[Theorem 3.32]{darlectures}, $d_1 (\phi_i , \psi^{(j)}_i)$ is Lipschitz equivalent to
	\begin{equation*}
		\int_X |\phi_i - \psi^{(j)}_i| \omega^n_{\phi_i} + \int_X |\phi_i - \psi^{(j)}_i| \omega^n_{\psi^{(j)}_i}.
	\end{equation*}
	We now take $s >1$ large enough so that $\omega < s \theta_i$. We then find
	\begin{equation*}
		\int_X |\phi_j - \psi^{(j)}_i| \omega^n_{\phi_i} \le s^n \int_X |\phi_i - \psi^{(j)}_i| (\theta_i + \sqrt{-1} \partial \bar{\partial} \phi_i /s)^n \to 0
	\end{equation*}
	as $j \to \infty$, by \cite[Theorem 3.46 (ii)]{darlectures} and $d_{1,i} (\phi_i , \psi^{(j)}_i) \to 0$ which follows from the hypothesis, where we note $\phi_i /s \in \mathcal{E}^1_i$ by the convexity \cite[Corollary 2.20]{darlectures}. Similarly, from $d_{1,i} (\phi_i , \psi^{(j)}_i) \to 0$ as $j \to \infty$, we also have
	\begin{equation*}
		\int_X |\phi_i - \psi^{(j)}_i| \left( \theta_i + \sqrt{-1} \partial \bar{\partial} \psi^{(j)}_i /s \right)^n \to 0
	\end{equation*}
	by a diagonal argument for $j$ with \cite[Theorem 3.46 (ii)]{darlectures}. Thus we find $d_1 (\phi_i , \psi^{(j)}_i) \to 0$ as $j \to \infty$ as claimed.
\end{proof}

\begin{proposition} \label{ppkickedcptc}
	$\left(X ;\{L_i\}_{i=1}^k\right)$ admits a $\T_r$-invariant coupled K\"ahler--Einstein metric if and only if $\mathrm{D}^{\mathrm{cp}}$ is $\T$-coercive.
\end{proposition}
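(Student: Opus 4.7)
The plan is to merge two existing variational arguments: the equivariant variational characterization of K\"ahler--Einstein metrics ($k=1$) by Li \cite{Li} and Darvas--Zhang \cite{DZ} (see also \cite{kewei2}), and the coupled variational characterization with trivial automorphisms treated in \cite[\S 3]{HWN}. First, I would extend $\mathrm{D}^{\mathrm{cp}}$ to a $d_1$-lower semicontinuous functional on the $\T_r$-invariant part $(\bm{\mathcal{E}}^1)^{\T_r}$ of the completed product space, using the $d_1$-continuity of each Monge--Amp\`ere energy $\mathrm{E}_i$ on $\mathcal{E}^1_i$ together with the identification $\mathrm{L}^{\mathrm{cp}}(\phi_1,\dots,\phi_k)=\mathrm{L}(\sum_i \phi_i)$ of \eqref{eqlcplsm}, the continuity of the sum map $\mathsf{S}$ of Proposition \ref{ppsmctna}, and the known lower semicontinuity of the ordinary Ding $\mathrm{L}$-functional on $\mathcal{E}^1(-K_X)$ (as in \cite{BBEGZ, BBJ, darlectures, gzbook}).

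For the direction $(\Leftarrow)$, assume $\T$-coercivity \eqref{apexcoercpdg}. Pick a minimizing sequence $\{(\phi_1^{(m)},\dots,\phi_k^{(m)})\}$ for $\mathrm{D}^{\mathrm{cp}}$ in $\bm{\mathcal{H}}^{\T_r}$. Since the $L_i$ are $\T$-linearized, $\mathrm{D}^{\mathrm{cp}}$ is $\T$-invariant, so after replacing each entry by a $\T$-translate that approximately attains $\mathrm{J}_{\mathrm{cp},\T}$ and normalizing suprema to zero, coercivity provides a uniform bound on $\sum_i \mathrm{J}_i(\phi_i^{(m)})$; hence each $\phi_i^{(m)}$ lies in a $d_{1,i}$-ball in $(\mathcal{E}^1_i)^{\T_r}$. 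Standard $d_1$-compactness (\cite[Theorem 2.17, 2.19]{darlectures}) yields a limit $(\phi_1^\infty,\dots,\phi_k^\infty)\in(\bm{\mathcal{E}}^1)^{\T_r}$ minimizing the extended $\mathrm{D}^{\mathrm{cp}}$. Computing the Euler--Lagrange equation (cf.~\cite[\S 3]{HWN}) gives the coupled Monge--Amp\`ere system weakly, and the regularity theory of \cite[Theorem 1.1]{HWN} and \cite{berman-boucksom-guedj-zeriahi} upgrades $(\phi_1^\infty,\dots,\phi_k^\infty)$ to a smooth $\T_r$-invariant coupled K\"ahler--Einstein metric.

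For the direction $(\Rightarrow)$, suppose a $\T_r$-invariant coupled K\"ahler--Einstein metric exists. By \cite[Proposition 3.1]{HWN} the coupled Ding functional $\mathrm{D}^{\mathrm{cp}}$ is convex along product finite-energy weak geodesics in $\bm{\mathcal{E}}^1$ (Berndtsson's convexity of $\mathrm{L}$ applied via \eqref{eqlcplsm}, plus linearity of each $\mathrm{E}_i$ along weak geodesics), and on $(\bm{\mathcal{E}}^1)^{\T_r}$ it is strictly convex modulo the action of $\T$, since $\T\subset\mathrm{Aut}_0(X)$ is maximal. The equivariant Darvas--Rubinstein principle, in the form developed by Darvas--Zhang \cite[Theorem 2.10]{DZ} and adapted to the torus-equivariant K\"ahler--Einstein case in \cite[\S 3]{kewei2} and \cite[\S 5]{Li}, then converts existence of a minimizer together with this strict convexity modulo $\T$ into the $\T$-coercivity estimate \eqref{apexcoercpdg}.

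The main obstacle is the $(\Rightarrow)$ direction: adapting the equivariant Darvas--Rubinstein coercivity principle from the single-polarization $k=1$ setting to the coupled product setting. This requires (i) the correct notion of finite-energy weak geodesics in the product space $\bm{\mathcal{E}}^1$ together with their compatibility with the diagonal $\T$-action, and (ii) identifying the infinitesimal kernel of the Hessian of $\mathrm{D}^{\mathrm{cp}}$ at a coupled K\"ahler--Einstein metric with the real Lie algebra of $\T$, which in turn relies on a uniqueness-up-to-$\mathrm{Aut}_0(X)$-action statement for coupled K\"ahler--Einstein metrics generalizing \cite[Theorem 1.2]{HWN} (proved there under a finite automorphism assumption) to the maximal torus case.
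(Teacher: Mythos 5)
Your overall route is essentially the paper's: both implications are obtained from the Darvas--Rubinstein properness principle \cite[Theorem 3.4]{DR}, following Li's proof of the $k=1$ case \cite[Theorem 2.19]{Li}, with the coupled inputs (convexity of $\mathrm{D}^{\mathrm{cp}}$ along geodesics, regularity of weak minimizers, uniqueness) supplied by Hultgren--Witt Nystr\"om. Two points in your write-up should be corrected, though neither is fatal.

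First, the ``main obstacle'' you single out is not actually open. Uniqueness of coupled K\"ahler--Einstein metrics up to the action of $\Aut_0(X)$ is proved in \cite[Theorem 1.5]{HWN} with no finiteness assumption on the automorphism group, and the existence of a cKE metric forces $\Aut_0(X)$ to be reductive \cite[Corollary 1.6]{HWN}. Moreover, what the Darvas--Rubinstein framework requires (their property (P5)) is not ``strict convexity modulo $\T$'' or an identification of the Hessian kernel with the Lie algebra of $\T$, but rather that any two $\T_r$-invariant minimizers differ by an element of $\T$; this follows from the HWN uniqueness together with the fact that the centralizer of the maximal torus $\T$ in the reductive group $\Aut_0(X)$ is $\T$ itself, exactly as in \cite[Proof of Theorem 2.19]{Li}. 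Second, in your direct minimization for the coercive direction, $d_1$-balls are not $d_1$-compact: one needs weak ($L^1$) compactness of sup-normalized potentials with bounded energy combined with lower semicontinuity of $\mathrm{D}^{\mathrm{cp}}$ (upper semicontinuity of each $\mathrm{E}_i$ and continuity of $\mathrm{L}$ along such limits), after which smoothness of the weak minimizer is \cite[Proposition 2.8 and Theorem 2.9]{HWN}; alternatively this direction comes for free, since once (A1)--(A4) and (P1)--(P7) are verified, \cite[Theorem 3.4]{DR} yields both implications simultaneously. With these corrections your argument coincides with the paper's proof.
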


\begin{remark}
Note that the existence of a $\T_r$-invariant coupled K\"ahler--Einstein metric implies that $\Aut_0(X)$ is necessarily reductive by \cite[Corollary 1.6]{HWN}.	
\end{remark}

The proof of Proposition \ref{ppkickedcptc} is an almost word-by-word repetition of \cite[Proof of Theorem 2.19]{Li} and relies on \cite[Theorem 3.4]{DR} by Darvas--Rubinstein, but we provide some details for the reader's convenience. It suffices to check (A1)--(A4) and (P1)--(P7) in \cite[\S 3]{DR}. As in \cite[Proof of Theorem 2.19]{Li}, we set
\begin{equation*}
	\bm{\mathcal{R}}:= (\mathcal{E}_1^1 \cap L^{\infty} (X))^{\T_r} \times \cdots \times  (\mathcal{E}_k^1 \cap L^{\infty} (X))^{\T_r}, \quad \overline{\bm{\mathcal{R}}}:= (\mathcal{E}_1^1 )^{\T_r} \times \cdots \times  (\mathcal{E}_k^1)^{\T_r},
\end{equation*}
where we set $\bm{\mathcal{M}}$ to be the set of (smooth) $\T_r$-invariant coupled K\"ahler--Einstein metrics. Noting that (A2) follows from \cite[Lemma 3.1]{HWN}, (A1)--(A4) are immediate. (P1) again follows from \cite[Lemma 3.1]{HWN}, noting that $\mathrm{D}^{\mathrm{cp}}$ is continuous along geodesics since it is lower semicontinuous and convex along geodesics. (P2) is straightforward by adapting \cite[Proof of Proposition 5.27]{DR} to the coupled case. (P3) follows from the smoothness of the weak solution \cite[Proposition 2.8 and Theorem 2.9]{HWN}. (P4) and (P7) are obvious. (P5) can be proved exactly as in \cite[Proof of Theorem 2.19]{Li}, by recalling the uniqueness of coupled K\"ahler--Einstein metrics up to $\mathrm{Aut}_0(X)$ \cite[Theorem 1.5]{HWN} and noting that $\T$ is reductive whose center is $\T$ itself. (P6) follows from \cite[Proposition 6.8]{DR}.


We first prove the easier direction of Theorem \ref{theorem:mainmc}, which generalizes \cite[Theorem 1.15]{HWN}:

\begin{thm}\label{theorem:apmnthm}
If $\left(X ;\{L_i\}_{i=1}^k\right)$ admits a $\T_r$-invariant coupled K\"ahler--Einstein metric, then 
it is $\T$-reduced uniformly coupled Ding stable.
\end{thm}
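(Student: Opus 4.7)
The strategy is the standard one that goes back to Berman--Boucksom--Jonsson \cite{BBJ} in the $k=1$ case: convert the hypothesized Archimedean coercivity to a non-Archimedean stability inequality by evaluating along a geodesic ray, and use slope formulas to identify the limits with the coupled Ding invariant and the coupled $\JJ_{\T}$-norm.

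First, Proposition \ref{ppkickedcptc} immediately upgrades the existence of a $\T_r$-invariant cKE metric to the $\T$-coercivity of $\mathrm{D}^{\mathrm{cp}}$: there is $\varepsilon>0$ such that (\ref{apexcoercpdg}) holds on all of $\bm{\mathcal{H}}^{\T_r}$. By standard approximation in the $d_1$-metric, this extends to $(\bm{\mathcal{E}}^1)^{\T_r}$. Given $\T$-equivariant test configurations $(\sX_i,\sL_i)/\A^1$ of $(X,L_i)$ (which we may assume normal by Lemma \ref{lemma:ding-normal}), we associate to each one the (maximal) $\T_r$-invariant weak geodesic ray $\phi_i(s)\in(\mathcal{E}^1_i)^{\T_r}$ emanating from a fixed reference potential, whose non-Archimedean limit is the NA metric $\phi_i^{\mathrm{NA}}\in\mathcal{H}^{\mathrm{NA}}_i$ represented by $(\sX_i,\sL_i)$; see \cite{BBJ,BJ22}. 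Plugging this $k$-tuple of rays into (\ref{apexcoercpdg}), dividing by $s$, and letting $s\to\infty$ reduces the proof to identifying the two slopes.

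For the Ding slope, the key identity is equation (\ref{eqlcplsm}): $\mathrm{L}^{\mathrm{cp}}(\phi_1,\ldots,\phi_k)=\mathrm{L}(\sum_i\phi_i)$. Combined with Proposition \ref{ppsmctna}, the sum $\sum_i \phi_i(s)$ is a weak geodesic ray in $\mathcal{E}^1(-K_X)$ whose NA limit is $\sum_i \phi_i^{\mathrm{NA}}$; by Lemma \ref{lmsmnatc} this NA limit is represented by the sum configuration $(\sX,\sL)$. Applying the standard slope formulas for $\mathrm{L}$ and $\mathrm{E}_i$ \cite{BBJ,BJ22} then gives
\[
\lim_{s\to\infty}\tfrac{1}{s}\mathrm{D}^{\mathrm{cp}}(\phi_1(s),\ldots,\phi_k(s))
=\Ding(\sX,\sL)+\tfrac{(\bar{\sL}^{\cdot n+1})}{(n+1)(L^{\cdot n})}-\sum_{i=1}^k\tfrac{(\bar{\sL_i}^{\cdot n+1})}{(n+1)(L_i^{\cdot n})}
=\Ding\left(\{\sX_i,\sL_i\}_{i=1}^k\right),
\]
where the last equality is Definition \ref{definition:ding} \eqref{definition:ding23}.

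For the $\JJ_{\T}^{\cp}$ slope, fix any $\xi\in N_\Q(\T)$ and let $\sigma_\xi(s)\in\T$ be the one-parameter subgroup $e^{-s\xi}$ (after rescaling). Pulling back $\phi_i(s)$ by $\sigma_\xi(s)$ gives a geodesic ray whose NA limit is the $\xi$-twist $\phi_{i,\xi}^{\mathrm{NA}}$, corresponding via Example \ref{example:twisted-tc} to the $\xi$-twisted test configuration $(\sX_{i,\xi},\sL_{i,\xi})$; the slope of $\sum_i\mathrm{J}_i(\sigma_\xi(s)^*\phi_i(s))$ then equals $\sum_i \JJ(\sX_{i,\xi},\sL_{i,\xi})$. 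Varying $\xi$ and using density of $N_\Q(\T)$ in $N_\R(\T)$ together with Definition \ref{definition:ding} \eqref{definition:ding22} yields
\[
\liminf_{s\to\infty}\tfrac{1}{s}\mathrm{J}_{\mathrm{cp},\T}(\phi_1(s),\ldots,\phi_k(s))=\JJ_{\T}^{\cp}\left(\{\sX_i,\sL_i\}_{i=1}^k\right).
\]
Combining the two slope identifications with (\ref{apexcoercpdg}) gives $\Ding(\{\sX_i,\sL_i\}_{i=1}^k)\geq\varepsilon\cdot\JJ_{\T}^{\cp}(\{\sX_i,\sL_i\}_{i=1}^k)$ as desired.

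The main technical obstacle will be the $\JJ_{\T}^{\cp}$ slope formula: one must justify the interchange of the infimum over $\sigma\in\T$ with the limit as $s\to\infty$. This requires a compactness argument showing that the effective infimum is taken over a bounded region in $N_\R(\T)$ both asymptotically and at each finite $s$; the needed coercivity of $\sigma\mapsto\sum_i\mathrm{J}_i(\sigma^*\phi_i(s))$, uniform in $s$, follows from the vanishing of coupled $\T$-Futaki characters (which is automatic here because a cKE metric exists, or alternatively deducible from the coercivity of $\mathrm{D}^{\mathrm{cp}}$) in the same manner as Proposition \ref{proposition:technical} \eqref{proposition:technical1} on the NA side.
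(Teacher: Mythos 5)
Your proposal is correct in substance and follows essentially the same route as the paper: upgrade the cKE metric to $\T$-coercivity of $\mathrm{D}^{\mathrm{cp}}$ via Proposition \ref{ppkickedcptc}, evaluate the coercivity inequality along rays attached to a $k$-tuple of $\T$-equivariant test configurations, and identify the two slopes with $\Ding\left(\{\sX_i,\sL_i\}_{i=1}^k\right)$ (through the sum configuration) and $\JJ^{\cp}_{\T}$ (through twists). The only differences are in the scaffolding: the paper works with the smooth Fubini--Study subgeodesic rays coming from the test configurations and quotes the slope formulae of \cite[Theorem 3.6]{BHJ2} and \cite[Theorem 19]{hashimoto} (together with Proposition \ref{proposition:hashimoto} to match Hashimoto's invariant with Definition \ref{definition:ding}), and for the reduced $\mathrm{J}$-slope it invokes the coupled generalization of \cite[Theorem 3.14]{Li}, which is exactly the inf/limit interchange you flag as the main obstacle; your plan of reproving that interchange via a compactness/coercivity argument in $\xi$ is viable but is the step you would actually have to carry out. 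Two small cautions: the sum $\sum_i\phi_i(s)$ of geodesic rays is in general only a subgeodesic, not a geodesic, so your Ding-slope identification should be phrased via the NA limit and the slope results valid for maximal (Phong--Sturm type) rays or for the smooth rays, rather than via geodesicity of the sum; and after proving the inequality for ample (normal) representatives you should add the remark, as the paper does, that all quantities depend only on the non-Archimedean metrics, so the inequality extends to arbitrary (semiample) $\T$-equivariant test configurations.
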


\begin{proof}
By Proposition \ref{ppkickedcptc}, it suffices to derive the $\T$-reduced uniform coupled Ding stability from the condition (\ref{apexcoercpdg}).

It is well-known that to each test configuration we can associate a ray of smooth K\"ahler potentials called a subgeodesic ray \cite[\S 3.1]{BHJ2} (also called a psh ray in \cite[Definition 1.3]{BBJ}), such that the non-Archimedean limit \cite[Definition 3.1]{BHJ2} of the ray agrees with the non-Archimedean metric represented by the given test configuration; see \cite[\S 3.1]{BHJ2} for more details. In a more down-to-earth terminology, for each $k$-tuple of ample test configurations $\left\{(\sX_i,\sL_i)\right\}_{i=1}^k$ we have a ray $\{ (\phi_1 (t) , \dots , \phi_k (t) ) \}_{t \ge 0} \subset \bm{\mathcal{H}}$ of smooth K\"ahler potentials, all of which are in fact Fubini--Study metrics explicitly defined in terms of the generators of the $\G_m$-action (see e.g.~\cite[Lemma 9 and Theorem 19]{hashimoto}), such that the following slope formulae hold:
\begin{equation} \label{apeqjisf}
	\lim_{t \to + \infty} \frac{\mathrm{J}_i (\phi_i (t))}{t} = \JJ\left(\sX_i,\sL_i\right)
\end{equation}
for each $1 \le i \le k$ by \cite[Theorem 3.6]{BHJ2}, and
\begin{equation} \label{apeqcpdsf}
	\lim_{t \to + \infty} \frac{\mathrm{D}^{\mathrm{cp}} (\phi_1 (t) , \dots , \phi_k (t))}{t} = \Ding\left(\{\sX_i,\sL_i\}_{i=1}^k\right)
\end{equation}
by \cite[Theorem 19]{hashimoto}. There are two important technical points in the formula (\ref{apeqcpdsf}). Firstly, the coupled Ding invariant in (\ref{apeqcpdsf}) is defined slightly differently to Definition \ref{definition:ding}, since the test configuration used in (\ref{apeqcpdsf}) (see \cite[Definition 19]{hashimoto}) is $(\mathcal{Y} , \mathcal{L}_{\mathcal{Y}})$ generated by the $\G_m$-actions of $\left\{(\sX_i,\sL_i)\right\}_{i=1}^k$, but it agrees with the one in Definition \ref{definition:ding} by Proposition \ref{proposition:hashimoto}. The second and more minor point is that \cite[Definition 19]{hashimoto} is defined for a $k$-tuple of very ample test configurations with exponent $m$, but the definition of the coupled Ding invariant therein involves re-scaling by the exponent and hence agrees with Definition \ref{definition:ding}. Thus, the right hand side of (\ref{apeqcpdsf}) is indeed the coupled Ding invariant defined in this paper.

Furthermore, if we have a $k$-tuple of $\T$-equivariant ample test configurations $\left\{(\sX_i,\sL_i)\right\}_{i=1}^k$, we may choose the ray $\{ (\phi_1 (t) , \dots , \phi_k (t) ) \}_{t \ge 0}$ to be contained in $\bm{\mathcal{H}}^{\T_r}$; in terms of the explicit formulation using the generators of the $\G_m$-action, this fact can be proved by noting that the generator is $\T_r$-invariant if it commutes with the $\T$-action (see e.g.~\cite[Lemma 5 and the discussion that follows]{hashimoto}). Moreover, by considering $\sum_{i=1}^k \mathrm{J}_i$ in the proof of \cite[Theorem 3.14]{Li} and together with the slope formula (\ref{apeqjisf}), we find that \cite[Theorem 3.14]{Li} generalizes to the coupled case so that
\begin{equation} \label{apeqcpjsf}
	\lim_{t \to + \infty} \frac{\mathrm{J}_{\mathrm{cp}, \T} (\phi_1 , \dots , \phi_k )}{t} = \inf_{\xi \in N_{\Q} (\T)} \sum_{i=1}^k \JJ \left(\sX_{i, \xi},\sL_{i,\xi} \right).
\end{equation}

With the preparation above, suppose that we are given a $k$-tuple of $\T$-equivariant ample test configurations $\left\{(\sX_i,\sL_i)\right\}_{i=1}^k$. The condition (\ref{apexcoercpdg}) implies
\begin{equation*}
	\mathrm{D}^{\mathrm{cp}} (\phi_1  (t) , \dots , \phi_k (t) ) \ge \varepsilon \mathrm{J}_{\mathrm{cp}, \T} (\phi_1 (t), \dots , \phi_k (t) ) - \frac{1}{\varepsilon}
\end{equation*}
for any $t \ge 0$ in the associated ray $\{ (\phi_1 (t) , \dots , \phi_k (t) ) \}_{t \ge 0} \subset \bm{\mathcal{H}}^{\T_r}$. Dividing both sides of this inequality by $t$ and taking the limit $t \to + \infty$, we get
\begin{equation} \label{apeqcudgst}
	\Ding\left(\{\sX_i,\sL_i\}_{i=1}^k\right) \geq \varepsilon \inf_{\xi \in N_{\Q} (\T)} \sum_{i=1}^k \JJ \left(\sX_{i, \xi},\sL_{i,\xi} \right) = \varepsilon\cdot
\JJ^{\cp}_{\T} \left(\{\sX_i,\sL_i\}_{i=1}^k\right)
\end{equation}
by the slope formulae (\ref{apeqcpdsf}) and (\ref{apeqcpjsf}), for any $k$-tuple of $\T$-equivariant ample test configurations $\left\{(\sX_i,\sL_i)\right\}_{i=1}^k$.

We note that the above inequality (\ref{apeqcudgst}) establishes the $\T$-reduced coupled uniform Ding stability. Indeed, all the terms appearing in the coupled Ding invariant depend only on the non-Archimedean metric on $L_i$ represented by $(\sX_i,\sL_i)$, for each $1 \le i \le k$, by \cite[\S 6.1 and \S 7]{BHJ}, \cite[Theorems 3.6 and 3.7]{BHJ2}, and Lemma \ref{lemma:aplmstcna}. Since each non-Archimedean metric contains a unique ample normal model \cite[Lemma 6.3]{BHJ}, we thus conclude that the inequality (\ref{apeqcudgst}) holds for any $k$-tuple of $\T$-equivariant semiample test configurations $\left\{(\sX_i,\sL_i)\right\}_{i=1}^k$, establishing the claimed result.
\end{proof}

We now prove the converse, which is generally considered to be harder. The key result is the following geodesic stability.
	
\begin{thm} \label{thgdsttc}
	Suppose that $\left(X;\{L_i\}_{i=1}^k\right)$ has vanishing coupled $\T$-Futaki characters. If $\mathrm{D}^{\mathrm{cp}}$ is not $\T$-coercive, there exists a $k$-tuple of maximal geodesic rays $\{ (\phi_1 (t) , \dots , \phi_k (t) ) \}_{t \ge 0} \subset (\bm{\mathcal{E}}^1)^{\T_r}$, with $\sup_X \phi_1 (t) = \cdots = \sup_X \phi_k (t)=0$ for all $t \ge 0$, and a constant $C \ge 0$ independent of $t$ such that
	\begin{equation*}
		\frac{\mathrm{D}^{\mathrm{cp}} (\phi_1 (t) , \dots , \phi_k (t) ) -C }{t} \le 0
	\end{equation*}
	for all $t > 0$, and
	\begin{equation*}
		\lim_{t \to + \infty} \frac{\mathrm{J}_{\mathrm{cp}, \T} (\phi_1 (t) , \dots , \phi_k (t) )}{t} > 0.
	\end{equation*}
\end{thm}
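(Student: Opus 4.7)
The plan is to argue by contradiction, adapting the maximal geodesic ray construction of Berman--Boucksom--Jonsson \cite{BBJ} and Darvas--Rubinstein \cite{DR}, together with the torus-reduced (calibrated) strengthening due to Chi Li \cite{Li}, to the coupled setting. The failure of $\T$-coercivity furnishes a sequence $(\phi_1^{(j)}, \dots, \phi_k^{(j)}) \in \bm{\mathcal{H}}^{\T_r}$ with $\mathrm{D}^{\mathrm{cp}}(\phi_1^{(j)},\dots,\phi_k^{(j)}) < \tfrac{1}{j} \mathrm{J}_{\mathrm{cp},\T}(\phi_1^{(j)},\dots,\phi_k^{(j)}) - j$. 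For each $j$ one picks $\sigma_j \in \T$ with $\sum_i \mathrm{J}_i(\sigma_j^* \phi_i^{(j)}) \le \mathrm{J}_{\mathrm{cp},\T}(\phi_1^{(j)}, \dots, \phi_k^{(j)}) + 1$ and replaces $\phi_i^{(j)}$ by $\sigma_j^* \phi_i^{(j)}$; the $\T$-invariance of $\mathrm{D}^{\mathrm{cp}}$ on $\T_r$-invariant potentials (which uses the vanishing of coupled $\T$-Futaki characters to keep $\mathrm{L}^{\mathrm{cp}}$ and $\sum_i \mathrm{E}_i$ separately $\T$-invariant in view of (\ref{eqlcplsm})) ensures that the destabilizing inequality is preserved. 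Normalizing $\sup_X \phi_i^{(j)} = 0$ then yields a calibrated destabilizing sequence in $\bm{\mathcal{H}}^{\T_r}$ on which $\mathrm{J}_{\mathrm{cp}} := \sum_i \mathrm{J}_i$ differs from $\mathrm{J}_{\mathrm{cp},\T}$ by $O(1)$.

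Set $T_j := \mathrm{J}_{\mathrm{cp}}(\phi_1^{(j)}, \dots, \phi_k^{(j)})$. The $d_1$-lower semicontinuity of $\mathrm{D}^{\mathrm{cp}}$ (established in \cite{HWN}), combined with Aubin-type $d_1$-compactness in each $\mathcal{E}^1_i$ under the normalization $\sup = 0$ (see \cite{darlectures, gzbook}), forces $T_j \to \infty$: otherwise the destabilizing sequence would be $d_1$-precompact in $\bm{\mathcal{E}}^1$ and $\mathrm{D}^{\mathrm{cp}}(\phi_1^{(j)}, \dots, \phi_k^{(j)}) \to -\infty$ would contradict the lower semicontinuity at the limit point. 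For each $j$, let $\psi^{(j)}(t) = (\psi_1^{(j)}(t),\dots,\psi_k^{(j)}(t))$, $t \in [0,T_j]$, be the product of Mabuchi $d_1$-geodesics in $\bm{\mathcal{E}}^1$ joining $(0,\dots,0)$ to $(\phi_1^{(j)},\dots,\phi_k^{(j)})$, parametrized so that $\sum_i \mathrm{J}_i(\psi^{(j)}_i(t)) \asymp t$. A diagonal extraction using $d_1$-precompactness of normalized geodesic rays (Darvas \cite{darlectures}) in each factor yields, along a subsequence, a limit maximal geodesic ray $(\phi_1(t),\dots,\phi_k(t)) \in (\bm{\mathcal{E}}^1)^{\T_r}$ with $\sup_X \phi_i(t) = 0$. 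The convexity of $\mathrm{D}^{\mathrm{cp}}$ along geodesics (\cite{HWN}) gives
\[
\mathrm{D}^{\mathrm{cp}}(\psi^{(j)}(t)) \le \mathrm{D}^{\mathrm{cp}}(0,\dots,0) + \tfrac{t}{T_j}\bigl(\mathrm{D}^{\mathrm{cp}}(\phi_1^{(j)},\dots,\phi_k^{(j)}) - \mathrm{D}^{\mathrm{cp}}(0,\dots,0)\bigr),
\]
and passing to the limit $j \to \infty$ using the $d_1$-lower semicontinuity of $\mathrm{D}^{\mathrm{cp}}$ yields the slope bound $\mathrm{D}^{\mathrm{cp}}(\phi_1(t),\dots,\phi_k(t)) - C \le 0$ for all $t > 0$, with $C := \mathrm{D}^{\mathrm{cp}}(0,\dots,0)$.

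The main obstacle, as in the $k=1$ K\"ahler--Einstein case of \cite{Li}, is to establish the strict positivity of the $\mathrm{J}_{\mathrm{cp},\T}$-slope of the limit ray. The calibration yields $\mathrm{J}_{\mathrm{cp}}(\phi_1^{(j)},\dots,\phi_k^{(j)}) - \mathrm{J}_{\mathrm{cp},\T}(\phi_1^{(j)},\dots,\phi_k^{(j)}) = O(1)$, and the unit $\mathrm{J}_{\mathrm{cp}}$-speed parametrization forces $\mathrm{J}_{\mathrm{cp}}(\psi^{(j)}(t))/t \to 1$ along a subsequence, so the $\mathrm{J}_{\mathrm{cp}}$-slope of the limit ray is positive. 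To transfer this to the $\mathrm{J}_{\mathrm{cp},\T}$-slope one needs a coupled analogue of Hisamoto's equivariant compactness lemma (the technical core of \cite[Theorem 3.14]{Li}): the one-parameter subgroup $\xi \in N_\R(\T)$ almost realizing the infimum in $\mathrm{J}_{\mathrm{cp},\T}(\phi_1(t),\dots,\phi_k(t))$ must stay bounded in $\xi/t$ as $t \to \infty$, so that the infimum is attained in a compact subset of $N_\R(\T)$. This reduces to a properness statement for $\xi \mapsto \sum_i \mathrm{J}_i((\phi_i)_\xi)$ (with fixed $\phi_i$) modulo the subtorus acting trivially on $\bm{\mathcal{E}}^1$; its non-Archimedean counterpart is exactly the properness provided by Lemma \ref{lemma:sum-basics}\eqref{lemma:sum-basics4}. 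Together with the resulting calibrated slope identity $\lim_{t \to \infty} \mathrm{J}_{\mathrm{cp}}(\phi_1(t),\dots,\phi_k(t))/t = \lim_{t \to \infty} \mathrm{J}_{\mathrm{cp},\T}(\phi_1(t),\dots,\phi_k(t))/t$, the required strict positivity follows.
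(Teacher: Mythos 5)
Your overall skeleton (destabilizing sequence from non-coercivity, calibration as in \cite[Lemma 2.15]{Li}, unit-speed geodesic segments, a limit ray, convexity to get the bound on the $\mathrm{D}^{\mathrm{cp}}$-slope) matches the paper's strategy, which follows \cite{DZ} and \cite{kewei2}, but three steps you assert are precisely the nontrivial points and they do not follow as stated. First, the compactness you invoke is not $d_1$-precompactness: under $\sup_X\phi^{(j)}_i=0$ and bounded energy one only has weak ($L^1$) compactness, and the limit of the finite-energy geodesic segments is a priori only a sublinear \emph{subgeodesic} ray $\tilde{\phi}_i(t)$, not a maximal geodesic ray. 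Upgrading to maximal rays requires passing to the maximal geodesic rays with the same non-Archimedean limits via \cite[Theorem 6.6]{BBJ} and then \emph{re-verifying} that the $\mathrm{L}$- and $\mathrm{E}$-slope inequalities survive the replacement (the $\mathrm{L}$-slope because it only depends on the non-Archimedean limit, the $\mathrm{E}$-slope by maximality and monotonicity); the paper even remarks that maximizing each factor separately is not obviously harmless, so this cannot be skipped. Second, "the unit $\mathrm{J}_{\mathrm{cp}}$-speed parametrization forces the $\mathrm{J}_{\mathrm{cp}}$-slope of the limit ray to be positive" is false as an argument: under weak convergence the energies can collapse, and in the limit one only retains $0\ge \mathrm{E}_i(\tilde{\phi}_i(t))\ge -t$. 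Nontriviality of the limit ray has to be extracted from the inequality $\mathrm{L}\left(\sum_{i=1}^k\phi_i(t)\right)\le -lt$, which survives the limit because $\mathrm{L}$ is $L^1$-continuous and convex along subgeodesics \cite{Ber}, and then yields $-\sum_i\mathrm{E}^{\mathrm{NA}}_i(\phi^{\mathrm{NA}}_i)>0$ via \cite[Corollary 10.5]{BJ22}; without carrying the $\mathrm{L}$-inequality through the limit you have no lower bound on the slope at all.

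Third, and most seriously, the transfer from the $\mathrm{J}_{\mathrm{cp}}$-slope to the $\mathrm{J}_{\mathrm{cp},\T}$-slope is exactly the crux, and your proposal replaces it by an unproven "coupled Hisamoto compactness lemma" plus the asserted identity of the two slopes along the limit ray. Calibration of the \emph{endpoints} $\phi^{(j)}_i$ does not pass to the limit ray for free: a ray generated by a one-parameter subgroup of $\T$ has positive $\mathrm{J}_{\mathrm{cp}}$-slope and zero $\mathrm{J}_{\mathrm{cp},\T}$-slope, so some propagation of calibration along the ray must be proved, and Lemma \ref{lemma:sum-basics} \eqref{lemma:sum-basics4} (a statement about non-Archimedean $\JJ$-norms of twisted product configurations) does not by itself control the Archimedean function $\xi\mapsto\sum_i\mathrm{J}_i\bigl(\phi_i^{\xi}(t)\bigr)$ uniformly in $t$. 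The paper's route is different: it uses the vanishing coupled $\T$-Futaki characters through Lemma \ref{lmcpfich2} (twist-invariance of each $\mathrm{E}_i$ and of $\mathrm{L}^{\mathrm{cp}}$), the comparison $\phi^{\xi}_i(t)\ge\tilde{\phi}^{\xi}_i(t)$ coming from maximality, and the linearity of $\mathrm{E}_i$ along geodesics, to prove directly that $\sum_i\mathrm{J}_i\bigl(\phi^{\xi}_i(t)\bigr)\ge at+o(t)$ uniformly in $\xi\in N_{\R}(\T)$, following \cite{LiCsck,kewei2}. If you want to pursue your alternative via an equivariant properness/compactness statement in $N_{\R}(\T)$, that statement must be formulated and proved (it is not in the paper and is not a formal consequence of the non-Archimedean Lemma \ref{lemma:sum-basics} \eqref{lemma:sum-basics4}); as written, the proposal has a genuine gap at this step.
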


\begin{remark}
Results of this type are often proved by comparing the Ding functional with the $K$-energy, since the commonly known proof such as \cite[Theorem 2.16]{BBJ} rests on the compactness result involving the entropy (see \cite[Definition 2.12]{BBJ}). Surprisingly, Darvas--Zhang \cite[Theorem 5.3]{DZ} provided a proof which uses another compactness result for psh functions, and hence circumvents the use of the $K$-energy. It is still natural to find an approach using the coupled version of $K$-energy, but this problem will be treated in a separate paper.
\end{remark}

Before we start the proof of the theorem above, we first explain some preliminary results involving the holomorphic vector fields. Suppose that we write $\sigma_{\xi} : \G_m \to \T$ for the 1-parameter subgroup generated by $\xi \in N_{\Z} (\T )$, and that we extend the action to $\xi \in N_{\R} (\T )$ by the matrix exponential function. We define a K\"ahler potential $ \psi^{\xi}_{i,t}$ by
\begin{equation*}
	\sigma_{\xi } (t)^* \theta_i = \theta_i + \sqrt{-1} \partial \bar{\partial} \psi^{\xi}_{i,t}
\end{equation*}
for $1 \le i \le k$, and it is well-known that $\{ \psi^{\xi}_{i,t} \}_{t \in \mathbb{R}}$ defines a geodesic line in $\mathcal{H}^{\T_r}_i$. Writing $V_{\xi} \in H^0 (X, T^{1,0}X)$ for the holomorphic vector field generated by $\xi$, it is well-known that $\dot{\psi}^{\xi}_{i,t}$ is the holomorphy potential of $V_{\xi}$ with respect to $\theta_i$. With this notation, the coupled $\T$-Futaki character $\mathrm{Fut}^{\mathrm{cp}} : H^0 (X, T^{1,0}X) \to \mathbb{C}$ was defined in \cite[Definition 1.1]{FZ} as
\begin{equation*}
	\mathrm{Fut}^{\mathrm{cp}} (V_{\xi}) := \sum_{i=1}^n \frac{\int_X \dot{\psi}^{\xi}_{i,t} (\sigma_{\xi}(t)^* \theta_i)^n}{\int c_1 (L_i)^n}.
\end{equation*}

We prove two auxiliary lemmas. The first lemma can be regarded as an analytic version of Corollary \ref{corollary:twist-ding}.

\begin{lemma} \label{lmcpfich1}
	The coupled $\T$-Futaki character vanishes for all $\xi \in N_{\R} (\T )$ if and only if $\alpha_{\bc}^{\cp} = 0$. Moreover, with respect to the notation above, we have
	\begin{equation*}
		\frac{d}{dt} \sum_{i=1}^k \mathrm{E}_i (\psi^{\xi}_{i,t}) = \frac{d}{dt} \mathrm{D}^{\mathrm{cp}} (\psi^{\xi}_{1,t} , \dots , \psi^{\xi}_{k,t}) = \mathrm{Fut}^{\mathrm{cp}} (V_{\xi}) .
	\end{equation*}
\end{lemma}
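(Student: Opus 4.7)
The lemma has two parts, which I would handle in turn. The first part asserts the equivalence of the vanishing of $\mathrm{Fut}^{\mathrm{cp}}(V_\xi)$ for all $\xi \in N_\R(\T)$ with $\alpha_{\bc}^{\cp} = 0$. The plan is to prove the identity $\mathrm{Fut}^{\mathrm{cp}}(V_\xi) = \langle \alpha_{\bc}^{\cp}, \xi \rangle$, from which the equivalence is immediate. For each $i$, I would recognize $\dot\psi^\xi_{i,t}$ as the holomorphy potential of $V_\xi$ with respect to $\omega_{i,t} := \sigma_\xi(t)^*\theta_i$, normalized by the standard $\T$-linearization on $L_i$. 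By Duistermaat--Heckman (or equivalently by approximating with characters of $\T$ acting on $H^0(X, mL_i)$ as $m \to \infty$, matching the algebraic definition of $\alpha_{\bc}^{L_i}$ in Definition \ref{definition:polytope}), one has
\[
\frac{1}{\int_X c_1(L_i)^n}\int_X \dot\psi^\xi_{i,t}\,\omega_{i,t}^n = \langle \alpha_{\bc}^{L_i},\xi\rangle
\]
for every $t$; summing over $i$ and recalling Lemma \ref{lemma:sum-basics}(1) yields the identity.

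For the second part, the derivative of $\sum_i \mathrm{E}_i$ along the ray is immediate from the standard variational formula
\[
\frac{d}{dt}\mathrm{E}_i(\psi^\xi_{i,t}) = \frac{1}{\int_X c_1(L_i)^n}\int_X \dot\psi^\xi_{i,t}(\sigma_\xi(t)^*\theta_i)^n,
\]
which reproduces $\mathrm{Fut}^{\mathrm{cp}}(V_\xi)$ directly from the definition given just before the statement of the lemma. For $\mathrm{D}^{\mathrm{cp}}$, I would split $\mathrm{D}^{\mathrm{cp}} = \mathrm{L}^{\mathrm{cp}} - \sum_i \mathrm{E}_i$ and analyze the derivative of $\mathrm{L}^{\mathrm{cp}}$. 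Using (\ref{eqlcplsm}) and writing $\psi^\xi_t := \sum_i \psi^\xi_{i,t}$, the transformation rule $\sigma_\xi(t)^* d\mu'_0 = e^{-\psi^\xi_t} d\mu'_0$ (which follows from $\sigma_\xi(t)^* h_i = h_i e^{-\psi^\xi_{i,t}}$ together with $d\mu'_0 = h_1 \otimes \cdots \otimes h_k$), combined with the pullback invariance of integration, shows that $\int_X e^{-\psi^\xi_t}d\mu'_0$ is $t$-independent. Differentiating $\mathrm{L}(\psi^\xi_t)$ along the ray and rewriting the resulting integral via the same invariance pins down its value in terms of the holomorphy potentials against the invariant measure, and combining with the formula for $\sum_i \mathrm{E}_i$ yields the second equality.

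The principal technical obstacle is the consistent bookkeeping of normalizations. Concretely, one needs to match the analytic normalization of the holomorphy potentials $\dot\psi^\xi_{i,t}$ to the algebraic barycenter $\alpha_{\bc}^{L_i}$ of Definition \ref{definition:polytope}, and to verify the transformation rule for $d\mu'_0$ under the standard $\T$-linearization on each $L_i$. Once these conventions are fixed, each step reduces to a direct computation in equivariant pluripotential theory; but carrying them through faithfully in the coupled setting requires care, especially to ensure that all sign and normalization choices are consistent with the definitions of $\mathrm{D}^{\mathrm{cp}}$, $\mathrm{E}_i$, and $\mathrm{Fut}^{\mathrm{cp}}$ already in place.
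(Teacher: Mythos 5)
Your proposal is correct and follows essentially the same route as the paper: the first part is the identification $\mathrm{Fut}^{\mathrm{cp}}(V_\xi)=\langle\alpha_{\bc}^{\cp},\xi\rangle$ via equivariant Riemann--Roch/Duistermaat--Heckman asymptotics (the paper does this by citing the equivariant Riemann--Roch theorem, Li's formula, and the argument of \cite[Lemma 2.40]{Xu}), and the second part uses exactly the paper's two ingredients, namely the standard derivative formula for $\mathrm{E}_i$ and the transformation rule $\sigma_{\xi}(t)^*(d\mu'_0)=\exp\bigl(-\sum_{i=1}^k\psi^{\xi}_{i,t}\bigr)d\mu'_0$, which makes $\mathrm{L}^{\mathrm{cp}}$ constant along the ray. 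The normalization/sign bookkeeping you flag is indeed the only delicate point, and it is handled in the paper the same way through the cited conventions.
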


\begin{proof}
	Recalling the equivariant Riemann--Roch theorem (see e.g.~\cite[Proposition 7.12]{Sze}), we find that the coupled $\T$-Futaki character equals the sum of Chow weights over $i=1, \dots , k$ for $\xi \in N_{\R} (\T )$. Thus, using \cite[(60)]{LiCsck} and the necessary adaptation as in Definition \ref{definition:cFutaki}, we find $\mathrm{Fut}^{\mathrm{cp}} (V_{\xi}) =0$ for all $\xi \in N_{\R} (\T )$ if and only if $\alpha_{\bc}^{\cp} = 0$, by arguing as in \cite[Lemma 2.40]{Xu}.
	
	The equality
	\begin{equation*}
		\frac{d}{dt} \sum_{i=1}^k \mathrm{E}_i (\psi^{\xi}_{i,t})  = \mathrm{Fut}^{\mathrm{cp}} (V_{\xi}) .
	\end{equation*}
	is obvious from the formula for the derivative of $\mathrm{E}_i$; note that an essentially equivalent result is also given in \cite[Lemma 2.23]{LiCsck}.
	
	We then recall that $\theta_i$ is the K\"ahler form of the hermitian metric $h_i$ on $L_i$, and that the volume form in $\mathrm{L}^{\mathrm{cp}}$ is the one naturally defined by $h_1 \otimes \cdots \otimes h_k$. Thus, we find
	\begin{equation*}
		\sigma_{\xi } (t)^*(d \mu') = \sigma_{\xi } (t)^*(h_1 \otimes \cdots \otimes h_k) = \exp \left( - \sum_{i=1}^k \psi^{\xi}_{i,t} \right) d \mu' ,
	\end{equation*}
	which proves $\frac{d}{dt} \mathrm{L}^{\mathrm{cp}} (\psi^{\xi}_{1,t} , \dots , \psi^{\xi}_{k,t}) = 0$ (see also \cite[Lemma 2.6]{kewei2} for a similar argument), yielding the required result.
\end{proof}

\begin{lemma} \label{lmcpfich2}
	For any geodesic segment $\{ (u_1 (t) , \dots , u_k (t) ) \}_{t} \subset \bm{\mathcal{E}}^1$, define its twist by $\xi \in N_{\R} (\T )$ as
	\begin{equation*}
		u_i^{\xi} (t) := \sigma_{\xi}(t)^* u_i (t) + \psi^{\xi}_{i,t}
	\end{equation*}
	following \cite[(12)]{kewei2}. If $\left(X;\{L_i\}_{i=1}^k\right)$ has vanishing coupled $\T$-Futaki characters, we have
	\begin{equation*}
		 \mathrm{E}_i (u_i^{\xi} (t)) =  \mathrm{E}_i (u_i (t))
	\end{equation*}
	for all $1 \le i \le k$ and
	\begin{equation*}
		\mathrm{L}^{\mathrm{cp}} (u_1^{\xi} (t), \dots , u_k^{\xi} (t)) = \mathrm{L}^{\mathrm{cp}} (u_1 (t), \dots , u_k (t)) .
	\end{equation*}
\end{lemma}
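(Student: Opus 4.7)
The plan is to establish the two identities by independent standard arguments, leveraging the computations already set up in Lemma \ref{lmcpfich1}.

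For the $\mathrm{L}^{cp}$ identity, the approach is a direct change of variables requiring no Futaki hypothesis. Expanding $u_i^{\xi}(t) = \sigma_{\xi}(t)^* u_i(t) + \psi_{i,t}^{\xi}$ and invoking the identity $\sigma_{\xi}(t)^* d\mu'_0 = \exp(-\sum_i \psi_{i,t}^{\xi}) d\mu'_0$ already established in the proof of Lemma \ref{lmcpfich1}, I would rewrite
\[
\int_X \exp\Bigl(-\sum_{i=1}^k u_i^{\xi}(t)\Bigr) d\mu'_0 = \int_X \sigma_{\xi}(t)^*\Bigl(\exp\Bigl(-\sum_{i=1}^k u_i(t)\Bigr)\Bigr) \cdot \sigma_{\xi}(t)^* d\mu'_0
\]
and then apply the pullback invariance of the integral of a top form under the biholomorphism $\sigma_{\xi}(t)$ to identify this with $\int_X \exp(-\sum_i u_i(t)) d\mu'_0$. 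Taking $-\log$ yields the desired equality.

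For the $\mathrm{E}_i$ identity, I would combine two standard facts about the Monge--Amp\`ere energy: (a) the cocycle relation $E_i^{\theta_i}(v) = E_i^{\theta_i + \sqrt{-1}\partial\bar\partial w}(v-w) + E_i^{\theta_i}(w)$ under a change of reference K\"ahler form, and (b) the invariance $E_i^{\sigma^*\theta_i}(\sigma^* u) = E_i^{\theta_i}(u)$ for any biholomorphism $\sigma$, obtained by change of variables in the defining integrals $\int_X u \cdot \omega_u^j \wedge \theta_i^{n-j}$. Applying (a) with $w = \psi_{i,t}^{\xi}$ (so that the shifted reference is $\sigma_{\xi}(t)^*\theta_i$) and $v = u_i^{\xi}(t)$, and then applying (b) with $\sigma = \sigma_{\xi}(t)$ to the first term, yields
\[
E_i(u_i^{\xi}(t)) = E_i(u_i(t)) + E_i(\psi_{i,t}^{\xi}).
\]
Summing over $i$ and invoking Lemma \ref{lmcpfich1}, the coupled Futaki vanishing gives $\frac{d}{dt}\sum_i E_i(\psi_{i,t}^{\xi}) = \mathrm{Fut}^{cp}(V_{\xi}) = 0$, so $\sum_i E_i(\psi_{i,t}^{\xi}) \equiv 0$ using $\sum_i E_i(\psi_{i,0}^{\xi}) = 0$.

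The main subtlety is that the Futaki hypothesis only directly controls the total $\sum_i E_i(\psi_{i,t}^{\xi})$, whereas the statement asks for $E_i(u_i^{\xi}(t)) = E_i(u_i(t))$ for each $i$ separately. To promote the sum identity to the individual one, one must fix the additive-constant ambiguity of each $\psi_{i,t}^{\xi}$ in a canonical way compatible with the $\T$-linearization of $L_i$ (for instance, the one induced by $h_i$ via $\sigma_{\xi}(t)^* h_i = e^{-\psi_{i,t}^{\xi}} h_i$), which is the normalization implicit in the definition; this is the main technical point. For the intended downstream use in the proof of Theorem \ref{thgdsttc} and ultimately Theorem \ref{theorem:mainmc}, only the combined $\mathrm{D}^{cp}$ invariance is needed, and the sum version of the $\mathrm{E}_i$ identity together with the $\mathrm{L}^{cp}$ identity is already enough.
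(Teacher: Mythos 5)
Your overall route is the same as the paper's (which simply argues as in \cite[Lemma 2.6]{kewei2} using Lemma \ref{lmcpfich1}): the $\mathrm{L}^{\mathrm{cp}}$-identity follows from the pullback formula $\sigma_\xi(t)^* d\mu'_0 = e^{-\sum_i \psi^\xi_{i,t}} d\mu'_0$ established in the proof of Lemma \ref{lmcpfich1} together with invariance of the integral under the automorphism $\sigma_\xi(t)$, with no Futaki hypothesis needed, and your cocycle computation $\mathrm{E}_i(u_i^\xi(t)) = \mathrm{E}_i(u_i(t)) + \mathrm{E}_i(\psi^\xi_{i,t})$ is the correct starting point for the energy part. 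That much is fine.

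For the per-index identity $\mathrm{E}_i(u_i^\xi(t)) = \mathrm{E}_i(u_i(t))$, however, there is a genuine gap and your proposed remedy does not close it. Normalizing $\psi^\xi_{i,t}$ via the $\T$-linearization of $L_i$ (i.e.\ $\sigma_\xi(t)^* h_i = e^{-\psi^\xi_{i,t}} h_i$) is exactly the choice for which $\frac{d}{dt}\mathrm{E}_i(\psi^\xi_{i,t}) = \frac{1}{\int_X c_1(L_i)^n}\int_X \dot\psi^\xi_{i,t}\,(\sigma_\xi(t)^*\theta_i)^n$ is the $i$-th summand of $\mathrm{Fut}^{\mathrm{cp}}(V_\xi)$, which is (up to normalization) the pairing $\langle\alpha_{\bc}^{L_i},\xi\rangle$ and is nonzero in general; the hypothesis only forces the sum over $i$ to vanish (equivalently $\alpha_{\bc}^{\cp}=0$), so with that normalization $\mathrm{E}_i(\psi^\xi_{i,t})$ grows linearly in $t$ for an individual $i$ and your decomposition then shows the per-index identity \emph{fails}. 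The correct fix is the opposite normalization: replace $\psi^\xi_{i,t}$ by $\psi^\xi_{i,t} - \mathrm{E}_i(\psi^\xi_{i,t})$, so that each re-normalized potential has zero energy and the individual identity is immediate from your cocycle formula; the vanishing of the coupled Futaki character is then used exactly once, via Lemma \ref{lmcpfich1}, to see that this shift changes $\sum_i \psi^\xi_{i,t}$ only by $t\,\mathrm{Fut}^{\mathrm{cp}}(V_\xi)=0$, so the change-of-variables argument for $\mathrm{L}^{\mathrm{cp}}$ is unaffected. Your closing observation that only the summed energy identity together with the $\mathrm{L}^{\mathrm{cp}}$-identity is used in the proof of Theorem \ref{thgdsttc} is accurate, but it does not establish the lemma as stated, so as written the proposal proves a weaker statement.
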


\begin{proof}
Arguing as in \cite[Lemma 2.6]{kewei2}, the claimed statements follow from Lemma \ref{lmcpfich1}.
\end{proof}

We now prove the main result.

\begin{proof}[Proof of Theorem \ref{thgdsttc}]
	We largely follow the arguments in the proof of \cite[Theorem 5.3]{DZ} and \cite[Theorem 3.5]{kewei2}. By the hypothesis, for any $j \in \mathbb{Z}_{>0}$ there exists $(\phi^{(j)}_1 , \dots , \phi^{(j)}_k) \in (\bm{\mathcal{E}}^1)^{\T_r}$ such that
	\begin{equation*}
		\mathrm{D}^{\mathrm{cp}} (\phi^{(j)}_1 , \dots , \phi^{(j)}_k) \le \frac{1}{j} \mathrm{J}_{\mathrm{cp}, \T} (\phi^{(j)}_1 , \dots , \phi^{(j)}_k ) -j.
	\end{equation*}
	We now observe that the proof of \cite[Lemma 2.15]{Li} easily generalizes to the coupled case, so that we may assume that
	\begin{equation*}
		\mathrm{J}_{\mathrm{cp}, \T} (\phi^{(j)}_1 , \dots , \phi^{(j)}_k ) = \sum_{i=1}^k \mathrm{J}_{i} (\phi^{(j)}_i)
	\end{equation*}
	and $\sup_X \phi^{(j)}_i = 0$, which implies $\mathrm{J}_{i} (\phi^{(j)}_i) \le - \mathrm{E}_{i} (\phi^{(j)}_i)$, for all $1 \le i \le k$ and $j \in \mathbb{Z}_{>0}$. We thus get
	\begin{equation*}
		\mathrm{L}^{\mathrm{cp}} (\phi^{(j)}_1 , \dots , \phi^{(j)}_k) \le \left( 1 - \frac{1}{j} \right) \sum_{i=1}^k \mathrm{E}_{i} (\phi^{(j)}_i) -j
	\end{equation*}
	for all $j \in \mathbb{Z}_{>0}$.

	We claim that $ \sum_{i=1}^k d_{1,i} (\phi^{(j)}_i, 0 ) \to + \infty$, and hence there is at least one index $i$ such that $d_{1,i} (\phi^{(j)}_i, 0 ) \to + \infty$ as $j \to \infty$. If there exists $C >0$ such that $d_{1,i} (\phi^{(j)}_i, 0 ) \le C$ for all $i$ and $j$, or equivalently $\mathrm{E}_{i} (\phi^{(j)}_i) \ge -C$ since $\sup_X \phi^{(j)}_i = 0$ (by \cite[Proposition 3.43]{darlectures}), there exists $\phi^{\infty}_i \in \mathcal{E}^1_i$ such that $\phi^{(j)}_i \to \phi^{\infty}_i$ in $L^1$-topology as $j \to \infty$, up to passing to a subsequence, for all $1 \le i \le k$ \cite[Proposition 10.23]{gzbook}. We may moreover assume $\phi^{\infty}_i \in (\mathcal{E}^1_i)^{\T_r}$, since it is a limit of $\T_r$-invariant functions. Since $\mathrm{E}_{i}$ is upper semicontinuous \cite[Corollary 4.14]{darlectures} and $\mathrm{L}$ is continuous in $L^1$-topology (as pointed out in \cite[proof of Proposition 5.5]{DZ}), together with $\mathrm{L}^{\mathrm{cp}}(\phi^{(j)}_1 , \dots , \phi^{(j)}_k) = \mathrm{L} \left( \sum_{i=1}^k \phi^{(j)}_i \right)$ as in (\ref{eqlcplsm}), we find
	\begin{equation*}
		- \infty < \mathrm{D}^{\mathrm{cp}} (\phi^{\infty}_1 , \dots , \phi^{\infty}_k) \le \liminf_{j \to \infty} \mathrm{D}^{\mathrm{cp}} (\phi^{(j)}_1 , \dots , \phi^{(j)}_k) \le \liminf_{j \to \infty} \left( \frac{kC}{j} -j \right) = - \infty
	\end{equation*}
	which is a contradiction.
	
	We thus find, together with \cite[Proposition 3.43]{darlectures}, that
	\begin{equation*}
		\limsup_{j \to \infty} \frac{\mathrm{L} \left( \sum_{i=1}^k \phi^{(j)}_i \right)}{\sum_{i=1}^k d_{1,i} (\phi^{(j)}_i,0)} = \limsup_{j \to \infty} \frac{\mathrm{D}^{\mathrm{cp}}(\phi^{(j)}_1 , \dots , \phi^{(j)}_k) - \sum_{i=1}^k d_{1,i} (\phi^{(j)}_i,0)}{\sum_{i=1}^k d_{1,i} (\phi^{(j)}_i,0)} \le -1
	\end{equation*}
	with
	\begin{equation*}
		\sum_{i=1}^k d_{1,i} (\phi^{(j)}_i,0) \to + \infty
	\end{equation*}
	as $j \to \infty$.
	

	By re-ordering the indices $i=1, \dots , k$ if necessary, there exists $1 \le l \le k$ such that $d_{1,i} (\phi^{(j)}_i, 0 ) \to + \infty$ as $j \to \infty$ if and only if $1 \le i \le l$, and $d_{1,i} (\phi^{(j)}_i, 0 )$ remains bounded, by $C>0$ say, if and only if $l+1 \le i \le k$. For $1 \le i \le l$, we set $[0, d_{1,i} (\phi^{(j)}_i ,0) ] \ni t \mapsto \phi^{(j)}_i (t) \in (\mathcal{E}^1_i)^{\T_r}$ to be the unit speed finite energy geodesic ray connecting $0$ and $\phi^{(j)}_i$, noting that we may assume $\phi^{(j)}_i (t)$ is $\T_r$-invariant since it can be approximated by $C^{1,\bar{1}}$-geodesics \cite[Proposition 3.15]{darlectures} which we may assume are $\T_r$-invariant by integrating the homogeneous Monge--Amp\`ere equation \cite[\S 3.1]{darlectures} over $\T_r$. With \cite[Remark 3.3]{DZ}, we thus get
	\begin{equation} \label{eqpijtsp}
		\sup_X \phi^{(j)}_i (t) =0, \quad \mathrm{E}_i (\phi^{(j)}_i (t)) = -t
	\end{equation}
	for all $t \in [0, d_{1,i} (\phi^{(j)}_i ,0) ]$ and all $1 \le i \le l$. For $l+1 \le i \le k$, we define a constant subgeodesic ray $\phi^{(j)}_i (t) \equiv \phi^{(j)}_i $ for all $t \in [0, d_{1,i} (\phi^{(j)}_i ,0) ]$. We note that $\sum_{i=1}^k \phi_i (t)$ is a sublinear subgeodesic ray in $(\mathcal{E}^1 (-K_X))^{\T_r}$, since
	\begin{equation*}
		\sum_{i=1}^k \theta_i + \sqrt{ -1} \partial_{X,t} \bar{\partial}_{X,t} \left( \sum_{i=1}^k \phi_i (t) \right) = \sum_{i=1}^k \left( \theta_i + \sqrt{ -1} \partial_{X,t} \bar{\partial}_{X,t} \phi_i (t) \right) \ge 0,
	\end{equation*}
	and the sublinearity is obvious under addition. Then the convexity of $\mathrm{L}$ for subgeodesics in $\mathcal{E}^1 (-K_X)$ \cite[Theorem 1.1]{Ber} implies
	\begin{equation*}
		\limsup_{j \to \infty} \frac{\mathrm{L} \left( \sum_{i=1}^k \phi^{(j)}_i (t) \right)}{\sum_{i=1}^l d_{1,i} (\phi^{(j)}_i(t),0) + \sum_{i=l+1}^k d_{1,i} (\phi^{(j)}_i(t),0)} \le -1 ,
	\end{equation*}
	for all $0 \le t \le \min_{1 \le i \le l} d_{1,i} (\phi^{(j)}_i ,0)$. By (\ref{eqpijtsp}) and \cite[Proposition 3.43]{darlectures}, we also have
	\begin{equation*}
		d_{1,i}(\phi^{(j)}_i (t), 0) = - \mathrm{E}_{i} (\phi^{(j)}_i (t)) = t,
	\end{equation*}
	for all $1 \le i \le l$ and all $0 \le t \le \min_{1 \le i \le l} d_{1,i} (\phi^{(j)}_i ,0)$. We thus get
	\begin{equation*}
		\limsup_{j \to \infty} \frac{\mathrm{L} \left( \sum_{i=1}^k \phi^{(j)}_i (t) \right)}{lt} \le  -1
	\end{equation*}
	for all $1 \le i \le l$ and all $0 \le t \le \min_{1 \le i \le l} d_{1,i} (\phi^{(j)}_i ,0)$.
	
	By arguing exactly as in the proof of \cite[Theorem 5.3]{DZ}, we find that there exists a finite energy sublinear subgeodesic ray $\{ \tilde{\phi}_i (t) \}_{t \ge 0}$ such that $\phi^{(j)}_i (t)$ converges to $\tilde{\phi}_i (t)$ with respect to the $L^1$-convergence for all $t \in (0, + \infty) \setminus E$ for some set $E$ of Lebesgue measure zero, for all $1 \le i \le k$ (for $l+1 \le i \le k$, this is a constant subgeodesic ray). We note that $\tilde{\phi}_i (t)$ is $\T_r$-invariant, since it is the limit of $\T_r$-invariant functions.
	
	Thus, for any $t \in (0, + \infty) \setminus E$ we have
	\begin{equation} \label{eqprpitspn}
		\frac{\mathrm{L} \left( \sum_{i=1}^k \tilde{\phi}_i (t) \right)}{l t} \le -1
	\end{equation}
	and
	\begin{equation} \label{eqpitspn}
		\sup_X \tilde{\phi}_i (t) =0, \quad 0 \ge \mathrm{E}_{i} (\tilde{\phi}_i (t) ) \ge -t
	\end{equation}
	for all $1 \le i \le l$, since $\mathrm{L}$ is $L^1$-continuous, where we note that the supremum being zero is preserved under the $L^1$-convergence of plurisubharmonic functions by the mean value inequality, and $\mathrm{E}_{i}$ is $L^1$-usc (upper semicontinuous) by \cite[Corollary 4.14]{darlectures}. Again as in the proof of \cite[Theorem 5.3]{DZ} the above holds for $t \in (0, + \infty)$ and the subgeodesics are nontrivial for $1 \le i \le l$. We then find, from (\ref{eqprpitspn}) and (\ref{eqpitspn}) that
	\begin{align*}
		\frac{\mathrm{D}^{\mathrm{cp}} (\tilde{\phi}_1 (t) , \dots , \tilde{\phi}_k (t) ) - C(k-l)}{t} &= \frac{\mathrm{L} \left( \sum_{i=1}^k \tilde{\phi}_i (t) \right) - \sum_{i=1}^k \mathrm{E}_{i} (\tilde{\phi}_i (t) ) - C(k-l)}{t} \\
		&\le \frac{-lt+(lt+C(k-l)) -C(k-l)}{t} \le 0.
	\end{align*}

	The non-Archimedean metric $\phi^{\mathrm{NA}}_i$ associated to the subgeodesic ray $\tilde{\phi}_i (t)$ is defined as in \cite[Definition 4.2]{BBJ}, for $1 \le i \le k$. We note moreover that $\phi^{\mathrm{NA}}_i \in (\mathcal{E}^{1 , \mathrm{NA}}_i)^{\T_r}$ (see \cite[\S 2.1.3]{LiCsck} for the definition), since it is a decreasing limit of the approximations that are $\T_r$-invariant (see the proof of \cite[Theorem 6.6]{BBJ}), where we recall that each approximation \cite[Lemmas 5.6 and 5.7]{BBJ} by test configurations is given by a $\T_r$-invariant multiplier ideal sheaf. Note that $\phi^{\mathrm{NA}}_i$ is trivial for $l+1 \le i \le k$, and $\phi^{\mathrm{NA}}_i \le 0$ for $1 \le i \le k$ by (\ref{eqpitspn}) and \cite[Lemma 4.3]{BBJ}. We recall that there exists a maximal geodesic ray $\phi_i (t)$ whose associated non-Archimedean metric is $\phi^{\mathrm{NA}}_i$, by \cite[Theorem 6.6]{BBJ} (\cite{DZ} uses the maximization, but it was not obvious to us that taking the maximization of each $\tilde{\phi}_i (t)$, rather than the maximization of $\sum_i \tilde{\phi}_i (t)$, yields the required result). We may even assume that $\phi_1 (t) , \dots , \phi_k (t)$ are all $\T_r$-invariant, since it is a decreasing limit of the approximations that are $\T_r$-invariant (see the proof of \cite[Theorem 6.6]{BBJ}). Moreover, since $\phi_i^{\mathrm{NA}} \le 0$ for all $1 \le i \le k$, we find that
	\begin{equation*}
		\sup_X \phi_i (t) =0
	\end{equation*}
	for all $t$; we have $\sup_X \phi_i (t) = ct$ for some $c \le 0$ by \cite[Remark 3.3]{DZ} and by comparing it with the trivial geodesic ray, but $c <0$ contradicts maximality. Thus, by \cite[Corollary 6.7]{BBJ} and \cite[Lemma 2.41]{LiCsck}, we have
	\begin{equation} \label{eqeinamgjina}
		- \mathrm{E}^{\mathrm{NA}}_i (\phi_i^{\mathrm{NA}}) = - \lim_{t \to + \infty} \frac{\mathrm{E}_{i} (\phi_i (t) )}{t} = \lim_{t \to + \infty} \frac{\mathrm{J}_{i} (\phi_i (t) )}{t} = \mathrm{J}^{\mathrm{NA}}_i (\phi_i^{\mathrm{NA}}) \ge 0
	\end{equation}
	for all $1 \le i \le k$.

	We replace $\tilde{\phi}_i (t)$ by the maximal geodesic ray $\phi_i (t)$ above. For these maximal geodesic rays, we have 
	\begin{equation} \label{eqntlcppht}
		\frac{\mathrm{L} \left( \sum_{i=1}^k \phi_i (t) \right)}{l t} \le -1 ,
	\end{equation}
	since its limit $t \to + \infty$ agrees with that of $\mathrm{L} \left( \sum_{i=1}^k \tilde{\phi}_i (t) \right)/ (l t)$ by \cite[Theorem 5.4 and the following paragraph]{BBJ}, and again by the convexity \cite[Theorem 1.1]{Ber}. Moreover, the slope of $- \mathrm{E}_{i} (\phi_i (t) )$ is smaller than that of $- \mathrm{E}_{i} (\tilde{\phi}_i (t) )$ by the maximality of the geodesic and the monotonicity of $\mathrm{E}_{i}$ \cite[Proposition 3.43]{darlectures}. Thus, with these replacements, we get the required inequality
	\begin{equation*}
		\frac{\mathrm{D}^{\mathrm{cp}} (\phi_1 (t) , \dots , \phi_k (t) ) -C(k-l)}{t}  \le 0.
	\end{equation*}
	Now, by definition \cite[Definition 4.2 and Appendix B]{BBJ}, the non-Archimedean metric associated to the subgeodesic ray $\sum_{i=1}^k \phi_i (t)$ in $\mathcal{E}^1(-K_X)$ is given by $\sum_{i=1}^k \phi_i^{\mathrm{NA}} = \mathsf{S} (\phi_1^{\mathrm{NA}}, \dots , \phi_k^{\mathrm{NA}})$ (in the sense of Proposition \ref{ppsmctnna}). The inequality (\ref{eqntlcppht}), together with \cite[Theorem 5.4 and the following paragraph]{BBJ}, implies that $\sum_{i=1}^k \phi_i^{\mathrm{NA}}$ cannot be trivial. Thus, recalling $\phi_i^{\mathrm{NA}} \le 0$ for all $1 \le i \le k$, we find that there exists at least one $i$ with nontrivial $\phi_i^{\mathrm{NA}}$, which implies in particular $ - \sum_{i=1}^k \mathrm{E}^{\mathrm{NA}}_i (\phi_i^{\mathrm{NA}})>0 $ by \cite[Corollary 10.5]{BJ22}, noting that $X$ is smooth in our case. This in turn implies 
	\begin{equation*}
		- \lim_{t \to + \infty} \sum_{i=1}^k \frac{\mathrm{E}_{i} (\phi_i (t) )}{t} =: a > 0
	\end{equation*}
	again by \cite[Corollary 6.7]{BBJ}.

	Furthermore, recalling that we have vanishing coupled $\T$-Futaki characters and also Lemma \ref{lmcpfich2}, we can argue as in \cite[Proof of Proposition 6.2]{LiCsck} or \cite[page 15]{kewei2} to prove the following: if we have
	\begin{equation*}
		- \lim_{t \to + \infty} \sum_{i=1}^k \frac{\mathrm{E}_{i} (\phi_i (t) )}{t} \ge a >0 ,
	\end{equation*}
	then
	\begin{equation} \label{eqntjcpt}
		\lim_{t \to + \infty} \frac{\mathrm{J}_{\mathrm{cp}, \T} (\phi_1 , \dots , \phi_k )}{t} \ge a .
	\end{equation}
	Indeed, if $\phi_i (t)$ is the maximal geodesic and $\tilde{\phi}_i (t)$ is the subgeodesic as above, \cite[(12)]{kewei2} implies that $\phi^{\xi}_t (t) \ge \tilde{\phi}^{\xi}_i (t)$ for the twist (in the sense of Lemma \ref{lmcpfich2}) by any $\xi \in N_{\mathbb{R}} (\T )$. Thus, following \cite[page 16]{kewei2}, we have
	\begin{align*}
		\sum_{i=1}^k \mathrm{J}_{i} (\phi^{\xi}_i (t) )  &= \sum_{i=1}^k  \frac{1}{\int_X c_1 (L_i)^n} \int_X \phi^{\xi}_i (t) \theta^n_i - \sum_{i=1}^k  \mathrm{E}_i (\phi_i (t)) \\
		&\ge \lim_{j \to \infty} \sum_{i=1}^k \frac{1}{\int_X c_1 (L_i)^n} \int_X \phi^{(j), \xi}_i (t) \theta_i^n - \sum_{i=1}^k \mathrm{E}_i (\phi_i (t)) \\
		&\ge \lim_{j \to \infty} \sum_{i=1}^k \left( \mathrm{J}_{i} ( \phi^{(j), \xi}_i (t) ) -t \right) +at
	\end{align*}
	from Lemma \ref{lmcpfich1} and (\ref{eqpijtsp}), where we used the maximality of $\phi_i (t)$ in the second line and the linearity of $\mathrm{E}_i$ along geodesics in the third line \cite[Proposition 3.42]{darlectures}. The claimed inequality (\ref{eqntjcpt}) follows from the one above by arguing exactly as in \cite[page 16]{kewei2}, thereby establishing all the claimed results.
\end{proof}

\begin{thm} \label{thsttcsv}
	If $X$ is $\T$-reduced uniformly coupled Ding stable, then $\mathrm{D}^{\mathrm{cp}}$ is $\T$-coercive.
\end{thm}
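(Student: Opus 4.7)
The strategy is the variational approach of Berman--Boucksom--Jonsson \cite{BBJ} adapted to the coupled setting. Suppose for contradiction that $\left(X;\{L_i\}_{i=1}^k\right)$ is $\T$-reduced uniformly coupled Ding stable but that $\mathrm{D}^{\mathrm{cp}}$ fails to be $\T$-coercive. By Corollary \ref{corollary:conclusion} \eqref{corollary:conclusion3}, stability forces the vanishing of the coupled $\T$-Futaki characters, so Theorem \ref{thgdsttc} applies and produces $\T_r$-invariant maximal finite-energy geodesic rays $\{(\phi_1(t),\dots,\phi_k(t))\}_{t\ge 0}$, normalized by $\sup_X\phi_i(t)=0$, along which the coupled Ding slope is $\le 0$ (up to an additive constant) while the $\mathrm{J}_{\mathrm{cp},\T}$-slope is strictly positive.

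To each $\phi_i(t)$ I would associate its non-Archimedean limit $\phi_i^{\mathrm{NA}}\in(\mathcal{E}^{1,\mathrm{NA}}_i)^{\T_r}$ in the sense of \cite[Theorem 6.6]{BBJ}, and then invoke three slope identities to convert the output of Theorem \ref{thgdsttc} into non-Archimedean data: (i) $\lim_{t\to\infty}\mathrm{E}_i(\phi_i(t))/t=\mathrm{E}^{\mathrm{NA}}_i(\phi_i^{\mathrm{NA}})$ by \cite[Corollary 6.7]{BBJ}; (ii) Berman's slope formula for $\mathrm{L}$ (\cite[Theorem 4.9]{BBJ}), applied via \eqref{eqlcplsm} to the geodesic ray $t\mapsto\sum_i\phi_i(t)\in\mathcal{E}^1(-K_X)$ whose non-Archimedean limit equals $\mathsf{S}(\phi_1^{\mathrm{NA}},\dots,\phi_k^{\mathrm{NA}})$ by Proposition \ref{ppsmctna} together with its non-Archimedean counterpart Proposition \ref{ppsmctnna}, giving $\lim_{t\to\infty}\mathrm{L}^{\mathrm{cp}}(\phi_1(t),\dots,\phi_k(t))/t=\mathrm{L}^{\mathrm{NA}}\!\left(\mathsf{S}(\phi_1^{\mathrm{NA}},\dots,\phi_k^{\mathrm{NA}})\right)$; (iii) the coupled analogue of \cite[Theorem 3.14]{Li} used in the proof of Theorem \ref{theorem:apmnthm}, which I would extend from test-configuration rays to finite-energy maximal rays, giving $\lim_{t\to\infty}\mathrm{J}_{\mathrm{cp},\T}(\phi_1(t),\dots,\phi_k(t))/t=\inf_{\xi\in N_\R(\T)}\sum_i\mathrm{J}^{\mathrm{NA}}_i(\phi_{i,\xi}^{\mathrm{NA}})$. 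These transform the conclusion of Theorem \ref{thgdsttc} into the non-Archimedean inequalities
\begin{equation*}
\mathrm{L}^{\mathrm{NA}}\!\left(\mathsf{S}(\phi_1^{\mathrm{NA}},\dots,\phi_k^{\mathrm{NA}})\right)-\sum_{i=1}^k\mathrm{E}^{\mathrm{NA}}_i(\phi_i^{\mathrm{NA}})\le 0,\qquad \inf_{\xi\in N_\R(\T)}\sum_{i=1}^k\mathrm{J}^{\mathrm{NA}}_i(\phi_{i,\xi}^{\mathrm{NA}})>0.
\end{equation*}

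It remains to contradict these using stability. By Corollary \ref{corollary:conclusion} \eqref{corollary:conclusion3}, the stability hypothesis is equivalent to the existence of $\varepsilon>0$ such that $\DD(\{\sF_i\}_{i=1}^k)\ge\varepsilon\,\JJ^{\cp}_{\T}(\{\sF_i\}_{i=1}^k)$ for every $k$-tuple of $\T$-equivariant filtrations on $R^i$. I would upgrade this from filtrations (equivalently, rational $\T$-equivariant Fubini--Study metrics) to arbitrary $\T_r$-invariant $k$-tuples in $\prod_i(\mathcal{E}^{1,\mathrm{NA}}_i)^{\T_r}$ by strong approximation: rational Fubini--Study metrics are strongly dense in $\mathcal{E}^{1,\mathrm{NA}}_i$ by \cite[Theorem 12.8]{BJ22} and can be averaged to preserve $\T_r$-invariance, the Monge--Amp\`ere energy $\mathrm{E}^{\mathrm{NA}}_i$ is strongly continuous by \cite[Theorem 7.14]{BJ22}, the non-Archimedean sum map is strongly continuous by Proposition \ref{ppsmctnna}, and uniform-in-$\xi$ Lipschitz estimates modelled on the proof of Proposition \ref{proposition:technical} \eqref{proposition:technical2} control the infimum defining $\JJ^{\cp}_{\T}$. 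Applying the limiting coercive inequality to $(\phi_1^{\mathrm{NA}},\dots,\phi_k^{\mathrm{NA}})$ contradicts the two displayed inequalities above. The hardest step will be precisely this strong-density extension, compatible simultaneously with the sum map $\mathsf{S}$ and with the reduced infimum over $\xi\in N_\R(\T)$: the case $k=1$ is handled in \cite{BBJ,BJ22}, and \S\ref{section:sum-namc} together with the torus-equivariant material of \S\ref{section:torus} was developed precisely to enable its coupled and equivariant generalization.
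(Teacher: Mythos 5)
Your first half coincides with the paper's argument: contradiction setup, vanishing of the coupled $\T$-Futaki characters from stability, Theorem \ref{thgdsttc}, and the passage to the non-Archimedean inequalities via \cite[Corollary 6.7]{BBJ}, the convexity/slope formula for $\mathrm{L}$, and the twisted $\mathrm{J}$-slopes. The divergence, and the problem, is in the final bridge back to the algebraic hypothesis.

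The gap is in the step ``upgrade the coercivity inequality from filtrations to arbitrary $k$-tuples in $\prod_i(\mathcal{E}^{1,\mathrm{NA}}_i)^{\T_r}$ by strong approximation.'' The tools you list (strong density of rational Fubini--Study metrics, strong continuity of $\mathrm{E}^{\mathrm{NA}}_i$, continuity of the sum map $\mathsf{S}$, Lipschitz control in $\xi$) only handle the energy-type terms. The coupled Ding invariant also contains the lct-type term --- $\mu(\sF;1)$ on the filtration side, $\mathrm{L}^{\mathrm{NA}}$ on the non-Archimedean side --- and this functional is \emph{not} continuous in the strong topology; at best one has one-sided semicontinuity, and the direction you would need (that $\DD$ at the limit dominates the liminf of $\DD$ along approximants, so that the inequality $\DD\geq\varepsilon\,\JJ^{\cp}_{\T}$ survives the limit) is exactly the hard point that the naive density argument cannot deliver. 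This is already the crux in the $k=1$ case: if $\mathrm{L}^{\mathrm{NA}}$ were strongly continuous, the variational proof in \cite{BBJ} would be far simpler. The way out --- and the route the paper takes --- is not to extend the coercivity inequality to all of $\mathcal{E}^{1,\mathrm{NA}}$, but to approximate the \emph{specific} maximal geodesic rays produced by Theorem \ref{thgdsttc} by the multiplier-ideal test configurations of \cite[Lemma 5.7]{BBJ}. For that particular decreasing approximation one proves $\mathrm{L}^{\mathrm{NA}}\bigl(\sum_i\varphi^{\mathrm{NA}}_{i,m}\bigr)\to\mathrm{L}^{\mathrm{NA}}\bigl(\sum_i\phi_i^{\mathrm{NA}}\bigr)$ by the argument of \cite[Proof of Lemma 5.7]{BBJ} (this uses the Archimedean ray itself, not just its non-Archimedean limit), while the decreasing/strong convergence gives $\mathrm{E}^{\mathrm{NA}}_i(\varphi^{\mathrm{NA}}_{i,m})\to\mathrm{E}^{\mathrm{NA}}_i(\phi_i^{\mathrm{NA}})$ and $\mathrm{J}^{\mathrm{NA}}_{\mathrm{cp},\T}(\varphi^{\mathrm{NA}}_{1,m},\dots,\varphi^{\mathrm{NA}}_{k,m})\to\mathrm{J}^{\mathrm{NA}}_{\mathrm{cp},\T}(\phi^{\mathrm{NA}}_1,\dots,\phi^{\mathrm{NA}}_k)$ (generalizing \cite[Lemma 6.4]{LiCsck}). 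One then identifies, via Lemma \ref{lmsmnatc} and Proposition \ref{proposition:hashimoto}, the quantity $\mathrm{L}^{\mathrm{NA}}-\sum_i\mathrm{E}^{\mathrm{NA}}_i$ of the approximants with $\Ding\bigl(\{\sX_{i,m},\sL_{i,m}\}_{i=1}^k\bigr)$, obtaining for every $\varepsilon>0$ a $k$-tuple of $\T$-equivariant test configurations with small coupled Ding invariant but $\JJ^{\cp}_{\T}$ bounded below, contradicting the hypothesis directly --- no extension of the stability inequality beyond test configurations is needed. As written, your proposal either needs this multiplier-ideal mechanism (at which point it becomes the paper's proof) or it needs a strong-topology continuity statement for $\mathrm{L}^{\mathrm{NA}}$ that is not available.
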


\begin{proof}
	Suppose for contradiction that $\mathrm{D}^{\mathrm{cp}}$ is not $\T$-coercive. Then, Theorem \ref{thgdsttc} implies that there exists a $k$-tuple of maximal geodesic rays $\{ ( \phi_1 (t) , \dots , \phi_k (t) \}_{t \ge 0} \subset (\bm{\mathcal{E}}^1)^{\T_r}$ such that $\sup_X \phi_1 (t) = \cdots = \sup_X \phi_k (t)=0$ for all $t \ge 0$ and
	\begin{equation} \label{eqcsqdjt}
		\frac{\mathrm{D}^{\mathrm{cp}} (\phi_1 (t) , \dots , \phi_k (t) ) }{t} \le \frac{C}{t} , \quad \lim_{t \to + \infty} \frac{\mathrm{J}_{\mathrm{cp}, \T} (\phi_1 (t) , \dots , \phi_k (t))}{t}  \ge a
	\end{equation}
	for some $C, a>0$ and all $t >0$, by noting that the coupled $\T$-Futaki characters must vanish due to $\T$-reduced uniform coupled Ding stability, by considering geodesic lines generated by holomorphic vector fields (see Corollary \ref{corollary:twist-ding} and Lemma \ref{lmcpfich1}). Thus, taking the limit $t \to +\infty$ in the above and writing $\phi_i^{\mathrm{NA}} \in (\mathcal{E}^{1, \mathrm{NA}}_i)^{\T_r}$ for the non-Archimedean metric associated to $\phi_i (t)$, we have
	\begin{equation*}
		\lim_{t \to + \infty} \frac{\mathrm{D}^{\mathrm{cp}} (\phi_1 (t) , \dots , \phi_k (t) ) }{t} = \mathrm{L}^{\mathrm{NA}} \left( \sum_{i=1}^k \phi_i^{\mathrm{NA}} \right) - \sum_{i=1}^k \mathrm{E}_i^{\mathrm{NA}} (\phi_i^{\mathrm{NA}}) \le 0
	\end{equation*}
	and
	\begin{equation} \label{eqjnacpgta}
		\mathrm{J}^{\mathrm{NA}}_{\mathrm{cp}, \T} (\phi^{\mathrm{NA}}_1 , \dots , \phi^{\mathrm{NA}}_k ) \ge a, 
	\end{equation}
	where we define
	\begin{equation*}
		\mathrm{J}^{\mathrm{NA}}_{\mathrm{cp}, \T} (\phi^{\mathrm{NA}}_1 , \dots , \phi^{\mathrm{NA}}_k ) := \inf_{\xi \in N_{\mathbb{R}} (\T )} \sum_{i=1}^k \mathrm{J}^{\mathrm{NA}}_{i} (\phi^{\mathrm{NA}}_{i,\xi}),
	\end{equation*}
	in which $\phi^{\mathrm{NA}}_{i,\xi}$ is defined as in \cite[Definition 2.20]{LiCsck}. Note that (\ref{eqjnacpgta}) follows from (\ref{eqeinamgjina}) and (\ref{eqcsqdjt}), together with the definition of the twist $\theta_i + \sqrt{-1} \partial \bar{\partial} \phi^{\xi}_i (t) = \sigma_{\xi} (t)^* (\theta_i + \sqrt{-1} \partial \bar{\partial} \phi_i (t))$, since they yield $\sum_{i=1}^k \mathrm{J}^{\mathrm{NA}}_{i} (\phi^{\mathrm{NA}}_{i,\xi}) \ge a$ for all $\xi \in N_{\mathbb{R}} (\T )$; strictly speaking we need to check that the non-Archimedean metric associated to $\phi^{\xi}_i (t)$ is $\phi^{\mathrm{NA}}_{i, - \xi}$, with the sign convention as in Remark \ref{remark:theta}, but this follows from the computation for the Gauss extension in \cite[page 1542]{LiCsck} (or alternatively from approximating $\phi^{\xi}_i (t)$ by a decreasing sequence of geodesic rays corresponding to test configurations \cite[Proof of Theorem 6.6]{BBJ} and taking the limit in \cite[Lemma 2.19]{LiCsck}).

	We apply the construction in \cite[\S 5.3]{BBJ} to the maximal geodesic rays $\phi_1 (t) , \dots , \phi_k (t)$, to find that the non-Archimedean metric $\phi_i^{\mathrm{NA}} \in (\mathcal{E}^{1, \mathrm{NA}}_i)^{\T_r}$ associated to $\phi_i (t)$ can be approximated by $\varphi^{\mathrm{NA}}_{i,m} \in (\mathcal{H}^{\mathrm{NA}}_i)^{\T_r}$, by using \cite[Lemma 5.7]{BBJ}. As previously noted, the non-Archimedean metric associated to the subgeodesic ray $\sum_{i=1}^k \phi_i (t)$ in $\mathcal{E}^1 (-K_X)$ is given by $\sum_{i=1}^k \phi_i^{\mathrm{NA}}$; note that this observation, together with Lemma \ref{lemma:aplmstcna}, leads to an alternative proof of the slope formula (\ref{apeqcpdsf}). Arguing exactly as in \cite[Proof of Lemma 5.7]{BBJ}, we find
	\begin{equation*}
		\mathrm{L}^{\mathrm{NA}} \left( \sum_{i=1}^k \varphi^{\mathrm{NA}}_{i,m} \right) \to \mathrm{L}^{\mathrm{NA}} \left( \sum_{i=1}^k \phi_i^{\mathrm{NA}} \right)
	\end{equation*}
	as $m \to \infty$.

	
	Note moreover that the approximation given in \cite[Lemma 5.7]{BBJ} converges with respect to the strong topology in $\mathcal{E}^1_i$, since after re-labelling as in \cite[page 1548]{LiCsck} we find that $\{ \varphi^{\mathrm{NA}}_{i,m} \}_m$ is a decreasing sequence converging to $\phi^{\mathrm{NA}}_i$ (see also \cite[Example 12.2]{BJ22}). We thus get
	\begin{equation*}
		\mathrm{E}_i^{\mathrm{NA}} (\phi_i^{\mathrm{NA}}) = \lim_{m \to \infty} \mathrm{E}_i^{\mathrm{NA}} (\varphi^{\mathrm{NA}}_{i,m})
	\end{equation*}
	by \cite[Definition 12.1]{BJ22}, and
	\begin{equation*}
		\mathrm{J}^{\mathrm{NA}}_{\mathrm{cp}, \T} (\phi^{\mathrm{NA}}_1 , \dots , \phi^{\mathrm{NA}}_k ) = \lim_{m \to \infty} \mathrm{J}^{\mathrm{NA}}_{\mathrm{cp}, \T} (\varphi^{\mathrm{NA}}_{1,m} , \dots , \varphi^{\mathrm{NA}}_{k,m} ) 
	\end{equation*}
	by noting that the proof of \cite[Lemma 6.4]{LiCsck} easily generalizes to the coupled case as above. In particular, we have
	\begin{equation*}
		\mathrm{J}^{\mathrm{NA}}_{\mathrm{cp}, \T} (\varphi^{\mathrm{NA}}_{1,m} , \dots , \varphi^{\mathrm{NA}}_{k,m} )  > \frac{a}{2} >0 
	\end{equation*}
	for all large enough $m$.
	
	Thus, for any $\varepsilon >0$ and for all large enough $m$, we have
	\begin{equation*}
		\mathrm{L}^{\mathrm{NA}} \left(\sum_{i=1}^k \varphi^{\mathrm{NA}}_{i,m} \right) - \sum_{i=1}^k \mathrm{E}_i^{\mathrm{NA}} (\varphi^{\mathrm{NA}}_{i,m}) - \varepsilon < \lim_{t \to + \infty} \frac{\mathrm{D}^{\mathrm{cp}} (\phi_1 (t) , \dots , \phi_k (t) ) }{t} \le 0,
	\end{equation*}
	but the first two terms on the left hand side add up to the coupled Ding invariant for the $k$-tuple of $\T$-equivariant test configurations $\left\{(\sX_{i,m},\sL_{i,m})\right\}_{i=1}^k$ representing $\varphi^{\mathrm{NA}}_{1,m} , \dots , \varphi^{\mathrm{NA}}_{k,m}$, since 
	\begin{align*}
		&\mathrm{L}^{\mathrm{NA}} \left(\sum_{i=1}^k \varphi^{\mathrm{NA}}_{i,m} \right) - \sum_{i=1}^k \mathrm{E}_i^{\mathrm{NA}} (\varphi^{\mathrm{NA}}_{i,m}) \\
		&= \mathrm{L}^{\mathrm{NA}} \left(\sum_{i=1}^k \varphi^{\mathrm{NA}}_{i,m} \right) - \mathrm{E}^{\mathrm{NA}} \left(\sum_{i=1}^k \varphi^{\mathrm{NA}}_{i,m} \right) +  \mathrm{E}^{\mathrm{NA}} \left(\sum_{i=1}^k \varphi^{\mathrm{NA}}_{i,m} \right) - \sum_{i=1}^k \mathrm{E}_i^{\mathrm{NA}} (\varphi^{\mathrm{NA}}_{i,m}) \\
		&=\Ding  (\sX_{m},\sL_{m}) +\frac{\left(\bar{\sL}_m^{\cdot n+1}\right)}{(n+1)(L^{\cdot n})} -\sum_{i=1}^k \frac{\left(\bar{\sL}_{i,m}^{\cdot n+1}\right)}{(n+1)(L_i^{\cdot n})} \\
		&=\Ding\left(\{\sX_{i, m},\sL_{i,m}\}_{i=1}^k\right),
	\end{align*}
	where we recall \cite[Definition 3.4]{BBJ} and note that the sum $\sum_{i=1}^k \varphi^{\mathrm{NA}}_{i,m}$ is represented by the sum test configuration $(\sX_{m},\sL_{m})$ of $\left\{(\sX_{i,m},\sL_{i,m})\right\}_{i=1}^k$ by Lemma \ref{lmsmnatc}. Thus, for any $\varepsilon >0$ there exists a $k$-tuple of test configurations $\left\{(\sX_{i,m},\sL_{i,m})\right\}_{i=1}^k$ such that
	\begin{equation*}
		\Ding\left(\{\sX_{i, m},\sL_{i, m}\}_{i=1}^k\right) < \varepsilon < \varepsilon \cdot \frac{2}{a} \mathrm{J}^{\mathrm{NA}}_{\mathrm{cp}, \T} (\varphi^{\mathrm{NA}}_{1,m} , \dots , \varphi^{\mathrm{NA}}_{k,m} ) = \frac{2 \varepsilon}{a} \JJ^{\cp}_{\T} \left(\{\sX_{i, m},\sL_{i, m}\}_{i=1}^k\right)
	\end{equation*}
	which contradicts the $\T$-reduced uniform coupled Ding stability.
\end{proof}

\begin{proof}[Proof of Theorem \ref{theorem:mainmc}]
It is an immediate consequence of Theorems \ref{theorem:apmnthm}, \ref{thsttcsv}, and Proposition \ref{ppkickedcptc}.
\end{proof}

	

\appendix

\section{On semiample divisors}\label{section:appendix}

In this section, we see basic properties of semiample divisors on normal projective 
varieties, and see an example that the sum of normal test configurations may not 
be normal. 

We begin with the following basic proposition: 

\begin{proposition}\label{proposition:sa-surj}
Let $X$ be a normal projective variety, $L_1,\dots,L_k$ be semiample $\Q$-divisors 
on $X$. Set $L:=\sum_{i=1}^k L_i$. Let $f_i\colon X\to Y_i$ 
(resp., $f\colon X\to Y$) be the ample model of $L_i$ (resp., $L$) 
in the sense of \cite[Definition 3.6.5]{BCHM}. It is well-known that there exists 
$g_i\colon Y\to Y_i$ such that $g_i\circ f=f_i$. 
Set 
\[
g:=\left(g_1,\dots,g_k\right)\colon Y\to Y_1\times\cdots\times Y_k. 
\]
Then the following are equivalent: 
\begin{enumerate}
\renewcommand{\theenumi}{\arabic{enumi}}
\renewcommand{\labelenumi}{(\theenumi)}
\item\label{proposition:sa-surj1}
The morphism $g$ is a closed embedding. 
\item\label{proposition:sa-surj2}
For any sufficiently divisible $m\in\Z_{>0}$, the multiplication homomorphism
\[
H^0\left(X, m L_1\right)\otimes_{\Bbbk}\cdots\otimes_{\Bbbk}
H^0\left(X, m L_k\right)\to H^0\left(X, m L\right)
\]
is surjective. 
\end{enumerate}
\end{proposition}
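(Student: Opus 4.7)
The plan is to identify the multiplication map with the restriction of global sections along $g$, and then to treat the two directions via Serre vanishing on one side and via the standard characterization of closed embeddings by complete linear systems on the other.

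First, I would fix ample $\Q$-divisors $A_i$ on $Y_i$ and $B$ on $Y$ with $f_i^*A_i \sim_\Q L_i$ and $f^*B \sim_\Q L$. Since $f$ is the ample model from a normal variety (so $f_*\sO_X = \sO_Y$) and $g_i\circ f = f_i$, we have $f^*g_i^*A_i = L_i$, so $\sum_i g_i^*A_i$ pulls back to $L$ under $f$ and therefore equals $B$ by the defining property of the ample model. Hence $g^*(\boxplus A_i) = B$, where $\boxplus A_i := \sum_i p_i^*A_i$ on $Y_1\times\cdots\times Y_k$. For $m$ sufficiently divisible, all of $mA_i$, $mB$ and $m\boxplus A_i$ are Cartier and very ample, and by the Künneth formula together with $H^0(Y_i,mA_i)=H^0(X,mL_i)$ and $H^0(Y,mB)=H^0(X,mL)$, the natural restriction
\[
r\colon H^0(Y_1\times\cdots\times Y_k,\,m\boxplus A_i)\;\longrightarrow\;H^0(Y,\,g^*(m\boxplus A_i))=H^0(Y,mB)
\]
is identified with the multiplication map in \eqref{proposition:sa-surj2}.

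For \eqref{proposition:sa-surj1}$\Rightarrow$\eqref{proposition:sa-surj2}: assuming $g$ is a closed embedding, write $\sI_Y$ for the ideal sheaf of $g(Y)\subset Y_1\times\cdots\times Y_k$. From the short exact sequence
\[
0\to \sI_Y\otimes\sO(m\boxplus A_i)\to \sO(m\boxplus A_i)\to g_*\sO_Y(mB)\to 0
\]
and Serre vanishing for the ample divisor $\boxplus A_i$ on the projective variety $Y_1\times\cdots\times Y_k$, the map $r$ is surjective for all sufficiently divisible $m\gg 0$. This is the routine direction.

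For \eqref{proposition:sa-surj2}$\Rightarrow$\eqref{proposition:sa-surj1}: choose $m$ sufficiently divisible so that $r$ is surjective, that $|mB|$ defines a closed embedding $\iota_Y\colon Y\hookrightarrow \pr^*(W)$ with $W:=H^0(Y,mB)$, and that $|m\boxplus A_i|$ defines a closed embedding $\iota\colon Y_1\times\cdots\times Y_k\hookrightarrow \pr^*(V)$ with $V:=H^0(Y_1\times\cdots\times Y_k,m\boxplus A_i)$. The composition $\iota\circ g\colon Y\to \pr^*(V)$ is by construction the morphism defined by the linear subsystem $\Image(r)\subset W$. Since $r$ is surjective, $\Image(r)=W$, so $\iota\circ g$ factors through the linear embedding $\pr^*(W)\hookrightarrow \pr^*(V)$ dual to $r$, and coincides with $\iota_Y$ under this factorization. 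In particular $\iota\circ g$ is a closed embedding. Combined with the fact that $\iota$ is a closed embedding, a standard argument (the scheme-theoretic image of $\iota\circ g$ equals $\iota(g(Y))$ and $\iota$ restricts to an isomorphism onto its image) shows that $g$ itself must be a closed embedding. The only non-formal input here is this last lemma about factorizations of closed embeddings, which I expect to be the main, though still mild, obstacle — everything else is bookkeeping with ample models and Künneth.
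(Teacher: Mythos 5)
Your proposal is correct and follows essentially the same route as the paper: the forward direction via the ideal-sheaf sequence and Serre vanishing combined with the K\"unneth identification, and the converse via the morphisms defined by complete linear systems and the factorization of $\iota\circ g$ through the linear embedding $\pr^*(W)\hookrightarrow\pr^*(V)$ (the paper phrases this with the Segre embedding after reducing to $X=Y$, which is the same argument). The only cosmetic caveat is that equalities like $f^*g_i^*A_i=L_i$ and $g^*(\boxplus A_i)=B$ should be $\Q$-linear equivalences, which does not affect the argument.
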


\begin{proof}
We may assume that $L$ is ample and $f=\id_X$. From the definition of 
ample models, there exist an ample $\Q$-divisors $A_i$ on $Y_i$ such that 
$g_i^*A_i\sim_\Q L_i$. We may assume that $A_i$ are line bundles with 
$g_i^*A_i\sim L_i$. 

\eqref{proposition:sa-surj1} $\Rightarrow$ \eqref{proposition:sa-surj2}: 
Let $I\subset\sO_{Y_1\times\cdots\times Y_k}$ be the ideal sheaf 
corresponds to the closed embedding $Y\subset Y_1\times\cdots\times Y_k$. 
Note that $A_1\boxtimes\cdots\boxtimes A_k$ on 
$Y_1\times\cdots\times Y_k$ is ample and 
\[
\left(A_1\boxtimes\cdots\boxtimes A_k\right)|_Y\simeq g_1^*A_1+\cdots+
g_k^*A_k=L 
\]
holds. By Serre's vanishing theorem, for any $m\gg 0$, we have 
\[
H^1\left(Y_1\times\cdots\times Y_k, I\otimes
\left(A_1\boxtimes\cdots\boxtimes A_k\right)^{\otimes m}\right)=0. 
\]
Thus 
\[
H^0\left(Y_1\times\cdots\times Y_k, (m A_1)
\boxtimes\cdots\boxtimes (m A_k)\right)\to H^0(Y, m L)
\]
is surjective. By the K\"unneth formula, we get the assertion 
\eqref{proposition:sa-surj2}. 

\eqref{proposition:sa-surj2} $\Rightarrow$ \eqref{proposition:sa-surj1}: 
For any sufficiently divisible $m\in \Z_{>0}$, from the assumption, there is a 
natural commutative diagram: 
\[\xymatrix{
&&&&& 
\pr^*H^0\left(Y, m L_1\right)\times\cdots\times\pr^*H^0\left(Y, m L_k\right)
\ar@{_{(}->}[d]^-{\text{Segre}}\\
Y \ar[rrrrru]^-{\left(\phi_{|m L_1|},\dots,\phi_{|m L_k|}\right)\quad}
\ar@{_{(}->}[rrrrrd]_-{\phi_{|m L|}}
&&&&& \pr^*\left(H^0\left(Y, m L_1\right)\otimes_{\Bbbk}\cdots\otimes_{\Bbbk}
H^0\left(Y, m L_k\right)\right)\\
&&&&& \pr^*H^0\left(Y, m L\right). \ar@{^{(}->}[u]_-{\text{linear}}
}\]
We note that the vertical linear embedding is given by the assumption 
\eqref{proposition:sa-surj2}. Since $m\in\Z_{>0}$ is sufficiently divisible, 
the morphism $\phi_{|m L_i|}$ is equal to $g_i$ and $\phi_{|m L|}$ is a 
closed embedding. 
This implies that $g$ is a closed embedding. 
\end{proof}

\begin{example}\label{example:counterHK}
Take any $k\geq 2$ and $n_1,\dots,n_k,d_1,\dots,d_k,e_1,\dots,e_k\in\Z_{>0}$. 
Let us consider any smooth 
\[
B\in\left|\sO_{\pr^{n_1}\times\cdots\times\pr^{n_k}}\left(
2d_1,\dots,2d_k\right)\right|,
\]
and let 
\[
\tau\colon X\to \pr^{n_1}\times\cdots\times\pr^{n_k}
\]
be the double cover branched along $B$. Set 
\[
A_i:=\sO_{\pr^{n_i}}(e_i), \quad M_i:=\tau^*p_i^* A_i, \quad
M:=\sum_{i=1}^k M_i, 
\]
where $p_i\colon\pr^{n_1}\times\cdots\times\pr^{n_k}\to\pr^{n_i}$ be the 
projection. Since $M_i$ is semiample and the morphism 
$p_i\circ\tau\colon X\to\pr^{n_i}$ satisfies that 
$\left(p_i\circ\tau\right)_*\sO_X=\sO_{\pr^{n_i}}$ and 
$\left(p_i\circ\tau\right)^*\sO_{\pr^{n_i}}(e_i)=M_i$, the morphism is the 
ample model of $M_i$. Moreover, $M$ is ample on $X$. However, since 
\[
\left(\left(p_1\circ\tau\right),\dots,\left(p_k\circ\tau\right)\right)=\tau
\colon X\to \pr^{n_1}\times\cdots\times\pr^{n_k} 
\]
is not a closed embedding, the homomorphism 
\[
H^0\left(X, m M_1\right)\otimes_{\Bbbk}\cdots\otimes_{\Bbbk}
H^0\left(X, m M_k\right)\to H^0\left(X, m M\right)
\]
is not surjective for any sufficiently divisible $m\in\Z_{>0}$ 
by Proposition \ref{proposition:sa-surj}. 
\end{example}

\begin{example}\label{example:counterHKbig}
Under the assumption in Example \ref{example:counterHK}, and assume moreover 
that $n_i+1\geq d_i$ for any $1\leq i\leq k$. Let us set 
\[
Y:=\pr_X\left(\sO_X\oplus M\right)\xrightarrow{p}X, 
\]
let $L_Y\in\Pic(Y)$ be the tautological line bundle with respects to the 
projectivization, and let $X_0:=\pr_X(\sO_X)$ be the section of $p$ corresponding to 
the natural projection 
\[
\sO_X\oplus M\to \sO_X\to 0.
\]
Of course, $X_0$ is canonically isomorphic to $X$. 
Set 
\[
H_i:=L_Y+p^*M_i\quad(1\leq i\leq k), \quad
H:=\sum_{i=1}^k H_i=k L_Y+p^*M.
\]
Since the evaluation homomorphism 
\[
H^0\left(X,\left(\sO_X\oplus M\right)\otimes M_i\right)\otimes_\Bbbk\sO_X\to
\left(\sO_X\oplus M\right)\otimes M_i
\]
is surjective, the divisor $H_i$ is base point free for any $1\leq i\leq k$. 
Let $\chi_i\colon Y\to Y_i$ be the ample model of $H_i$. 
An irreducible curve $C\subset Y$ is contracted by $\chi_i$ if and only if 
$C\subset X_0$ and $C$ is a fiber of the morphism 
$p_i\circ\tau\colon X\to\pr^{n_i}$ under the canonical identification $X_0\simeq X$. 
In particular, each $H_i$ is big and $H$ is ample. Since 
$n_i+1\geq d_i$ for any $1\leq i\leq k$, the divisor $-K_X$ is nef. 
This immediately implies that 
$-K_Y-X_0$ is $\chi_i$-nef and $\chi_i$-big for any $1\leq i\leq k$. 
By the Kawamata--Viehweg vanishing theorem, we have 
$R^1(\chi_i)_*\sO_Y(-X_0)=0$. Thus we get the surjection 
\[
(\chi_i)_*\sO_Y\twoheadrightarrow(\chi_i|_{X_0})_*\sO_{X_0}.
\]
Since the left hand side is nothing but $\sO_{Y_i}$, we have 
\[
(\chi_i|_{X_0})_*\sO_{X_0}=\sO_{\chi_i(X_0)}. 
\]
In particular, we have $\chi_i(X_0)\simeq \pr^{n_i}$ and the morphism 
$\chi_i|_{X_0}$ is isomorphic to $p_i\circ\tau$. Thus the restriction of 
\[
\chi:=\left(\chi_1,\dots,\chi_k\right)\colon Y\to Y_1\times\cdots\times Y_k
\]
to $X_0$ is isomorphic to $\tau$, which is not an embedding. 
In particular, the morphism $\chi$ is not an embedding. 
\end{example}

\begin{remark}\label{remark:counterHK}
Therefore, the surjectivity assertion in \cite[Lemma 2.8]{HK} is not true 
in general, even when all of $L_i$ are big and semiample. 
\end{remark}

\begin{example}\label{example:non-normal-sum}
Under the assumption in Example \ref{example:counterHK}, and assume moreover 
that $n_i>d_i$ for any $1\leq i\leq k$. From now on, just for simplicity, we set 
\[
\sO_X\left(m_1,\dots,m_k\right)
:=\tau^*\sO_{\pr^{n_1}\times\cdots\times\pr^{n_k}}
\left(m_1,\dots,m_k\right)
\]
for any $m_1,\dots,m_k\in\Z$. Note that 
\[
-K_X\sim\sO_X\left(n_1+1-d_1,\dots,n_k+1-d_k\right).
\]
Take any $c_1,\dots,c_k\in\Z_{>0}$ with $n_i-d_i\geq c_i$ for any $1\leq i\leq k$, 
and let us take a general smooth member 
\[
C\in\left|\sO_X\left(c_1,\dots,c_k\right)\right|.
\]
We set $\sO_C\left(m_1,\dots,m_k\right):=\sO_X\left(m_1,\dots,m_k\right)|_C$ 
for any $m_1,\dots,m_k\in\Z$. Let us take the blowup 
\[
\sigma\colon\sX\to X_{\A^1}
\]
along $C\times\{0\}$, let $E\subset\sX$ be the exceptional divisor, and 
let $\hat{X}_0\subset\sX$ be the strict transform of the prime divisor 
$X\times\{0\}\subset X_{\A^1}$. 
Since
\[
\sN_{C\times\{0\}/X_{\A^1}}\simeq\sO_C\oplus\sO_C\left(c_1,\dots,c_k\right), 
\]
we get 
\[
E\simeq\pr_C\left(\sO_C\oplus\sO_C\left(-c_1,\dots,-c_k\right)\right)
\xrightarrow{\pi_E}C,
\]
where $\pi_E$ is the natural projection. Let $L_E\in \Pic(E)$ be the 
tautological line bundle of the projective bundle under the above isomorphism. 
Then we have 
\[
-E|_E\sim L_E\sim\hat{X}_0|_E, \quad
-K_E\sim 2L_E+\pi_E^*\sO_C\left(n_1+1-d_1,\dots,n_k+1-d_k\right).
\]
Moreover, we have 
\[
\hat{X}_0|_{\hat{X}_0}\sim -E|_{\hat{X}_0}\sim
\sO_X\left(-c_1,\dots,-c_k\right)
\]
under the canonical isomorphism $\hat{X}_0\simeq X$. 

For any $1\leq i\leq k$, let us set 
\begin{eqnarray*}
L_i&:=&\sO_X\left(c_1,\dots,c_k\right)+M_i, \\
\sL_i^{\sX}&:=&\sigma^*(L_i)_{\A^1}-E, 
\end{eqnarray*}
where $M_i$ is as in Example \ref{example:counterHK}. 
Observe that 
\begin{itemize}
\item
$\sL_i^{\sX}|_{\hat{X}_0}\sim L_i-E|_{\hat{X}_0}\sim M_i$ is nef, and
\item
$\sL_i^{\sX}|_E\sim\pi_E^*\left(L_i|_C\right)+L_E\sim
\pi_E^*\left(L_i|_C\right)+\hat{X}_0|_E$ is also nef. 
Moreover, any curve $\gamma \subset E$ with 
$\left(\sL_i^{\sX}\cdot\gamma\right)=0$ satisfies that $\gamma\subset\hat{X}_0$. 
\end{itemize}
In particular, $\sL_i^{\sX}$ is nef over $\A^1$. Moreover, 
\begin{itemize}
\item
$\left(\sL_i^{\sX}-K_{\sX}\right)|_{\hat{X}_0}\sim M_i
+\sO_X\left(n_1+1-d_1-c_1,\dots,n_k+1-d_k-c_k\right)$
is ample, and 
\item
$\left(\sL_i^{\sX}-K_{\sX}\right)|_E\sim 2L_E+\pi_E^*\left(M_i|_C+
\sO_C\left(n_1+1-d_1+c_1,\dots,n_k+1-d_k+c_k\right)\right)$
is also ample. 
\end{itemize}
In particular, $\sL_i^{\sX}-K_{\sX}$ is ample over $\A^1$. By the base point free 
theorem, $\sL_i^{\sX}$ is semiample over $\A^1$. Let 
\[
\left(\sX,\sL_i^{\sX}\right)\xrightarrow{\sigma_i}
\left(\sX_i,\sL_i\right)
\xrightarrow{\pi_i}\A^1
\]
be the ample model over $\A^1$. The $\left(\sX_i,\sL_i\right)/\A^1$ can 
naturally be seen as a normal test configuration of $(X, L_i)$. 
As we already observed, the morphism $\sigma_i$ is an isomorphism on 
$\sX\setminus\hat{X}_0$, and the set of curves contracted by $\sigma_i$ is 
equal to the set of curves contracted by the morphism 
$p_i\circ\tau\colon X\to \pr^{n_i}$ under the canonical isomorphism 
$\hat{X}_0\simeq X$. 

Note that $\left(-K_{\sX}-\hat{X}_0\right)|_{\hat{X}_0}\sim -K_X$ is ample. 
Thus $-K_{\sX}-\hat{X}_0$ is nef and big over $\sX_i$. 
By the Kawamata--Viehweg vanishing theorem, we have 
$R^1(\sigma_i)_*\sO_{\sX}\left(-\hat{X}_0\right)=0$. Thus we get 
\[
\left(\sigma_i|_{\hat{X}_0}\right)_*\sO_{\hat{X}_0}=\sO_{\sigma_i\left(\hat{X}_0\right)}.
\]
In particular, we have $\sigma_i\left(\hat{X}_0\right)\simeq\pr^{n_i}$ and 
the morphism $\sigma_i|_{\hat{X}_0}$ is isomorphic to $p_i\circ\tau$. 

Let us set 
\begin{eqnarray*}
L&:=&\sum_{i=1}^k L_i=\sO_X\left(k c_1,\dots, k c_k\right)+\sum_{i=1}^k M_i, \\
\sL&:=&\sum_{i=1}^k \sL_i=\sigma^*L_{\A^1}-k E. 
\end{eqnarray*}
Since $\sL|_{\hat{X}_0}\sim\sum_{i=1}^k M_i$ and $\sL|_E\sim k\xi+\pi_E^*(L|_C)$, 
the divisor $\sL$ is ample over $\A^1$. In particular, by 
Lemma \ref{lemma:sum-tc} \eqref{lemma:sum-tc3}, the normal test configuration 
$\left(\sX,\sL\right)/\A^1$ is the normalized sum configuration of 
$\left\{\left(\sX_i,\sL_i\right)\right\}_{i=1}^k$. Let 
$\left(\sX',\sL'\right)/\A^1$ be the sum configuration of 
$\left\{\left(\sX_i,\sL_i\right)\right\}_{i=1}^k$. Then there is a morphism 
$\nu\colon \sX\to\sX'$ such that $\sL=\nu^*\sL'$ holds. 
As in Lemma \ref{lemma:sum-tc} \eqref{lemma:sum-tc4}, 
for any $1\leq i\leq k$, we have the following natural commutative diagram: 
\[\xymatrix{
\sX\ar[r]^\nu \ar[rrrd]_{\sigma_i} & \sX' \ar@{^{(}->}[r] & 
\sX_{\PROD}\ar@{}[r]|-{=} & 
\sX_1\times_{\A^1}\cdots\times_{\A^1}\sX_k\ar[d]^{q_i}\\
&&&\sX_i.
}\]
Let $\sX_{i,0}$ (resp., $\sX_0$) be the fiber of $\sX_i\to\A^1$ (resp., 
$\sX\to\A^1$) over $0\in\A^1$. The fiber of the above diagram 
over $0\in\A^1$ gives
\[\xymatrix{
\hat{X}_0 \ar@{}[r]|{\subset} \ar[rrrd]_{(\sigma_i)|_{\hat{X}_0}}
&\sX_0 \ar[r] & \sX_0' \ar@{^{(}->}[r] & 
\sX_{1,0}\times\cdots\times\sX_{k,0}\ar[d]^{q_i}\\
&&&\sX_{i,0}.
}\]
If $\nu$ is an isomorphism, then we get the closed embedding 
\[
\hat{X}_0\hookrightarrow\sX_{1,0}\times\cdots\times\sX_{k,0}.
\]
However, since the morphism $\sigma_i|_{\hat{X}_0}$ is isomorphic to $p_i\circ\tau$ 
for any $1\leq i\leq k$, the above embedding must be isomorphic to 
$\tau\colon X\to \pr^{n_1}\times\cdots\times\pr^{n_k}$. 
Clearly, the $\tau$ is not an embedding, a contradiction. Thus $\nu$ is 
not an isomorphism. In particular, 
the sum configuration of $\left\{\left(\sX_i,\sL_i\right)\right\}_{i=1}^k$ is 
not normal. 
\end{example}

\end{document}